\documentclass[11pt]{amsart}
\usepackage[utf8]{inputenc}

\usepackage{amsmath,amssymb,amsfonts,amsthm}
\usepackage{graphicx}
\usepackage{xcolor}
\usepackage[colorlinks=true]{hyperref}
\usepackage{epsfig, enumerate}
\usepackage{tikz}
\usetikzlibrary{arrows.meta,decorations.pathreplacing,positioning}

\numberwithin{equation}{section}
\newcommand{\R}{\mathbb{R}}
\newcommand{\N}{\mathbb{N}}
\newcommand{\Z}{\mathbb{Z}}

\newcommand{\W}{\mathcal{W}}
\newcommand{\CZ}{\mathcal{CZ}}
\newcommand{\SL}{\mathrm{SL}}
\newcommand{\GL}{\mathrm{GL}}
\newcommand{\PGL}{\mathrm{PGL}}
\newcommand{\Diff}{\mathrm{Diff}}

\newcommand{\beqnn}{\begin{eqnarray*}}
\newcommand{\eeqnn}{\end{eqnarray*}}
\newcommand{\beqn}{\begin{eqnarray}}
\newcommand{\eeqn}{\end{eqnarray}}
\newcommand{\beq}{\begin{equation}}
\newcommand{\eeq}{\end{equation}}

\theoremstyle{plain}
\newtheorem{thm}{Theorem}[section]
\newtheorem{prop}[thm]{Proposition}

\newtheorem{lem}[thm]{Lemma}
\newtheorem{cor}[thm]{Corollary}
\newtheorem{rmk}[thm]{Remark}
\newtheorem{defi}{Definition}[section]

\begin{document}

\title{Local centralizer rigidity for a non-generic diagonal map}
\author{Zhijing Wendy Wang}
\address{Department of Mathematics, University of Chicago, Chicago, IL 60637, USA}
\email{zhijingw@uchicago.edu}
\author{Amie Wilkinson}
\address{Department of Mathematics, University of Chicago, Chicago, IL 60637, USA}
\email{wilkinso@uchicago.edu}

\begin{abstract}
 In this paper, we prove centralizer rigidity for any non-trivial diagonal map in the compact homogeneous space $\mathrm{SL}_n\R/\Gamma, n\ge 5$, under the assumption that the perturbed volume-preserving diffeomorphism has a regular isomorphic centralizer. 
\end{abstract}

\maketitle
\tableofcontents

\section{Introduction}

The dynamical symmetries of a  diffeomorphism \(f\in \mathrm{Diff}^\infty(X)\) are captured by its
{\em smooth centralizer}
\[
        \mathcal Z(f)
        =
        \{h\in \mathrm{Diff}^\infty(X): hf=fh\}.
\]
From
the point of view of generic dynamics, one expects these symmetries to be
small: Smale \cite{smale} conjectured that generically the centralizer consists of the iterates of the map itself.  Algebraic systems, however, behave very differently.  If
\(f_0\) is an affine map on a homogeneous space, then its centralizer $\mathcal Z(f_0)$ often
contains many algebraic symmetries.  The centralizer rigidity problem asks
whether this distinction is (locally) rigid: if a smooth perturbation of an algebraic
system still has a large centralizer, must the perturbation itself be
algebraic?

In this paper, we consider the compact homogeneous space $X=\mathrm{SL}_n\R/\Gamma$. Fix throughout the paper a Riemannian metric on $X$, which descends from a right-invariant metric on $\mathrm{SL}_n\R$. A left translation $f_0=L_a: X\to X$ is called a {\em diagonal map} if {$a$ belongs to the identity component of a real split Cartan subgroup; equivalently, $a$ is conjugate in $\SL_n(\mathbb R)$ to a positive diagonal matrix}; we say $f_0$ is {\em non-trivial} if $f_0^2\neq id$ (equivalently, the eigenvalues of $a$ do not all have modulus $1 $);  we say $f_0$ is {\em generic} if the eigenvalues of $a$ are distinct.

For a nontrivial diagonal map $f_0$, work of \cite{DWWX} implies that the smooth centralizer agrees with the algebraic centralizer; in particular $\mathcal Z(f_0)\doteq \R^{n-1}$ if $f_0$ is generic, and $\mathcal Z(f_0)\doteq \R^{k-1}\times \mathrm{SL}_{n_1}\R\times \cdots \times \mathrm{SL}_{n_k}\R$, with $\sum_{i=1}^kn_i=n$, if $f_0$ is non-generic. Here, for two groups $G_1$ and $G_2$, we write $G_1\doteq G_2$ if a finite index subgroup of $G_1$ is isomorphic to a finite index subgroup of $G_2$ as Lie groups.

In the case when  the center of the algebraic
centralizer contains a higher rank abelian action $\R^k,k\ge 2$, under suitable assumptions on the centralizer, the first author \cite{Wang,Wang26} established that elements of the higher rank action also satisfy some partially hyperbolic properties; combined with local rigidity theory for higher-rank restrictions of diagonal actions, these results establish centralizer rigidity for these cases. This paper is a follow-up to these earlier works.

The main point of the present paper is to treat the very non-generic diagonal maps, when $\mathcal Z(f_0)\doteq \R\times \mathrm{SL}_{n_1}\R\times \mathrm{SL}_{n_2}\R$. In this case, the higher rank abelian mechanism is no
longer available.  

Our main new contribution is a method for extracting partial hyperbolicity
from this non-abelian centralizer structure, with the assumption that a $C^1$-perturbation $f$ has an isomorphic centralizer. We then upgrade
 measurable information obtained from Zimmer's cocycle superrigidity   to geometric data. This allows us to construct an element
\[
        g\in \mathcal Z(f)
\]
that is partially hyperbolic and whose stable, center, and unstable bundles
are \(C^0\)-close to those of an algebraic diagonal element \(g_0\).  Once
this element is constructed, the pair \(\langle f,g\rangle\) recovers the
coarse Lyapunov geometry needed to apply the geometric rigidity scheme for
higher-rank diagonal actions.

We now state the main result.

Let $\mathcal L$ be a Lie group, and fix a constant $K>1$.
  At a locally free point
$x_0$, endow $\mathfrak l$ with the norm pulled back from
$T_{x_0}(\mathcal L\cdot x_0)$ by the derivative of the orbit map, and
endow $\mathcal L$ with the corresponding left-invariant metric. We say that a smooth, locally free action $\sigma: \mathcal L\times X\to X$ is {\em $K$-regular} if there exists a point $x_0\in X$ such that the orbit map is injective on $B_{\mathcal L}(e,K^{-1})$ and
$\| \sigma(\cdot, x_0)\|_{C^3}+\| (\sigma(\cdot, x_0))^{-1}\|_{C^3}\le K$ on this ball, where the inverse is taken locally from a neighborhood of $x_0\in \mathcal L\cdot x_0$ to $B_{\mathcal L}(e,K^{-1})$.

\begin{thm}\label{cenrignongen}
   Let \(X = \mathrm{SL}_n\R/\Gamma\) where $\;n\ge 5$ and $\Gamma$ is a cocompact lattice. 
    Let $f_0$ be a non-trivial diagonal map, and let \(f\in \mathrm{Diff}^\infty(X)\) be a  \(C^1\)-small perturbation of \(f_0\). Then $\mathcal Z(f)$ is a Lie group and the action $\mathcal{Z}(f)\times X\to X$ is jointly smooth.
    
    Fix $K>1$. If in addition, \(f\in \mathrm{Diff}^\infty(X)\) is a \(C^1\)-small volume-preserving perturbation of \(f_0\) with
    \( \mathcal Z(f) \doteq \mathcal Z(f_0)\), $d_{C^2}(f,f_0)\le K$ and the action $\mathcal{Z}(f)\times X\to X$ is $K$-regular, then \(f\) is smoothly conjugate to a diagonal map.
\end{thm}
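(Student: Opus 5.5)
The argument splits into two parts: a structural first part about $\mathcal Z(f)$, and the rigidity second part. For the first part, $f_0=L_a$ is partially hyperbolic with center bundle tangent to the orbits of the centralizer of $a$ (the Lie algebra of $\mathcal Z(f_0)$, which is everything commuting with $\mathrm{Ad}_a$). By Hirsch--Pugh--Shub, a $C^1$-small perturbation $f$ is again partially hyperbolic. Its center foliation is dynamically coherent and leaf-conjugate to the orbit foliation of $\mathcal Z(f_0)$. Any $h\in\mathcal Z(f)$ preserves $E^s_f,E^c_f,E^u_f$, so it permutes center leaves. I would then show $\mathcal Z(f)$ is a closed subgroup of $\mathrm{Diff}^\infty(X)$ that is locally compact in the $C^\infty$ topology. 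The key point is that elements of $\mathcal Z(f)$ $C^0$-close to the identity are determined by their action on one center leaf together with uniform $C^r$ bounds, and those bounds come from conjugating by iterates of $f$ along stable and unstable directions; this follows \cite{DWWX}. Montgomery--Zippin (Bochner--Montgomery) then makes $\mathcal Z(f)$ a Lie group acting smoothly. Joint smoothness follows because its Lie algebra consists of smooth vector fields, obtained by exponentiating the derivative of one-parameter subgroups.

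For the second part, I fix an isomorphism $\mathcal Z(f)\doteq \R^{k-1}\times\prod\SL_{n_i}\R$. The case where the center has rank $\ge 2$ is already handled in \cite{Wang,Wang26}. So the new case is $\mathcal Z(f)\doteq\R\times\SL_{n_1}\R\times\SL_{n_2}\R$ with $n_1+n_2=n\ge5$, which forces some $n_i\ge3$. The steps are as follows.
\begin{enumerate}
\item Use $K$-regularity and the $C^2$ bound to get uniform control of the orbit map. Together with the leaf conjugacy, this shows the $\mathcal Z(f)$-orbits are $C^1$-close to the center leaves of $f_0$.
\item The volume-preserving action of the higher-rank factor $\SL_{n_i}\R$, $n_i\ge3$, has derivative cocycle on $TX$. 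Zimmer cocycle superrigidity makes this cocycle measurably cohomologous to a representation twisted by a compact cocycle. The closeness to $f_0$ identifies the representation with the algebraic adjoint one.
\item Upgrade this measurable data to geometric data. Pick a diagonal $b\in\SL_{n_i}\R$ whose algebraic counterpart $g_0$, together with $a$, generates a rank-two action with the same coarse Lyapunov splitting as the Cartan action. Using the exponents from superrigidity and the uniform bounds from step 1, find cone fields invariant under $g=\sigma(b)$, giving uniform expansion and contraction. This makes $g$ partially hyperbolic with bundles $C^0$-close to those of $g_0$.
\item The pair $\langle f,g\rangle$ is then a $C^1$-small perturbation of a higher-rank diagonal $\Z^2$ action on $\SL_n\R/\Gamma$. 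Apply the geometric local rigidity scheme (Damjanović--Katok style, as used in \cite{Wang,Wang26}) to obtain a smooth conjugacy of $f$ to a diagonal map.
\end{enumerate}

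I expect step 3 to be the main obstacle. Superrigidity gives only measurable, almost-everywhere information about Lyapunov exponents, while partial hyperbolicity needs uniform estimates. My first attempt would use the fact that every invariant measure for the $\SL_{n_i}$ action has exponents given by the representation: the compact-extension part has zero exponents, and the closeness constrains the representation. I would then argue by uniform-over-all-invariant-measures (Schreiber/Cao type) arguments, with $K$ controlling the constants, to pass to uniform hyperbolicity on the cones.
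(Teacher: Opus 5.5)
There is a genuine gap. Your architecture matches the paper: superrigidity, then a partially hyperbolic $g$ in the centralizer, then higher-rank rigidity for $\langle f,g\rangle$. Your first assertion is essentially the cited result of \cite{Wang}. But Step~3, which you rightly flag as the heart, does not work as proposed, for two reasons.
\begin{itemize}
\item Cao/Schreiber-type criteria need a \emph{continuous} $Dg$-invariant bundle or cone field to start from. That is exactly what measurable superrigidity does not provide.
\item They also need negative exponents for \emph{every} $g$-invariant measure. Superrigidity only describes the derivative cocycle for measures invariant under the whole $\SL_{n_i}\R$ factor (here, volume), and most $g$-invariant measures are not of this kind.
\end{itemize}
The missing idea is to use $f$, not $g$, as the base dynamics. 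Since $f_0$ has a single root up to sign, $Df_0|_{E^s_{f_0}}$ is conformal. Hence $Df|_{E^s_f}$ is a fiber-bunched H\"older cocycle over the accessible, center-bunched map $f$. The paper then proceeds as follows.
\begin{itemize}
\item It first proves that $Df|_{E^s_f}$ has a \emph{single} Lyapunov exponent. This uses Zimmer applied to a grouped Oseledets summand, dimension counting, and, for $q=2$, an extra argument with the determinant homomorphism on the $\SL_2$ factor and Lemma~\ref{lem:tauginv}.
\item Kalinin--Sadovskaya (Theorem~\ref{contbund1exp}) then makes the measurable $g$-splitting of $E^s_f$ continuous.
\item Relative Pesin manifolds inside $\mathcal W^s_f$ and $\mathcal W^u_f$, a Hopf argument, and Burns--Wilkinson applied to $f$ give ergodicity of $g$.
\item Ergodicity of $g$ makes $Df|_{E^s_f}$ measurably cohomologous to a constant modulo $\pm I$. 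When $q=2$ this also needs Lee's superrigidity and property (T).
\item Theorem~\ref{KS23} makes that transfer continuous, so $Dg|_{E^s_f}$ is continuously cohomologous to a constant hyperbolic matrix. This is where uniformity comes from.
\end{itemize}

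Three other steps claim more than is available.
\begin{itemize}
\item In Step~2, $C^1$-closeness of $f$ to $f_0$ gives no control on derivatives of centralizer elements transverse to center leaves. So it cannot identify the representation. The paper only rules out trivial summands, via Lemma~\ref{gnozeroexp}, and then argues by dimension. That lemma rests on the macroscale expansion estimate and on the H\"older exponent of the leaf conjugacy tending to $1$ (Lemma~\ref{lem:holexp1}).
\item Step~1 must produce the adapted Lie-algebra isomorphism $\Xi$ of Lemma~\ref{liealg}, obtained from projective closeness and $K$-regularity. An abstract isomorphism $\mathcal Z(f)\doteq\mathcal Z(f_0)$ need not send your diagonal $b$ to an element that behaves like $g_0$ on center leaves.
\item In Step~4, $\langle f,g\rangle$ is \emph{not} known to be a $C^1$-small perturbation of $\langle f_0,g_0\rangle$. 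The element $g$ is only projectively close to $g_0$ along center leaves, with $C^0$-close invariant foliations and holonomies. This is why the paper verifies the axioms of a topological perturbation on a sufficient set. It then invokes the Lyapunov-cycle rigidity of \cite{Wang26} and \cite{VinWang}, rather than a $C^1$ local rigidity theorem.
\end{itemize}
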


\begin{rmk}
    The assumption that the rank is at least 4 ensures that for any non-trivial diagonal map \(f_0\), one of two favorable situations occurs: either \(f_0\) possesses at least two distinct positive roots, or the algebraic centralizer \(\mathcal Z(f_0)\) itself contains a simple Lie group factor of rank at least 2. In the latter case, the rigidity can be proved using the higher-rank action of this subgroup. In fact, Theorem \ref{cenrignongen} extends to any diagonal matrix in homogeneous spaces $\mathrm{SL}_n\R/\Gamma,\,n\ge3$, except for two cases $f_0=diag(e^t,e^t,e^{-2t})$ and $f_0=diag(e^t,e^t,e^{-t},e^{-t})$.
\end{rmk}

\subsection{Historical remarks and related work}

The local rigidity theory of higher-rank algebraic actions has a long history,
beginning with the work of Katok--Spatzier \cite{KS} on higher-rank Anosov actions and
homogeneous actions. For Weyl chamber flows and their restrictions, a
geometric approach was developed by Damjanovi\'c--Katok \cite{DK} and later extended by
Vinhage \cite{Vinhage} and Vinhage--Wang \cite{VinWang}.  These works provide local rigidity results for
higher-rank restrictions of diagonal actions on homogeneous spaces, and form
the main rigidity background for the present paper.

Centralizers have also been studied from a different, more generic point of
view.  A classical question of Smale \cite{smale} asks whether a generic diffeomorphism has
trivial centralizer. Kopell \cite{Kopell} proved this on the circle in the $C^r$ topology, $r\geq 2$, and Bonatti-Crovisier-Wilkinson \cite{BCW} proved this in the
\(C^1\) topology.

For partially hyperbolic systems, centralizer rigidity was first proposed by Damjanovi\'c-Wilkinson-Xu in \cite{DWX}. Centralizer rigidity in tori and nilmanifolds was studied in \cite{DWX23},\cite{DWWX}, \cite{GSXZ}, \cite{Sandfeldt}. In semisimple homogeneous spaces and twisted product of semisimple homogeneous spaces and tori, \cite{Wang} and \cite{Wang26} studied centralizer rigidity of (semi-)generic elements of the Weyl chamber flow.

In \cite{DWWX}, Damjanovi\'c-Wilkinson-Wu-Xu conjectured,
roughly, that for perturbations of affine \(K\)-systems, \begin{enumerate}
 
    \item the centralizer should
be a Lie group;

\item higher-rank centralizers should
force smooth conjugacy to an affine model;

\item isomorphic centralizers should
force smooth conjugacy to an affine model.
 
\end{enumerate}  Theorem \ref{cenrignongen} verifies Conjectures 1 and 3 of \cite{DWWX} for non-trivial diagonal maps on compact quotients of
\(\mathrm{SL}_n(\mathbb R)\), \(n\ge5\), under the quantitative regularity
assumption on the centralizer action.  

 \subsection{Organization of the paper and outline of proof}\label{outl}

In Section \ref{prelim}, we recall some preliminaries on the splitting of the Lie algebra, partial hyperbolicity, smoothness and existence of holonomies, and some Pesin theory. 

Then we proceed with the proof of Theorem \ref{cenrignongen}.

We note that elements in the centralizer of $f$ preserve the center, stable and unstable foliations of $f$.
In Section \ref{sec:cla}, we prove the Lie group property of the centralizer and establish dynamics of the centralizer along the center leaves of $f$. 
 First, we apply results in \cite{Witte} and \cite{Wang} to establish that the center-fixing centralizer $\CZ(f):=\{g\in \mathcal Z(f):g(x)\in \W^c_f(x), \, \forall x\in X\}$ is a finite index subgroup of $\mathcal Z(f)$, and $\CZ(f)$ is a Lie group. Furthermore, results from \cite{Wang26} show that the joint action $\CZ(f)\times \W^c_f(x)\to \W^c_f(x)$ is free, proper, jointly smooth, and ``projectively-$C^0$-close" to the affine action $\CZ(f_0)$ on $\W^c_{f_0}$. The projective estimate from \cite{Wang26}, together with the $K$-regularity hypothesis, then gives an adapted isomorphism between the
Lie algebras of the perturbed and algebraic centralizers. 
 This allows us to associate to a suitable algebraic element $g_0\in \CZ(f_0)$ an element
$g\in\CZ(f)$ with the same hyperbolic behavior along the center leaves.
The remainder of the section establishes the three estimates later used
outside the center: a leaf conjugacy for the common neutral foliation, a
H\"older exponent tending to one with the perturbation, and an expansion to macroscale behavior along the stable and unstable direction, which rules out a zero exponent of $Dg$ restricted to $E_f^s$ or $E_f^u$.

Section~\ref{sec:gph} proves that sufficiently many elements of $\CZ(f)$ are partially hyperbolic. The proof has
four stages. 
In Section \ref{sec:1exp}, we use Zimmer cocycle superrigidity and a low-dimensional
representation argument to show that $Df|_{E_f^s}$ has a single Lyapunov
exponent. 
In Section \ref{sec:gerg}, we first promote measurable stable, neutral, and
unstable subspaces for a centralizer element $g$ to continuous
subbundles. Choose $g\in G_1$ that has no central component on $E^s_f$ and $E^u_f$. We show that any set which is bi-essentially saturated by the Pesin stable and unstable foliations of $g$ is bi-essentially saturated by the stable and unstable foliations of $f$. Then applying Theorem \ref{burnswikerg} on $f$, this shows that $g$ is ergodic. 
Section \ref{subsec:ph} then shows that
the derivative cocycle is reduced, modulo its scalar sign, to a constant
cocycle and Sections~\ref{sec:c0closebund} and \ref{subsec:proofgph}  show that the holonomies of $g$ and $g_0$ are $C^0$-close.

Section~\ref{sec:main-proof} applies the rigidity theorem from \cite{Wang26} to
the action generated by $f$ and $g$, completing the proof of
Theorem~\ref{cenrignongen}. 

Appendix~\ref{app:leafwise-pesin-proof} proves transverse absolute
continuity of Pesin stable and unstable manifolds of $g$ inside $\mathcal W_f^s$ and $\mathcal W_f^u$.

Appendix~\ref{app:topological-perturbations}
recalls the
higher-rank rigidity theorem used in Section~5, and verifies its
hypotheses are satisfied for the action constructed in
Section \ref{sec:gph}.

\subsection{Further questions}
Theorem \ref{cenrignongen} is a first step toward the
centralizer rigidity conjectures stated above for the simplest semisimple case of compact quotients of
\(\mathrm{SL}_n(\mathbb R)\), \(n\ge5\)
under two additional assumptions:
the perturbed centralizer is virtually isomorphic to the algebraic
centralizer, and its action satisfies a uniform local regularity condition.
It is natural to ask to what extent these hypotheses can be weakened or
removed, and whether similar results hold in more generality.

1. Can the \(K\)-regularity assumption in Theorem \ref{cenrignongen} be removed?
 In Theorem \ref{cenrignongen}, \(K\)-regularity is used to obtain
uniform quantitative control of the action near the identity.  One expects
that such control should follow from the structure of the centralizer itself,
at least after passing to a finite-index subgroup or after choosing suitable
coordinates on center leaves.

2. Can the virtual equality of centralizers  be replaced
by a more intrinsic largeness assumption? For example, can one prove centralizer rigidity using only a dimension count as in Theorem 1.1 of \cite{Wang} in the generic case, or a no-rank-one factor condition as in Conjecture 2 of \cite{DWWX}?

3. The remaining low-rank cases also appear interesting.  
What happens for the exceptional low-rank one-root cases 
$
        \operatorname{diag}(e^t,e^t,e^{-2t})\in \mathrm{SL}_3(\mathbb R)
$
and
$
        \operatorname{diag}(e^t,e^t,e^{-t},e^{-t})
        \in \mathrm{SL}_4(\mathbb R)?
$

4. Another natural direction is to go beyond diagonal maps on
\(\mathrm{SL}_n(\mathbb R)/\Gamma\).  For example, can one treat the case of $f_0=L_a: \mathrm{SL}_{2n}\R/\Gamma\to \mathrm{SL}_{2n}\R/\Gamma$ , where $a=\begin{pmatrix}
    eR_1&0\\0& e^{-1}R_2
\end{pmatrix}$  and $R_1,R_2\in \mathrm{SO}(n)$?  In this case, the center dimension will be larger than the centralizer, so different methods are needed to find close-by affine models for elements of the centralizer.

5. Can we extend the rigidity of very non-generic elements to elements of the Weyl chamber flow in other semisimple homogeneous spaces, or twisted products? The methods of this paper extend to many $\R$-split simple homogeneous spaces, but semisimple and non-$\R$-split cases warrant new ideas and mechanisms.

\subsection{Acknowledgements}

The authors thank Kurt Vinhage for explaining his work and for many
useful discussions, particularly concerning the application of Zimmer's
cocycle rigidity to the higher-rank centralizer case treated in
Section~\ref{sec:gph}. They also thank Aaron Brown, David Fisher,
Homin Lee, James Marshall Reber, Sven Sandfeldt and Disheng Xu for
helpful discussions and comments.

The authors used generative AI to assist in revising
the manuscript and in drafting portions of Appendices~A and~B.
All AI-assisted material was reviewed, edited, and mathematically
verified by the authors. The authors assume responsibility for all
content.

\section{Setting and preliminaries}\label{prelim}
In this section, we provide a brief introduction to the diagonal action, the theory of partially hyperbolic dynamical systems, the Mather spectrum and narrow-band condition, Pesin theory, and some results on holonomies of cocycles.

\subsection{Partial hyperbolicity and regularity}\label{prel:ph}
We recall the definitions of dominated splitting and partial hyperbolicity, and introduce the concept of leaf conjugacy, drawing on results from \cite{HPS}.

\begin{defi}[Dominated Splitting]
Let $M$ be a Riemannian manifold, and let $f:M\to M$ be a diffeomorphism. A \emph{dominated splitting} of $f$ is a $Df$-invariant decomposition $TM=E^1\oplus E^2\oplus \cdots \oplus E^k$ satisfying:
\begin{itemize}
\item For each $1\le i\le k$, $Df(E^i(x))=E^i(f(x))$ for any $x\in M$.
\item There exist constants $C>0$ and $0 < \lambda < 1$ such that for any integer $n \ge 1$, $x\in M$, and for any unit vectors $v\in E^i(x), u\in E^{i+1}(x)$,
$$
\|Df^n(x)v\|\le C\lambda^n \|Df^n(x)u\|,
$$
for $1\le i\le k-1$.
\end{itemize}
\end{defi}

\begin{defi}[Partially Hyperbolic Diffeomorphism]\label{defi:ph}
A diffeomorphism $f:M\to M$ of a Riemannian manifold $M$ is \emph{partially hyperbolic} if it admits a \emph{dominated splitting} $TM=E^s\oplus E^c\oplus E^u$, and there exist constants $C>0$ and $0<\mu_u,\mu_s<1$ such that for any integer $n \ge 1$, and any $x\in M$:
$$
\|Df^n(x)v\|\le C\mu_s^n\|v\|,\qquad \text{for any } v\in E^s(x),
$$
and
$$
\|Df^{-n}(x)u\|\le C\mu_u^n\|u\|,\qquad \text{for any } u\in E^u(x).
$$
In this paper, we assume that both the stable and unstable bundles $E^s$ and $E^u$ are non-trivial (i.e., non-zero dimensional).
\end{defi}

For a partially hyperbolic diffeomorphism, the stable and unstable bundles $E^s$ and $E^u$ are uniquely integrable to $f$-invariant stable and unstable foliations $\mathcal{W}^s$ and $\mathcal{W}^u$, respectively. The leaves of these foliations are globally defined by:
$$
\mathcal{W}^s(x)=\{y\in M:\; \exists\; C>0,\; d(f^n(x),f^n(y))\le C\mu_s^n \,\; \forall\; n>0 \},
$$
and
$$
\mathcal{W}^u(x)=\{y\in M: \;\exists\; C>0, \;d(f^n(x),f^n(y))\le C\mu_u^{-n} , \;\forall\; n<0 \}.
$$
Unlike $E^s$ and $E^u$, the center bundle $E^c$ is generally not integrable to a foliation. A partially hyperbolic diffeomorphism is said to be normally hyperbolic if there exists an $f$-invariant center foliation $\mathcal{W}^c$ such that $T\mathcal{W}^c = E^c$.

\begin{defi}[Normally Hyperbolic]
Let $f:M\to M$ be a diffeomorphism and let $\mathcal{F}$ be a foliation in $M$. For $r\ge 1$, we say $(f,\mathcal F)$ is \emph{$r$-normally hyperbolic}, if $f$ preserves $\mathcal{F}$, $f$ is partially hyperbolic with respect to $E^c=T\mathcal{F}$, and there exists an integer $k \ge 1$ such that: 
$$
\sup_{x\in M}\|D_xf^k|_{E^s}\|\cdot \|(D_xf^k|_{E^c})^{-1}\|^r<1
$$
and
$$
\sup_{x\in M}\|(D_xf^k|_{E^u})^{-1}\|\cdot \|D_xf^{k}|_{E^c}\|^r<1.
$$

We say that $(f,\mathcal F)$ is \emph{normally hyperbolic} if it is $1$-normally hyperbolic.
\end{defi}

Note that $r$-normal hyperbolicity is a $C^1$-open condition.

For a normally hyperbolic diffeomorphism $(f,\mathcal{F})$, heuristically, the induced action of $f$ on the leaf space $M/\mathcal{F}$ can be viewed as an Anosov system. Consequently, one might expect structural stability of $f$ up to the leaves of $\mathcal{F}$. Under suitable conditions, perturbations of $f$ are $C^0$-conjugate to $f$ modulo the leaves of $\mathcal{F}$; we call such a perturbation \emph{leaf conjugate} to $f$. Now we define leaf conjugacy precisely.

\begin{defi}[Leaf Conjugacy]
Suppose $(f,\mathcal{F}_f)$ and $(g,\mathcal{F}_g)$ are two diffeomorphisms of $M$ with invariant foliations $\mathcal{F}_f$ and $\mathcal{F}_g$, respectively. Then $(f,\mathcal{F}_f)$ and $(g,\mathcal{F}_g)$ are said to be \emph{leaf conjugate} via a leaf conjugacy $h\in \mathrm{Homeo}(M)$ if
$$
h(\mathcal F_f(x))=\mathcal F_g(h(x))\quad \text{and}\quad h(f(\mathcal F_f(x)))=g(h(\mathcal F_f(x))), \quad \text{for all } x\in M.
$$
\end{defi}

A notion slightly stronger than normal hyperbolicity is dynamical coherence.

\begin{defi}[Dynamically Coherent]
A partially hyperbolic diffeomorphism $f$ is said to be \emph{dynamically coherent} if there exist two $f$-invariant foliations $\mathcal{W}^{cs}$ and $\mathcal{W}^{cu}$ that are tangent to the continuous subbundles $E^{cs}=E^c\oplus E^s$ and $E^{cu}=E^c\oplus E^u$, respectively.

If $f$ is dynamically coherent, the center foliation of $f$, denoted $\mathcal{W}^c$, is defined as the intersection $\mathcal{W}^{cs}\cap \mathcal{W}^{cu}$.
\end{defi}

By definition, $\mathcal{W}^c$ is an $f$-invariant foliation tangent to $E^c$ at every point, which implies that $(f,\mathcal{W}^c)$ is normally hyperbolic. It is important to note that dynamical coherence does not require $E^{cs}$, $E^{cu}$, or $E^c$ to be uniquely integrable to their respective foliations. For a more in-depth discussion, see \cite{BWcoh}.

Now we introduce a central result about leafwise structural stability for normally hyperbolic diffeomorphisms. A more general statement and proof can be found in \cite{BWcoh} and \cite{PSWholrev}.

\begin{thm}[Leafwise structural stability]\label{leafconj}

Let  $f\in\mathrm{Diff}^r(M)$, $r\ge2$, preserve a uniformly $C^r$
foliation $\mathcal F$ and be normally hyperbolic and plaque expansive for
$\mathcal F$.  Every sufficiently small $C^1$ perturbation $f_1\in\mathrm{Diff}^r(M)$ has a
nearby invariant foliation $\mathcal F_1$ and there is a leaf conjugacy
$\phi:(f,\mathcal F)\to(f_1,\mathcal F_1)$ which is $C^0$-close to the
identity and bi-H\"older.  If $f$ is $r$-normally hyperbolic, the restrictions of $\phi$ to the leaves of
$\mathcal F$ are uniformly $C^r$ and converge to the identity in the
leafwise $C^1$ topology as $f_1\to f$ in $C^1$.

The stable and unstable bundles of $f_1$ are uniquely integrable, and their
leaves are $C^r$-smooth.

\end{thm}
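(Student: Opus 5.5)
The plan is to follow the classical Hirsch--Pugh--Shub scheme, carried out on the space of leaves. First, set $E^s,E^c=T\mathcal F,E^u$ to be the splitting for $f$. By the cone-field criterion, partial hyperbolicity is $C^1$-open. So for $f_1$ that is $C^1$-close to $f$ there is a $Df_1$-invariant splitting $E^s_1\oplus E^c_1\oplus E^u_1$. This splitting is $C^0$-close to the original one, and the same rate inequalities hold with slightly worse constants. The invariant cone families for $f$ remain invariant for $f_1$, and the graph transform yields the new bundles. Unique integrability of $E^s_1,E^u_1$, with $C^r$ leaves, follows from the stable manifold theorem for partially hyperbolic systems. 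Each leaf $\mathcal W^s_1(x)$ is characterized dynamically as the set of points whose forward orbits converge at rate $\mu_s^n$. The standard Hadamard--Perron graph transform in local charts gives $C^r$ leaves that vary continuously in the $C^r$ topology.

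Second, construct $\mathcal F_1$ and $\phi$ simultaneously, by a fixed-point argument in the space of continuous maps $\phi:M\to M$ that are $C^0$-close to the identity. Because $\mathcal F$ is uniformly $C^r$, one can build a tubular family of transversals $\Sigma_x=\exp_x(E^s(x)\oplus E^u(x))_{\le\delta}$. Any map $C^0$-close to the identity is then encoded by its displacement, a section of the transverse bundle. The aim is to find, for each plaque $\mathcal F_{loc}(x)$, a graph over it in these transverse coordinates such that the resulting family of plaques is $f_1$-invariant, i.e. $f_1$ maps plaques into plaques. This is the graph-transform/contraction-mapping step. Normal hyperbolicity makes the operator acting on families of transverse sections hyperbolic: one pushes forward along the $u$-direction and pulls back along the $s$-direction. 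This yields a unique invariant family of $C^1$ plaques, tangent to $E^c_1$. To glue the plaques into a genuine foliation and to get a leaf-level (rather than merely plaque-level) conjugacy, use plaque expansivity. Plaque expansivity guarantees that two $\delta$-pseudo-orbits of plaques that shadow each other must lie in the same plaque. Hence the plaque family is coherent, and the map $\phi$ defined by $\phi(x)=$ the intersection of $\Sigma_x$ with the new plaque through the shadowing point is well defined, continuous, and injective. Injectivity needs plaque expansivity for both $f$ and $f_1$. Plaque expansivity is itself $C^1$-open for normally hyperbolic systems, by the shadowing estimate. Surjectivity then follows by degree/homotopy to the identity. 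By construction, $\phi$ sends leaves to leaves and intertwines $f$ and $f_1$ on leaves. H\"older continuity of $\phi$ and $\phi^{-1}$ comes from the standard exponential-contraction estimate: in the transverse directions, the displacement is a fixed point of a contraction with rates controlled by the spectral gap.

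Third, prove the leafwise regularity under $r$-normal hyperbolicity. The invariant plaques are $C^r$ by the $C^r$ section theorem (fiber contraction). The $r$-normal hyperbolicity inequalities are exactly the fiber-bunching conditions that make the induced action on $r$-jets of plaques a contraction. Since $\phi|_{\mathcal F(x)}$ is the composition of the $C^r$ leaf of $\mathcal F$ with the $C^r$ graph map into $\mathcal F_1$ (projection along $E^s\oplus E^u$), it is uniformly $C^r$. Its $C^1$ distance to the identity is bounded by the $C^1$ size of the graphs. That size tends to zero as $f_1\to f$, because the fixed point of the contraction depends continuously on $f_1$.

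The main obstacle is the second step. Plaque-invariance alone does not give a global foliation, since plaques might fail to piece together and leaves could intersect. Plaque expansivity is precisely what is needed to rule this out. I would first try to show directly that for $\epsilon$-pseudo-orbits respecting $\mathcal F_1$, the shadowing orbit is unique up to plaque, and then deduce well-definedness of $\phi$ from this uniqueness.
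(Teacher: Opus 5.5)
The paper gives no proof of this theorem; it quotes it from Burns--Wilkinson and Pugh--Shub--Wilkinson, building on Hirsch--Pugh--Shub. Your outline is the classical Hirsch--Pugh--Shub route, but three steps do not work as written.

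\textbf{Leafwise $C^r$ regularity.} Your transversals $\Sigma_x=\exp_x\bigl(E^s(x)\oplus E^u(x)\bigr)_{\le\delta}$ are built from bundles that are in general only H\"older. The standard condition making $E^s$ and $E^u$ $C^r$ along the leaves of $\mathcal F$ is center $r$-bunching, which $r$-normal hyperbolicity does not imply; without it $E^s\oplus E^u$ need not even be $C^1$ along $\mathcal F$. So your third-step claim fails in this generality: $\phi|_{\mathcal F(x)}$, as a projection along $E^s\oplus E^u$ onto a $C^r$ leaf, need not be $C^r$. The $C^r$ section theorem also cannot be run in those coordinates. (This is harmless for the algebraic $f_0$ of the paper, but not for the statement as posed.)

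The fix is to use a $C^\infty$ bundle $N$ transverse to $T\mathcal F$ and $C^0$-close to $E^s\oplus E^u$. Then the point of $\exp_y(N(y)_\delta)$ on the new leaf is $C^r$ in $y\in\mathcal F(x)$ by the implicit function theorem.

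Relatedly, leaves are typically not properly embedded, so transversals over different returns of a leaf overlap. Hence ``the new plaque through the shadowing point'' is not well defined in $M$. The graph transform must be done on $N$ pulled back to each abstract leaf, i.e.\ for one small section $\sigma$ of $N$ over $M$ with small leafwise Lipschitz constant. Uniqueness holds in that class, not for locally invariant plaque families, which are in general not unique.

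\textbf{Role of plaque expansivity.} You have it backwards. Well-definedness of $\phi$ comes from the contraction. Plaque expansivity is needed only for injectivity, and only that of $f$. Invoking plaque expansivity of $(f_1,\mathcal F_1)$ is circular at that stage:
\begin{itemize}
\item $\mathcal F_1$ is not yet known to be a foliation;
\item it is only continuous transversally, so the HPS criterion for $C^1$ foliations does not apply;
\item its plaque expansivity is normally obtained as a consequence of the leaf conjugacy, not as an input.
\end{itemize}
The working argument is this. The construction gives a leaf-preserving $\tilde f$ with $\tilde f(y)\in\mathcal F_{\mathrm{loc}}(f(y))$ and $\phi\circ\tilde f=f_1\circ\phi$. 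If $\phi(p)=\phi(q)$, then $(\tilde f^np)_n$ and $(\tilde f^nq)_n$ are $f$-pseudo-orbits respecting $\mathcal F$ that stay $2\|\phi-\mathrm{id}\|_{C^0}$-close. Plaque expansivity of $f$ puts $p$ and $q$ in one plaque, on which $\phi$ is injective. In particular the images of distinct leaves are disjoint, which (with surjectivity) makes $\mathcal F_1$ a foliation.

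\textbf{Bi-H\"older.} This claim is unsupported. A contraction estimate on the displacement can at best give H\"older continuity of $\phi$, e.g.\ by interpolating the Lipschitz iterates of the contraction against its geometric $C^0$ convergence. It says nothing about $\phi^{-1}$: the inverse of a H\"older homeomorphism need not be H\"older. Nor can you simply rerun the construction from the merely continuous foliation $\mathcal F_1$.

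You need a separate argument. One option is the holonomy/invariant-section construction of Pugh--Shub--Wilkinson. Another, when $\mathcal F$ is $C^1$ (as for $\mathcal W^c_{f_0}$ in the paper), is a finite-time plaque-expansivity estimate:
\begin{itemize}
\item for $d(z,z')=\rho$, the lifted pseudo-orbits $\tilde f^n(\phi^{-1}z)$ and $\tilde f^n(\phi^{-1}z')$ stay uniformly close for $|n|\le c|\log\rho|$;
\item normal hyperbolicity then puts their time-$0$ plaques within $\lesssim\rho^\theta$ of each other transversally;
\item the $C^1$ projection along $N$ onto $\mathcal F$-plaques turns this into H\"older continuity of $\phi^{-1}$.
\end{itemize}
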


\subsubsection{Accessibility}\mbox{}\par
Now we recall some definitions and facts about $su$-holonomies of partially hyperbolic diffeomorphisms.

An \emph{$su$-path} for $f$ is a piecewise $C^1$ curve
$\gamma:[0,1]\to M$  whose legs lie alternately in stable or unstable
 leaves.  More precisely, there  is an integer  $k\ge1$, together with times
$0=t_0<t_1<\cdots<t_k=1$ such that each
$\gamma([t_i,t_{i+1}])$  lies in either
 $\mathcal W^s(\gamma(t_i))$ or  $\mathcal W^u(\gamma(t_i))$.   We call such
a path $k$-legged and sometimes record only its vertices,
$[\gamma(t_0),\gamma(t_1),\ldots,\gamma(t_k)]$.

\begin{defi}[Accessibility]
A partially hyperbolic diffeomorphism $f$ is said to be \emph{accessible} if for any two points $x,y\in M$, there exists an $su$-path from $x$ to $y$.
\end{defi}

For partially hyperbolic systems with center dimension $1$ or $2$, accessibility is known to be an open property (see \cite{Didier} and \cite{AV}). Stable accessibility (meaning accessibility persists under $C^1$-small perturbations) has also been shown to be $C^r$-dense for $r\ge 1$ for center dimension $1$ in \cite{RHU}, and $C^1$-dense for arbitrary center dimension in \cite{DW}. Furthermore, if the center foliation of a partially hyperbolic diffeomorphism $f$ is smooth, then accessibility is $C^1$-open around $f$ by Proposition 1.4 of \cite{GPS}.

\subsubsection{Center-bunching and ergodicity}
We now recall the property of center-bunching and its relation with ergodicity of a partially hyperbolic system.

\begin{defi}[Center Bunching]

We say that a partially hyperbolic diffeomorphism $f:M\to M$ is
center $r$-bunched if $f$ is $r$-normally hyperbolic and there is
an integer $k\ge1$ such that
\begin{align*}
\sup_{x\in M} \|D_xf^k|_{E^s}\|\cdot \|(D_xf^k|_{E^c})^{-1}\|\cdot \|D_xf^k|_{E^c}\|^r &<1, \text{ and } \\
\sup_{x\in M} \|(D_xf^k|_{E^u})^{-1}\|\cdot \|(D_xf^k|_{E^c})^{-1}\|^r\cdot \|D_xf^k|_{E^c}\| &<1.
\end{align*}
We say that $f$ is \emph{center bunched} if it is center $1$-bunched.

\end{defi}

The following theorem of Burns and Wilkinson \cite{BWerg} shows that partial hyperbolicity, center bunching, and accessibility together imply ergodicity.

\begin{thm}[Theorem 0.1, \cite{BWerg}]\label{accerg}
Let $f$ be a $C^2$, partially hyperbolic, volume-preserving diffeomorphism that is center bunched and accessible. Then $f$ is ergodic with respect to the Lebesgue measure.
\end{thm}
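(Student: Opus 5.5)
We follow the Hopf argument, in the form developed by Burns and Wilkinson. Let $\varphi$ be a continuous function on $M$ and let $\varphi^\pm$ be its forward and backward Birkhoff averages under $f$. These exist Lebesgue-a.e. by Birkhoff's theorem and agree a.e. Because $f$ is uniformly contracting along $\mathcal W^s$ and $\varphi$ is uniformly continuous, $\varphi^+$ is constant along stable leaves. Likewise $\varphi^-$ is constant along unstable leaves. Since continuous functions are dense in $L^1$, ergodicity follows once every measurable set $A$ that is bi-essentially saturated has measure $0$ or $1$. Here bi-essentially saturated means that $A$ coincides mod zero with an $s$-saturated set and with a (possibly different) $u$-saturated set. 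The plan is to prove that the set of Lebesgue density points of such an $A$ is itself $s$- and $u$-saturated. Accessibility then forces this set to be empty or all of $M$, hence $m(A)\in\{0,1\}$.

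The core step is that stable and unstable holonomies preserve density points. This is where ordinary balls fail. Pushing a ball by a stable holonomy between center-unstable transversals gives a distorted set whose measure-theoretic comparison with balls is not controlled. We therefore replace balls by \emph{juliennes}. Fix $x$ and a large $n$. The center-unstable julienne $J^{cu}_n(x)$ is obtained by taking a center piece of size about $\prod_{i<n}\|D f|_{E^s}(f^ix)\|^{\theta}$ and saturating it by unstable plaques of size about $\prod_{i<n}\|Df^{-1}|_{E^u}\|^{-1}$, both measured along the orbit. The center-stable julienne $J^{cs}_n(x)$ is defined symmetrically. The full julienne $J_n(x)$ is the union of local stable plaques through $J^{cu}_n(x)$. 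Center bunching is exactly the condition that makes this geometry stable under holonomy. Because the center is dominated tightly enough by the hyperbolic rates, stable holonomy maps $J^{cu}_n(x)$ into $J^{cu}_{n-k}(x')$ and contains $J^{cu}_{n+k}(x')$ for a fixed $k$. Moreover, the holonomy is Lipschitz-comparable between $cs$- and $cu$-juliennes: on $\mathcal W^{cs}$ it is a fake-foliation holonomy with bounded distortion at the scale $n$.

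With this in place, the proof has four steps. First, construct the local fake foliations $\widehat{\mathcal W}^{c},\widehat{\mathcal W}^{cs},\widehat{\mathcal W}^{cu}$ (Burns–Wilkinson style), which exist without dynamical coherence. Second, prove that the juliennes are nested and regular, meaning $m(J_n)\asymp m(J_{n+1})$, and that they are comparable under holonomy as described above. Third, prove that the Lebesgue density points of $A$ coincide with its julienne density points. This uses absolute continuity of $\mathcal W^s,\mathcal W^u$, which needs $f\in C^2$, together with a Lebesgue-type density theorem for the nested regular family $J_n$. Fourth, transfer julienne density along a stable leg. If $x$ is a julienne density point of an $s$-saturated version of $A$ and $x'\in\mathcal W^s_{loc}(x)$, then $J_n(x)$ and $J_n(x')$ are covered by each other's stable saturations up to a bounded index shift. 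Absolute continuity with bounded Jacobians then carries density from $x$ to $x'$. The unstable leg is handled by applying the same argument to $f^{-1}$.

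I expect the main obstacle to be the third step, showing that Lebesgue density is equivalent to julienne density. The juliennes are highly eccentric: their eccentricity grows exponentially in $n$, and they are not uniformly comparable to balls. The plan is to establish an intermediate Vitali-type covering property for the $cu$-juliennes inside center-unstable fake leaves. This would be done by estimating the measures of $J^{cu}_n$ and of their stable saturations, using the $C^1$ dependence of the fake leaves on the base point and Hölder continuity of $E^s,E^u$. The density statement would then be integrated against the absolutely continuous stable foliation. This is precisely where the bunching exponent enters, and I would try first to follow the estimates as done with center $1$-bunching.
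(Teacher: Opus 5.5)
Your architecture is the Burns--Wilkinson one that the paper relies on. The paper gives no proof of its own; it cites \cite{BWerg} and singles out Theorem~\ref{burnswikerg} (the density points of a bi-essentially saturated set are bi-saturated) as the key input. Your Hopf reduction and your final accessibility step are fine.

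The gap is in Step 3. You rightly flag it as the crux, but you propose the wrong kind of tool for it. Step 3 must deliver a \emph{pointwise} statement: for \emph{every} $x$, $x$ is a Lebesgue density point of $A$ if and only if it is a julienne density point. This is forced by accessibility, which connects individual points, so you need a single set, $\mathrm{dp}(A)$, that is $s$- and $u$-saturated \emph{everywhere}. A Vitali-type covering property for $cu$-juliennes, integrated against the stable foliation, can at best give an almost-everywhere differentiation theorem: a.e.\ point of $A$ is a julienne density point. That says nothing about a given Lebesgue density point $x$. Carried through, it would leave you with two sets, each equal to $A$ mod $0$: an $s$-saturated set of $cu$-julienne density points and, by symmetry, a $u$-saturated set of $cs$-julienne density points. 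That is exactly the hypothesis you started from. Even that a.e.\ statement is not something a covering argument gives cheaply for sets whose eccentricity grows exponentially and varies from point to point.

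The missing idea is to use the \emph{other} saturation. In your Steps 3 and 4, only the essential $s$-saturation of $A$ is ever used. Yet Theorem~\ref{burnswikerg} assumes bi-essential saturation even though you only want to move along stable legs, and this is not an accident. Holonomies that are merely absolutely continuous with bounded Jacobians do not by themselves carry density points to density points.

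In Burns--Wilkinson, $s$-saturation does what your Step 4 does. It reduces julienne density to $cu$-julienne density in $\widehat{\mathcal W}^{cu}(x)$, and quasi-conformality plus bounded Jacobians carry that across $h^s$.

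The identification of Lebesgue density with julienne density at a point comes from a different source. It uses that $A$ is essentially saturated in exactly the directions where the juliennes are eccentric; they are round only in the center, where the radius shrinks at a bounded ratio. Because $A$ is essentially $u$-saturated, absolute continuity of $\mathcal W^u$ and a Fubini argument control the density of $A$ in a $u$-stack over a base by its density in the base, whatever the thickness of the stack. This lets you replace $J_n(x)$ by sequences that are nested and regular, and internested with balls of regularly decreasing radii up to a bounded shift of index. For two such sequences, density along one is equivalent to density along the other at every point. This elementary internesting lemma is the right replacement for your ``Lebesgue-type density theorem''.

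Two smaller slips also need fixing.
\begin{itemize}
\item The stable plaques in $J_n(x)$ must shrink with $n$. Otherwise $J_n(x)$ does not shrink to $x$ and Step 3 loses its meaning.
\item The unstable radius should be the decaying Bowen scale of the pulled-back unit unstable ball. The expression $\prod_{i<n}\|Df^{-1}|_{E^u}\|^{-1}$ grows instead.
\end{itemize}
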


The main ingredient of the proof in \cite{BWerg} is the following result, which plays an analogous role to the absolute continuity of stable and unstable foliations in the Hopf argument.

Let $\mathcal F$ be a foliation in a Riemannian manifold $M$. We say that a measurable set $A$ is \emph{$\mathcal F$-saturated} if for any $x\in A$, the entire leaf $\mathcal{F}(x)$ passing through $x$ is contained in $A$. We say that a measurable set $A$ is \emph{essentially $\mathcal F$-saturated} if there exists an $\mathcal F$-saturated set $B$ such that $m(A\triangle B)=0$.

\begin{thm}[Theorem 5.1, \cite{BWerg}]\label{burnswikerg}
Let $f$ be a $C^2$, partially hyperbolic, center-bunched diffeomorphism in a manifold $M$. Let $A\subset M$ be a measurable set that is both essentially $\mathcal{W}^s_f$-saturated and essentially $\mathcal{W}^u_f$-saturated. Then the set of Lebesgue density points of $A$ is both $\mathcal{W}^s_f$-saturated and $\mathcal{W}^u_f$-saturated.
\end{thm}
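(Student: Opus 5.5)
The plan is to follow the julienne approach of Burns--Wilkinson and replace round balls by dynamically defined neighborhoods that are well adapted to the $s$- and $u$-holonomies. Let $A$ be essentially $\mathcal W^s_f$- and $\mathcal W^u_f$-saturated. By symmetry (replace $f$ by $f^{-1}$) it suffices to show the following: if $x$ is a Lebesgue density point of $A$ and $x'\in \mathcal W^s_f(x)$, then $x'$ is also a density point of $A$.

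\textbf{Step 1: juliennes.} For $x\in M$ and $n\ge 0$, I will build nested families of sets $J_n(x)$, the \emph{juliennes}. Using local $cs$- and $cu$-fake foliations (these exist without dynamical coherence by Burns--Wilkinson's construction), $J_n(x)$ is a $\mathcal W^s\times\mathcal W^u$-product over a center disc. The center disc has radius $\sigma^n$, and the stable and unstable sizes are the sizes at which $f^{\pm n}$ contracts to scale $\hat\nu^n$. The rates $\sigma,\hat\nu$ are chosen using center bunching: $\nu<\sigma\gamma$ and $\hat\nu<\sigma\hat\gamma$, where $\nu,\hat\nu$ bound the stable and unstable contraction and $\gamma,\hat\gamma$ bound the center contraction and expansion.

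\textbf{Step 2: three properties of juliennes.}
\begin{enumerate}
\item[(a)] The family $J_n(x)$ is a regular density basis: $x$ is a Lebesgue density point of $A$ if and only if $m(A\cap J_n(x))/m(J_n(x))\to1$. The proof uses the absolute continuity and Hölder regularity of the fake foliations. The key point is that $J_n(x)$ and $B(x,\sigma^n)$ are comparable in an "engulfing" sense up to bounded distortion.
\item[(b)] For $x'\in\mathcal W^s_{loc}(x)$, the stable holonomy $h^s$ between local $cu$-transversals carries $J_n(x)$ into a set equivalent to $J_{n+k}(x')$ and contains $J_{n-k}(x')$, with $k$ independent of $n$. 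This is where center bunching is used: after iterating by $f^n$, the $s$-holonomy moves center coordinates by at most $\nu^n\ll(\sigma\gamma)^n$, so center discs of radius $\sigma^n$ are preserved up to constants.
\item[(c)] Holonomy preserves density ratios: $h^s$ is absolutely continuous with Jacobian uniformly bounded and Hölder along juliennes, with constants independent of $n$. Hence it distorts relative measure in $J_n$ by factors tending to $1$.
\end{enumerate}

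\textbf{Step 3: conclusion.} Choose an honestly $\mathcal W^s$-saturated set $B$ with $m(A\triangle B)=0$. Fubini along the absolutely continuous stable foliation gives
\[
m\bigl(A\cap J_n(x')\bigr)/m\bigl(J_n(x')\bigr)\;\approx\; m\bigl(A\cap h^s J_n(x)\bigr)/m\bigl(h^s J_n(x)\bigr).
\]
By (b) and (c), together with doubling of juliennes, this tends to $1$. Essential $u$-saturation is used symmetrically, for the $u$-direction statement and to pass density along the unstable factor of the product $J_n$. Then (a) converts julienne density at $x'$ back into Lebesgue density.

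The main obstacle is Step 2(b)--(c): proving that $s$-holonomies between $cu$-fake leaves preserve juliennes up to a bounded shift in $n$ and have controlled Jacobians. This requires the fine Hölder estimates that use center bunching, and I would first try proving it by conjugating with $f^n$ to unit scale.
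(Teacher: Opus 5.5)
The paper gives no proof of this statement; it quotes it from Burns--Wilkinson. Your outline (fake foliations, juliennes, quasi-conformality of $s$-holonomy, then a density-point argument) is their strategy. However, Step 2(a) is false as stated and justified, and it is exactly the step where the saturation hypotheses on $A$ must be used.

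\textbf{The gap in Step 2(a).} Since $\gamma<\hat\gamma^{-1}$, at least one of $\gamma,\hat\gamma$ is $<1$. Your conditions $\nu<\sigma\gamma$, $\hat\nu<\sigma\hat\gamma$ therefore force $\nu<\sigma$ or $\hat\nu<\sigma$ (both, in the usual normalization $\gamma,\hat\gamma\le 1$). So $J_n(x)$ has center radius $\sigma^n$ but is exponentially thinner than $\sigma^n$ in the stable and/or unstable direction. Consequently $m(B(x,\sigma^n))/m(J_n(x))\to\infty$ exponentially, and $J_n(x)$ contains no ball of radius comparable to $\sigma^n$. There is no engulfing comparison with balls. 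These are thin slices whose eccentricity tends to infinity, and whose shape and orientation vary with the base point along merely H\"older foliations. No differentiation theorem makes such a family a density basis for arbitrary measurable sets. As a result, both ends of your Step 3 are unsupported: you cannot go from Lebesgue density at $x$ to julienne density at $x$, nor from julienne density at $x'$ back to Lebesgue density at $x'$.

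\textbf{The missing idea.} Lebesgue density and julienne density can only be compared for saturated sets, by using the saturation to fatten the juliennes in precisely the directions where they are thin.
\begin{itemize}
\item Essential $s$-saturation, together with absolute continuity (bounded Jacobians) of $\mathcal W^s$, shows that changing the stable thickness of a set $\mathcal W^s(S,\rho)$, with $S$ in a $cu$-transversal, alters the relative measure of the complement of $A$ only by bounded factors. This reduces everything to $cu$-sets.
\item Essential $u$-saturation does the same for the unstable radius inside $cu$-sets. A $cu$-julienne (center radius $\sigma^n$, thin unstable radius) can thus be replaced by its $u$-fattening to radius $\sigma^n$, which is comparable to a $cu$-ball.
\end{itemize}
Only after this do you know that $x$ is a Lebesgue density point of $A$ if and only if it is a $cu$-julienne density point. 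Then 2(b)--(c) transport $cu$-julienne density along $\mathcal W^s$. This is why bi-essential saturation is needed even for the $s$-statement. In your plan, $u$-saturation enters only through symmetry, which is the symptom of this gap.

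\textbf{Two smaller corrections.} First, $h^s$ acts between $cu$-transversals, so 2(b)--(c) should be stated for $cu$-juliennes, with $J^{cu}_{n+k}(x')\subset h^s(J^{cu}_n(x))\subset J^{cu}_{n-k}(x')$; your indices are reversed. Second, for (c) a uniformly bounded Jacobian, together with this nesting and a doubling bound $m(J^{cu}_{n-k})\le C_k\, m(J^{cu}_n)$, already suffices. You do not need distortion tending to $1$ or H\"older Jacobians.
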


\subsection{Mather spectrum and the narrow-band condition}
\label{subsec:mather-spectrum}

We recall the spectral term used below.  If a bundle map
$A:E\to E$ over a diffeomorphism $f$ uniformly contracts $E$, its
\emph{Mather spectrum} is the spectrum of the operator
\[
 A_*:\Gamma(E)\to\Gamma(E),\qquad
 (A_*v)(x)=A_{f^{-1}x}v(f^{-1}x).
\]
Here $\Gamma(E)$ is the Banach space of continuous sections with the
supremum norm.
It is a finite union of closed annuli with logarithmic endpoints
\[
 \lambda_1\le\mu_1<\lambda_2\le\mu_2<\cdots
 <\lambda_l\le\mu_l<0.
\]
The spectrum is \emph{narrow band} if
$\mu_i+\mu_l\le\lambda_i$ for $1\le i\le l$; its critical regularity is
$s(A)=\lambda_1/\mu_l$.  For an expanding bundle this terminology is
applied to the inverse bundle map.  

We say that a diffeomorphism $f$ satisfies the narrow-band condition if
$Df|_{E^s_f}$ and $Df^{-1}|_{E^u_f}$ have narrow-band spectrum.

\subsection{The diagonal action}\label{sec:semisimple}

Let $G=\mathrm{SL}_n\R$. Then the Lie algebra is
\[
\mathfrak{g}=\mathfrak{sl}_n\R=\{A\in M_{n\times n}(\R): \mathrm{tr}\,A=0\}.
\]

A Cartan subalgebra of $\mathfrak{g}$ is conjugate to the set of diagonal matrices
\[
\mathfrak{h}\simeq \left\{
\begin{pmatrix}
 t_1 & &&\\
 & t_2&&\\
 && \ddots&\\
 &&& t_n
\end{pmatrix}:\; t_i\in \R, \,\sum_{i=1}^n t_i=0
\right\}.
\]

The adjoint action of $\mathfrak{h}$ induces the splitting $\mathfrak{g}=\mathfrak h\oplus_{i\neq j}\mathfrak{g}_{ij}$.

Here, the eigenspaces of the adjoint action of $\mathfrak{h}$ are
\[
\mathfrak{g}_{ij}=\R e_{ij},\qquad 1\le i\neq j\le n,
\]
where $e_{ij}$ is the matrix with $1$ in the $(i,j)$-entry and zeros elsewhere. The corresponding eigenvalues $\lambda_{ij}$ are given by
\[
\lambda_{ij}
\big(\begin{pmatrix}
 t_1 & &&\\
 & t_2&&\\
 && \ddots&\\
 &&& t_n
\end{pmatrix}\big)
=t_i-t_j.
\] 

Let $f_0$ be a diagonal map on $X=\mathrm{SL}_n\R/\Gamma$, then by identifying tangent spaces at different points by right translation, the differential $Df_0$ is identified with the adjoint action $Ad(f_0):\mathfrak{g}\to \mathfrak{g}$. This induces the following dominated splitting   \begin{equation}\label{eq:f_0split}
    \mathfrak g=\mathfrak g^s_{f_0}\oplus \mathfrak  g^c_{f_0}\oplus \mathfrak  g^u_{f_0},
\end{equation} where $$\mathfrak g^s_{f_0}=\bigoplus_{\lambda_{ij}(f_0)<0}\,\mathfrak{g}_{ij},\;  \mathfrak g^c_{f_0}=\mathfrak h\oplus_{\lambda_{ij}(f_0)=0}\,\mathfrak{g}_{ij},\;\mathfrak g^u_{f_0}=\bigoplus_{\lambda_{ij}(f_0)>0}\,\mathfrak{g}_{ij}.$$ 

We say that $f_0$ has only one non-zero root, if there exists $\lambda\neq 0$, such that $\lambda_{ij}(f_0)\in \{-\lambda,0,\lambda\}$ for any $1\le i\neq j\le n$. We say that $f_0$ has at least two distinct positive roots if $\{\lambda_{ij}(f_0)\}$ takes at least two different positive values.

Equation \ref{eq:f_0split} shows that $f_0$ is partially hyperbolic. Furthermore, $\|D_xf_0^k|_{E^c_{f_0}}\|=1$ for any $k\in \Z$, so $f_0$ is also $r$-normally hyperbolic and center-bunched. Since the bundles are uniquely integrable, $f_0$ is also dynamically coherent.

Any sufficiently small $C^1$-perturbation $f$ of a diagonal map $f_0$ inherits these dynamical properties.

\begin{prop}\label{fparhyp}
Let $f_0$ be a non-trivial diagonal map. Fix an integer $r\ge2$.
Then there exist $N\in\mathbb N$ and $C>0$ such that $f_0$ is
$r$-normally hyperbolic with respect to its center foliation. For any
sufficiently small $C^1$-perturbation $f \in \mathrm{Diff}^r(X)$ of $f_0$:
\begin{enumerate}
    \item $f$ is a partially hyperbolic diffeomorphism with non-trivial stable and unstable bundles.
    \item The stable and unstable bundles $E^u_f,E^s_f$ are uniquely integrable.
    \item $f$ is dynamically coherent, $r$-normally hyperbolic,
    center $r$-bunched, and satisfies the narrow-band
    condition.
    \item $f$ is accessible and any  two
points of $X$ can be joined by an $f$-$su$-path  with at most $N$ legs and
 total leafwise length  at most $C$.

    \item There exists a bi-H\"older leaf conjugacy between $f_0$ and $f$ that is uniformly $C^r$ in the center leaves.
    \item If $f$ is volume-preserving, then $f$ is ergodic with respect to the Lebesgue measure. Furthermore, any $g\in \mathcal Z(f)$ is volume-preserving.
\end{enumerate}
\end{prop}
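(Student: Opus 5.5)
The strategy is to verify every property for $f_0$ directly from the algebraic splitting \eqref{eq:f_0split}, and then pass to $f$ by $C^1$-openness. Where openness alone does not suffice, namely for accessibility with uniform bounds and for the leaf conjugacy, we use the smoothness of the center foliation of $f_0$ together with Theorem \ref{leafconj}.

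\emph{Properties of $f_0$.} Write $a$ conjugate to $\mathrm{diag}(e^{t_1},\dots,e^{t_n})$. Non-triviality means the $t_i$ are not all equal, so some $\lambda_{ij}(f_0)>0$; hence both $\mathfrak g^s_{f_0}$ and $\mathfrak g^u_{f_0}$ are nonzero. Under right-invariant trivialization, $Df_0^k$ acts on $\mathfrak g_{ij}$ by $e^{k\lambda_{ij}}$ and isometrically on $\mathfrak g^c_{f_0}$. Therefore $\|Df_0^k|_{E^c}\|^{\pm1}$ is bounded uniformly in $k$, and all the $r$-normal hyperbolicity and center $r$-bunching inequalities hold for large $k$. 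The Mather spectrum of $Df_0|_{E^s}$ consists of the circles of radius $e^{\lambda}$, with $\lambda$ ranging over the negative values $\lambda_{ij}$.

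\emph{Passing to $f$, items (1)--(3).} Partial hyperbolicity, $r$-normal hyperbolicity and center bunching are strict inequalities on finitely many iterates, so they are $C^1$-open. For narrow band, perturbation widens each spectral circle to a thin annulus $[\lambda_i-\epsilon,\mu_i+\epsilon]$ with $\mu_i\approx\lambda_i$. The condition $\mu_i+\mu_l\le\lambda_i$ then reduces to $\mu_l<0$ strictly, with room $|\mu_l|$, which survives a small perturbation. The center foliation of $f_0$ is the orbit foliation of the centralizer of $a$, so it is smooth and plaque expansive. Theorem \ref{leafconj} then gives the invariant foliation $\mathcal W^c_f$ and the leaf conjugacy in item (5), uniformly $C^r$ along leaves since $f_0$ is $r$-normally hyperbolic; it also gives unique integrability of $E^s_f,E^u_f$. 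Dynamical coherence follows by the same argument applied to the smooth foliations $\mathcal W^{cs}_{f_0},\mathcal W^{cu}_{f_0}$, or from \cite{BWcoh} via plaque expansivity.

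\emph{Item (4).} This is the main obstacle. For $f_0$, the stable and unstable horospherical subalgebras generate $\mathfrak g$, since they generate a normal subalgebra and $\mathfrak{sl}_n$ is simple. Hence $su$-paths with boundedly many legs of bounded length reach a full neighborhood of each point, via a local submersion built from a bounded word in the $U^\pm$ directions. Compactness of $X$ then gives $N$ and $C$.

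For $f$, we would follow \cite{GPS} (Proposition 1.4): smoothness of $\mathcal W^c_{f_0}$ makes accessibility $C^1$-open. More quantitatively, the $su$-holonomies of $f$ are $C^0$-close to those of $f_0$ on bounded-length paths, because the leaves of $f$ are $C^1$-close on compact pieces. A degree/topological-transversality argument then shows that the perturbed composite map still covers a slightly smaller neighborhood. The delicate point is making this uniform in the base point, which we handle by compactness and continuity of local stable/unstable manifolds in $f$. We then chain along a fixed finite cover, which yields uniform $N$ and $C$.

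\emph{Item (6).} If $f$ preserves volume, it is $C^2$, partially hyperbolic, center bunched and accessible, so Theorem \ref{accerg} gives ergodicity. For $g\in\mathcal Z(f)$, the pushforward $g_*m$ is $f$-invariant. Since $g$ is a diffeomorphism, $g_*m=\rho\, m$ with $\rho$ continuous and positive, and $\rho$ is $f$-invariant because $m$ and $g_*m$ both are. Ergodicity forces $\rho$ to be constant, equal to $1$ by total mass. Hence $g$ preserves volume.
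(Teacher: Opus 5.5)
Your proposal is correct and follows essentially the same route as the paper. Both arguments get strict inequalities for $f_0$ from the root decomposition, transfer them to $f$ by $C^1$-openness together with Theorem~\ref{leafconj}, derive (4) from the fact that the stable and unstable root spaces generate $\mathfrak g$ together with persistence of finitely many bounded local $su$-paths, and derive (6) from Theorem~\ref{accerg} plus $f$-invariance of the density of $g_*m$; your degree argument and narrow-band reduction simply spell out persistence steps that the paper states more tersely.
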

\begin{proof}
As established in Section \ref{sec:semisimple}, for a non-trivial diagonal map $f_0$, the action of $Ad(f_0)$ has both positive and negative eigenvalues, thus $f_0$ possesses non-trivial stable and unstable bundles. Therefore, by the splitting of the Lie algebra, $f_0$ is $r$-normally hyperbolic with respect to the foliation $\mathcal{W}^c_{f_0}$. Since $f_0$ is plaque expansive, applying Theorem \ref{leafconj} directly yields partial hyperbolicity, unique integrability, dynamical coherence, $r$-normal hyperbolicity, and the asserted leaf conjugacy.  Since $Df_0|_{E^c_{f_0}}$ is isometric, the center $r$-bunching inequalities are strict for every fixed $r$.  Moreover, the Mather bands of $Df_0|_{E^s_{f_0}}$ and $Df_0^{-1}|_{E^u_{f_0}}$ are the singleton negative root values, so the narrow-band inequalities are also strict.  Both properties therefore persist under sufficiently small $C^1$ perturbations.

For property (4), the stable and unstable root subgroups of
$f_0$ generate $G$: their Lie algebras contain the nonzero root spaces of
$f_0$, and their brackets generate the zero-root spaces as well.  Hence a
finite product of local stable and unstable plaques has an endpoint map
whose image contains a neighborhood of the identity.  These finitely
many local paths persist, with uniform plaque lengths, for every
sufficiently small $C^1$ perturbation.  A finite cover of the compact
connected manifold $X$, followed by concatenation, therefore gives uniform
bounds $N$ on the number of legs and $C$ on their total leafwise length.
This is also the quantitative form of the accessibility argument in
Proposition~1.4 of \cite{GPS}.

For property (6), we use that accessibility is open around $f_0$, and center-bunching is an open condition. Therefore, applying Theorem \ref{accerg} shows that $f$ is ergodic with respect to the Lebesgue measure $m$. For any $g\in \mathcal Z(f)$, $g_*m$ is an absolutely continuous $f$-invariant probability measure, so the Radon-Nikodym derivative is an $f$-invariant function. Since $f$ is ergodic, this shows that $g_*m=m$, i.e. $g$ is volume-preserving.
\end{proof}

 \subsection{Holonomy of a foliation}

Let $M$ be a smooth manifold. Let $\mathcal{F}$ be a foliation in $M$ which is a decomposition of $M$ into a collection of disjoint, connected, $C^1$-embedded submanifolds, called \emph{leaves}. For each $x \in M$, we denote the unique leaf containing $x$ by $\mathcal{F}(x)$. 

For a leaf $\mathcal{F}(x)$, we denote by $\mathcal{F}(x,R) = \{ y \in \mathcal{F}(x) : d_{\mathcal{F}(x)}(x,y) < R \}$ a bounded portion of the leaf. A \emph{foliation box} of $\mathcal{F}$ is a foliated region $B \subset M$ that is locally homeomorphic to a product space, such that the leaves of $\mathcal{F}$ within $B$ correspond to one of the product factors.

A \emph{transversal} of $\mathcal{F}$ is an embedded disk $S \subset M$ whose tangent space is transverse to \(T\mathcal F\) at every point
of \(S\).

For two transversals $S_1, S_2$ and a foliation box $B$ that intersects both, the \emph{local holonomy map} $h_{S_1, S_2}^{\mathcal{F}, B}: S_1 \cap B \to S_2$ is defined by following the plaques (leaf segments) within $B$. For general transversals $S_1, S_2$, the \emph{holonomy map} $h_{S_1, S_2}^{\mathcal{F}}: \mathrm{Dom}(h) \to S_2$ is defined on the subset $\mathrm{Dom}(h) \subset S_1$ consisting of points whose leaves also intersect $S_2$. This map is constructed by composing a finite chain of local holonomies across foliation boxes covering a leafwise path between $S_1$ and $S_2$; the resulting map depends only on the homotopy class of the path within the leaf.

\subsubsection{Smoothness of $su$-holonomy}\label{sec:suhol}
In this paper, we mainly consider stable and unstable holonomies, with the center leaves serving as sections.

Suppose $f$ is a dynamically coherent, normally hyperbolic diffeomorphism with center foliation $\mathcal{W}^c$. The stable holonomy maps between center leaves are defined by following stable leaves. Specifically, for $p \in M$ and $q \in \mathcal{W}^s_f(p)$, consider a local product structure where stable leaves are transverse to center leaves. Then there exist sufficiently small neighborhoods $U(p) \subset \mathcal{W}^c_f(p)$ of $p$ and $U(q) \subset \mathcal{W}^c_f(q)$ of $q$, and a constant $R > 0$ (e.g., $R = 2 d_{\mathcal{W}^s_f}(p,q)$), such that for every $x \in U(p)$, the stable leaf segment $\mathcal{W}^s_f(x, R) = \{ y \in \mathcal{W}^s_f(x) : d_{\mathcal{W}^s_f}(x,y) < R \}$ intersects $U(q)$ in exactly one point. The \emph{local stable holonomy} is then defined by
$$
h^s_{f,p,q}: U(p) \to U(q), \qquad x \mapsto U(q) \cap \mathcal{W}^s_f(x,R).
$$
Local unstable holonomies are defined analogously. These stable and unstable holonomies along center leaves are known to possess significant smoothness properties.

\begin{thm}[Theorem B, \cite{PSW}; Theorem 1.1, Section 3.6 \cite{Saghin}]\label{c1hol}
Let $f \in \mathrm{Diff}^{r+1}(M)$, $r \ge 1$, be normally hyperbolic
 and center $r$-bunched.  Then local stable and unstable  holonomies
between center leaves are uniformly $C^r$. These local holonomies and their first
derivatives depend continuously on $f$ in a $C^{r+1}$ neighborhood, under the $C^1$ topology and continuously on the
endpoints of a bounded leg.  
\end{thm}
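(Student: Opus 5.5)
The plan is to realize the stable holonomy as a limit of conjugated ``fake'' projections and to control derivatives through a telescoping series. Fix $p$ and $q\in\mathcal W^s_f(p)$ at bounded stable distance, and write $p_n=f^n(p)$, $q_n=f^n(q)$. Since $d(p_n,q_n)\le C\mu_s^n$, the center leaves $\mathcal W^c_f(p_n)$ and $\mathcal W^c_f(q_n)$ become exponentially close. Between them we take a smooth local projection $\pi_n$, for instance by following the exponential image of a smooth bundle $\tilde E^s$ that approximates $E^s$ and is transverse to $E^c$. We then define
\[
h_n = f^{-n}\circ \pi_n\circ f^n : U(p)\to \mathcal W^c_f(q).
\]
The first step is to show that $h_n\to h^s_{f,p,q}$ in $C^0$. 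This follows because $f^n$ maps $x$ and $h^s(x)$ onto the same stable leaf. The points $\pi_n(f^n x)$ and $f^n h^s(x)$ then differ by $O(\mu_s^n)$ within the center leaf. Since $r$-normal hyperbolicity holds and $(f,\mathcal W^c)$ has uniformly $C^r$ leaves by Theorem~\ref{leafconj}, pulling back by $f^{-n}|_{\mathcal W^c}$ costs at most $\|Df^{-n}|_{E^c}\|$. Center bunching gives $\mu_s^n\|Df^{-n}|_{E^c}\|\to0$.

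The second step treats the first derivative. By the chain rule,
\[
Dh_n(x)=Df^{-n}|_{E^c}\circ D\pi_n\circ Df^n|_{E^c},
\]
and we compare consecutive terms $Dh_{n+1}-Dh_n$. The discrepancy between $\pi_{n+1}$ and $f\circ\pi_n\circ f^{-1}$ is governed by how far $Df$ moves $E^c(p_n)$ relative to $E^c(q_n)$. That is $O(d(p_n,q_n)^\theta)$ for a H\"older exponent $\theta$ of $E^c$ along stable leaves. The point is that center bunching makes $E^c$ close to Lipschitz along $\mathcal W^s$, so $\theta$ can be taken arbitrarily close to $1$ in the bunching inequality. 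The resulting term is bounded by
\[
\|Df^{-n}|_{E^c}\|\,\|Df^n|_{E^c}\|\,\|Df^n|_{E^s}\|^\theta ,
\]
which decays geometrically by center $1$-bunching (after passing to the power $f^k$). Hence $Dh_n$ is uniformly Cauchy, $h^s$ is $C^1$, and $Dh^s$ is given by the convergent product formula.

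The third step handles higher derivatives $j\le r$ by differentiating the formula $r$ times (Fa\`a di Bruno). A $j$-th derivative of $h_n$ produces terms containing up to $j$ factors of $Df^n|_{E^c}$, one factor $D^j\pi_n$-type discrepancy of size $O(d(p_n,q_n))$, and $\|Df^{-n}|_{E^c}\|$. The dangerous term is controlled by
\[
\|Df^n|_{E^s}\|\,\|(Df^n|_{E^c})^{-1}\|\,\|Df^n|_{E^c}\|^r ,
\]
which is precisely the center $r$-bunching quantity. The remaining terms involve derivatives of $f^{\pm n}$ along center leaves, and these are controlled inductively via $r$-normal hyperbolicity, as in the $C^r$ section theorem of \cite{HPS}. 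I expect this step to be the main obstacle: bounding the higher derivatives of $f^{\pm n}|_{\mathcal W^c}$ and of the centre bundle along stable leaves uniformly enough that every term is dominated by the bunching quantity. A cleaner alternative I would try first is to recast $\{Dh,\dots,D^rh\}$ as an invariant section of a bundle of $r$-jets over pairs $(x,y)$ with $y\in\mathcal W^s_{loc}(x)$ and apply the fiber contraction theorem, where the contraction rate is exactly the $r$-bunching constant.

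Finally, the unstable case follows by applying the argument to $f^{-1}$. Continuity of the holonomies and of their first derivatives in $f$ (in the $C^1$ topology, within a $C^{r+1}$ neighbourhood) and in the endpoints comes from the fact that every estimate above is uniform over a $C^1$ neighbourhood in which bunching, normal hyperbolicity and $C^{r+1}$ bounds persist. Each $h_n$ depends continuously on $(f,p,q)$, and the series converge uniformly, so the limits inherit this continuity.
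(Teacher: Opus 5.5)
The paper gives no proof of this statement; it is quoted from the literature. Your sketch therefore has to carry the whole argument, and at the decisive point it rests on a false claim.

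\textbf{The gap in Step~2.} You bound $Dh_{n+1}-Dh_n$ by $\|Df^{-n}|_{E^c}\|\,\|Df^n|_{E^c}\|\,d(p_n,q_n)^\theta$. You then justify summability by asserting that center bunching makes $E^c$ nearly Lipschitz along $\mathcal W^s$. It does not. Comparing $E^c$ at two points of a stable leaf via the backward graph transform gives a H\"older exponent of order $\log\bigl(\|Df|_{E^s}\|\,\|Df^{-1}|_{E^c}\|\bigr)/\log m(Df|_{E^s})$. This exponent is governed by the spread of the stable spectrum, and Weierstrass-type tilts show it cannot be improved in general; bunching says nothing about it.

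For a concrete case, take stable rates $10^{-6}$ and $1/2$ and center rates $1.1^{\pm1}$. Then $f$ is center bunched, since $0.5\cdot1.21<1$. But $\theta\approx0.04$, so $\|Df|_{E^s}\|^\theta\,\|Df^{-1}|_{E^c}\|\,\|Df|_{E^c}\|\approx1.17$, and your series need not converge. Demanding such a $\theta$ amounts to the stronger $\theta$-bunching hypothesis, not center bunching.

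You have also dropped a term. Since $\pi_{n+1}(f^{n+1}x)\neq f\pi_n(f^nx)$, the outer factor $Df^{-(n+1)}|_{E^c}$ is evaluated at two points of $\mathcal W^c_f(q_{n+1})$ at distance comparable to $d(p_n,q_n)$. Bounding this distortion factor by factor gives $\|Df^n|_{E^s}\|\,\|Df^{-n}|_{E^c}\|^2\,\|Df^n|_{E^c}\|$, which bunching does not control either.

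\textbf{What is missing.} The stable-direction defect must be measured through structure that $Df$ intertwines, not through the size of the tilt of $E^c$.
\begin{itemize}
\item The $D\pi$-discrepancy is, to leading order, a map $E^s\to E^c$ composed with the tilt $A_n:E^c\to E^s$. The tilt is transported by the graph transform, so $A_n\circ Df^n|_{E^c}=Df^n|_{E^s}\circ A_0+\sum_k(\cdots)$ is $O(n\,\mu_s^n\Gamma^n)$, where $\Gamma=\max\{1,\sup\|Df|_{E^c}\|\}$. Only with this bookkeeping does the bunching product appear.
\item Equivalently, identify $E^c(p_n)$ with $E^c(q_n)$ by transport inside the $\mathcal W^{cs}$-leaf followed by projection along the invariant $E^s$. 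This makes $Df|_{E^c}$ a Lipschitz, fiber-bunched cocycle along $\mathcal W^s$, and its cocycle holonomy is the candidate $Dh^s$.
\item The distortion term must likewise be regrouped. Backward products along the nearby orbit should be composed with $Df^n|_{E^c}(x)$ and bounded using the uniformly bounded finite-time holonomies.
\end{itemize}

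\textbf{The $C^r$ step.} For $r\ge2$ your Step~3 is left open, and the jet fallback fails as stated. Over pairs $(x,y)$ the top-order fiber rate is $\|Df^{-1}|_{E^c}\|\,\|Df|_{E^c}\|^r$, which is not less than $1$. The factor $\|Df|_{E^s}\|$ appears only after normalizing by $d(x,y)$ against reference jets that are Lipschitz along $\mathcal W^s$, which is the same issue again.

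\textbf{A route that avoids this.} Use an invariant-section argument along center-stable leaves. $E^s|_{\mathcal W^{cs}}$ is the invariant section of the $Df^{-1}$ graph transform on planes in $T\mathcal W^{cs}$. Its fiber contraction is $\|Df|_{E^s}\|\,\|(Df|_{E^c})^{-1}\|$, over a base whose inverse has leafwise Lipschitz constant about $\|Df|_{E^c}\|$. The $C^r$ section theorem condition is then exactly center $r$-bunching, and no transverse regularity of $E^c$ is ever used.

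In this paper's setting, $f$ is $C^1$-close to $f_0$ with isometric center, so $\|Df^{\pm1}|_{E^c}\|\approx1$ and your cruder estimates would suffice. The gap concerns the theorem as stated.
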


We may also define the \emph{holonomy of an $su$-path}.   Let
$\gamma=[x_0,x_1,\ldots,x_k]$ and, for $0\le i<k$, choose
 $\star_i\in\{s,u\}$ so that
 $x_{i+1}\in\mathcal W_f^{\star_i}(x_i)$.   The local $su$-holonomy near
$x_0$ is the composition
 $$
h_\gamma^f = h^{\star_{k-1}}_{f,x_{k-1},x_k}
 \circ h^{\star_{k-2}}_{f,x_{k-2},x_{k-1}}
 \circ \cdots \circ h^{\star_0}_{f,x_0,x_1}:
 U(x_0) \longrightarrow U(x_k),
$$
where each $U(x_i)\subset\mathcal W_f^c(x_i)$ is a sufficiently small
neighborhood of $x_i$.

Let $f'$ be a sufficiently small $C^1$  perturbation of $f$, and let
$\phi$ be a leaf conjugacy from $f$ to $f'$.   A bounded $f$--$su$-path
$\gamma=[x_0,\ldots,x_k]$ determines an $f'$--$su$-path only after the
usual center adjustment: starting with $x'_0=\phi(x_0)$, define
$x'_{i+1}$ as the intersection of the corresponding stable/unstable leaf of $f^\prime$
through $x'_i$ with the center leaf $\phi(\mathcal W_f^c(x_{i+1}))$.
Thus $x'_i$ and $\phi(x_i)$ lie on the same center leaf, but need not be
the same point. Theorem~\ref{c1hol}, together with the leafwise
smoothness of $\phi$, shows that the holonomy of this adjusted path is
smooth and converges in the leafwise $C^1$ topology to the model
holonomy if the numbers and lengths of the legs remain bounded.

\subsubsection{Existence of global holonomy}
We say that $f$ has \emph{global holonomy} on
$\mathcal W_f^c(x_0)$ if, for every $su$-path $\gamma$ from $x_0$ to
$x_k$, the local map $h_\gamma^f$ extends uniquely to the whole center
leaf. Equivalently, there is an open cover $\{U_j\}$ of
$\mathcal W_f^c(x_0)$ and compatible local holonomies
 \[
h_{\gamma_j}^f:U_j\longrightarrow\mathcal W_f^c(x_k)
\]
whose restrictions agree on overlaps. They therefore define a global
smooth map
$h_\gamma^f:\mathcal W_f^c(x_0)\to\mathcal W_f^c(x_k)$.

In the case where $f$ is a $C^1$-perturbation of a partially hyperbolic affine model, the following result shows that $f$ always has global holonomy in the universal cover.

\begin{prop}[Proposition 2.8 \cite{Wang}]\label{proP:globhol}

Let $f_0$ be a partially hyperbolic affine diffeomorphism  of a compact
homogeneous space $G/\Gamma$.   Every sufficiently small $C^1$
 perturbation $f$ of $f_0$ has global holonomy in the universal cover.  In
particular, $f$ has global holonomy  on $\mathcal W_f^c(x_0)$ whenever that
center leaf is simply connected.

\end{prop}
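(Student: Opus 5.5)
The plan is to follow the strategy of Proposition~2.8 in \cite{Wang}: first produce a uniform local holonomy for every bounded $su$-path, then extend it along paths in the center leaf, and finally show the extension does not depend on the path when the center leaf is simply connected. The model case is $f_0$. There the center leaves are orbits $C\cdot x$ of the centralizer group $C$ of $a$, acting by left translation. The stable and unstable leaves are orbits of the unipotent groups $U^\pm$, which are normalized by $C$. Hence the $f_0$-holonomy along a leg $x\mapsto u x$ with $u\in U^\pm$ is the global map $c x\mapsto (c u c^{-1}) c x$. Since the conjugation $cuc^{-1}$ varies with $c$, this map is defined on the whole center leaf but is not an isometry. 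We will therefore work on the universal cover $\widetilde X=\widetilde G$, where all center leaves are simply connected, and treat the perturbation $f$ there.

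\textbf{Step 1 (uniform local holonomies).} By Proposition~\ref{fparhyp}, $f$ is dynamically coherent and center $r$-bunched. Theorem~\ref{c1hol} then shows that for a leg of length at most $R$, the local stable and unstable holonomies between center leaves are $C^1$ on center plaques of a uniform radius $\rho(R)>0$. They are also $C^1$-close to the $f_0$-holonomies transported by the leaf conjugacy $\phi$ of Theorem~\ref{leafconj}. Both $\rho$ and the closeness are uniform in the base point, by compactness of $X$.

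\textbf{Step 2 (continuation along center paths).} Fix an $su$-path $\gamma=[x_0,\dots,x_k]$ and a center path $\sigma$ from $x_0$ to a point $y\in\mathcal W^c_f(x_0)$. We continue $h^f_\gamma$ along $\sigma$ by sliding each vertex $x_i$ along its center leaf. At each time we use the uniform local holonomies of Step~1 for the legs from the moved vertex $x_i(t)$ to $x_{i+1}(t)$. The difficulty is to keep the leg lengths bounded during the sliding. In the model, the sliding multiplies $u$ by conjugation by $c$, and this can grow when $c$ is hyperbolic in $C$.

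To handle this, we decompose $\sigma$ into short pieces and perform the continuation one piece at a time. After each piece we re-anchor at the new base point, using the fact that $h^f_\gamma$ is locally defined on a $\rho$-plaque around any point where the current legs have length at most $R$. The leg lengths for $f$ are controlled by the $f_0$ leg lengths through the $C^0$-closeness of $\phi$ and of the foliations, so they are finite along any compact path $\sigma$. Continuation along compact paths is therefore always possible. The key point is that we never need a global bound, only a bound on each compact $\sigma$.

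\textbf{Step 3 (path independence and the main obstacle).} Local holonomies are unique: stable leaves meet center plaques in at most one point inside a foliation box. So continuation along two homotopic center paths gives the same germ, by the usual monodromy argument. On a simply connected center leaf, every continuation therefore defines a single global map $h^f_\gamma:\mathcal W^c_f(x_0)\to\mathcal W^c_f(x_k)$. Its restrictions agree with the local holonomies on overlaps, which gives the desired cover $\{U_j\}$.

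The main obstacle is Step~2. We must rule out that the continued legs escape, meaning that the stable leaf through a moved point fails to meet the target center leaf. On the universal cover, the foliations $\mathcal W^s_f$ and $\mathcal W^c_f$ are $C^0$-close to homogeneous foliations with a global product structure: $U^-\times C\times U^+$ is a neighborhood of the identity, extended equivariantly. I would first try to prove a global product structure for $\widetilde{\mathcal W}^{cs}_f$, namely that each $cs$-leaf is subfoliated by $\mathcal W^s_f$ and $\mathcal W^c_f$ with every stable leaf meeting every center leaf exactly once. This would use the quasi-isometric closeness given by $\phi$ together with a standard Hadamard--Perron / degree argument. Once this holds, the intersection defining each leg exists globally, and the continuation in Step~2 never breaks down.
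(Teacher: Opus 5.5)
There is a genuine gap: the step the whole argument depends on is only announced, and the justification offered for it does not work.

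\textbf{Where the gap is.} Everything rests on the claim in Step~2 that the continued legs stay of finite length along every compact center path, i.e.\ that the stable leaf through a moved point cannot escape before it reaches the target center leaf. Your last paragraph only restates this as a global product structure for $\widetilde{\mathcal W}^{cs}_f$ that you ``would try to prove''. The reason you give, $C^0$-closeness of $\phi$ and of the foliations, is a local statement and does not yield it, for two reasons:
\begin{itemize}
\item Perturbation errors compound along a leg. Chain a leg of length $\ell$ as unit sublegs. Each perturbed unit holonomy is only $(1+\epsilon)$-Lipschitz, so the far end of the leg can move in the center up to $(1+\epsilon)^{\ell}$ times as far as the base point.
\item The model itself stretches legs. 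It turns a leg $u$ into $cuc^{-1}$, by the factor $\|\mathrm{Ad}(c)|_{\mathfrak u^-}\|$, which grows exponentially in the center displacement (already along the split Cartan directions of $C$).
\end{itemize}
Together these give a differential inequality for $\ell$ along $\sigma$ whose right-hand side is exponential in $\ell$. Such an inequality does not rule out blow-up in finite time. So finiteness on compact paths is precisely the content of the proposition, not a consequence of local closeness.

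A smaller point: lifted center leaves in $\widetilde G$ need not be simply connected. For $a=\mathrm{diag}(e^t,e^t,e^{-2t})\in\mathrm{SL}_3\mathbb R$, $Z_G(a)^0\cong\mathrm{GL}_2^+\mathbb R$, and its preimage in $\widetilde{\mathrm{SL}_3\mathbb R}$ has fundamental group $\mathbb Z$. Your monodromy step therefore only gives holonomy on universal covers of the leaves.

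\textbf{The missing idea.} Your final paragraph points the right way, but the key input is dynamical. Push the leg forward until it is short, use the local product structure there, and pull back. One then checks that hyperbolic expansion beats the center distortion.
\begin{itemize}
\item \emph{Coordinates.} For $f_0=L_a$, write points of a lifted model $cs$-leaf as $cu\tilde y$ with $c\in\widetilde C$ and $u\in U^-$, so that $u$ labels the center leaves.
\item \emph{Form of the conjugated map.} The map $\phi$ conjugates the induced maps on center-leaf spaces and is leafwise $C^0$-close to the identity. Hence $\hat f=\tilde\phi^{-1}\tilde f\tilde\phi$ acts in these coordinates by $(c,u)\mapsto(\eta c,\,aua^{-1})$, with $d(\eta,e)\le\epsilon_1$ and $\epsilon_1\to0$ as $f\to f_0$.
\item \emph{Center drift is linear.} By right-invariance, the center coordinate $c_n$ of $\hat f^n(c,u)$ drifts by at most $n\epsilon_1$.
\item \emph{Local discs are graphs over label balls.} Set $\Lambda=\sup\{\|\mathrm{ad}\,X|_{\mathfrak u^-}\|: X\in\mathfrak c,\ \|X\|=1\}$. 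The local $\tilde f$-stable disc of fixed radius $\delta$ at $\tilde f^nz$ is a graph over a ball of labels of radius at least $\delta\|\mathrm{Ad}(c_n)|_{\mathfrak u^-}\|^{-1}\gtrsim e^{-\Lambda\epsilon_1 n}$.
\item \emph{Pullback expands labels.} $\tilde f^{-n}$ acts on labels by conjugation by $a^{-n}$. This expands by at least $\lambda^{-n}$, where $\lambda<1$ is the weakest contraction of $\mathrm{Ad}(a)$ on $\mathfrak u^-$.
\end{itemize}
If $\lambda e^{\Lambda\epsilon_1}<1$, the nested union of the pulled-back graphs, which is the whole stable leaf, is a graph over all of $U^-$. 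So each lifted stable leaf meets each lifted center leaf of its $cs$-leaf exactly once, and similarly for $cu$-leaves. Each leg holonomy is then defined globally by intersection, with no continuation or monodromy argument, and the simply connected case follows by projecting to $X$.

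For comparison, the paper does not reprove this statement; it imports it from \cite{Wang}.
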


\subsection{Pesin theory}
We  recall the  Oseledets theorem  and the specific Pesin-theory facts used
below.

\subsubsection{Oseledets theorem and Pesin blocks}\mbox{}\par

Let $(X,\mu)$ be a probability space, let
$f:X\to X$ be an invertible measure-preserving transformation, and let
$\pi:E\to X$ be a measurable Euclidean vector bundle of dimension $d$.
A measurable bundle map
\[
A:E\longrightarrow E
\]
is called an invertible linear cocycle over $f$ if each fiber map
\[
A_x:E_x\longrightarrow E_{f(x)}
\]
is a linear isomorphism.  For $n\ge1$, write
\[
A_x^n
 =
A_{f^{n-1}(x)}\circ\cdots\circ A_x,
\qquad
A_x^{-n}
 =
\bigl(A_{f^{-n}(x)}^n\bigr)^{-1}.
\]

\begin{thm}[Oseledets multiplicative ergodic theorem {{\cite[Theorems~3.12 and~4.1]{introdyn}}}]\label{Oseled}
Let $A:E\to E$ be an invertible measurable linear cocycle over an
invertible measure-preserving transformation
$f:(X,\mu)\to(X,\mu)$. Assume that
\[
\log^+\|A_x\|,\qquad \log^+\|A_x^{-1}\|
\]
belong to $L^1(\mu)$. Then there exists an $f$-invariant set
$\mathcal R_A\subset X$ of full measure such that, for every
$x\in\mathcal R_A$, there are numbers
\[
\chi_1(x)<\cdots<\chi_{m(x)}(x)
\]
and a direct-sum decomposition
\[
E_x
 =
H_A^1(x)\oplus\cdots\oplus H_A^{m(x)}(x)
\]
with the following properties:
\[
A_xH_A^i(x)=H_A^i(f(x)),
\]
and, for every $0\ne v\in H_A^i(x)$,
\[
\lim_{n\to\pm\infty}
\frac1n\log\|A_x^nv\|
 =
\chi_i(x).
\]
The functions $\chi_i(x)$, their multiplicities, and $m(x)$ are
$f$-invariant. In particular, if $f$ is ergodic, they are constant
almost everywhere.
\end{thm}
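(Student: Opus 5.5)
The proof of Theorem \ref{Oseled} follows the standard route: build a forward flag from singular values via the subadditive ergodic theorem, build a backward flag the same way, and intersect the two to obtain the splitting. The first step is to trivialize $E$ measurably. Choose a measurable orthonormal frame, so that $A$ becomes a measurable map $x\mapsto A_x\in\GL_d\R$; norms and singular values are unchanged. For $1\le k\le d$, consider the exterior powers $\Lambda^kA_x^n$. The functions
\[
a^{(k)}_n(x)=\log\|\Lambda^kA^n_x\|
\]
are subadditive along the orbit, $a^{(k)}_{n+m}(x)\le a^{(k)}_n(x)+a^{(k)}_m(f^nx)$. The integrability hypothesis on $\log^+\|A_x\|$ and $\log^+\|A_x^{-1}\|$ gives $a^{(k)}_1\in L^1(\mu)$, since $|\log\|\Lambda^kA_x\||\le k(\log^+\|A_x\|+\log^+\|A_x^{-1}\|)$. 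Kingman's subadditive ergodic theorem then produces $f$-invariant limits $\tfrac1n a^{(k)}_n\to \Sigma_k(x)$ almost everywhere. Setting $\chi^{(k)}=\Sigma_k-\Sigma_{k-1}$, we recover the exponents with multiplicity: $\tfrac1n\log\sigma_k(A^n_x)\to\chi^{(k)}(x)$, where $\sigma_k$ denotes the $k$-th singular value. We order these increasingly and group equal values to obtain $\chi_1<\cdots<\chi_{m(x)}$ and their multiplicities. All of these quantities are $f$-invariant, hence constant when $f$ is ergodic.

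The second step builds the forward filtration. Let $U_n(x)$ be the span of the right singular vectors of $A^n_x$ whose singular values are at most $e^{n(\chi_i+\varepsilon)}$. We need these subspaces to converge, and this is the main obstacle of the proof. The approach is Raghunathan's estimate. Compare $A^{n+1}_x=A_{f^nx}A^n_x$ with $A^n_x$ and use the tempered growth $\tfrac1n\log^+\|A_{f^nx}^{\pm1}\|\to0$, which follows from Birkhoff's theorem applied to an $L^1$ function. This shows that the angle between $U_n$ and $U_{n+1}$ is at most $e^{-n(\text{gap}-\varepsilon)}$, where the gap is the spectral gap $\chi_{i+1}-\chi_i>0$. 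These angles are summable, so $U_n(x)\to V_i(x)$. A second summation gives the growth statement: $\tfrac1n\log\|A^n_xv\|\to\chi_i$ for $v\in V_i\setminus V_{i-1}$. The resulting flag $V_1\subset\cdots\subset V_m=E_x$ is $A$-invariant, because the limit is characterized intrinsically by growth rates.

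The third step uses invertibility of $f$. Apply the same construction to the inverse cocycle $A^{-1}$ over $f^{-1}$. Its exponents are $-\chi_m<\cdots<-\chi_1$, and this follows from $\sigma_k(A^{-n}_{x})=\sigma_{d-k+1}(A^n_{f^{-n}x})^{-1}$ together with invariance of $\mu$. The construction yields a backward flag $W_m\subset\cdots\subset W_1=E_x$, where $W_i$ consists of vectors with backward rate at most $-\chi_i$. We then define $H^i_A(x)=V_i(x)\cap W_i(x)$. Invariance of $H^i_A$ is immediate from invariance of the two flags. The remaining task is to show that $E_x=\bigoplus_i H^i_A(x)$, that is, that the two flags are transverse. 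The dimension count is automatic once transversality holds.

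Transversality is the second delicate point. Here I would use a Poincaré recurrence argument on a Pesin-type set on which convergence is uniform. Suppose $v\in V_{i-1}\cap W_i$ were nonzero. Its forward rate would then be at most $\chi_{i-1}$, while its backward contraction rate would be at least $\chi_i$. Along recurrent times, the angle between $V_{i-1}$ and $W_i$ would be forced to decay exponentially. This contradicts recurrence to a set where that angle is bounded below, and such a set has positive measure by measurability. Once transversality is established, both two-sided limits $n\to\pm\infty$ follow on each $H^i_A$. Finally, $\mathcal R_A$ is defined as the intersection of the full-measure sets produced above together with all their $f$-iterates, which is an $f$-invariant set of full measure.
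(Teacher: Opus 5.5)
The paper does not prove this theorem; it quotes it from \cite{introdyn}. Your route is the standard textbook one: Kingman's theorem on exterior powers, Raghunathan's convergence of the singular-vector subspaces, the same construction for $A^{-1}$ over $f^{-1}$, and $H^i_A=V_i\cap W_i$. Your first three steps are sound. One small point in Step 3: the base point $f^{-n}x$ moves with $n$. So identifying the exponents of the inverse cocycle with $-\chi_m<\cdots<-\chi_1$ pointwise uses the $L^1$ convergence in Kingman's theorem together with $f$-invariance of the limits, not invariance of $\mu$ alone.

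The genuine gap is the transversality step.

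\begin{itemize}
\item Let $Z$ be the set where $L(x)=V_{i-1}(x)\cap W_i(x)\neq0$. This set is $f$-invariant, since both flags are. On $Z$, the angle between $V_{i-1}$ and $W_i$ is identically zero along every orbit, so nothing ``decays''.
\item An orbit in $Z$ never enters a set where that angle is bounded below. Measurability does not supply such a set inside $Z$: postulating one is the same as assuming $V_{i-1}\cap W_i=0$, which is what you are trying to prove.
\end{itemize}

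What must be compared across a return time is growth along $L$, not an angle. Suppose $\mu(Z)>0$. Then there are $\delta>0$ and $C>0$ such that, with $\epsilon=\delta/3$, the set $B\subset Z$ on which $\chi_i-\chi_{i-1}>\delta$ and
\[
\|A^n_x|_{L(x)}\|\le Ce^{n(\chi_{i-1}+\epsilon)},\qquad
\|A^{-n}_x|_{L(x)}\|\le Ce^{-n(\chi_i-\epsilon)}\qquad(n\ge0)
\]
has positive measure. By Poincar\'e recurrence there are $x\in B$ and arbitrarily large $n$ with $f^nx\in B$. The exponents are constant along orbits, so for a unit vector $v\in L(x)$,
\[
1=\|A^{-n}_{f^nx}A^n_xv\|\le C^2e^{-n(\chi_i-\chi_{i-1}-2\epsilon)}\le C^2e^{-n\epsilon},
\]
which is a contradiction for large $n$.

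An equivalent argument uses Birkhoff's theorem. The forward and backward Birkhoff averages of the integrable function $\log|\det A_x|_{L(x)}|$ agree almost everywhere. But the forward average is at most $\dim L\cdot\chi_{i-1}$, and the backward one is at least $\dim L\cdot\chi_i$.

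Once $V_{i-1}\cap W_i=0$ holds for every $i$, your dimension count and the two-sided limits on each $H^i_A$ go through as you describe.
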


A point $x\in\mathcal R_A$ is called an
\emph{Oseledets-regular point}, or simply a \emph{regular point} when
the cocycle is understood.

We now state Pesin's theorem, which shows in some tempered blocks, the cocycle can be straightened to a block-diagonal form with the prescribed growth given by the Lyapunov exponents.

For a linear isomorphism $L$, write \[ m(L)=\|L^{-1}\|^{-1}. \] 

\begin{thm}[Pesin blocks {\cite[Theorem~4.13]{introdyn}}]\label{thm:pesin-blocks} Let $X$ be a compact metric space, let $f:X\to X$ be a homeomorphism, let $\pi:E\to X$ be a continuous Euclidean vector bundle, and let $A:E\to E$ be a continuous bundle automorphism covering $f$. Fix $\varepsilon>0$. For every regular point $x$, there is a linear isomorphism \[ C_\varepsilon(x): \mathbb R^{d_1(x)}\oplus\cdots\oplus \mathbb R^{d_{m(x)}(x)} \longrightarrow E_x \] which maps $\mathbb R^{d_i(x)}$ onto $H_A^i(x)$ and for which \[ B_\varepsilon(x) := C_\varepsilon(f(x))^{-1}A_xC_\varepsilon(x) \] has the block-diagonal form \[ B_\varepsilon(x) = \begin{pmatrix} B_\varepsilon^1(x) & & 0\\ &\ddots&\\ 0&&B_\varepsilon^{m(x)}(x) \end{pmatrix}. \] For every $v\in\mathbb R^{d_i(x)}$, \[ e^{\chi_i(x)-\varepsilon}\|v\| \le \|B_\varepsilon^i(x)v\| \le e^{\chi_i(x)+\varepsilon}\|v\|. \] Equivalently, \[ m\bigl(B_\varepsilon^i(x)\bigr) \ge e^{\chi_i(x)-\varepsilon}, \qquad \bigl\|B_\varepsilon^i(x)\bigr\| \le e^{\chi_i(x)+\varepsilon}. \] The coordinate changes are tempered: there is a measurable function $K_\varepsilon:\mathcal R_A\to[1,\infty)$ such that \[ \|C_\varepsilon(x)\|, \ \|C_\varepsilon(x)^{-1}\| \le K_\varepsilon(x) \] and \[ K_\varepsilon(f^n(x)) \le K_\varepsilon(x)e^{\varepsilon|n|} \qquad \text{for every }n\in\mathbb Z. \] Moreover, the regular set admits a countable exhaustion by compact sets \[ \mathcal R_A = \bigcup_{j\ge1}\Lambda_j \quad\text{modulo a null set}, \] called \emph{Pesin blocks}, such that on each $\Lambda_j$: \begin{enumerate} \item the number of Lyapunov exponents and their multiplicities are constant; \item the functions $\chi_i(x)$, the Oseledets subspaces $H_A^i(x)$, and the Lyapunov coordinate changes $C_\varepsilon(x)$ are continuous; \item there is a constant $K_j\ge1$ such that \[ \|C_\varepsilon(x)\|, \ \|C_\varepsilon(x)^{-1}\| \le K_j \qquad\text{for every }x\in\Lambda_j. \] \end{enumerate} 
\end{thm}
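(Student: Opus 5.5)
The plan is the standard construction of \emph{Lyapunov (adapted) norms} via a two-sided weighted sum, followed by a tempering-kernel argument and Lusin's theorem. We take Theorem~\ref{Oseled} as given: at each regular point $x$ we have the splitting $E_x=\bigoplus_i H^i_A(x)$ with exponents $\chi_i(x)$. We also use the standard additional property of regular points: the angles between the sums of Oseledets spaces decay at most subexponentially along the orbit. We work with $\varepsilon/2$ in the construction, keeping the remaining slack for tempering.

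\textbf{Step 1: Lyapunov inner products.} For $u,v\in H^i_A(x)$ define
\[
\langle u,v\rangle'_{x}=\sum_{n\in\Z}\langle A^n_xu,A^n_xv\rangle\, e^{-2n\chi_i(x)-\varepsilon|n|}.
\]
The series converges because $\tfrac1n\log\|A^n_xv\|\to\chi_i(x)$ as $n\to\pm\infty$, and uniformly so on the unit sphere of the finite-dimensional space $H^i_A(x)$. We declare the $H^i_A(x)$ mutually $\langle\cdot,\cdot\rangle'$-orthogonal.

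Since $A_xH^i_A(x)=H^i_A(f(x))$ and $\chi_i$ is $f$-invariant, reindexing $n\mapsto n+1$ in the sum gives
\[
e^{\chi_i-\varepsilon/2}\|v\|'_x\le\|A_xv\|'_{f(x)}\le e^{\chi_i+\varepsilon/2}\|v\|'_x .
\]
Now let $C_\varepsilon(x)$ send the standard basis of $\R^{d_i(x)}$ to a $\langle\cdot,\cdot\rangle'_x$-orthonormal basis of $H^i_A(x)$. Then $B_\varepsilon(x)=C_\varepsilon(f(x))^{-1}A_xC_\varepsilon(x)$ is block diagonal, and each block satisfies the stated bounds.

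\textbf{Step 2: temperedness.} Set $K(x)=\|C_\varepsilon(x)\|\cdot\|C_\varepsilon(x)^{-1}\|$, or better take $K(x)$ to be the larger of these two norms.

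The lower comparison $\|v\|\lesssim\|v\|'$ is easy: keep only the $n=0$ term of the sum and control the non-orthogonality of the $H^i$ through the angles. The upper bound is given by the sup over unit $v$ of the weighted sum, which is finite. This yields a measurable $K$ that is finite at every regular point.

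What does not come for free is $K(f^nx)\le K(x)e^{\varepsilon|n|}$. Two inputs give it:
\begin{itemize}
\item the one-step estimate from Step~1 shows $K(f^{\pm1}x)\le C(x)K(x)$ with $\tfrac1n\log C(f^nx)\to0$, using the subexponential behaviour of the angles and of the sums;
\item the \emph{tempering kernel lemma}: if $\log K$ grows subexponentially along orbits, then there is a measurable $K_\varepsilon\ge K$ with $K_\varepsilon(f^{\pm1}x)\le e^{\varepsilon}K_\varepsilon(x)$.
\end{itemize}
For the lemma we take
\[
K_\varepsilon(x)=\sup_{n\in\Z}K(f^nx)e^{-\varepsilon|n|}.
\]
This supremum is finite because $\tfrac1n\log K(f^nx)\to0$, and the one-step estimate for it follows directly from the definition. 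I expect this step, proving subexponential growth of $K$ along orbits (in particular controlling the angles), to be the main technical point. Since it is contained in the cited \cite{introdyn}, I would quote that part if needed.

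\textbf{Step 3: Pesin blocks.} Partition $\mathcal R_A$ into the countably many measurable sets $\mathcal R^{(m,d)}$ on which the number of exponents and the multiplicities are constant.

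On each piece, $x\mapsto(\chi_i(x),H^i_A(x),C_\varepsilon(x))$ is measurable, with $C_\varepsilon$ chosen measurably, for instance by Gram--Schmidt from a measurable frame. By Lusin's theorem, there are compact sets $\Lambda'_j$ with $\mu(\mathcal R_A\setminus\bigcup_j\Lambda'_j)=0$ on which all these maps are continuous.

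Finally set $\Lambda_j=\Lambda'_j\cap\{K_\varepsilon\le j\}$. Since $\{K_\varepsilon\le j\}$ is closed relative to the continuity set, $\Lambda_j$ is compact, and after an increasing reindexing the sets $\Lambda_j$ still exhaust $\mathcal R_A$ modulo a null set. Properties (1)--(3) of the theorem hold on each $\Lambda_j$ by construction.
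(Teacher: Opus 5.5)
Your architecture (Lyapunov inner products, then a tempering kernel, then Lusin's theorem) is the standard proof of the cited textbook result; the paper gives no proof of its own, only the citation. Step~1 is correct: reindexing gives exactly $e^{\chi_i\pm\varepsilon/2}$ because $\bigl||n-1|-|n|\bigr|\le1$.

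\textbf{The gap is in Step~2.} The mechanism you offer for its one nontrivial claim, $\frac1{|n|}\log K(f^nx)\to0$, does not work. A one-step bound $K(f^{\pm1}x)\le C(x)K(x)$ with $C$ tempered only gives $\log K(f^nx)\le\log K(x)+\sum_{j=0}^{n-1}\log C(f^jx)$. That is a Birkhoff sum of linear size, with rate $\int\log C\,d\mu$, not $o(n)$. Indeed, the one-step bound Step~1 actually produces, $S_i(fx)\le e^{\chi_i+\varepsilon/2}\sup\|A^{-1}\|\,S_i(x)$ with $S_i(y)=\sup\{\|u\|'_y:u\in H^i_A(y),\ \|u\|=1\}$, has a constant factor and yields only exponential control.

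What works is a direct computation at $f^mx$. For $u=A^m_xw$ with $w\in H^i_A(x)$, reindexing gives $\|u\|'^2_{f^mx}=e^{2m\chi_i}\sum_{j\in\Z}\|A^j_xw\|^2e^{-2j\chi_i-\varepsilon|j-m|}$. Use the two-sided regularity bounds $D_\delta(x)^{-1}e^{j\chi_i-\delta|j|}\|w\|\le\|A^j_xw\|\le D_\delta(x)e^{j\chi_i+\delta|j|}\|w\|$ for all $j\in\Z$, with $2\delta<\varepsilon$. Insert the upper bound into the sum, use $|j|\le|j-m|+|m|$, and apply the lower bound to $\|u\|=\|A^m_xw\|$. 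This yields $S_i(f^mx)\le c_{\varepsilon,\delta}D_\delta(x)^2e^{2\delta|m|}$.

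Combine this with $\|C_\varepsilon(y)^{-1}\|\le\sqrt d\,\max_i\|\pi_i(y)\|\,S_i(y)$, where $\pi_i$ are the Oseledets projections, and with the subexponential growth of $\|\pi_i(f^mx)\|$. That gives the temperedness, after which your $K_\varepsilon(x)=\sup_nK(f^nx)e^{-\varepsilon|n|}$ works verbatim.

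The lower regularity bound must be uniform over the unit sphere of $H^i_A(x)$. Unlike the upper bound, this does not follow from the pointwise limits in Theorem~\ref{Oseled} plus finite dimensionality. It comes from the determinant regularity of Lyapunov-regular points, so list it as an input alongside the angle property.

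\textbf{Two smaller corrections.} First, the angles enter the bound on $\|C_\varepsilon(x)^{-1}\|$, i.e.\ $\|v\|'\lesssim\|v\|$, not the comparison $\|v\|\lesssim\|v\|'$. The latter follows from the $n=0$ term and the triangle inequality alone, and gives $\|C_\varepsilon(x)\|\le\sqrt d$ everywhere.

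Second, in Step~3 the set $\Lambda'_j\cap\{K_\varepsilon\le j\}$ need not be closed. $K_\varepsilon$ is a supremum over the whole orbit and is not continuous on $\Lambda'_j$ merely because $C_\varepsilon$ is. Include $K_\varepsilon$ among the functions made continuous by Lusin's theorem. Property~(3) by itself is automatic from continuity of $C_\varepsilon$ on a compact set. Still, keep the intersection: a bound $K_\varepsilon\le K_j$ on each block is what the orbit-wise tempered estimates in Appendix~\ref{app:leafwise-pesin-proof} actually require.
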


\subsubsection{Stable manifolds and absolute continuity}\mbox{}\par

Based on the Oseledets splitting of a diffeomorphism $f$, we define the subbundles corresponding to negative and positive exponents:
\[
E^-_f(x)=\bigoplus_{\chi_i(x)<0} H^i_f(x), \qquad
E^+_f(x)=\bigoplus_{\chi_i(x)>0} H^i_f(x).
\]
For $\mu$-almost every regular point $x$, Pesin theory associates to
$E^-_f(x)$ and $E^+_f(x)$ local stable and unstable manifolds tangent
to these subspaces at $x$.

The \emph{Pesin stable manifold} of $x$ is defined as
\[
\mathcal{W}^-_f(x)=\left\{y\in M : \limsup_{n\to \infty}\frac{\log d(f^n(x),f^n(y))}{n}<0\right\},
\]
and the \emph{Pesin unstable manifold} of $x$ as
\[
\mathcal{W}^+_f(x)=\left\{y\in M : \limsup_{n\to \infty}\frac{\log d(f^{-n}(x),f^{-n}(y))}{n}<0\right\}.
\]

\begin{thm}[{\cite[Theorem~7.1 and Section~7.3.5]{introdyn}}]\label{pesin}
Let $M$ be a compact Riemannian manifold and let $f:M\to M$ be a $C^2$
diffeomorphism preserving an invariant measure $\mu$. Then for
$\mu$-almost every regular point $x$, if
$
k=\dim E^-_f(x),$ 
there exists a $C^{1+\beta}$ embedded $k$-dimensional disk
$\mathcal{W}^-_{f,\mathrm{loc}}(x)$ centered at $x$ such that
$
T_x\mathcal{W}^-_{f,\mathrm{loc}}(x)=E^-_f(x),
$
and
$
\mathcal{W}^-_{f,\mathrm{loc}}(x)\subset \mathcal{W}^-_f(x).
$
Similarly, there is a local unstable manifold
$\mathcal{W}^+_{f,\mathrm{loc}}(x)$ tangent to $E^+_f(x)$ at $x$.
\end{thm}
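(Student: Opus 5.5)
The plan is the standard Lyapunov-chart proof: straighten $Df$ along the orbit of a regular point using Theorem~\ref{thm:pesin-blocks}, run a Hadamard--Perron graph transform in those charts, and transport the resulting graph back to $M$. The unstable manifold then follows by applying the same argument to $f^{-1}$.

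\textbf{Step 1: Lyapunov charts.} Fix a regular point $x$ and write $\chi^-<0$ for the largest negative exponent and $\chi^+>0$ for the smallest positive exponent. Choose $\varepsilon\ll\min(|\chi^-|,\chi^+)$, and also small compared with the spectral gap between the exponents in $E^-_f$ and those in $E^0_f\oplus E^+_f$. Apply Theorem~\ref{thm:pesin-blocks} to the cocycle $A=Df$ on $E=TM$. This gives tempered coordinate changes $C_\varepsilon(f^nx)$, and I group the blocks into the splitting $\R^k\oplus\R^{d-k}$ corresponding to $E^-_f\oplus(E^0_f\oplus E^+_f)$. In these coordinates $B_\varepsilon$ contracts the first factor by at most $e^{\chi^-+\varepsilon}$. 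On the second factor it expands by at least $e^{-\varepsilon}$, i.e.\ $m(B_\varepsilon|_{\R^{d-k}})\ge e^{-\varepsilon}$.

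Next define local charts
\[
\tilde f_n=C_\varepsilon(f^{n+1}x)^{-1}\circ\exp_{f^{n+1}x}^{-1}\circ f\circ\exp_{f^nx}\circ C_\varepsilon(f^nx)
\]
on a ball of radius $r_n=r_0e^{-2\varepsilon|n|}$. Because $f$ is $C^2$ on a compact manifold, $D^2f$ is uniformly bounded. Because $K_\varepsilon(f^nx)\le K_\varepsilon(x)e^{\varepsilon|n|}$, we get
\[
\operatorname{Lip}(\tilde f_n-B_\varepsilon(f^nx))\le c\,K_\varepsilon(f^nx)^2 r_n\le \delta
\]
on that ball, once $r_0$ is chosen small depending on $K_\varepsilon(x)$. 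This is the step I expect to be the main technical obstacle. The shrinking radii must be balanced against the exponential growth of $K_\varepsilon$, and one must check that the $e^{-2\varepsilon|n|}$ loss is absorbed by the gap $e^{\chi^-+\varepsilon}$ versus $e^{-\varepsilon}$. The nonuniform domination only holds if $\varepsilon$ is much smaller than that gap.

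\textbf{Step 2: graph transform.} Consider sequences of graphs $\phi_n:B^k(r_n)\to\R^{d-k}$ with $\phi_n(0)=0$ and $\operatorname{Lip}\phi_n\le1$. Define the backward graph transform: $\operatorname{graph}\phi_n$ is the component of $\tilde f_n^{-1}(\operatorname{graph}\phi_{n+1})$ lying over the ball. The domination between the two factors makes this map well defined, and it is a contraction in the sup norm over the sequence space. Its unique fixed point $(\phi_n^*)$ is invariant, and $\operatorname{graph}\phi_0^*$ consists exactly of those points whose forward orbit in charts stays in the shrinking balls. On that set, iterates contract at rate $e^{\chi^-+2\varepsilon}$.

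Transporting by $\exp_x\circ C_\varepsilon(x)$ gives the disk $\mathcal W^-_{f,\mathrm{loc}}(x)$. For $y$ in this disk,
\[
d(f^nx,f^ny)\le K_\varepsilon(x)e^{\varepsilon n}e^{(\chi^-+2\varepsilon)n}\,d(x,y),
\]
so $\limsup_n \frac1n\log d(f^nx,f^ny)<0$, and therefore $\mathcal W^-_{f,\mathrm{loc}}(x)\subset\mathcal W^-_f(x)$.

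\textbf{Step 3: regularity and tangency.} Run the same contraction on the $C^1$ data, i.e.\ on the invariant cone field of tangent planes, using fiber contraction. This shows $\phi_0^*$ is $C^1$ with $D\phi_0^*(0)=0$, hence $T_x\mathcal W^-_{f,\mathrm{loc}}(x)=E^-_f(x)$. For $C^{1+\beta}$, apply the graph transform on the space of $\beta$-Hölder sections of Grassmannian-valued maps. Since $Df$ is Lipschitz (as $f$ is $C^2$), this space is preserved and contracted whenever $\beta$ is small enough that $e^{(\chi^-+2\varepsilon)\beta}$ times the ratio of contraction to expansion stays below one. That gives a Hölder modulus for $D\phi_0^*$.

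Finally, the statement holds for $\mu$-a.e.\ regular $x$, because $K_\varepsilon(x)<\infty$ on $\mathcal R_A$. The same argument applied to $f^{-1}$, with $E^+_f$ in place of $E^-_f$, produces $\mathcal W^+_{f,\mathrm{loc}}(x)$.
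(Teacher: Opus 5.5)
Your proposal is correct and follows the standard argument behind this result; the paper gives no proof of its own and only cites \cite{introdyn}. The argument uses tempered Lyapunov charts from Theorem~\ref{thm:pesin-blocks}, a Hadamard--Perron graph transform along the forward orbit, $C^{1+\beta}$ regularity via fiber contraction, and $f^{-1}$ for the unstable disk.

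Two bookkeeping points need tightening. First, with both $\|C_\varepsilon\|$ and $\|C_\varepsilon^{-1}\|$ tempered, the nonlinearity bound carries a factor of order $K_\varepsilon(x)^3e^{3\varepsilon|n|}$, not $K_\varepsilon(f^nx)^2$, so the radii must decay strictly faster than $e^{-3\varepsilon|n|}$ rather than at rate $e^{-2\varepsilon|n|}$. Second, when zero exponents are present, the backward graph transform has Lipschitz constant about $e^{\varepsilon}$ in the plain sup norm, so you should run the contraction in a norm weighted by $r_n^{-1}$ or by $|u|^{-1}$.
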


We also recall the absolute continuity properties of Pesin stable and unstable manifolds, which we shall use to prove ergodicity of some elements of the centralizer.

\begin{thm}[{\cite[Theorem~8.2 and Remark~8.19]{introdyn}}]\label{pesabcont}
Let $f$ be a $C^2$ volume-preserving diffeomorphism of a compact
Riemannian manifold $M$. In each Pesin block on which
$\dim E^-_f$ is constant, the family of local stable manifolds is
absolutely continuous. More precisely, if $S_1$ and $S_2$ are
sufficiently small smooth disks transverse to these local stable
manifolds inside a stable foliation box, and if
\[
h:D_1\subset S_1\longrightarrow D_2\subset S_2
\]
denotes the stable holonomy map, then $h$ is absolutely continuous with
respect to the induced Riemannian measures on $S_1$ and $S_2$. In fact,
its Jacobian is bounded above and bounded away from zero almost
everywhere.

The analogous statement for unstable manifolds follows by applying the
same result to $f^{-1}$.
\end{thm}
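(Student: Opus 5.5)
We would follow the classical Pesin--Sinai--Brin--Pesin scheme, working throughout in Lyapunov charts adapted to a fixed Pesin block $\Lambda_j$ (Theorem~\ref{thm:pesin-blocks}) with $\varepsilon$ much smaller than the spectral gap $\chi^-:=\min\{|\chi_i|:\chi_i<0\}$. In these charts $f$ is, up to an error of size $e^{\varepsilon |n|}$ in the chart sizes along an orbit, a $C^{1}$-small perturbation of the block-diagonal map $B_\varepsilon(x)$. The local stable manifolds $\mathcal W^-_{f,\mathrm{loc}}(x)$, $x\in\Lambda_j$, given by Theorem~\ref{pesin}, are graphs over $E^-_f(x)$ of uniform size, and they vary continuously in the $C^1$ topology on $\Lambda_j$. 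The transversals $S_1,S_2$ are taken to be small disks whose tangent planes lie in a cone around the complementary direction $\bigoplus_{\chi_i\ge 0}H^i_f$. The first step is a tempered \emph{inclination lemma}: for $y\in S_1$ on a stable manifold of a block point, the planes $Df^n(T_yS_1)$ converge exponentially fast, at a rate governed by $\chi^-$ minus $O(\varepsilon)$, to the planes $Df^n(T_{h(y)}S_2)$ at the partner point $f^n(h(y))$. This works because both iterated planes are attracted to the same ``non-contracting'' direction along orbits that converge exponentially.

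The second step is to identify the candidate Jacobian
\[
\mathrm{Jac}\,h(y)=\prod_{k=0}^{\infty}\frac{\det\bigl(Df|_{T_{f^k y}f^kS_1}\bigr)}{\det\bigl(Df|_{T_{f^k h(y)}f^kS_2}\bigr)}
\]
and to show that it converges uniformly on the domain $D_1$. Since $f$ is $C^2$, $\log\det(Df|_P)$ is Lipschitz in both the base point and the plane $P$. The $k$-th factor therefore has logarithm bounded by
\[
C\bigl(d(f^ky,f^kh(y))+\angle(\text{planes})\bigr)\le C' K_j e^{-(\chi^--2\varepsilon)k}.
\]
This is summable, which gives two-sided bounds $e^{\pm C''K_j}$, uniform over the block. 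Temperedness is exactly what makes the constants depend only on $j$, since the chart distortion $e^{\varepsilon k}$ is beaten by the contraction.

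The third step is to prove that $h$ actually has this Jacobian in the measure-theoretic sense, namely $m_{S_2}(h(A))=\int_A \mathrm{Jac}\,h\,dm_{S_1}$ for Borel $A\subset D_1$. For this we push forward by $f^n$ and compare $f^n(A)\subset f^nS_1$ with $f^n(h(A))\subset f^nS_2$. These two sets are joined by stable pieces of length $\lesssim e^{-(\chi^--\varepsilon)n}$, so a thin-tube covering (Vitali or Besicovitch) argument shows that their volumes agree up to a factor $1+o(1)$, plus the volume of an exponentially thin boundary neighborhood. Pulling back by $f^{-n}$ with the finite products of determinants and letting $n\to\infty$ gives the formula.

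We expect this third step to be the main obstacle. The images $f^n(y)$ leave $\Lambda_j$, and the family of stable manifolds is only measurable, defined on a set $D_1$ that is typically a Cantor-like set of positive measure. The covering argument therefore has to be carried out with a uniform local product structure that is controlled only along orbits starting in $\Lambda_j$. We would first try to handle this by using the tempered charts $K_\varepsilon(f^n x)\le K_je^{\varepsilon n}$ to show that, at scale $e^{-\varepsilon n}$ around $f^n x$, the stable plaques still form a uniformly transverse Lipschitz lamination. That uniform structure is what the volume comparison requires. The unstable case follows by applying the result to $f^{-1}$.
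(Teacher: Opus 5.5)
Your Steps~1 and~2 are the standard ingredients of the proof in the cited reference. These are the tempered inclination estimate along the paired orbits $f^ky$, $f^kh(y)$, and the summable distortion of $\log\det(Df|_P)$ giving the infinite product with block-dependent bounds. They are also what the paper writes out for its leafwise version in Lemma~\ref{lem:app-relative-ac}.

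The gap is Step~3. It is the only non-routine step, and you leave it as a plan whose diagnosis and proposed fix are both off target.

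The boundary term in your time-$n$ comparison is not small because it is exponentially thin. $f^n(A)$ is still Cantor-like below scale $\delta_n\approx e^{-(\chi^- -O(\varepsilon))n}$, so thinness buys nothing. The term becomes negligible only after pulling back, where it lies in an $\eta_n$-neighbourhood of $A$ with $\eta_n\to0$. Making this rigorous needs three things you do not mention:
\begin{enumerate}
\item the estimate $\delta_ne^{O(\varepsilon)n}\to0$, because zero exponents are allowed and $f^{-n}$ need not contract along $f^nS_1$;
\item distortion bounds for $f^{-n}$ at points of that neighbourhood, which lie on no block stable plaque;
\item compactness of $A$ together with outer regularity.
\end{enumerate}

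Conversely, no uniform lamination structure at time $n$ is needed, and the one you propose is not available. Stable holonomies fail to be Lipschitz already for typical Anosov diffeomorphisms. The Lyapunov charts at distinct points $f^nx'$ are not mutually controlled once these points have left $\Lambda_j$. All the comparison actually uses is that each matched pair $f^ny$, $f^nh(y)$ is joined by a stable arc of length $O(\delta_n)$, and that $f^nS_1$, $f^nS_2$ are $C^1$-close graphs near $f^n(D_1)$. Your Step~1 already gives both.

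The cited proof, and Lemma~\ref{lem:app-relative-ac}, close the gap with smooth approximants
\[
h_n=(f^n|_{S_2})^{-1}\circ\pi_n\circ f^n|_{S_1},
\]
defined on a neighbourhood of $D_1$. Here $\pi_n$ is a $C^1$ map from $f^nS_1$ to $f^nS_2$ near $f^n(D_1)$ that matches the non-contracting chart coordinates, so $\|D\pi_n-I\|\to0$ and $\pi_n$ moves points by $O(\delta_n)$.
\begin{itemize}
\item Since $h_n$ is smooth and injective, the area formula is exact, and $\operatorname{Jac}h_n$ is needed only at points $y\in D_1$.
\item At such $y$, the chain rule gives your finite product times $\operatorname{Jac}\pi_n$, with the second orbit that of $h_n(y)$ rather than $h(y)$. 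Your distortion estimate still applies because $d(f^kh_n(y),f^kh(y))=O(\delta_ne^{O(\varepsilon)n})$ for $0\le k\le n$.
\item This yields $(h_n)_*(\mathrm{vol}_{S_1}|_{D_1})\le C\,\mathrm{vol}_{S_2}$ with $C$ independent of $n$.
\item Since $h_n\to h$ uniformly on $D_1$, the push-forwards converge weakly. The bound passes to $h_*(\mathrm{vol}_{S_1}|_{D_1})$ on open sets, and then on all Borel sets by outer regularity.
\item The symmetric construction handles $h^{-1}$.
\end{itemize}
No covering argument and no local product structure at time $n$ is required.

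Your route, made precise, reduces to the same object. One has $f^nh(A)\subset\pi_n(U_n)$, where $U_n$ is the $C\delta_n$-neighbourhood of $f^n(A)$ in $f^nS_1$. Hence $\mathrm{vol}_{S_2}(h(A))\le\int_{f^{-n}(U_n)}\operatorname{Jac}h_n\,d\mathrm{vol}_{S_1}$, and for compact $A$ one concludes by letting $n\to\infty$.
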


In particular, if a measurable set has zero measure in $M$, then its intersection
with almost every local Pesin stable leaf has zero measure with respect to the Riemannian volume restricted to the leaves; conversely, if
a measurable set has zero leafwise measure on almost every local leaf,
then it has zero measure in $M$.

\subsubsection{Leafwise Pesin theory}\mbox{}\par

We will use Pesin theory not only on the ambient manifold, but also after
restricting to an invariant foliation.  Let $T\in\Diff^2(M)$ preserve a
foliation $\mathcal F$.  For points $x,y$ on the same leaf, write
$d_{\mathcal F}(x,y)$ for their intrinsic leafwise distance.  At a regular
point for the restricted cocycle $DT|_{T\mathcal F}$, let
\[
E^-_{\mathcal F,T}(x)=\bigoplus_{\chi_i(x)<0}H^i_{\mathcal F,T}(x),
\qquad
E^+_{\mathcal F,T}(x)=\bigoplus_{\chi_i(x)>0}H^i_{\mathcal F,T}(x).
\]
The corresponding \emph{relative stable and unstable sets} are
\[
\mathcal W^-_{\mathcal F,T}(x)
 =\left\{y\in\mathcal F(x):
 \limsup_{n\to\infty}\frac1n
 \log d_{\mathcal F}(T^nx,T^ny)<0\right\},
\]
and
\[
\mathcal W^+_{\mathcal F,T}(x)
 =\left\{y\in\mathcal F(x):
 \limsup_{n\to\infty}\frac1n
 \log d_{\mathcal F}(T^{-n}x,T^{-n}y)<0\right\}.
\]

We use the following terminology.  A probability measure on $M$ is
\emph{smooth} if it has a strictly positive $C^1$ density with respect to
Riemannian volume.  A foliation $\mathcal F$ is \emph{uniformly $C^2$} if it
admits a finite foliation atlas whose plaque parametrizations have uniformly
bounded $C^2$ norms.  We say that $\mathcal F$ is \emph{absolutely
continuous} if its local holonomy maps between smooth transversals are
absolutely continuous. 

Two embedded submanifolds $N_1,N_2\subset M$ \emph{meet cleanly} if
$N_1\cap N_2$ is a submanifold and
\[
T_z(N_1\cap N_2)=T_zN_1\cap T_zN_2
\]
at every $z\in N_1\cap N_2$.

\begin{lem}\label{lem:leafwise-pesin-ac}
Let $T\in\Diff^2(M)$ preserve a uniformly $C^2$ foliation $\mathcal F$, and
let $\nu$ be a $T$-invariant smooth probability measure.  Assume that
$\mathcal F$ is absolutely continuous and that at $\nu$-almost every point $DT|_{T\mathcal F}$ has no
zero Lyapunov exponent.  Then:
\begin{enumerate}
\item For $\nu$-almost every $x$, the relative stable and unstable sets
$\mathcal W^-_{\mathcal F,T}(x)$ and
$\mathcal W^+_{\mathcal F,T}(x)$ locally agree with $C^1$ embedded disks
$\mathcal W^-_{\mathcal F,T,\mathrm{loc}}(x)$ and
$\mathcal W^+_{\mathcal F,T,\mathrm{loc}}(x)$ tangent at $x$ to
$E^-_{\mathcal F,T}(x)$ and $E^+_{\mathcal F,T}(x)$, respectively.

\item In every foliation box, for almost every $\mathcal F$-plaque $P$, the
local stable and unstable laminations formed by these relative Pesin disks
have absolutely continuous holonomy inside $P$, with respect to the
Riemannian measure induced on $P$.

\item Suppose moreover that, for $\nu$-almost every $x$ at which the ambient
local Pesin manifold is defined, $\mathcal F_{\mathrm{loc}}(x)$ and
$\mathcal W^{\pm}_{T,\mathrm{loc}}(x)$ meet cleanly near $x$.  Then,
after shrinking the local plaques if necessary,
\[
\mathcal W^{\pm}_{\mathcal F,T,\mathrm{loc}}(x)
 =\mathcal F_{\mathrm{loc}}(x)
  \cap\mathcal W^{\pm}_{T,\mathrm{loc}}(x)
\]
for almost every such $x$.
\end{enumerate}
\end{lem}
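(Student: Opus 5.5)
The proof reduces all three assertions to the ambient Pesin theory of Theorems~\ref{pesin} and \ref{pesabcont}, applied after a leafwise reduction. \emph{Step 1 (leafwise Pesin manifolds).} Since $\mathcal F$ is uniformly $C^2$ and $T$-invariant, the cocycle $DT|_{T\mathcal F}$ is a continuous bundle automorphism over $T$. By Theorem~\ref{Oseled} it has an Oseledets splitting $\nu$-a.e., with no zero exponents by hypothesis. I will work in the finite foliation atlas. The plaque maps $T:\mathcal F(x)\to\mathcal F(Tx)$, read in uniform $C^2$ charts, form a sequence of $C^2$ maps of $\mathbb R^{\dim\mathcal F}$ with uniformly bounded $C^2$ norms. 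The nonuniform Hadamard--Perron graph transform, carried out in the tempered Lyapunov charts $C_\varepsilon(x)$ of Theorem~\ref{thm:pesin-blocks} for this restricted cocycle, then produces $C^{1}$ (in fact $C^{1+\beta}$) disks $\mathcal W^{\pm}_{\mathcal F,T,\mathrm{loc}}(x)$ tangent to $E^\pm_{\mathcal F,T}(x)$. The usual exponential-rate argument identifies these disks with the germs of $\mathcal W^\pm_{\mathcal F,T}(x)$; the tempered constants $K_\varepsilon(T^nx)\le K_\varepsilon(x)e^{\varepsilon n}$ give the local agreement. Only measure-theoretic recurrence is used here, so no leafwise invariant measure is needed. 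This proves~(1).

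\emph{Step 2 (absolute continuity inside plaques).} This is the main obstacle. The classical proof of Theorem~\ref{pesabcont} needs an invariant volume, whereas here $T$ only preserves $\nu$ on $M$. I plan to disintegrate $\nu$ along plaques of a foliation box. Absolute continuity of $\mathcal F$ and smoothness of $\nu$ give conditional measures on a.e.\ plaque that are equivalent to leaf Riemannian measure. The leafwise holonomy Jacobian estimate is then proved as in the Pesin--Katok argument, following the scheme of \cite{introdyn}. On a Pesin block the disks vary continuously in $C^1$. One compares $T^{-n}$-images of transversals and bounds the Jacobian of the holonomy by the infinite product $\prod_{k\ge0}\det DT|_{\mathrm{transv}}(T^k y)/\det DT|_{\mathrm{transv}}(T^k h(y))$. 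This product converges because the two orbits approach exponentially fast and $DT|_{T\mathcal F}$ is H\"older (indeed $C^1$, since $\mathcal F$ is uniformly $C^2$ and $T\in\Diff^2$). That argument is entirely leafwise. The measure enters only through recurrence to the block, which holds for $\nu$-a.e.\ point. Hence recurrence occurs for $\mathrm{Leb}_P$-a.e.\ point of a.e.\ plaque $P$, by the Fubini-type property guaranteed by absolute continuity of $\mathcal F$. The unstable case follows by replacing $T$ with $T^{-1}$.

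\emph{Step 3 (clean intersection).} I will first show the inclusion $\mathcal F_{\mathrm{loc}}(x)\cap\mathcal W^{\pm}_{T,\mathrm{loc}}(x)\subset\mathcal W^{\pm}_{\mathcal F,T}(x)$. Both sides are defined by exponential approach, and leafwise distance is comparable to ambient distance for nearby points on the same local plaque under forward iterates. That comparison requires the iterates to stay in local plaques, which holds because the ambient Pesin manifold contracts. Conversely, leafwise exponential approach implies ambient exponential approach, since $d\le d_{\mathcal F}$, so the reverse inclusion also holds. By clean intersection the left side is a submanifold with tangent $T_x\mathcal F\cap E^\pm_T(x)$, which equals $E^\pm_{\mathcal F,T}(x)$ because Oseledets subspaces of the restricted cocycle are intersections with the ambient ones. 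Dimensions therefore agree, and after shrinking plaques the two disks coincide.
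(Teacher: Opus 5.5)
Your proposal is correct and follows essentially the same route as the paper's Appendix~A. Both use a leafwise graph transform in tempered Lyapunov charts for $DT|_{T\mathcal F}$, then a Pesin--Katok Jacobian comparison of iterated transversals inside a plaque, where $\nu$ enters only through the full leafwise measure of the regular set (via absolute continuity of $\mathcal F$), and finally clean intersection together with $E^-_{\mathcal F,T}(x)=T_x\mathcal F\cap E^-_T(x)$ to match dimensions.

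Two small points. In Step~3 the paper needs only the inclusion $\mathcal W^-_{\mathcal F,T,\mathrm{loc}}(x)\subset\mathcal W^-_{T,\mathrm{loc}}(x)$, and it gets this from local uniqueness of the ambient Pesin manifold; the bound $d\le d_{\mathcal F}$ alone only puts you in the global set $\mathcal W^-_T(x)$. In Step~2, convergence of your Jacobian product also needs forward cone estimates that force the tangent planes of $T^n\Sigma_0$ and $T^n\Sigma_1$ together; the paper handles this with approximate holonomies $h_n$ and a weak-limit argument.
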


\begin{proof}
See Appendix~\ref{app:leafwise-pesin-proof}.
\end{proof}

\subsection{Standard holonomies of cocycles}

In Section \ref{sec:suhol}, we defined $su$-holonomies, which describe how the stable and unstable foliations connect different leaves of the center foliation. In this section, we define holonomies for linear cocycles, which describe how the cocycle varies along stable and unstable directions, forward and backward in time.

Let $f \colon M \to M$ be a diffeomorphism of a manifold $M$. A \emph{cocycle} $\beta$ over $f$ with values in a group $S$ is a map
\[
\beta \colon \mathbb{Z} \times M \to S
\]
such that, for all $m,n \in \mathbb{Z}$ and $x \in M$,
\[
\beta(m+n,x) = \beta\bigl(n, f^m(x)\bigr)\,\beta(m,x).
\]

Two cocycles $\beta$ and $\beta'$ over $f$ with values in $S$ are said to be (measurably, continuously, H\"older, smoothly) \emph{cohomologous} if there exists a (measurable, continuous, H\"older, smooth) transfer map $H \colon M \to S$ such that, for all $x \in M$ and $n \in \mathbb{Z}$,
\[
\beta(n,x) \;=\; H\bigl(f^n(x)\bigr)\,\beta'(n,x)\,H(x)^{-1}.
\]

Now suppose that $f$ is a partially hyperbolic diffeomorphism with exponents $\mu_u,\mu_s$ as in Definition \ref{defi:ph}. Fix a norm $\|\cdot\|$ (e.g. the operator norm) in $\mathrm{GL}_k(\mathbb{R})$. Let $\beta$ be a cocycle over $f$ with values in $\mathrm{GL}_k(\mathbb{R})$ for some $k \in \mathbb{N}$. We say that $\beta$ is \emph{fiber bunched} if there exist $r>0$ and $C>0$ such that:

\begin{itemize}
  \item[(1)] $\beta$ is $r$-H\"older in the base, that is,
  \[
  d\bigl(\beta(1,x),\beta(1,y)\bigr)\;\le\; C\,d(x,y)^r
  \quad\text{for all } x,y \in M;
  \]
  \item[(2)] there exists
$N\ge 1$ such that
\[
\sup_{x\in M}\|\beta(N,x)\|\,\|\beta(N,x)^{-1}\|\,\mu_s^{Nr}<1
\]
and
\[
\sup_{x\in M}\|\beta(-N,x)\|\,\|\beta(-N,x)^{-1}\|\,\mu_u^{Nr}<1.
\]
\end{itemize}

In particular, $Df|_{E^c_f}$ is fiber bunched whenever $f$ is $r$--center-bunched. If $f$ is a $C^1$--perturbation of a left translation $f_0$ on $X = G/\Gamma$, then we have fiber-bunching for $Df|_{E^{c}_f}$. If $f_0$ has only one  nonzero root value up to sign, then we also have fiber-bunching for $Df|_{E^{s}_f}$ and $Df|_{E^{u}_f}$.

\begin{prop}[Proposition~3.4 of \cite{ASV}; Theorem~3.5 of \cite{KalSad}]\label{prop:holcocycle}
Let $\beta$ be a fiber-bunched cocycle over a partially hyperbolic diffeomorphism $f$. Then, for any pair of points $x,y \in M$ in the same local stable leaf of $f$, the limit
\[
\mathcal{H}_{x,y}^s
\;=\;
\lim_{n \to \infty}
\bigl(
  \beta(-n, f^{n}(y)) \,\beta(n, x)
\bigr)
\]
exists in $\mathrm{GL}_k(\mathbb{R})$ and is called the \emph{standard stable holonomy} of the cocycle. Similarly, for any $x,y$ in the same local unstable leaf of $f$,
\[
\mathcal{H}_{x,y}^u
\;=\;
\lim_{n \to \infty}
\bigl(
  \beta(n, f^{-n}(y)) \,\beta(-n, x)
\bigr)
\]
exists and is called the \emph{standard unstable holonomy}. Moreover, the maps
\[
(x,y) \mapsto \mathcal{H}_{x,y}^s, \qquad (x,y) \mapsto \mathcal{H}_{x,y}^u
\]
are H\"older continuous (in the appropriate local stable/unstable sets).
\end{prop}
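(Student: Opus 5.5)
The plan is to treat the stable case directly and get the unstable case by applying it to $f^{-1}$ and the inverse cocycle. By the cocycle identity, $\beta(-n,f^n(y))=\beta(n,y)^{-1}$, so the $n$-th approximant is
\[
P_n(x,y)=\beta(n,y)^{-1}\beta(n,x).
\]
Replacing $\beta$ by its $N$-th iterate over $f^N$, and $\mu_s$ by $\mu_s^N$, I may assume the fiber-bunching inequality in (2) holds with $N=1$. The subsequences along multiples of $N$ still control the full limit, because the finitely many intermediate factors are continuous and tend to the identity as $d(f^nx,f^ny)\to 0$. Write $\theta:=\sup_x\|\beta(1,x)\|\,\|\beta(1,x)^{-1}\|\,\mu_s^{r}<1$. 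For $y$ in the local stable leaf of $x$ we have $d(f^jx,f^jy)\le C\mu_s^j d(x,y)$.

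\textbf{Step 1: telescoping.} One has
\[
P_{n+1}-P_n=\beta(n,y)^{-1}\bigl[\beta(1,f^ny)^{-1}\beta(1,f^nx)-I\bigr]\beta(n,x).
\]
By the H\"older condition (1) and the uniform bound on $\|\beta(1,\cdot)^{-1}\|$, the bracket has norm at most $C\,d(f^nx,f^ny)^r\le C'\mu_s^{nr}d(x,y)^r$. Hence
\[
\|P_{n+1}-P_n\|\le C'\,\|\beta(n,y)^{-1}\|\,\|\beta(n,x)\|\,\mu_s^{nr}\,d(x,y)^r .
\]

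\textbf{Step 2: the key estimate.} I will show that
\[
\|\beta(n,y)^{-1}\|\,\|\beta(n,x)\|\le C''\prod_{j<n}\|\beta(1,f^jx)\|\,\|\beta(1,f^jx)^{-1}\|,
\]
with $C''$ uniform. Submultiplicativity bounds the left side by $\prod_{j<n}\|\beta(1,f^jy)^{-1}\|\,\|\beta(1,f^jx)\|$. Next I use
\[
\|\beta(1,f^jy)^{-1}\|\le\|\beta(1,f^jx)^{-1}\|\bigl(1+C\mu_s^{jr}\bigr),
\]
which holds by H\"older continuity together with the uniform bounds on $\beta^{\pm1}$. The product $\prod_j(1+C\mu_s^{jr})$ converges, which gives the claim. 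Consequently
\[
\|P_{n+1}-P_n\|\le C'''\,\theta^n d(x,y)^r .
\]
So $(P_n)$ is uniformly Cauchy and converges to some $\mathcal H^s_{x,y}$. The limit is invertible: the same argument applied to $P_n^{-1}=\beta(n,x)^{-1}\beta(n,y)$ shows that $P_n^{-1}$ also converges, and the two limits are mutually inverse. The bound also gives $\|\mathcal H^s_{x,y}-I\|\le C\,d(x,y)^r$. I expect this step, controlling the product of norms at two different base points by the fiber-bunching constant at a single base point, to be the main point. The multiplicative comparison above is what makes it work.

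\textbf{Step 3: H\"older dependence on $(x,y)$.} This follows by the usual splitting. For two nearby pairs $(x,y)$ and $(x',y')$ at distance $\delta$, each finite product $P_n$ is H\"older in its entries with constant at most $C\Lambda^n$, where $\Lambda$ bounds the growth of $\beta(n,\cdot)^{\pm1}$ along orbits. The tail beyond $n$ is at most $C\theta^n$ by Step 1. Choosing $n\approx c\log(1/\delta)$ balances $\Lambda^n\delta^r$ against $\theta^n$, which yields a H\"older bound with a smaller exponent. Alternatively, one can use the holonomy identities
\[
\mathcal H^s_{x,y}=\mathcal H^s_{z,y}\,\mathcal H^s_{x,z},\qquad \mathcal H^s_{x,y}=\beta(n,y)^{-1}\,\mathcal H^s_{f^nx,f^ny}\,\beta(n,x),
\]
together with $\|\mathcal H^s-I\|\le C\,d^r$. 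The unstable statement follows by applying all of the above to $f^{-1}$, whose stable leaves are the unstable leaves of $f$, and to the cocycle $\beta(-n,\cdot)$.
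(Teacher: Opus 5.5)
Your argument is correct and is the standard one behind the results the paper simply cites without proof (Avila--Santamaria--Viana, Kalinin--Sadovskaya): you telescope $P_{n+1}-P_n$, absorb the comparison between the orbits of $x$ and $y$ into the convergent product $\prod_j(1+C\mu_s^{jr})$ so that fiber bunching at the single base point $x$ gives $\|P_{n+1}-P_n\|\le C\theta^n d(x,y)^r$, and then balance $\Lambda^n\delta^r$ against $\theta^n$ to get H\"older dependence. The only loosely justified sentence is the passage from multiples of $N$ to the full sequence: intermediate factors tending to $I$ is not enough by itself, because they sit between $\beta(Nm,y)^{-1}$ and $\beta(Nm,x)$; however, your Step~1--2 bound with the one-step bracket replaced by a $k$-step bracket ($0<k<N$) gives $\|P_{Nm+k}-P_{Nm}\|\le C\theta^m d(x,y)^r$, which closes this point.
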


\section{The center-fixing centralizer and its dynamics }\label{sec:cla}
In this section, we establish some dynamical properties of elements of the centralizer.
\subsection{The center-fixing centralizer}

Let $f_0$ be a diagonal map and let $f\in \mathrm{Diff}^\infty(X)$, $d_{C^2}(f,f_0)<K$ be a sufficiently small $C^1$-perturbation of $f_0$. The following result shows that the center-fixing centralizer of $f$, denoted \[\CZ(f):=\{g\in \mathcal Z(f): g(x)\in \W^c_f(x),\;\forall x\in X\}\] is a finite-index subgroup in the full centralizer $\mathcal Z(f)$.

\begin{thm}[Theorem 4.1, Corollary 4.3 in \cite{Wang}, see also Theorem 1.1 in \cite{Witte}]
   Let $f_0$ be a non-trivial diagonal map, and let $f\in \mathrm{Diff}^\infty(X)$ be a $C^1$ perturbation of $f_0$, then $\CZ(f)$ is a finite index subgroup of $\mathcal Z(f)$.
\end{thm}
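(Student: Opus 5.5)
The plan is to exploit the fact that every $g\in\mathcal Z(f)$ permutes the center leaves of $f$, and then to reduce to an algebraic statement about the induced action on the leaf space, which is modelled on $f_0$ via the leaf conjugacy.

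\emph{Step 1: center leaves are permuted.} Since $gf=fg$, the derivative $Dg$ carries the dominated splitting $E^s_f\oplus E^c_f\oplus E^u_f$ to itself. The stable and unstable foliations are characterized dynamically (exponential contraction under forward, respectively backward, iterates), so $g$ maps $\W^s_f(x)$ to $\W^s_f(gx)$ and $\W^u_f(x)$ to $\W^u_f(gx)$. Using the dynamical characterization of $\W^{cs}_f,\W^{cu}_f$ for a normally hyperbolic, plaque expansive system (Theorem~\ref{leafconj} and Proposition~\ref{fparhyp}), $g$ likewise permutes the center leaves. Conjugating by the leaf conjugacy $\phi$, we obtain a homeomorphism $\bar g$ of the leaf space $X/\W^c_{f_0}$ that commutes with the induced map $\bar f_0$. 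The leaf space of $f_0$ is the space of orbits of the centralizer $Z_G(a)$ acting on $X$.

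\emph{Step 2: rigidity on the leaf space.} Lift $\bar g$ to the universal cover, where center leaves correspond to cosets $gZ_G(a)$ in $G=\SL_n\R$. This follows the idea of \cite{Witte}: the lifted leaf-space homeomorphism commutes with a hyperbolic translation and preserves the stable and unstable foliations. We would argue that it must come from an affine map $x\mapsto c\,x$ with $c$ normalizing the relevant structure, composed with an automorphism induced by $N_G(\Gamma)$ of $\Gamma$, modulo the center leaves. The supporting points are:
\begin{itemize}
\item the action of $g$ on the fundamental group is an automorphism of the lattice $\Gamma$;
\item by Mostow/Margulis rigidity, for $n\ge3$ this automorphism is induced by conjugation by an element of $\mathrm{Aut}(G)$;
\item compatibility with $f$ forces that element to normalize $Z_G(a)$ up to finitely many choices.
\end{itemize}
Composing with the leaf conjugacy identifies the image of $\mathcal Z(f)$ in the group of leaf-space symmetries with a subgroup of a finite extension of $Z_G(a)/Z_G(a)^\circ$-type data. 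Concretely, it lands in the finite group $\mathrm{Out}$-data together with the finite group $N_G(Z_G(a))\cap\mathrm{Comm}(\Gamma)$ modulo $Z_G(a)\Gamma$.

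\emph{Step 3: conclude finiteness.} The kernel of the map $g\mapsto\bar g$ is exactly $\CZ(f)$, since $\bar g=\mathrm{id}$ means $g(x)\in\W^c_f(x)$ for all $x$. The image is finite by Step~2, so $\CZ(f)$ has finite index.

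The main obstacle is Step~2. One has to show that a merely continuous symmetry of the leaf space that commutes with $\bar f_0$ is determined, up to a finite ambiguity, by its action on $\pi_1$. This is where the trick of \cite{Witte} enters: the commuting homeomorphism is homotopic to an affine map, and uniqueness of leaf conjugacies (expansivity of $\bar f_0$ on the leaf space) forces equality with the affine model on leaves. I would first verify this uniqueness via plaque expansivity, which is available from Proposition~\ref{fparhyp}.
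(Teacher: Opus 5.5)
Your Steps 1 and 3 are fine. The paper also takes for granted that $\mathcal Z(f)$ permutes the leaves of $\W^c_f$, and the kernel of the induced action on leaves is $\CZ(f)$ by definition. The gap is Step 2, and the mechanism you propose there cannot work.

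\begin{itemize}
\item Since $a=\exp(\log a)\in Z_G(a)^0$, the translation $f_0=L_a$ maps every center leaf $Z_G(a)^0x\Gamma$ to itself. The paper uses exactly this to get $f\in\CZ(f)$. So $\bar f_0$ is the identity on the leaf space: commuting with it is vacuous, and there is no expansivity of $\bar f_0$ on the leaf space to invoke. Moreover, the leaf space is non-Hausdorff, since almost every $Z_G(a)^0$-orbit is dense.
\item Likewise, $f$ is isotopic to $L_a$, hence to the identity. So $f$ acts on $\pi_1(X)$ by an inner automorphism, and $gf=fg$ imposes no condition on $g_*$. Your third bullet (``compatibility with $f$ forces that element to normalize $Z_G(a)$'') therefore has no source.
\item Mostow rigidity does give an affine map $\eta$ realizing $g_*$, whose lift stays at bounded distance from a lift of $\hat g=\phi_f^{-1}g\phi_f$. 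But nothing you wrote shows that $\eta$ permutes the $Z_G(a)^0$-orbits. The plaque-expansivity uniqueness argument for leaf conjugacies presupposes that both maps respect the center foliation and intertwine the dynamics modulo leaves, which is exactly what has to be proved.
\end{itemize}

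The missing ingredient is a rigidity statement for homeomorphisms that carry the orbit foliation of the noncompact reductive group $Z=Z_G(a)^0$ to itself. The only usable constraint on $\hat g$ is this geometric one, not anything coming from $f$.

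The paper does not reprove the statement. It cites \cite{Wang} together with Witte's rigidity of topological equivalences between orbit foliations of homogeneous spaces \cite{Witte}. The input there is precisely that such a leaf-preserving homeomorphism agrees modulo leaves with an affine map of $G/\Gamma$ preserving the foliation. The affine symmetries of the foliation, modulo those fixing every leaf, then form a finite group built from $N_G(Z)/Z$ and the finite outer-automorphism data of $\Gamma$. Commensurators play no role, and $Z_G(a)\Gamma$ is not a group one can quotient by.

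If you want to keep your Mostow route, you must supply this step yourself. One way:
\begin{itemize}
\item Write $\tilde{\hat g}(x)=\psi(x)\Sigma(x)$, with $\Sigma$ the extension of $g_*$ and $\psi$ bounded by equivariance.
\item Leaf preservation then puts $\Sigma(Z)$ inside $B\,b^{-1}Zb$ for a compact $B$ and some $b\in G$.
\item Since $Z$ is reductive, $G/Z$ and $Z\backslash G$ are affine varieties on which split tori and unipotent subgroups have no nontrivial bounded orbits.
\item This forces $\Sigma(Z)=b^{-1}Zb$. It then shows that $\hat g$ agrees modulo center leaves with $x\mapsto n\,b\,\Sigma(x)$ for a fixed $n\in N_G(Z)$, which gives the finite image you need.
\end{itemize}
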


From now on, we concentrate on dynamical properties of center-fixing elements $g\in \CZ(f)$.

\subsection{\texorpdfstring{Elements of the centralizer are $C^0$ close to
affine}{Elements of the centralizer are C0 close to affine}}
Following \cite{Wang26}, we estimate the difference between the $su$-holonomies
of $f$ and $f_0$ to show that the elements of $\CZ(f)$ are $C^0$-close to affine
translations along center leaves of $f_0$ in $X$.

 Throughout the rest of the paper, we take $f\in \mathrm{Diff}^\infty(X)$ with $d_{C^2}(f,f_0)<K$ and $d_{C^1}(f,f_0)$ sufficiently small.
 
 Let $\phi_f$ be the  canonical leaf
conjugacy from $(X,\mathcal W^c_{f_0})$ to $(X,\mathcal W^c_f)$
provided by Theorem ~\ref{leafconj}. For
$g\in\CZ(f)$ denote  $\hat g:=\phi_f^{-1}\,g\,\phi_f$. Since $g$ fixes the
center leaves of $f$, the conjugate map $\hat g$ fixes the center leaves of $f_0$.

Denote by $\tilde g$ a lift of $\hat g$ to the cover $G$ of $G/\Gamma$,
and denote by $\widetilde{\mathcal W}^c_{f_0}$ the lift of the center leaves of
$f_0$ to $G$, which are cosets of a subgroup of $G$.

Then we have the following result from \cite{Wang26}.

\begin{prop}[Proposition 4.1 of \cite{Wang26}]\label{gtransl}
For any $0<\epsilon<1$ and $R>0$ there exists $\delta>0$ such that for any
$f\in\mathrm{Diff}^r(X)$ with $d_{C^1}(f,f_0)<\delta$, and for any
$x\in X$ and $g\in\CZ(f)^0$, we have that if
$d_{\widetilde{\mathcal W}^c_{f_0}}(\tilde g(x),x)\le R$, then we have
\[
d_{\widetilde{\mathcal W}^c_{f_0}}(\tilde g(y),g_1(y))
\le \epsilon\; d_{\widetilde{\mathcal W}^c_{f_0}}(\tilde{g}(x),x),
\;\; \forall y\in X.
\]

Here $g_1$ denotes the left translation in $G/\Gamma$ determined by
$\tilde g(x)x^{-1}$, i.e.
$g_1(y)=\big(\tilde g(x)x^{-1}\big)\cdot y$.

\end{prop}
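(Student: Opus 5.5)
The plan is to compare $\hat g$ with the affine map $g_1$ by transporting the single point $x$, where they agree, to an arbitrary point $y$ along $su$-paths. The transport uses the $su$-holonomies of $f$, which $g$ must respect, and those of $f_0$, which respect left translations. The whole argument runs in the universal cover $G$, with lifted center leaves $\widetilde{\mathcal W}^c_{f_0}$, which are cosets $Cz$ of the connected subgroup $C$ with Lie algebra $\mathfrak g^c_{f_0}$.

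\emph{Step 1 (invariances).} Since $g$ commutes with $f$, it maps $\mathcal W^{s}_f$ and $\mathcal W^{u}_f$ leaves to $\mathcal W^{s}_f$ and $\mathcal W^{u}_f$ leaves. Since $g$ fixes every center leaf, $g\circ h^f_\gamma=h^f_{\gamma'}\circ g$ for any $su$-path $\gamma$, where $\gamma'$ is the corresponding path starting at $g(x_0)$ and ending on the same center leaf. Because the holonomy depends only on the endpoint center leaves and the homotopy class, this reads $g\circ h^f_\gamma=h^f_\gamma\circ g$ as maps between center leaves. Here we use Proposition~\ref{proP:globhol}, so $h^f_\gamma$ is globally defined on lifted center leaves. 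Taking $g\in\CZ(f)^0$ makes the lift $\tilde g$ canonical, namely the one isotopic to the identity, so the relation lifts.

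Conjugating by $\phi_f$, set $\hat h_\gamma:=\phi_f^{-1}h^f_\gamma\phi_f$. Then $\tilde g\circ\hat h_\gamma=\hat h_\gamma\circ\tilde g$. For the model, a direct computation shows that $f_0$-holonomies are equivariant under left $C$-translations. Indeed, $C$ normalizes the stable and unstable root subgroups $U^{s}$ and $U^{u}$. Hence the $s$-holonomy along $x\mapsto ux$ with $u\in U^s$ sends $cx$ to $cux$, since $c u c^{-1}\in U^s$. So every $f_0$-holonomy has the form $cz\mapsto c\,w$ for some fixed pair $z\mapsto w$; call this map $A_\gamma$.

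\emph{Step 2 (bounded paths and $C^1$-closeness).} By Proposition~\ref{fparhyp}(4), every $y$ can be joined to $x$ by an $f$-$su$-path with at most $N$ legs and total length at most $C$, after the center adjustment of Section~\ref{sec:suhol}. We choose the path so that $\hat h_\gamma(x)=y$ exactly; this is possible because the terminal center leaf is hit and we may absorb a center correction. Theorem~\ref{c1hol} and the leafwise $C^1$ convergence of $\phi_f$ to the identity from Theorem~\ref{leafconj} then imply the following. On the ball of radius $2R$ around $x$ in $\widetilde{\mathcal W}^c_{f_0}(x)$, the map $\hat h_\gamma$ is $C^1$-close, within some $\eta(\delta)\to0$, to the equivariant affine map $A_\gamma$ normalized by $A_\gamma(x)=y$, uniformly in $y$ and $x$. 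The $N$ legs and the bounded length keep all constants uniform.

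\emph{Step 3 (the estimate).} Write $\tilde g(x)=cx$ with $c=\tilde g(x)x^{-1}\in C$, so $g_1=L_c$ and $d(c,e)\lesssim d(\tilde g(x),x)=:d\le R$. Then
\[
\tilde g(y)=\tilde g(\hat h_\gamma(x))=\hat h_\gamma(\tilde g(x))=\hat h_\gamma(cx),
\]
while $g_1(y)=cy=c\,A_\gamma(x)=A_\gamma(cx)$ by equivariance. Set $\Delta:=\hat h_\gamma-A_\gamma$, computed in exponential charts of $C$ on the $2R$-ball. Then
\[
d\bigl(\tilde g(y),g_1(y)\bigr)\lesssim|\Delta(cx)-\Delta(x)|\le\|\Delta\|_{C^1}\,d\le C_R\,\eta(\delta)\,d,
\]
using $\Delta(x)=0$. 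The constant $C_R$ accounts for the distortion of left translations and charts in the right-invariant metric on a ball of radius $R$. Choosing $\delta$ so that $C_R\eta(\delta)<\epsilon$ gives the claim.

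\emph{Main obstacle.} The hardest step is Step 2. We need $C^1$-closeness of $\hat h_\gamma$ to $A_\gamma$ on a center ball of fixed radius $2R$, not just on a small local neighborhood, together with the legitimacy of composing the lifts of $\tilde g$ and the global holonomies. My first attempt would be to cover the $2R$-ball by finitely many local holonomy charts, each controlled by Theorem~\ref{c1hol}. I would then glue them using the uniqueness in Proposition~\ref{proP:globhol}, with the number of charts depending only on $R$, $N$ and $C$.
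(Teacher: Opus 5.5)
Your proposal is correct and follows essentially the route the paper indicates for this cited result, namely comparing the $su$-holonomies of $f$ and $f_0$: $g$ commutes with the global $f$-holonomies, which you justify via an isotopy in $\CZ(f)^0$; left $C$-translations commute with the $f_0$-holonomies; and normalizing the comparison map at $x$ kills the zeroth-order term, so uniform $C^1$-closeness of the holonomies on a center ball of radius $\sim R$ gives the relative bound $\epsilon\, d_{\widetilde{\mathcal W}^c_{f_0}}(\tilde g(x),x)$. Two small points: the uniform $C^1$-closeness in Step 2 (via Theorem~\ref{c1hol}) relies on the standing bound $d_{C^2}(f,f_0)\le K$, not only on $C^1$-smallness; and on balls of radius $R$ the exponential map of $C$ need not be injective, so Step 3 should be done by integrating along a path in right-invariant trivializations (a Gronwall estimate) rather than in exponential charts.
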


Motivated by Proposition \ref{gtransl} we make the following definition.

\begin{defi}\label{def:projepsclose}
Let $g:X\to X$. We say that $g$ is \emph{projectively $\epsilon$-close} to a left
translation $g_0$ if for every $y\in X$ the lifts satisfy
\[
d_{\widetilde{\mathcal W}^c_{f_0}}(\tilde g(y),g_0(y))
\le \epsilon\; d_{\widetilde{\mathcal W}^c_{f_0}}(g_0(y),y),
\]
where $\tilde g$ is the lift of  $\hat g=\phi_f^{-1} g\phi_f$ to $G$.
We note that $g_0$ is a left translation, so the right hand side
$d_{\widetilde{\mathcal W}^c_{f_0}}(g_0(y),y)$ is a constant not
depending on $y$.
\end{defi}

\subsection{Adapted isomorphism of the Lie algebras}

First we recall a slight variation of Theorem 3.1 in \cite{Wang}, which
implies that $\CZ(f)$ is a Lie group acting freely, properly, and smoothly
on the lifted center leaf $\widetilde{\mathcal W}^c_F$.

\begin{thm}[Theorem 3.1, \cite{Wang}]\label{czlieg}
Let $M$ be a compact Riemannian manifold and let $F$ be a partially
hyperbolic diffeomorphism of $M$. Assume $F$ is accessible and dynamically
coherent, center $r$-bunched with narrow band spectrum, and that a lift of
$F$ has global holonomy on the lift of a center leaf
$\widetilde{\mathcal W}^c_F(x_0)$. Then the $\mathcal W^c$-fixing,
$C^\infty$ centralizer $\CZ(F)$ is a Lie group acting freely and
properly on $\widetilde{\mathcal W}^c_F(x_0)$. Moreover the action
\[
\CZ(F)\times \widetilde{\mathcal W}^c_F(x_0)\to\widetilde{\mathcal W}^c_F(x_0),
\qquad (g,x)\mapsto g(x),
\]
is jointly $C^\infty$.
\end{thm}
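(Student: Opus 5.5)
The plan is to follow the strategy of \cite{Wang}: realize $\CZ(F)$ as a closed subgroup of the diffeomorphism group of a single lifted center leaf, and then use the rigidity that comes from preserving the $su$-holonomy structure.

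\textbf{Step 1: injectivity of restriction.} Fix the lifted leaf $\widetilde{\mathcal W}^c_F(x_0)$. Any $g\in\CZ(F)$ commutes with $F$, so it preserves $\mathcal W^s_F$, $\mathcal W^u_F$ and the center leaves. Hence $g$ intertwines $su$-holonomies: if $\gamma$ is an $su$-path, then $g\circ h^F_\gamma = h^F_{g\gamma}\circ g$. Since each leaf is fixed, $g\gamma$ has the same endpoint leaves. By accessibility and global holonomy, every center leaf is the image of $\widetilde{\mathcal W}^c_F(x_0)$ under some global holonomy $h^F_\gamma$. It follows that $g$ is determined by its restriction to $\widetilde{\mathcal W}^c_F(x_0)$, so restriction is injective.

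\textbf{Step 2: a holonomy-compatibility characterization.} The image of restriction consists of diffeomorphisms $\psi$ of the leaf with two properties. First, $\psi$ commutes, in the appropriate sense, with the holonomy groupoid obtained from $su$-loops returning to the leaf. Second, $\psi$ conjugates the leafwise return dynamics induced by $F$. I will show this set is closed in the $C^0$ topology on compacta. The point is that a $C^0$-limit of such maps still defines a global map of $M$ by transporting along holonomies. Well-definedness uses the commutation with loops. Commutation with $F$ uses continuity. Smoothness comes from the fact that holonomies are uniformly $C^r$ (Theorem~\ref{c1hol}, via center $r$-bunching). Smoothness in the transverse directions follows from the $C^r$ regularity of stable and unstable leaves together with Journé-type arguments.

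\textbf{Step 3: freeness and properness.} For freeness, suppose $g(x)=x$ for a point $x$ of the leaf. Then $g$ fixes the whole $su$-accessibility class, and compatibility propagates derivative information. Combining this with narrow-band spectrum (which gives $C^\infty$ normal forms on $E^s,E^u$ that $g$ must respect), $g$ acts as a linear normal-form map fixing $x$ that commutes with all holonomies. From this I would argue $g=\mathrm{id}$. Properness should follow from a uniform equicontinuity estimate: elements moving a compact set a bounded amount have uniformly bounded $C^r$ norms, because holonomies of bounded $su$-paths (bounded number of legs and length) have uniform $C^r$ bounds.

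\textbf{Step 4: Lie group and joint smoothness.} The group is then locally compact and acts freely, properly, and by $C^r$ diffeomorphisms with uniform bounds. I would invoke the Montgomery--Zippin / Bochner--Montgomery theorem: a locally compact group acting effectively by $C^r$ diffeomorphisms on a manifold is a Lie group, and the action is jointly $C^r$ for every $r$, hence $C^\infty$. Uniform $C^r$ bounds for all $r$ require bunching for all $r$, which the hypotheses supply.

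\textbf{Main obstacle.} The hardest step is the freeness and properness part of Step 3, specifically obtaining uniform higher-derivative control and no-small-subgroups from the normal forms. I expect to use nonstationary normal form theory on $E^s$ and $E^u$, which is where narrow band enters, to show that $g$ acts by polynomial maps of bounded degree in normal-form coordinates. This reduces compactness to a finite-dimensional statement.
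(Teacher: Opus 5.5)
Your architecture is the right one: transport along $su$-paths, a closedness statement, narrow-band normal forms, and Bochner--Montgomery. But the identity that drives the argument is missing, and as a result two steps do not go through.

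\textbf{Commutation with holonomy, and freeness.} In Step~1 you only record $g\circ h_\gamma=h_{g\gamma}\circ g$. Since $g\gamma$ depends on the values of $g$ at the intermediate vertices of $\gamma$, which lie off $\widetilde{\mathcal W}^c_F(x_0)$, this alone does not show that $g$ is determined by its restriction to that leaf. What you need is $h_{g\gamma}=h_\gamma$. The lift of $g$ preserves every lifted center leaf that $\gamma$ visits. Under the global-holonomy hypothesis, the holonomy of an $su$-path is a globally defined map depending only on the sequence of lifted center leaves it visits. So $g$ genuinely commutes with every $su$-holonomy.

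With this, freeness is immediate and has nothing to do with normal forms. By accessibility, the $su$-loop holonomies act transitively on the leaf. If $g(x)=x$, then $g(h_\gamma x)=h_\gamma(gx)=h_\gamma x$ for every loop $\gamma$. Hence $g$ is the identity on the leaf, and by transport on $M$. Your Step~3 asserts that $g$ ``fixes the whole accessibility class'' without this justification. It then turns to normal forms and ends with ``I would argue $g=\mathrm{id}$'', which is not an argument.

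\textbf{Properness and local compactness.} This step, where the real work lies, fails as written. Uniform $C^r$ bounds on holonomies in the leaf variable do not bound the $C^r$ norm of $g$. The relation $g=h_\gamma\circ g\circ h_\gamma^{-1}$ only moves a bound from one point of the leaf to another. The formula $g(y)=h_{\gamma_y}(g(x_0))$ expresses $g$ through the dependence of holonomy on the endpoints of its legs, which is only continuous (Theorem~\ref{c1hol}). Moreover, the hypothesis is center $r$-bunching for one fixed $r$, not for all $r$.

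What the holonomies do give is equicontinuity. Hence $\{g\in\CZ(F):g(x_0)\in K\}$ is $C^0$-precompact for compact $K$ by Arzel\`a--Ascoli. The substantive claim is then that a $C^0$-limit of elements of $\CZ(F)$ is again a $C^\infty$ element of $\CZ(F)$.

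This is where narrow band spectrum is used. In nonstationary normal-form coordinates, the restriction of every element of $\CZ(F)$ to $\mathcal W^s_F$- and $\mathcal W^u_F$-leaves is a sub-resonance polynomial of uniformly bounded degree. This finite-dimensional class is closed under $C^0$-limits, with convergence in every $C^k$.

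Your Step~2 instead attributes transverse smoothness to the $C^r$ regularity of the stable and unstable leaves together with Journ\'e. That says nothing about the regularity of the limit map restricted to those leaves. Likewise, center regularity of the limit needs an argument beyond the $C^r$ regularity of individual holonomy maps, for the reason just given.

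Once closedness is in hand, the orbit map identifies $\CZ(F)$ with a closed subset of the leaf, so it is locally compact. Bochner--Montgomery, applied to this group of $C^\infty$ diffeomorphisms, then gives the Lie structure and the joint $C^\infty$ regularity. No bunching for all $r$ is needed.
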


Applying this to $f_0$, and passing to the subgroup whose lifts induce the
identity on the deck group, identifies the identity component of
$\CZ(f_0)$ with the connected algebraic centralizer $G^c=Z_G(a)^0$.
Consequently $\mathcal Z(f_0)$ is  virtually the  algebraic centralizer.

 Propositions \ref{fparhyp} and \ref{proP:globhol} verify
the hypotheses of Theorem \ref{czlieg} for  every sufficiently small
$C^1$  perturbation of $f_0$.  Hence $\CZ(f)$ is a Lie group acting freely and
properly on  each lifted center leaf, and the action is jointly smooth.
 Since $\CZ(f)$ has finite index in $\mathcal Z(f)$, the  full centralizer is
a finite  extension of  this Lie group; this proves the first assertion of
Theorem \ref{cenrignongen}.

 Fix the constant $K$ in Theorem ~\ref{cenrignongen}.   Choose a
$K$-regular  base point $z_\ast\in X$, put
$x_\ast=\phi_f^{-1}(z_\ast)$.  The action
 \[
\sigma:\CZ(f)\times \mathcal W^c_f(z_\ast)
       \longrightarrow\mathcal W^c_f(z_\ast)
\]
is jointly $C^\infty$, and  $\phi_f$ is $C^r$ when restricted to center
leaves. Differentiating the action at the basepoint yields a linear map
 \begin{equation}\label{equ:Psi}
\Psi:\mathfrak g(\CZ(f))
\ \xrightarrow{\,D\sigma(\cdot,z_\ast)\,}\ 
T_{z_\ast}\mathcal W^c_f(z_\ast)
\ \xrightarrow{\,D\phi_f^{-1}\,}\ 
T_{x_\ast}\mathcal W^c_{f_0}(x_\ast)\simeq\mathfrak g(G^c),
\end{equation}
where $\mathfrak g(\CZ(f))$ denotes the Lie algebra of $\CZ(f)$ and
$\mathfrak g(G^c)$ the Lie algebra of $G^c$.

We claim that there  is a  Lie-algebra isomorphism $\Xi$  close to $\Psi$.
The  next lemma makes this precise.   This is the  only use of the
 $K$-regularity hypothesis.

\begin{lem}\label{liealg}

For  every $K>1$ and  $\epsilon_0>0$ there  is a $C^1$-neighborhood of
$f_0$ with the following  property.  If
 $\mathcal Z(f)\doteq\mathcal Z(f_0)$ and the  centralizer action is
$K$-regular, then  there is a  Lie-algebra isomorphism
 \[
 \Xi:\mathfrak g(\CZ(f))\longrightarrow\mathfrak g(G^c)
 \quad\text{such that}\quad
 \|\Xi-\Psi\|_{\mathrm{op}}<\epsilon_0.
\]

\end{lem}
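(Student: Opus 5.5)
The plan is to show that $\Psi$ is an approximate Lie-algebra homomorphism with uniformly controlled distortion. We then invoke rigidity of Lie algebra structures: an almost-homomorphism between isomorphic semisimple-plus-abelian algebras lies near a genuine isomorphism.

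\emph{Step 1: $\Psi$ is uniformly bi-Lipschitz.} By $K$-regularity at $z_\ast$, the derivative $D\sigma(\cdot,z_\ast)$ at the identity has norm and conorm bounded by $K$. Here $\mathfrak g(\CZ(f))$ carries the pulled-back norm, so this map is in fact an isometry onto its image. Since $\phi_f$ is leafwise $C^1$-close to the identity (Theorem~\ref{leafconj}), $\Psi$ is a linear isomorphism with $\|\Psi\|,\|\Psi^{-1}\|\le 2$. Injectivity uses freeness of the action (Theorem~\ref{czlieg}). Surjectivity uses a dimension count: $\mathcal Z(f)\doteq\mathcal Z(f_0)$ gives $\dim\mathfrak g(\CZ(f))=\dim\mathfrak g(G^c)$.

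\emph{Step 2: almost-homomorphism.} For $X\in\mathfrak g(\CZ(f))$ with $\|X\|$ small but fixed (say $\|X\|= t\le K^{-1}$), consider $g=\exp X$. Proposition~\ref{gtransl} shows that $\tilde g$ is projectively $\epsilon$-close to the left translation $g_1=\tilde g(x_\ast)x_\ast^{-1}$. The $C^3$ bounds of $K$-regularity let us compare $\tilde g(x_\ast)x_\ast^{-1}$ with $\exp(\Psi X)$ up to an error $O(\epsilon t + K t^2)$. Apply this to the group commutators $\exp(sX)\exp(sY)\exp(-sX)\exp(-sY)$, with $s$ tied to $t$. Since projective closeness is multiplicative up to constants, compositions of centralizer elements are close to the corresponding products of translations. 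Comparing second-order terms, and dividing by $s^2$ with $s$ chosen so that $K s\ll\epsilon_1$ while $\epsilon\ll s$, gives
\[
\|\Psi[X,Y]-[\Psi X,\Psi Y]\|\le \epsilon_1\|X\|\|Y\|
\]
with $\epsilon_1\to 0$ as $d_{C^1}(f,f_0)\to0$, uniformly for fixed $K$. The $C^3$ bound is what controls the Baker--Campbell--Hausdorff remainder uniformly. This is the step I expect to be the main obstacle. Projective closeness only controls errors relative to displacement, so the choice of scale $s$ must balance the perturbative error against the cubic BCH terms. The comparison must also be made at the single base point $x_\ast$ through $\phi_f$.

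\emph{Step 3: from near-homomorphism to isomorphism.} Transport the bracket of $\mathfrak g(\CZ(f))$ via $\Psi$ to a Lie bracket $\mu_f$ on the vector space $\mathfrak g(G^c)$. Step 2 says that $\mu_f$ is $\epsilon_1$-close to the standard bracket $\mu_0$ of $\mathfrak g(G^c)\cong\R^{k-1}\oplus\bigoplus\mathfrak{sl}_{n_i}\R$. By hypothesis, $\mathcal Z(f)\doteq\mathcal Z(f_0)$, so $\mu_f\cong\mu_0$, and $\mu_f$ lies in the $\GL$-orbit of $\mu_0$ in the variety of Lie brackets. We need $A\in\GL(\mathfrak g(G^c))$ with $A\cdot\mu_f=\mu_0$ and $\|A-\mathrm{id}\|$ small. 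The algebra $\mathfrak g(G^c)$ is reductive, with $H^2$ of the semisimple part vanishing, but deformations of the abelian center are not rigid in general. So rather than quoting Nijenhuis--Richardson directly, I would argue within the orbit: the orbit map $\GL\to\GL\cdot\mu_0$ is a submersion onto a locally closed smooth orbit. Knowing $\mu_f$ lies in that orbit and near $\mu_0$, local closedness of the orbit gives $A$ near the identity. Concretely, first match the centers and the semisimple parts, using that the center and the radical vary continuously on the orbit and that semisimple $\mathfrak{sl}_{n_i}$ brackets are rigid by Whitehead's lemma. Then set $\Xi=A\circ\Psi$. This gives $\|\Xi-\Psi\|\le\|A-\mathrm{id}\|\|\Psi\|<\epsilon_0$ once the neighborhood of $f_0$ is small enough.
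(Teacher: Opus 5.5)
Your Steps 1--2 are essentially the paper's Step 1. The paper also telescopes Proposition~\ref{gtransl} over the four factors of the commutator to get a $C_1\epsilon t$ discrepancy at $x_\ast$. It then uses the $C^3$ part of $K$-regularity for the cubic remainder and takes $t\sim\sqrt\epsilon$ to get a bracket defect of order $\sqrt\epsilon$.

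Your Step 3 takes a genuinely different route. The paper fixes an abstract identification $\mathfrak g(\CZ(f))\simeq V=\mathfrak g(G^c)$ and regards $\Psi$ as a point of the compact set $\mathcal L_K=\{T\in\GL(V):\|T\|,\|T^{-1}\|\le K\}$. It then argues by compactness and continuity of the bracket defect $\Delta$ that a small defect forces proximity to the zero set $\mathcal L_K^0$ of automorphisms. You instead transport the bracket through $\Psi$ itself, so that $\mu_f$ lies in the $\GL(V)$-orbit of $\mu_0$ and close to it. Local closedness of that algebraic orbit, or your matching of center and semisimple part plus Whitehead rigidity, then yields $A$ near the identity, and you set $\Xi=A\Psi$.

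\textbf{What each route buys.} The paper's argument is shorter and purely soft. However, it needs the two-sided bound on $\Psi$ as a self-map of $V$ through the chosen identification. In effect this is a uniform bound on the abstract isomorphism relative to the pulled-back norm, which the paper draws from $K$-regularity. Your route needs only that $\Psi$ is nearly isometric, which you correctly get from the leafwise $C^1$-closeness of $\phi_f$. It uses $\mathcal Z(f)\doteq\mathcal Z(f_0)$ only qualitatively, at the cost of an algebraic input.

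Your route also makes visible why the isomorphism hypothesis cannot be replaced by mere closeness of brackets when the center has dimension at least $2$. In the one-root case one has $\dim\mathfrak z=1$, so $H^2(\mathfrak g,\mathfrak g)\cong\Lambda^2\mathfrak z^*\otimes\mathfrak z=0$ and Nijenhuis--Richardson applies directly.

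One small correction to your concrete version: the second subspace to track is the derived algebra, not the radical. The derived algebra is the image of the bracket and has constant rank on the orbit; the radical of a reductive algebra is just its center.
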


\begin{proof}

{ \textbf{Step 1: $\Psi$ almost preserves brackets.}
}

The assumed $K$-regularity of $\mathcal Z(f)$  gives the required
quantitative bounds for the orbit map of $\CZ(f)$  at $z_\ast$.

Since $\mathcal Z(f)\doteq \mathcal Z(f_0)$ and $\mathcal{Z}(f)/\CZ(f),\mathcal{Z}(f_0)/\CZ(f_0)$ are finite, the Lie algebras  $\mathfrak g(\CZ(f))$ and $\mathfrak{g}(G^c)$ are isomorphic.

 Use the norm  on $\mathfrak g(\CZ(f))$ induced by the  derivative of  the
orbit map,
 \[
 \|X\|=\lim_{t\to0}
 \frac{d(\sigma(\exp(tX),z_\ast),z_\ast)}{|t|}.
\]
Let  $X_1,X_2$ be unit vectors and  write
\[
g_i^t:=\exp(tX_i)\in \CZ(f),\qquad i=1,2.
\]
 For $0<t<\frac{1}{K}$, set
the left translations in $G/\Gamma$ 
 \[
L_i^t(y):=(\tilde g_i^t(x_\ast)x_\ast^{-1})\cdot y.
\]

The commutator estimate supplied by Proposition \ref{gtransl} is
\begin{equation}\label{eq:commutator-c0}
d\Bigl(\hat g_1^{-t}\hat g_2^{-t}\hat g_1^t\hat g_2^t(x_\ast),\,
(L_1^t)^{-1}(L_2^t)^{-1}L_1^tL_2^t(x_\ast)\Bigr)
\le C_1\epsilon\,t,
\end{equation}
for  a constant $C_1$ depending only on the fixed  local geometry.

To prove \eqref{eq:commutator-c0}, apply Proposition~\ref{gtransl}
successively to the four factors.  For the negative-time
factors, compose the estimate from Proposition~\ref{gtransl} with
$(L_i^t)^{-1}$; after enlarging a uniform constant $C_0$, this compares
$\hat g_i^{-t}$ with $(L_i^t)^{-1}$.  The last error is zero at
$x_\ast$ by the definition of $L_2^t$:

$$d\Bigl(\hat g_1^{-t}\hat g_2^{-t}\hat g_1^t\hat g_2^t(x_\ast),\,
(L_1^t)^{-1}\hat g_2^{-t}\hat g_1^t\hat g_2^t(x_\ast)\Bigr)
\le C_0\epsilon d(\tilde g_1^t(x_\ast),x_\ast);$$
\begin{align*}
&d\Bigl((L_1^t)^{-1}\hat g_2^{-t}\hat g_1^t\hat g_2^t(x_\ast),
(L_1^t)^{-1}(L_2^t)^{-1}\hat g_1^t\hat g_2^t(x_\ast)\Bigr)\\
&\qquad\le C_0\epsilon d(\tilde g_2^t(x_\ast),x_\ast)
 \cdot \|(L_1^t)^{-1}\|_{C^1}.
\end{align*}
\begin{align*}
&d\Bigl((L_1^t)^{-1}(L_2^t)^{-1}\hat g_1^t\hat g_2^t(x_\ast),
 (L_1^t)^{-1}(L_2^t)^{-1}L_1^t\hat g_2^t(x_\ast)\Bigr)\\
&\qquad\le \epsilon d(\tilde g_1^t(x_\ast),x_\ast)
 \cdot \|(L_1^t)^{-1}\|_{C^1}\cdot \|(L_2^t)^{-1}\|_{C^1}.
\end{align*}
and note that
 $$(L_1^t)^{-1} (L_2^t)^{-1}  L_1^t\hat g_2^t(x_\ast)=(L_1^t)^{-1}(L_2^t)^{-1} L_1^t L_2^t(x_\ast).$$

For left translations,  $\|DL\|$ and $\|DL^{-1}\|$ tend uniformly  to $1$
as  $L\to\mathrm{id}$, so the displayed estimates prove
\eqref{eq:commutator-c0}.

 The commutator
displacement at $z_\ast$ is $O(t^2)$;
differentiability of $\phi_f^{-1}$ at $z_\ast$ sends its quadratic term
to $\Psi([X_1,X_2])$, while the $C^3$ bound for the original orbit map
controls the $O(t^3)$ remainder.  Thus, the $C^3$ part of $K$-regularity and
\eqref{eq:commutator-c0} give, after division by $t^2$,  
 \begin{equation}
\|\Psi([X_1,X_2])-[\Psi(X_1),\Psi(X_2)]\|
\le C_1 \frac{\epsilon }{t}+C_2K^2t
\end{equation}
Here $C_1,C_2$ are uniform constants, and the
$K$-regularity estimate is applied at its stated base point:
\[
\sup_{d(g,\mathrm{id})<1/K}
\bigl(\|D^2\sigma(g,z_\ast)\|+\|D^3\sigma(g,z_\ast)\|\bigr)
\le K.
\]
Taking $t$ comparable to $\sqrt\epsilon$ (with constants depending on
$K$) gives
\begin{align}
&\|\Psi([X_1,X_2])-[\Psi(X_1),\Psi(X_2)]\|_{\mathfrak g(G^c)}
\nonumber\\
&\qquad\le C_0\sqrt{\epsilon}\,
\|X_1\|_{\mathfrak g(\CZ(f))}
\|X_2\|_{\mathfrak g(\CZ(f))}.
\label{eq:approx-Lie-hom}
\end{align}
By homogeneity, the estimate holds for all $X_1,X_2\in \mathfrak g(\CZ(f))$.

\textbf{Step 2: find a nearby Lie-algebra isomorphism.}

Fix $\epsilon_0>0$.  Using the fixed identification $
\mathfrak g(\CZ(f))\simeq V:=\mathfrak g(G^c)$ given by virtual isomorphism between $\mathcal Z(f)$ and $\mathcal Z(f_0)$,
we regard $\Psi$ as an invertible linear map from $V$ to itself.  The
$K$-regularity assumption gives, after enlarging $K$ by a uniform
constant if necessary,
\[
\|\Psi\|_{\mathrm{op}}\le K,
\qquad
\|\Psi^{-1}\|_{\mathrm{op}}\le K.
\]
Consider the compact set
\[
\mathcal L_K
 :=
 \left\{
 T\in\GL(V):
 \|T\|_{\mathrm{op}}\le K,\ 
 \|T^{-1}\|_{\mathrm{op}}\le K
 \right\}.
\]
Define the bracket defect
\[
\Delta(T)
 :=
 \sup_{\|X\|=\|Y\|=1}
 \bigl\|
 T([X,Y])-[T(X),T(Y)]
 \bigr\|,
 \qquad T\in\mathcal L_K,
\]
and let
\[
\mathcal L_K^0
 :=
 \{T\in\mathcal L_K:\Delta(T)=0\}.
\]
The function $\Delta$ is continuous, and every element of
$\mathcal L_K^0$ is an invertible Lie-algebra homomorphism, hence a
Lie-algebra isomorphism.

By compactness of $\mathcal L_K$, for every $\eta>0$, there exists
$\delta=\delta(K,\eta)>0$ such that
\begin{equation}\label{eq:defect-distance}
\Delta(T)<\delta
\quad\Longrightarrow\quad
d(T,\mathcal L_K^0)<\eta
\end{equation}
for every $T\in\mathcal L_K$. 

By \eqref{eq:approx-Lie-hom},
$
\Delta(\Psi)\le C_0(K)\sqrt{\epsilon}.
$
Choose the $C^1$-neighborhood of $f_0$ sufficiently small that
\[
C_0(K)\sqrt{\epsilon}<\delta(K,\epsilon_0).
\]
Then \eqref{eq:defect-distance} gives an element
$\Xi\in\mathcal L_K^0$ satisfying $
\|\Xi-\Psi\|_{\mathrm{op}}<\epsilon_0.$ 

Since $\Xi: \mathfrak g(\CZ(f))\longrightarrow\mathfrak g(G^c)$ is invertible and preserves brackets, it is a Lie-algebra
isomorphism.
This proves the lemma.

\end{proof}
\subsection{Dynamics in the center foliation}\label{sec:gcendyn}
Now suppose $f_0$ is a diagonal map,
$\mathcal Z(f)\doteq \mathcal Z(f_0)$ and the $\mathcal Z(f)$ action is $K$-regular. Then the Lie algebra homomorphism
of Lemma \ref{liealg} is an isomorphism.

\begin{lem}\label{lem:finflow}
    Suppose $f_0$ is a diagonal map and $f$ is a sufficiently small $C^1$-perturbation of $f_0$, such that
$\mathcal Z(f)\doteq \mathcal Z(f_0)$ and the $\mathcal Z(f)$ action is $K$-regular, then there exists a flow $f^t\in \mathcal Z(f)$ such that $f^1=f$ and $\mathcal {CZ}(f)\subset \mathcal Z(f^t)$ for any $t\in \R$.
\end{lem}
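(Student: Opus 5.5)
Since $f$ commutes with every element of $\mathcal Z(f)$, in particular $f$ lies in the center of $\mathcal Z(f)$. The plan is to locate $f$, up to a finite power, inside a one-parameter subgroup of the center of the identity component $\CZ(f)^0$, and then take $f^t$ to be that one-parameter subgroup, suitably normalized.

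\textbf{Step 1: the structure of the identity component.} Since $\mathcal Z(f)\doteq\mathcal Z(f_0)\doteq \R^{k-1}\times \SL_{n_1}\R\times\cdots\times\SL_{n_k}\R$, the Lie algebra $\mathfrak g(\CZ(f))$ is isomorphic to $\mathfrak g(G^c)=\R^{k-1}\oplus\bigoplus_i\mathfrak{sl}_{n_i}\R$. By Lemma~\ref{liealg}, the isomorphism $\Xi$ can be chosen close to $\Psi$. The center $\mathfrak z$ of $\mathfrak g(\CZ(f))$ is $\Xi^{-1}$ of the Cartan directions commuting with all of $G^c$. Set $Z=\exp\mathfrak z\subset\CZ(f)^0$, a connected abelian group.

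\textbf{Step 2: placing $f$ in $Z$.} We have $f\in \CZ(f)$, because $f$ preserves its own center leaves. Since $\CZ(f)/\CZ(f)^0$ is countable, $f$ alone need not lie in the identity component. However, $f$ is close to $f_0=L_a$, and $a$ lies in the center of $G^c$. Apply Proposition~\ref{gtransl} to compare $\hat f$ with the translation by $a$ along center leaves: $\hat f$ displaces each point of a center leaf by approximately $a$, with small relative error. Next, use the free and proper action of Theorem~\ref{czlieg}, transported through $\phi_f$ and $\Xi$. There is an element $h=\exp(\Xi^{-1}(\log a)')\in Z$ whose action on the lifted center leaf at $z_\ast$ sends $z_\ast$ to the same point as $f$ does. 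Here $(\log a)'$ denotes the nearby Lie algebra element chosen so that $\sigma(h,z_\ast)=f(z_\ast)$ in the lifted leaf; this is solvable by the inverse function theorem, using $K$-regularity and $\Psi\approx\Xi$.

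Freeness of the action then gives $f=h$ as elements of $\CZ(f)$ acting on the lifted leaf, provided the lifts are chosen compatibly with the deck group. This is where care is needed: $f$ and $h$ agree at one point, and since both are elements of the group acting freely, $f h^{-1}$ fixes $z_\ast$ and is therefore the identity. If only a deck-translate matches, replace $f$ by a finite power or adjust by the finite group. To show that $h$ lies in the center rather than merely in $\CZ(f)^0$, note that $f$ commutes with $\CZ(f)^0$. Hence $\mathrm{Ad}(f)$ is trivial, so $\log h$ is fixed by $\mathrm{Ad}$ of the connected group and lies in $\mathfrak z$ after possibly modifying by the discrete center.

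\textbf{Step 3: defining the flow.} Set $f^t=\exp(tX)$ with $X\in\mathfrak z$ and $\exp X=f$. Then $f^1=f$. Because $X$ is central in $\mathfrak g(\CZ(f))$, each $f^t$ commutes with $\CZ(f)^0$. For the other components, conjugation by $g\in\CZ(f)$ fixes $f$ and acts on $\mathfrak z$ by automorphisms. Since $\mathrm{Ad}(g)$ fixes $X$, because $g f g^{-1}=f$ and the exponential is injective on a neighborhood where $\Xi X\approx\log a$ via the closeness estimate, it follows that $g f^t g^{-1}=f^t$. Hence $\CZ(f)\subset\mathcal Z(f^t)$, and $f^t\in\mathcal Z(f)$.

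The main obstacle is Step 2: showing that $f$ itself, rather than some element of a component of $\CZ(f)$ or an element differing by a discrete central or deck element, equals $\exp X$ for a central $X$. I would argue via the projective closeness of Proposition~\ref{gtransl} together with $\|\Xi-\Psi\|<\epsilon_0$. These show that the candidate $\exp X$ and $f$ have displacements within $\epsilon$ of each other at every point. Freeness and properness then force equality, since the discrete set of group elements that move $z_\ast$ by a bounded amount is separated.
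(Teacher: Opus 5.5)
Your overall strategy (approximate $\log f$ by the central element $X_0=\Xi^{-1}(\log f_0)$, then use $K$-regularity and freeness of the center-leaf action to pin $f$ down exactly) is the paper's. However, two steps fail as written.

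\textbf{Centrality of the logarithm.} When $G^c$ is nonabelian, $\dim Z<\dim L$. So the inverse function theorem cannot produce $h\in Z$ with $\sigma(h,z_\ast)=f(z_\ast)$. What you actually get is $f=\exp(W)$ with $W$ a priori only in $\mathfrak g(\CZ(f))$. Your inference ``$\mathrm{Ad}(f)=e^{\mathrm{ad}\,W}=I$, hence $W\in\mathfrak z$'' is false in general: in $\SL_2\R$, $\exp(\pi J)=-I$ is central although $\pi J$ is not. ``Modifying by the discrete center'' is not available either, because $f$ is given. Similarly, in your last paragraph, closeness of $\exp X$ to $f$ does not ``force equality''. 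By properness, the elements moving $z_\ast$ a bounded amount form a compact neighborhood of $e$, not a discrete set.

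The missing step is the paper's splitting. The element $a_f=\exp X_0$ is central, and the comparison with $f_0$ plus $K$-regularity gives $a_f^{-1}f=\exp Y$ with $Y$ small. This $\exp Y$ is central because $f$ and $a_f$ are. Then $e^{\mathrm{ad}\,Y}=I$ with $\mathrm{ad}\,Y$ of small spectrum forces $\mathrm{ad}\,Y=0$, so $f=\exp(X_0+Y)$ with $X_0+Y\in\mathfrak z$.

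\textbf{Other components.} Your treatment of $g\notin\CZ(f)^0$ in Step~3 is unjustified. Nothing places $\mathrm{Ad}(g)X$ in the neighborhood of $X$ on which $\exp$ is injective, so $\exp(\mathrm{Ad}(g)X)=\exp X$ does not give $\mathrm{Ad}(g)X=X$. The idea you are missing is that $\CZ(f)$ is connected, contrary to your remark that $f$ ``need not lie in the identity component.'' The argument runs as follows.
\begin{itemize}
\item For a lifted center leaf $L$, $\dim\CZ(f)=\dim\mathcal Z(f_0)=\dim\mathcal W^c_{f_0}=\dim L$.
\item Freeness therefore makes each $\CZ(f)^0$-orbit open in $L$, and properness makes it closed.
\item Hence $\CZ(f)^0$ acts transitively on $L$, and freeness then forces $\CZ(f)=\CZ(f)^0$.
\end{itemize}
With this, $f^t=\exp(tX_f)$ is central in $\CZ(f)$, and $\CZ(f)\subset\mathcal Z(f^t)$ follows immediately. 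Your fallback of replacing $f$ by a finite power is also not permitted, since the conclusion requires $f^1=f$. If the lifts are chosen compatibly with the group structure, as in Theorem~\ref{czlieg}, no deck-transformation ambiguity arises.
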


\begin{proof}
We first observe that $\mathcal{CZ}(f)$ is connected. By the leaf-conjugacy between $f$ and $f_0$,
$f$ fixes every leaf of $\mathcal W_f^c$, because $f_0$ acts trivially
on the model center-leaf space, so we have $f\in \CZ(f)$.

Let $L$ be a connected center leaf on which $\CZ(f)$ acts freely and
properly. Since $\mathcal Z(f)/\CZ(f)$ is finite and
$\mathcal Z(f)\doteq\mathcal Z(f_0)$,
$
\dim \CZ(f)
 =\dim\mathcal Z(f_0)
 =\dim\mathcal W_{f_0}^c
 =\dim L.$
 
It follows that the orbit $\CZ^0(f)x$ of the identity component is open in
$L$. It is also closed, since the $\CZ^0(f)$-action is proper. Hence
$\CZ(f)^0x=L$. Thus freeness of the $\CZ(f)$-action gives $
\CZ(f)=\CZ(f)^0.$

Let
$
v_f=\log f_0\in Z\bigl(\mathfrak g(\mathcal{CZ}(f_0))\bigr)$ 
and let
\[
\Xi:\mathfrak g(\CZ(f))\longrightarrow
\mathfrak g(\mathcal{CZ}(f_0))
\]
be the adapted Lie-algebra isomorphism constructed in Lemma \ref{liealg}. Set
\[
X_0=\Xi^{-1}(v_f),
\qquad
a_f=\exp_{\CZ(f)}(X_0).
\]
Since Lie-algebra isomorphisms preserve centers,
$
X_0\in Z\big(\mathfrak g(\CZ(f))\big),$
and hence $a_f\in Z(\CZ(f))$.

The same comparison used in Proposition~\ref{prop:g_0tog}, applied
with $g_0=f_0$, shows that $a_f$ converges to $f_0$ in the center-leaf
orbit coordinates. The map $f$ itself also converges to $f_0$ in
these coordinates. The $K$-regularity of the orbit map therefore
implies that
\[
a_f^{-1}f\longrightarrow e
\qquad\text{in }\CZ(f)
\]
as $d_{C^1}(f,f_0)\to0$. After decreasing the neighborhood, we may
therefore write uniquely
\[
a_f^{-1}f=\exp_{\CZ(f)}(Y)
\]
for some sufficiently small $Y\in\mathfrak g(\CZ(f))$.

Every element of $\CZ(f)$ commutes with $f$, by the definition of
$\mathcal{CZ}(f)$, and $a_f$ is central in $\CZ(f)$. Thus
$a_f^{-1}f$ is central in $\CZ(f)$. Consequently, $Y\in Z\big(\mathfrak g(\CZ(f))\big)$. Hence we may define
\[
X_f:=X_0+Y\in  Z\big(\mathfrak g(\CZ(f))\big)
\]
and, since $X_0$ and $Y$ commute,
\[
f=a_f\exp_{\CZ(f)}(Y)
  =\exp_{\CZ(f)}(X_0+Y)
  =\exp_{\CZ(f)}(X_f).
\]
This defines the required flow
\[
f^t=\exp_{\CZ(f)}(tX_f)\in Z(\CZ(f)).
\]
\end{proof}

By the isomorphism $\Xi: \mathfrak {g}(\CZ(f))\to \mathfrak {g}(G^c)$ given by Lemma \ref{liealg}, for any  semisimple $g\in\CZ(f)^0$ the adjoint action of $g$ on
$\mathfrak g(\CZ(f))$ induces a splitting
\begin{equation}\label{eq:liealgsplit}
    \mathfrak g(\CZ(f))=E^s_g(\mathfrak g)\oplus E^c_g(\mathfrak g)\oplus E^u_g(\mathfrak g),
\end{equation}

given by the linear transformation $\operatorname{Ad}(g)$ on
$\mathfrak g(\CZ(f))$, where $E^s_g$ (resp.\ $E^c_g$, $E^u_g$) is the sum
of generalized eigenspaces of $\operatorname{Ad}(g)$ with eigenvalues of
modulus $<1$ (resp.\ $=1$, $>1$). Passing to the tangent space of
$\mathcal W^c_f(x_0)$ and using Lemma \ref{liealg}, we obtain at each
$x_0\in X$ the splitting
\begin{equation}\label{eq:Ecfsplit}
    E^c_f=E^{c,s}_{f,g}\oplus E^{c,c}_{f,g}\oplus E^{c,u}_{f,g},
\end{equation}

where $E^{c,*}_{f,g}=D\sigma(E^*_g(\mathfrak g))$ for $*=s,c,u$.

By construction the splitting $E^c_f=E^{c,s}_{f,g}\oplus E^{c,c}_{f,g}\oplus E^{c,u}_{f,g}$
is $g$-invariant. Moreover, since  $\phi_f$ and the action map
$\sigma$ in \eqref{equ:Psi} are $C^r$ in center leaves, $Dg$ is
continuously cohomologous via $D\sigma$ to the constant linear map induced by
$\operatorname{Ad}(g)$ on $\mathfrak g(\CZ(f))$. Hence, if $E^{c,s}_{f,g},E^{c,u}_{f,g}$ are non-trivial, then $Dg$ is partially
hyperbolic on the center bundle $E^c_f$ with respect to the above
splitting in Equation \ref{eq:Ecfsplit}.

We now show the bundles in Equation \ref{eq:Ecfsplit} are $C^0$-close to the corresponding bundles of
$f_0$ and the model element $g_0\in\CZ(f_0)$.

\begin{prop}\label{prop:g_0tog}
Let  $g_0=\exp_G(v_0)\in\CZ(f_0)^0$, where $v_0$ is real-split semisimple and $g_0\ne e$, and let $\epsilon>0$. Suppose $f$ is a sufficiently
$C^1$-small perturbation of the diagonal map $f_0$,
$\mathcal Z(f)\doteq \mathcal Z(f_0)$ and the action $\mathcal Z(f)\times X\to X$ is $K$-regular. (Here $g_0$ is chosen first, and the  required smallness of  $d_{C^1}(f,f_0)$ may depend on $g_0$.) Then there exists $g\in\CZ(f)$ satisfying:

\begin{enumerate}
\item $g$ is projectively $\epsilon$-close  to $g_0$ (see Definition \ref{def:projepsclose}).

\item The subbundles $E^{c,*}_{f,g}\subset E^c_f$ are $C^0$-close to
  $E^{c,*}_{f_0,g_0}=E^c_{f_0}\cap E^*_{g_0}$ for $*=s,c,u$.

\item The subbundles $E^{c,*}_{f,g}$ uniquely integrate to sub-foliations
  $\mathcal W^{c,*}_{f,g}$ of $\mathcal W^c_f$.

\item There  is a bi-H\"older  homeomorphism $h_c:X\to X$ mapping the leaves  of
$\mathcal W^{c,c}_{f_0,g_0}$ onto the leaves of
$\mathcal W^{c,c}_{f,g}$.  Moreover, $h_c$ is  simultaneously a leaf
conjugacy  from $(g_0,\mathcal W^{c,c}_{f_0,g_0})$ to
 $(g,\mathcal W^{c,c}_{f,g})$ and from
$(f_0,\mathcal W^{c,c}_{f_0,g_0})$ to
$(f,\mathcal W^{c,c}_{f,g})$.
\end{enumerate}
\end{prop}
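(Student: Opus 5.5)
The plan is to define $g$ through the adapted isomorphism of Lemma~\ref{liealg}. Put
\[
X_0:=\Xi^{-1}(v_0)\in\mathfrak g(\CZ(f)),\qquad g:=\exp_{\CZ(f)}(X_0).
\]
Since $\CZ(f)=\CZ(f)^0$ (Lemma~\ref{lem:finflow}), $g$ lies in $\CZ(f)$. Because $\Xi$ is a Lie-algebra isomorphism, $\operatorname{ad}(X_0)$ is conjugate to $\operatorname{ad}(v_0)$. Hence $\operatorname{Ad}(g)$ is diagonalizable with the same real spectrum as $\operatorname{Ad}(g_0)$. It follows that
\[
E^*_g(\mathfrak g)=\Xi^{-1}\bigl(E^*_{g_0}(\mathfrak g(G^c))\bigr),\qquad *=s,c,u.
\]

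For (1), I would compare the one-parameter families $\exp(tX_0)$ and $\exp(tv_0)$ for $t\in[0,1]$. By Lemma~\ref{liealg}, $\|\Psi(X_0)-v_0\|<\epsilon_0\|X_0\|$, so the initial velocities of the two orbit maps at $z_\ast$, read in $\phi_f$-coordinates, are $\epsilon_0$-close. The $C^3$ bound from $K$-regularity, combined with the fact that $\Xi$ intertwines brackets exactly, lets me integrate this comparison over $t$: the displacement $\tilde g(x_\ast)x_\ast^{-1}$ is $C\epsilon_0$-close to $g_0$ relative to $d(g_0(x_\ast),x_\ast)$, with a constant depending on $K$ and $|v_0|$. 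Since the displacement is bounded by some $R=R(v_0)$, Proposition~\ref{gtransl} then propagates this comparison from $x_\ast$ to every $y\in X$. I expect this step to be the main obstacle. The orbit coordinates are controlled only near $z_\ast$ at scale $1/K$, while $g_0$ may be far from the identity. I would first handle the root $g_0^{1/N}$, with $N$ large relative to $K|v_0|$, and then compose $N$ times. Proposition~\ref{gtransl} keeps each step projectively $\epsilon$-close to the translation it determines, but the errors accumulate over the $N$ steps, so $\epsilon_0$ and the $C^1$-neighbourhood have to be shrunk depending on $g_0$. The statement allows exactly this.

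For (2), $E^{c,*}_{f,g}(z)=D\sigma(\cdot,z)E^*_g(\mathfrak g)$, and $E^{c,*}_{f_0,g_0}$ is the right-translate of $E^*_{g_0}$. At $z_\ast$, closeness follows from $\|\Psi-\Xi\|<\epsilon_0$ together with the leafwise $C^1$-closeness of $\phi_f$ to the identity from Theorem~\ref{leafconj}. At a general point $z$, I would use the same argument with Proposition~\ref{gtransl} applied at $z$. The point is that the infinitesimal generators of the $\CZ(f)$-action at $z$ are projectively close to right-invariant fields, and projective closeness of small elements $\exp(tX)$ gives closeness of derivatives after dividing by $t$. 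The resulting estimate is uniform in $z$.

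For (3), $E^{c,*}_{f,g}$ is spanned by the orbit directions of the subalgebra $E^*_g(\mathfrak g)$. The subspace $E^c_g$ is the centralizer subalgebra of $X_0$, and $E^s_g,E^u_g$ are nilpotent subalgebras. Each $E^{c,*}_{f,g}$ is therefore tangent to the orbits of the corresponding connected subgroup of $\CZ(f)$. Those orbits give smooth foliations, by freeness and joint smoothness from Theorem~\ref{czlieg}. The bundles are smooth, hence uniquely integrable.

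For (4), $\mathcal W^{c,c}_{f,g}$ consists of the orbits of $Z_{\CZ(f)}(g)^0$, inside the normally hyperbolic foliation $\mathcal W^c_f$. Both $f$ and $g$ preserve it: $f$ is central by Lemma~\ref{lem:finflow}, and $g$ lies in its own centralizer. By (2), $g$ is normally hyperbolic relative to $\mathcal W^{c,c}$ within $\mathcal W^c_f$, and so is $g_0$ relative to the homogeneous foliation. Combined with $f$ along $\mathcal W^{s},\mathcal W^u$, I would use the element $f^{m}g$ with $m$ large. Its normally hyperbolic foliation is $\mathcal W^{c,c}$, and its model $f_0^{m}g_0$ is plaque expansive, since the model foliation is homogeneous. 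Applying Theorem~\ref{leafconj} to $(f_0^mg_0,\mathcal W^{c,c}_{f_0,g_0})$ gives a bi-Hölder leaf conjugacy $h_c$ to $(f^mg,\mathcal W^{c,c}_{f,g})$; uniqueness of the invariant foliation identifies its image foliation with $\mathcal W^{c,c}_{f,g}$. Finally, uniqueness of leaf conjugacies $C^0$-close to the identity shows that $h_c$ also conjugates $f_0$ to $f$ and $g_0$ to $g$ modulo leaves. Both $f$ and $g$ commute with $f^mg$, so $f^{-1}h_cf_0$ and $g^{-1}h_cg_0$ are again leaf conjugacies close to $h_c$.
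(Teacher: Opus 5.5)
Your choice $g=\exp_{\CZ(f)}(\Xi^{-1}(v_0))$, the root-and-compose argument for (1), and the orbit-of-subgroup argument for (3) match the paper. For (2) you apply Proposition~\ref{gtransl} infinitesimally at every point, so that the generators of $\hat g^t$ are uniformly close to the right-invariant field $\Psi(X)$. This is a valid and somewhat more direct alternative to the paper's transport of orbit differentials along bounded $su$-paths.

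The gap is in (4). Theorem~\ref{leafconj} applies only to a \emph{sufficiently $C^1$-small perturbation} of the model, so you would need $f^mg$ to be $C^1$-close to $f_0^mg_0$ on all of $X$. Nothing available at this stage gives that. You control $g$ only along center leaves, after conjugating by $\phi_f$, via $K$-regularity and Proposition~\ref{gtransl}. $Dg|_{E^s_f\oplus E^u_f}$ is uncontrolled, and extracting information about it is the whole purpose of Section~\ref{sec:gph} (Zimmer superrigidity, Lemma~\ref{gnozeroexp}, Steps~1--8). Even normal hyperbolicity of $f^mg$ requires $m$ to dominate $\sup\|Dg^{\pm1}|_{E^s_f\oplus E^u_f}\|$. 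That quantity depends on $f$ and has no uniform bound, so $m$, the model $f_0^mg_0$, and the neighbourhood demanded by Theorem~\ref{leafconj} all move with $f$; the argument is circular. Separately, identifying the foliation produced by Theorem~\ref{leafconj} with $\mathcal W^{c,c}_{f,g}$ is not part of that theorem.

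The fix is to stay inside center leaves, where you do have control. $\phi_f$ already matches $\mathcal W^c_{f_0}$ with $\mathcal W^c_f$. On model center leaves, $\hat g=\phi_f^{-1}g\phi_f$ is uniformly $C^1$-close to $g_0$, and by (2) $\phi_f^{-1}\mathcal W^{c,*}_{f,g}$ is $C^0$-close to $\mathcal W^{c,*}_{f_0,g_0}$. You can then run the leafwise amalgam/invariant-section argument of \cite[Section~9]{PSWholrev}, as the paper does:
\begin{enumerate}
\item correct $\phi_f$ along $\mathcal W^{c,s}_{f,g}$ using the invariant section of a fibre contraction built from $g^k$;
\item correct along $\mathcal W^{c,u}_{f,g}$ in the same way using $g^{-k}$.
\end{enumerate}
This yields a bi-H\"older $h_c=s_u\circ s\circ\phi_f$ matching $\mathcal W^{c,c}$-leaves.

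Your final step also needs no uniqueness of leaf conjugacies. Both $g=\exp(X_0)$ and $f=\exp(X_f)$, with $X_f$ central, lie in $Z_{\CZ(f)^0}(g)^0$. Hence every leaf $Z_{\CZ(f)^0}(g)^0\cdot x$ is fixed by $f$ and by $g$, and likewise every model leaf is fixed by $f_0$ and $g_0$. So any leaf-matching homeomorphism is automatically a leaf conjugacy for both pairs.
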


\begin{proof}
Recall that the base point $x_\ast$ and the Lie-algebra isomorphism
$\Xi:\mathfrak g(\CZ(f))\to\mathfrak g(G^c)$ were fixed above. Set
 \[
g=\exp_{\CZ(f)}\bigl(\Xi^{-1}(v_0)\bigr).
\]
Choose  $m\ge1$ so that  $u_0=v_0/m$ lies in the $K$-regular exponential
ball.  Since $m$ and $g_0$ are fixed, continuity of the exponential,
$\Xi\to\Psi$, and Proposition ~\ref{gtransl},  first applied to
 $\exp_{\CZ(f)}(\Xi^{-1}u_0)$ and then to  its $m$th power,  give
 \[
d_G\bigl(\tilde g(x_\ast),\tilde g_0(x_\ast)\bigr)
 <\tfrac\epsilon4d_G(\tilde g_0(x_\ast),x_\ast).
\]
Here $\tilde g$ is the lift of  $\phi_f^{-1}g\phi_f$ and  $\tilde g_0$ is
the  corresponding model lift.
 Applying Proposition ~\ref{gtransl} once more,  with error $\epsilon/4$,
to  the left translation determined at $x_\ast$ shows that $g$ is
projectively $\epsilon$-close to $g_0$. This proves (1).

By construction, $E^*_g(\mathfrak g)$ from Equation \ref{eq:liealgsplit} corresponds to the constant bundle
$E^c_{f_0}\cap E^*_{g_0}$ for $*=s,c,u$. Therefore, since $\Xi$ is close to  $\Psi=D\phi_f^{-1} \circ D\sigma$ and the leaf conjugacy  $\phi_f$ restricted to the center is $C^1$-close to identity, the
sub-bundles $E^{c,*}_{f,g}=D\sigma(E^*_g(\mathfrak g))$ are close  at the
base point.  This closeness is uniform on $X$: bounded $f$--$su$ paths
transport the orbit-map differentials equivariantly, and their stable and
unstable holonomies are uniformly $C^1$-close to the corresponding model
holonomies.  Hence $E^{c,*}_{f,g}$ is $C^0$--close to
$E^{c,*}_{f_0,g_0}=E^c_{f_0}\cap E^*_{g_0}$ for $*=s,c,u$, proving (2).

Since  $g$ is real-split semisimple, these bundles uniquely integrate to
the orbits of  its stable, central, and unstable subgroups.  Concretely,
 \[
\mathcal W^{c,s}_{f,g}(x)=G_g^s\cdot x,
\qquad G_g^s=\{h\in\CZ(f)^0:g^nhg^{-n}\to e\},
\]
\[
\mathcal W^{c,u}_{f,g}(x)=G_g^u\cdot x,
\qquad G_g^u=\{h\in\CZ(f)^0:g^{-n}hg^n\to e\},
\]
and
 \[
\mathcal W^{c,c}_{f,g}(x)=Z_{\CZ(f)^0}(g)^0\cdot x.
\]
Their tangent spaces are respectively
$E^{c,s}_{f,g}$, $E^{c,u}_{f,g}$, and $E^{c,c}_{f,g}$, proving (3).

It remains to prove (4). 

We use the same methods as in Section 9 of \cite{PSWholrev}. Fix a small $0<r\ll1$. 

Choose $k\in\mathbb N$ large enough so
that
\[
g^k\big(\mathcal W^{c,s}_{f,g}(x,r)\big)\subset
\mathcal W^{c,s}_{f,g}\big(g^k(x),r/2\big),
\qquad
g^{-k}\big(\mathcal W^{c,u}_{f,g}(x,r)\big)\subset
\mathcal W^{c,u}_{f,g}\big(g^{-k}(x),r/2\big)
\]
(the choice of such $k$ only depends on $g_0$). 

 After conjugation by $\phi_f$, the  restrictions of $g$ and $g_0$ to
model center leaves are uniformly $C^1$-close, and  their respective
leafwise foliations are $C^0$-close.  Thus we
may define the ``amalgam'' map $a:X\to X$ by
 \[
a(x)=\mathcal W^{c,s}_{f,g}(g^k(x),r)\cap
\phi_f\!\big(\mathcal W^{c,cu}_{f_0,g_0}(g_0^k(\phi_f^{-1}(x)),r)\big),
\]
where $\mathcal W^{c,cu}_{f_0,g_0}(\cdot,r)$ denotes the joint
integration of $\mathcal W^{c,c}_{f_0,g_0}$ and $\mathcal W^{c,u}_{f_0,g_0}$
in a small radius $r$. Because $E^{c,s}_{f,g}$ is $C^0$-close to
$E^{c,s}_{f_0,g_0}$ and  $\phi_f$ is $C^1$ inside $\W^c_f$-leaves with differential close to the
identity, for any
$y\in\mathcal W^{c,s}_{f,g}(x,r)$,
\[
d\big(a(x),a(y)\big)\le \frac{2}{3}d(x,y).
\]
By the local-product structure of $\W^{c,s}_{f,g}$ and $\phi_f(\W^{c,cu}_{f_0,g_0})$, the amalgam map $a$ is a homeomorphism with inverse map also given by local intersection of the foliations, and $a$ is $C^1$-along the $\W^{c,s}_{f,g}$-leaves.

Define the fiber bundle
\[
X_1=\{(x,y):x\in X,\ y\in\mathcal W^{c,s}_{f,g}(x,r)\},
\]
with projection $\pi:X_1\to X$.

If $f$ is chosen
sufficiently close to $f_0$, then  $\phi_f^{-1}g^k\phi_f$ is  uniformly close to  $g_0^k$ along
model center leaves, and hence $d_{\mathcal W^{c,s}_{f,g}}
(a(x),g^kx)<r/3$ uniformly and
\[
g^k\big(\mathcal W^{c,s}_{f,g}(x,r)\big)\subset
\mathcal W^{c,s}_{f,g}\big(g^k(x),r/2\big)\subset
\mathcal W^{c,s}_{f,g}\big(a(x),r\big).
\]
Thus the bundle map
\[
F:X_1\to X_1,\qquad F(x,y)=(a(x),g^k(y))
\]
is well defined and contracts fibers over $a$. Similarly
\[
G:X_1\to X_1,\qquad G(x,y)=(g^k(x),a(y))
\]
is well defined and contracts fibers over $g^k$: indeed,
$a(y)$ is on the stable leaf of $g^kx$, and the bounds
$d(a(y),a(x))\le2r/3$ and $d(a(x),g^kx)<r/3$ give the stated radius
inclusion. 

By the invariant-section theorem for fiber contractions over
the homeomorphism $a$, $F$ admits a unique invariant
section $\sigma(x)=(x,s(x))$ and $G$ admits a unique invariant section
$(x,t(x))$. The invariance relations imply
$g^k(s(x))=s(a(x))$ and $a(t(x))=t(g^k(x))$. For any
$y\in\mathcal W^{c,cu}_{f,g}(x,r)$, plaque expansiveness gives
$t(y)\in\phi_f\mathcal W^{c,cu}_{f_0,g_0}(\phi_f^{-1} t(x))$.
Hence  $\phi_f^{-1}\circ t\big(\mathcal W^{c,cu}_{f,g}(x)\big)\subset
\mathcal W^{c,cu}_{f_0,g_0}(\phi_f ^{-1} \circ t(x))$. Moreover
$s\circ t\circ g^k=s\circ a\circ t=g^k\circ s\circ t$ and
$s\circ t(x)\in\mathcal W^{c,s}_{f,g}(x,2r)$. Iterating shows
$d(g^{kn}(s\circ t(x)),g^{kn}(x))\le 2r$ for all $n\in\mathbb Z$. Since
$g$ exponentially expands stable leaves as $n\to-\infty$, it follows
that $s\circ t=\mathrm{id}_X$.  Invariance of domain and compactness
show that  $t$ is a  homeomorphism with  inverse $s$. Thus
 $\phi_f^{-1}\circ t$ is a leaf conjugacy
$(g^k,\mathcal W^{c,cu}_{f,g})$ to $(g_0^k,\mathcal W^{c,cu}_{f_0,g_0})$.

Put $H_0=s\circ\phi_f$.  Repeat the same construction with
$g^{-k}$, with stable and unstable exchanged, and with $H_0$ in place
of $\phi_f$.  Thus the second amalgam map is
\[
b(x)=\mathcal W^{c,u}_{f,g}(g^{-k}x,r)\cap
H_0\!\left(\mathcal W^{c,cs}_{f_0,g_0}
 (g_0^{-k}H_0^{-1}x,r)\right),
\]
and the fiber bundle is
\[
X_u=\{(x,y):y\in\mathcal W^{c,u}_{f,g}(x,r)\}.
\]
The two bundle maps are
\[
(x,y)\longmapsto(b(x),g^{-k}y),\qquad
(x,y)\longmapsto(g^{-k}x,b(y)).
\]
The same radius estimates as above make them well defined fiber
contractions.  Every fiber $\mathcal W^{c,u}_{f,g}(x,r)$ lies in the
already matched $\mathcal W^{c,cu}_{f,g}$-leaf, while $b$ and $g^{-k}$
preserve that leaf.  Hence both bundle maps restrict to the bundle over
each such leaf.  Uniqueness in the
invariant-section theorem therefore shows that the restricted and
unrestricted constructions give the same section.  Consequently the resulting correction $s_u$ preserves
every already matched center-unstable leaf and matches the
center-stable leaves. Therefore
 \[
h_c=s_u\circ s\circ\phi_f
\]
maps the leaves of  $\mathcal W^{c,c}_{f_0,g_0}$ onto the leaves of
 $\mathcal W^{c,c}_{f,g}$.  The H\"older  invariant-section estimate in
 \cite[Proposition~11 and Section~9]{PSWholrev}, applied also to the  inverse
construction, makes $h_c$ bi-H\"older.

Finally, by our construction of the foliation and Lemma \ref{lem:finflow}, leaves of $\W^{c,c}_{f,g}$ are fixed by $f$ and $g$, and leaves of $\W^{c,c}_{f_0,g_0}$ are fixed by $f_0$ and $g_0$. Therefore, $h_c$ is a leaf conjugacy. This
proves (4).
\end{proof}

\subsection{H\"older exponent of leaf conjugacy}
Here we show that in certain cases the H\"older exponent of the leaf
conjugacy from $(f_0,\mathcal W^c_{f_0})$ to $(f,\mathcal W^c_{f})$ can be
taken arbitrarily close to $1$.

Let $\mu=\lambda_{ij}(f_0)\in\R$ be  the value of a root of $f_0$.
Denote by $\mathcal W^\mu_{f_0}$ the cosets given by the exponential of
 $\oplus_{\lambda_{ij}(f_0)=\mu}\mathfrak g_{ij}$. Then for any
$\epsilon>0$, if $f$ is sufficiently close to $f_0$, there is a splitting
 \[
T_xX=E^c_f(x)\oplus
 \bigoplus_{\mu\in \{\lambda_{ij}(f_0)\}\setminus\{0\}}E^\mu_f(x)
\]
such that
\[
e^{\mu-\epsilon}\|v\|\le \|D_xf(v)\|\le e^{\mu+\epsilon}\|v\|,
\qquad\forall v\in E^\mu_f.
\]

\begin{lem}\label{lem:holexp1}
Let  $\phi_f$ be the  canonical leaf conjugacy from
$(f_0,\mathcal W^c_{f_0})$ to $(f,\mathcal W^c_f)$. Suppose for every
$\mu\in\{\lambda_{ij}(f_0)\}\setminus\{0\}$, the bundles $E^\mu_f$ and
$E^c_f$ are jointly integrable to a foliation $\mathcal W^{\mu,c}_f$,
and
 \[
\phi_f(\mathcal W^{\mu,c}_{f_0})=\mathcal W^{\mu,c}_f.
\]
Then, for  every $\theta_0<1$,  the maps $\phi_f$ and $\phi_f^{-1}$ are
$\theta_0$-H\"older if $f$ is sufficiently  $C^1$-close to $f_0$ .
\end{lem}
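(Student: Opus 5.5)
The plan is to reduce Hölder regularity of $\phi_f$ to Hölder regularity of its restriction to each invariant foliation $\mathcal W^{\mu,c}_{f_0}$. On each such leaf the only transverse direction to the center is a single band $E^\mu$, where $f$ acts with rate pinched in $[\mu-\epsilon,\mu+\epsilon]$. Recall that $\phi_f$ is uniformly $C^r$ along center leaves and $C^1$-close to the identity there (Theorem \ref{leafconj}). Hence only displacements transverse to $\mathcal W^c_{f_0}$ matter. Using the local product structure of $G$ near the identity, any two nearby points $x,y$ can be joined by a bounded chain of at most $N$ short legs, each lying in a single $\mathcal W^{\mu,c}_{f_0}$-plaque, with total length comparable to $d(x,y)$. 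It therefore suffices to prove a uniform estimate of the form $d(\phi_f x,\phi_f y)\le C\,d(x,y)^{\theta_0}$ for $x,y$ in the same local $\mathcal W^{\mu,c}_{f_0}$-plaque, and the same for $\phi_f^{-1}$ on $\mathcal W^{\mu,c}_f$-plaques.

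Fix $\mu>0$; the case $\mu<0$ follows by replacing $f$ with $f^{-1}$. Take $x,y$ in one $\mathcal W^{\mu,c}_{f_0}$-plaque with $d(x,y)=\delta$ small. First decompose $y$ via the local product structure inside the plaque as $y'\in\mathcal W^{\mu}_{f_0}(x)$ followed by a center segment; the center segment is handled by leafwise $C^1$-closeness. For the $\mathcal W^\mu$-displacement, iterate forward $n\approx \log(\rho/\delta)/\mu$ times until $d(f_0^n x,f_0^n y')$ reaches a fixed macroscopic scale $\rho$. By hypothesis $\phi_f$ maps $\mathcal W^{\mu,c}_{f_0}$ to $\mathcal W^{\mu,c}_f$ and intertwines the dynamics modulo center leaves. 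Consequently $\phi_f(x)$ and $\phi_f(y')$ lie in one $\mathcal W^{\mu,c}_f$-plaque. Their transverse separation inside it, measured as the distance between the center plaques through them, is multiplied by roughly $e^{\mu\pm\epsilon}$ at each step, because in a $\mathcal W^{\mu,c}_f$-leaf the quotient dynamics on the center-plaque space is governed by $Df|_{E^\mu_f}$. Since $\phi_f$ is $C^0$-close to the identity, at time $n$ the transverse separation of the images is at most a constant. Pulling back gives transverse separation at most $C e^{-(\mu-\epsilon)n}\asymp C\delta^{(\mu-\epsilon)/\mu}$. The same argument for $\phi_f^{-1}$ uses the rate $\mu+\epsilon$ and gives exponent $\mu/(\mu+\epsilon)$.

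The remaining center component is harder. The images $\phi_f(x)$ and $\phi_f(y')$ may differ along the center by an amount that is not directly controlled by the transverse estimate. To handle this, we use the center holonomy along $\mathcal W^\mu_f$ inside $\mathcal W^{\mu,c}_f$. This is $C^1$ by Theorem \ref{c1hol}, since $\mathcal W^\mu_f$ lies inside the stable or unstable foliation and center bunching holds. Combined with the leafwise $C^1$ property of $\phi_f$, it shows that the center offset is bounded by a constant times the transverse one. Choosing $\epsilon<\mu_{\min}(1-\theta_0)$ over the finitely many root values gives exponent at least $\theta_0$.

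I expect the main obstacle to be the claim that transverse distance within a $\mathcal W^{\mu,c}_f$-leaf grows at a rate pinched to $e^{\mu\pm\epsilon}$ uniformly at macroscopic scale, so that this separation really is a single-band quantity. This needs the joint integrability hypothesis, plus a uniform comparison between the leafwise distance of center plaques and the length of the connecting $\mathcal W^\mu_f$-arc.
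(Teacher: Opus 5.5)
Your route differs from the paper's. The paper does not argue band by band: it invokes the H\"older theorem for the center leaf conjugacy in \cite{PSWholrev} (Theorem~A and Sections~4 and~9), after noting that the spectral inequalities hold with exponent $\sqrt{\theta_0}$ because $Df_0|_{E^c_{f_0}}$ is isometric. A direct argument could replace that citation, but yours has a gap at its central step.

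Both $f_0=L_a$ and $f$ fix every center leaf. So ``$\phi_f$ intertwines the dynamics modulo center leaves'' carries no information: $f^n(\phi_f x)$ and $\phi_f(f_0^nx)$ merely lie on the same center leaf, possibly far apart. The separation you propagate at rate $e^{\mu\pm\epsilon}$ is measured at $f^n(\phi_f x)$, but $C^0$-closeness of $\phi_f$ bounds it only near $\phi_f(f_0^nx)$. This matters because the $\mathcal W^\mu$-separation of two center leaves depends on the base point. In the model, between $G^cx$ and $G^cux$ with $u\in\exp\mathfrak g^\mu$, $\mathfrak g^\mu=\bigoplus_{\lambda_{ij}(f_0)=\mu}\mathfrak g_{ij}$, it equals $|\operatorname{Ad}(c)\log u|$ at $cx$. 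And $\operatorname{Ad}(G^c)$ is far from conformal on $\mathfrak g^\mu$; for instance $\mathfrak g^\mu\simeq\mathbb R^p\otimes\mathbb R^q$ in the one-root case. Hence ``at time $n$ the transverse separation of the images is at most a constant'' is unjustified.

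The missing step is to compute in the model:
\begin{itemize}
\item With $\hat f=\phi_f^{-1}f\phi_f$ one has $f^n\phi_f(x)=\phi_f(c_nx)$, where $c_n\in G^c$ is a product of $n$ center displacements of $\hat f$. Each factor is $\eta$-close to $a$, and $\eta\to0$ as $f\to f_0$.
\item Since $\operatorname{Ad}(a)|_{\mathfrak g^\mu}=e^\mu\,\mathrm{Id}$, you get $\|\operatorname{Ad}(c_n)|_{\mathfrak g^\mu}\|\le e^{(\mu+C\eta)n}$. So the model separation at $c_nx$ is at most $e^{(\mu+C\eta)n}\delta$.
\item Only now does $C^0$-closeness of $\phi_f$ at $c_nx$ bound the separation at $f^n\phi_f(x)$, by $C\bigl(e^{(\mu+C\eta)n}\delta+\eta\bigr)$.
\item Choosing $n$ with $e^{(\mu+C\eta)n}\delta=\rho$ gives exponent $(\mu-\epsilon)/(\mu+C\eta)$, which still tends to $1$.
\end{itemize}
You must also check inductively that the iterated arc $f^k(\gamma_0)$ remains the short $\mathcal W^\mu_f$-arc ending on the correct local plaque of $\mathcal W^c_f(\phi_f y')$.

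The center-offset step has a separate gap. $C^1$ regularity of $\mathcal W^\mu_f$-holonomy and leafwise $C^1$-closeness of $\phi_f$ say nothing about how $\phi_f$ places points of \emph{different} center leaves relative to each other. Precompose $\phi_f$ with a homeomorphism that slides points along $\mathcal W^c_{f_0}$-leaves, smoothly along leaves but only H\"older transversally. The result keeps every property you use: it is still a leafwise $C^r$ leaf conjugacy, since $f$ fixes all center leaves. Yet it ruins the estimate. The same objection applies to your opening inference ``hence only displacements transverse to $\mathcal W^c_{f_0}$ matter''.

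What controls the offset is the canonical normalization of $\phi_f$ in the Hirsch--Pugh--Shub construction: $\phi_f(p)$ lies on a smooth transversal to $\mathcal W^c_{f_0}$ at $p$. With this, $\phi_f(y')-\phi_f(x)$ has $E^c_f$-component at most $C\eta(t_0+\delta)$, where $t_0$ is the transverse separation. The obstacle you singled out, pinched growth of $\mathcal W^\mu_f$-arcs at macroscopic scale, is harmless, since $Df|_{E^\mu_f}$ is pinched at every scale. The real difficulties are the base-point dependence and the normalization described above.
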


This is the quantitative conclusion of
\cite[Theorem~A and Sections~4 and~9]{PSWholrev}.  In the model,
$Df_0|_{E^c_{f_0}}$ is isometric, while the normal contraction and
expansion inequalities are strict.  Hence, for every fixed
$\sqrt\theta_0<1$, the two spectral inequalities for the center leaf
conjugacy in Section~4 of that paper hold after the $C^1$-neighborhood
of $f_0$ is reduced.  The suspension and holonomy-intersection
construction in the proof of Theorem~A then shows that the canonical center
leaf conjugacy and its inverse are $\theta_0$-H\"older.

\subsection{Expansion along the stable and unstable bundles}

In this subsection we prove the following lemma, which shows that many elements of
$\CZ(f)$ exhibit expansion in prescribed cones inside the stable bundle
$E^s_{f_0}$ up to macroscale. We will use this in Section \ref{sec:1exp} to show that some
elements of $\CZ(f)$ are partially hyperbolic.

\begin{lem}\label{lem:coneexpan}
Let $g_0\in\CZ(f_0)$ be a diagonal map.
Suppose there exist constants $\mu>\lambda>1$ and a family of cones
$C(x)\subset E^s_{f_0}(x)$ with the following property: for any left
translation $g_1$ which is projectively $\epsilon$-close to $g_0$ and for any
$u\in C(x)$ we have  
\[Dg_1(u)\in C(g_1(x)),\quad\text{and}\quad
\mu\,|u|\ge |Dg_1(u)|\ge \lambda\,|u|.
\]

Let $g\in\CZ(f)$ be a diffeomorphism projectively $\epsilon$-close to $g_0$ and
let $\tilde g$ be the lift of  $\hat g=\phi_f^{-1}\,g\,\phi_f$ to $G$. Then for any $y\in\exp_x(C(x))$ and $n\in \N$ with
$d(x,y)<\mu^{-n}\epsilon$, we have
\[
d_G\big(\tilde g^n(x),\widetilde{\mathcal W}^c_{f_0}(y)\big)
\ge \tfrac{1}{4}\,\lambda^{n}\, d(x,y).
\]
\end{lem}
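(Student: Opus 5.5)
We argue by induction on $n$, comparing the orbit of $\tilde g$ with a left translation at each step. Projective closeness is what makes this possible. By Definition~\ref{def:projepsclose} and Proposition~\ref{gtransl}, at every point $z$ there is a left translation $g_1^{(z)}$, namely the one determined by $\tilde g(z)z^{-1}$, which is projectively $\epsilon$-close to $g_0$. It satisfies $g_1^{(z)}(z)=\tilde g(z)$ and
\[
d_{\widetilde{\mathcal W}^c_{f_0}}\bigl(\tilde g(w),g_1^{(z)}(w)\bigr)\le \epsilon\, d(g_0(w),w)
\]
for all $w$.

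The key observation is that $\tilde g$ preserves lifted $f_0$-center leaves, and so does every element of $G^c$. Hence the error between $\tilde g$ and $g_1^{(z)}$ lies entirely inside center leaves. It therefore does not change the transverse quantity $d_G(\cdot,\widetilde{\mathcal W}^c_{f_0}(\cdot))$ that we want to bound.

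Concretely, we track the pair $(x_k,y_k)=(\tilde g^k(x),\tilde g^k(y))$, with $x_0=x$ and $y_0=y$. We write $y_k$ as $x_k$ times a center displacement times $\exp(u_k)$, where $u_k$ is a vector transverse to the center. Locally,
\[
d_G\bigl(x_k,\widetilde{\mathcal W}^c_{f_0}(y_k)\bigr)\asymp |u_k|,
\]
with constants close to $1$ at small scales. This uses local product structure: since $y_k$ is replaced by any point of its center leaf, only $u_k$ matters.

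For the inductive step we apply $\tilde g$. Because $\tilde g$ maps center leaves to center leaves, $\tilde g(y_k)$ lies on the same center leaf as $g_1^{(x_k)}(y_k)$. Thus the transverse component of $y_{k+1}$ relative to $x_{k+1}$ equals the transverse component of $g_1^{(x_k)}(y_k)$ relative to $g_1^{(x_k)}(x_k)$.

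For a left translation $L$ in $G^c$, which normalizes the stable root spaces of $f_0$, this transverse component is simply $\mathrm{Ad}(L)u_k$. There is a correction coming from the center displacement, but that displacement commutes, to first order, with the $E^s_{f_0}$ part modulo center. The cone hypothesis then gives $u_{k+1}\in C(x_{k+1})$, up to a small error, together with
\[
\lambda|u_k|\le |u_{k+1}|\le \mu|u_k|.
\]
The upper bound $|u_k|\le \mu^k d(x,y)<\epsilon$ keeps us at the scale where exponential coordinates are valid. This is exactly why we assume $d(x,y)<\mu^{-n}\epsilon$.

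The main obstacle is the nonlinearity of exponential coordinates: the transverse component of $L\cdot x_k\exp(c)\exp(u)$ is only approximately $\mathrm{Ad}(L)u$. We plan to handle this with a BCH estimate that is multiplicative, of the form $|u_{k+1}|\ge \lambda(1-C\epsilon)|u_k|$, and that preserves the cone up to a slightly enlarged cone. For this to hold, the cone family should be chosen with some room to spare.

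The accumulated multiplicative losses are harmless. The error at step $k$ is proportional to the scale $\mu^{k}d(x,y)$, and these scales are bounded by a geometric series $\sum_{j\ge 0}\mu^{-j}\epsilon$. Summing them gives a total factor of at least $1/2$. Combining this with the comparability constant of $1/2$ between $|u_n|$ and the distance to the center leaf yields the constant $\tfrac14\lambda^n d(x,y)$.
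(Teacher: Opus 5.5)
Your skeleton is the paper's: compare $\tilde g$ at step $k$ with the left translation by $\tau_{k+1}=\tilde g(x_k)x_k^{-1}\in G^c$, where $x_k=\tilde g^k(x)$. Use that $\tilde g$ preserves lifted center leaves, so only the stable component of the relative position matters. Then feed $\operatorname{Ad}(\tau_{k+1})$ into the cone hypothesis. The weak point is the step you call the main obstacle; as you plan to handle it, the argument does not close.

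A per-step factor $\lambda(1-C\epsilon)$ gives only $(\lambda(1-C\epsilon))^n$, which is not bounded below by $\tfrac14\lambda^n$ uniformly in $n$. Your rescue is that the step-$k$ error is proportional to $\mu^k d(x,y)$. That fails exactly for the ``correction coming from the center displacement.'' The center displacement $c_k$ between $\tilde g^k(y)$ and $\tilde g^k(x)$ is controlled only through projective closeness, or through H\"older continuity of $\tilde g=\phi_f^{-1}g\phi_f$. Neither gives control linear in $|u_k|$. So $c_k$ is not small at the scale of $u_k$, $\operatorname{Ad}(c_k)$ is not close to the identity on $\mathfrak g^s_{f_0}$, and ``commutes to first order'' is false.

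The fix, which is exactly what the paper does, is to observe that there is no nonlinearity once the relative position is written with the center factor on the left:
\[
\tilde g^k(y)\,(\tilde g^k(x))^{-1}=c_ks_k,\qquad c_k\in G^c,\ s_k\in G^s .
\]
With this convention $s_k$ depends only on the center leaf of $\tilde g^k(y)$, which is the invariance you wanted. Put $\sigma_{k+1}=\tilde g(y_k)y_k^{-1}\in G^c$. Since $G^c$ normalizes $G^s$,
\[
c_{k+1}s_{k+1}=\sigma_{k+1}c_ks_k\tau_{k+1}^{-1}=\bigl(\sigma_{k+1}c_k\tau_{k+1}^{-1}\bigr)\bigl(\tau_{k+1}s_k\tau_{k+1}^{-1}\bigr),
\]
so $\log s_{k+1}=\operatorname{Ad}(\tau_{k+1})\log s_k$ exactly, and $c_k$ never enters.

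By right-invariance of the metric, $d(\tau_{k+1}w,g_0w)=d(\tilde g(x_k),g_0(x_k))$ for every $w$. Hence $L_{\tau_{k+1}}$ is itself a left translation projectively $\epsilon$-close to $g_0$. Cone invariance and $\lambda|u|\le|\operatorname{Ad}(\tau_{k+1})u|\le\mu|u|$ then apply verbatim, with no enlarged cones and no accumulated loss.

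The only approximations are two local comparisons at the endpoints: $d_G(\tilde g^n(x),\widetilde{\mathcal W}^c_{f_0}(y))\ge\tfrac12\|\log s_n\|$ and $\|\log s_0\|\ge\tfrac12 d(x,y)$. This is where $d(x,y)<\mu^{-n}\epsilon$ is used, through the upper bound $\mu$, and where the constant $\tfrac14$ comes from.
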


\begin{proof}
We work on the cover $G$ of $X=G/\Gamma$. Consider the center-stable leaf through
the identity $\widetilde{\mathcal W}^{cs}_{f_0}(\mathrm{id})$. Every
$z\in\widetilde{\mathcal W}^{cs}_{f_0}(\mathrm{id})$ can be written
uniquely as a product
\[
z = g^c(z)\cdot \mathfrak n(z),
\]
with $\mathfrak n(z)\in\widetilde{\mathcal W}^s_{f_0}(\mathrm{id})$ and
$g^c(z)\in \widetilde\W^c_{f_0}(\mathrm{id})$. Using this decomposition we compute
\[
\tilde g^n(y)\,(\tilde g^n(x))^{-1}
=\tilde g^n(y)y^{-1}\,(y x^{-1})\,(\tilde g^n(x)x^{-1})^{-1}.
\]
Since the center  subgroup normalizes the stable subgroup, the  factor
coming from the $y$-orbit changes only the center  coordinate. Put
 $r_i=\tilde g^i(y)(\tilde g^i(x))^{-1}=c_i s_i$ in center--stable
coordinates.  Then the stable  coordinates give the  exact recurrence
$s_{i+1}=\tau_{i+1}s_i\tau_{i+1}^{-1}$.  Hence the  stable component is
\[
\mathfrak n\big(\tilde g^n(y)\,(\tilde g^n(x))^{-1}\big)
= \tau_n\tau_{n-1}\cdots\tau_1\;\mathfrak n(yx^{-1})\;
\tau_1^{-1}\cdots\tau_n^{-1},
\]
where
\[
\tau_i=\tilde g\big(\tilde g^{\,i-1}(x)\big)\cdot\big(\tilde g^{\,i-1}(x)\big)^{-1}.
\]

By assumption $\mathfrak n(yx^{-1})\in C(\mathrm{id})$. The cone
invariance and expansion hypothesis imply inductively that each conjugate
$\tau_i\cdots\tau_1\;\mathfrak n(yx^{-1})\;\tau_1^{-1}\cdots\tau_i^{-1}$
remains in the cone and that its norm grows at least by a factor
$\lambda$ at each step. Thus for every $n\in\mathbb N$,
\[
\mathfrak n\big(\tilde g^n(y)\,(\tilde g^n(x))^{-1}\big)\in C(\mathrm{id}),
\qquad
\big\|\mathfrak n\big(\tilde g^n(y)\,(\tilde g^n(x))^{-1}\big)\big\|
\ge \lambda^{n}\big\|\mathfrak n(yx^{-1})\big\|.
\]

For any $z\in\widetilde{\mathcal W}^{cs}_{f_0}(\mathrm{id})$ with
$\|\mathfrak n(z)\|$ small we have $d_G(z,\mathrm{id})\ge
\tfrac{1}{2}\,\|\mathfrak n(z)\|$ (this is the standard comparison of
group distance and stable coordinate in a small neighborhood).  The upper cone bound and $d(x,y)<\mu^{-n}\epsilon$ keep every
stable coordinate through time $n$ in this neighborhood.  Since
$\tilde g$ fixes each model center leaf, we therefore get
\[
d_G\big(\tilde g^n(x),\widetilde{\mathcal W}^c_{f_0}(y)\big)
\ge \tfrac{1}{2}\,\big\|\mathfrak n\big(\tilde g^n(y)\,(\tilde g^n(x))^{-1}\big)\big\|
\ge \tfrac{1}{2}\,\lambda^{n}\big\|\mathfrak n(yx^{-1})\big\|.
\]
Finally, for small separations there is the estimate
$\|\mathfrak n(yx^{-1})\|\ge \tfrac{1}{2}d(x,y)$, so combining the
inequalities gives the claimed bound
\[
d_G\big(\tilde g^n(x),\widetilde{\mathcal W}^c_{f_0}(y)\big)
\ge \tfrac{1}{4}\,\lambda^{n}\,d(x,y).
\]
This completes the proof.
\end{proof}

As a corollary, if $g_0$ has sufficiently large root values on the
stable directions, then one obtains exponential separation in the stable
foliation for iterates of $g$.

\begin{cor}\label{cor:gexpandinEsf}
Let $g_0$ be a diagonal map such that
$|\lambda_{ij}(g_0)|>10$ for every root $\lambda_{ij}$ with $\lambda_{ij}(f_0)<0$. Let
$g\in\CZ(f)$ be an element projectively $\epsilon$-close to $g_0$. Then there
exist constants $\mu>\lambda>1$ (depending only on $g_0$) and constants
$C_1,C_2>0$ (depending only on the H\"older conjugacy and $\epsilon$)
such that for every $x\in X$ and every $y\in\mathcal W^s_f(x)$ with
$d(x,y)\le C_2\mu^{-n}$ either
\[
d\big(g^n(x),g^n(y)\big)\ge C_1\lambda^{n}\, d(x,y)^{1/\theta^2}
\] or \[
d\big(g^{-n}(x),g^{-n}(y)\big)\ge C_1\lambda^{n}\, d(x,y)^{1/\theta^2},
\]
where $\theta\in(0,1]$ is the H\"older exponent of the leaf conjugacy
between $f$ and $f_0$.
\end{cor}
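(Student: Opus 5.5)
The plan is to derive the corollary from Lemma~\ref{lem:coneexpan}, applied once to $g$ and once to $g^{-1}$, and then to pass back to $f$-stable leaves using the leaf conjugacy $\phi_f$.

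\textbf{Step 1: cones for $g_0$ and $g_0^{-1}$.} Since $g_0$ is diagonal and commutes with $f_0$, $\operatorname{Ad}(g_0)$ preserves each root space $\mathfrak g_{ij}\subset\mathfrak g^s_{f_0}$, acting on it by $e^{\lambda_{ij}(g_0)}$. Split
\[
\mathfrak g^s_{f_0}=\mathfrak g^{s,+}\oplus\mathfrak g^{s,-},
\]
where $\mathfrak g^{s,+}$ (resp.\ $\mathfrak g^{s,-}$) is the sum of the stable root spaces with $\lambda_{ij}(g_0)>10$ (resp.\ $<-10$). The hypothesis says these two pieces exhaust $\mathfrak g^s_{f_0}$. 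Take $C^+$ to be a narrow cone around $\mathfrak g^{s,+}$ and $C^-$ a narrow cone around $\mathfrak g^{s,-}$. Then $g_0$ expands $C^+$ by at least $e^{10}$ and $g_0^{-1}$ expands $C^-$ by at least $e^{10}$, with an upper bound $e^{\max|\lambda_{ij}(g_0)|}$. Because the gap is large, a left translation $g_1$ that is projectively $\epsilon$-close to $g_0$ still leaves these cones invariant and satisfies the expansion bounds with some $\mu>\lambda>1$ depending only on $g_0$. Here one uses that $g_1g_0^{-1}$ is a center element of size $O(\epsilon)$ times the displacement, and that the center normalizes $\mathfrak g^s_{f_0}$. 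Any nonzero stable vector lies, after shrinking $\epsilon$, in $C^+$ or $C^-$ up to a bounded comparison; concretely, either its $\mathfrak g^{s,+}$ or its $\mathfrak g^{s,-}$ component carries at least half its norm.

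\textbf{Step 2: apply Lemma~\ref{lem:coneexpan}.} Given $y\in\mathcal W^s_f(x)$, set $\hat x=\phi_f^{-1}x$ and $\hat y=\phi_f^{-1}y$. By Theorem~\ref{leafconj}, $\hat y$ lies on the model center-stable leaf and is $\theta$-H\"older close to $\hat x$. Write $\hat y\hat x^{-1}=c\,\mathfrak n$ with $c$ central and $\mathfrak n$ stable. Replace $\hat x$ by the point $c\hat x$ on the same model center leaf, which $\tilde g$ preserves; this replacement costs only a bounded distortion. By Step 1, $\mathfrak n$ has either its $+$ or its $-$ part comparable to $\|\mathfrak n\|$.

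In the $+$ case, Lemma~\ref{lem:coneexpan} gives
\[
d_G\bigl(\tilde g^n(\hat x),\widetilde{\mathcal W}^c_{f_0}(\hat y)\bigr)\ge \tfrac14\lambda^n\|\mathfrak n\|.
\]
The argument should be applied only to the relevant cone component. The recurrence in the proof of the lemma is conjugation, which is linear to first order and preserves the $\pm$ splitting up to $\epsilon$-small terms, so the projection onto the $+$ part still grows.

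In the $-$ case, apply the lemma to $g^{-1}$, which is projectively close to $g_0^{-1}$.

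\textbf{Step 3: return to $f$-coordinates.} This is the main obstacle: relating $\|\mathfrak n\|$ to $d(x,y)$, and relating the model distance to the center leaf to $d(g^nx,g^ny)$. I would use the bi-H\"older property of $\phi_f$ twice, with exponent $\theta$.

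First, $\|\mathfrak n\|\gtrsim d(\hat x,\hat y)$, because $\hat y\notin\mathcal W^c_{f_0}(\hat x)$ locally unless $x=y$. Moreover $d(\hat x,\hat y)\ge c\,d(x,y)^{1/\theta}$.

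Second, $g^n=\phi_f\hat g^n\phi_f^{-1}$, and
\[
d(g^nx,g^ny)\ge c\,d(\hat g^n\hat x,\hat g^n\hat y)^{1/\theta}\ge c\,d\bigl(\tilde g^n\hat x,\widetilde{\mathcal W}^c_{f_0}(\hat y)\bigr)^{1/\theta}.
\]
Combining these gives
\[
d(g^nx,g^ny)\ge C_1\lambda^{n/\theta}d(x,y)^{1/\theta^2}\ge C_1\lambda^n d(x,y)^{1/\theta^2}.
\]

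The smallness condition $d(\hat x,\hat y)<\mu^{-n}\epsilon$ needed by the lemma follows from $d(x,y)\le C_2\mu^{-n}$ by H\"older continuity, after adjusting $C_2$ and $\mu$ (replace $\mu$ by $\mu^{1/\theta}$ if needed). The delicate point is the lifted distance versus the distance in $X$. For small separations (below the injectivity radius) these agree. So the constants $C_1,C_2$ depend only on $\theta$, $\epsilon$, and the H\"older constants of $\phi_f$, as claimed.
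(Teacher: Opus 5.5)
Your route is the same as the paper's: two cone families for $g_0$ and $g_0^{-1}$ covering $E^s_{f_0}$, projection of $\phi_f^{-1}(y)$ along the model center leaf, Lemma~\ref{lem:coneexpan} applied to $g$ or $g^{-1}$, and two passes through the $\theta$-bi-H\"older leaf conjugacy. But the first comparison in your Step~3, $\|\mathfrak n\|\gtrsim d(\hat x,\hat y)$, does not follow from what you give, and there is no reason to expect it. Knowing $\hat y\notin\mathcal W^c_{f_0}(\hat x)$ only gives $\mathfrak n\neq 0$. A linear bound would require $\phi_f^{-1}(\mathcal W^s_f(x))$ to be a uniformly Lipschitz graph over $\mathcal W^s_{f_0}(\hat x)$ in center--stable coordinates. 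Since $\phi_f$ is only H\"older transversally to the center foliation, nothing prevents the center part $c$ of $\hat y\hat x^{-1}=c\,\mathfrak n$ from being of order $\|\mathfrak n\|^{\theta}\gg\|\mathfrak n\|$.

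The missing idea is to use transversality on the perturbed side, where $y$ really lies on a smooth stable leaf of $f$. This is how the paper relates $d(x_1,y_1)$ to $d(x,y)$; the estimate is written out in Section~\ref{sec:c0closebund}. Put $y_1=\mathfrak n\hat x\in\mathcal W^s_{f_0}(\hat x)\cap\mathcal W^c_{f_0}(\hat y)$. Since $\phi_f$ maps center leaves to center leaves, $\phi_f(y_1)\in\mathcal W^c_f(y)$. Then $y\in\mathcal W^s_f(x)$ and the uniform angle between $E^s_f$ and $E^c_f$ give $d(x,\phi_f(y_1))\ge C\,d(x,y)$. Hence
\[
\|\mathfrak n\|\asymp d(\hat x,y_1)\ge c\,d\bigl(x,\phi_f(y_1)\bigr)^{1/\theta}\ge c'\,d(x,y)^{1/\theta},
\]
which is the bound your Step~3 needs, with the exponent you claimed.

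There are two smaller repairs. First, your narrow cones around $\mathfrak g^{s,\pm}$ do not cover $E^s_{f_0}$, which is why you end up re-running the lemma's proof on components. Use wide cones instead: $\mathcal C_\pm=\{v:\|v^{\mp}\|\le\|v^{\pm}\|\}$, with $v^\pm$ the components in $\mathfrak g^{s,\pm}$. These cover $E^s_{f_0}$, and the large gap $|\lambda_{ij}(g_0)|>10$ keeps them invariant and uniformly expanded under $g_0$ (resp.\ $g_0^{-1}$) and under nearby translations. Lemma~\ref{lem:coneexpan} then applies verbatim.

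Second, move $\hat y$, not $\hat x$, along the model center leaf. With base point $\hat x$ and partner $y_1$, the lemma bounds the distance from $\tilde g^n(\hat x)$ to $\widetilde{\mathcal W}^c_{f_0}(y_1)$. That leaf contains $\tilde g^n(\hat y)$ because $\tilde g$ preserves model center leaves. With base point $c\hat x$, the lemma only controls $\tilde g^n(c\hat x)$, not the point $\tilde g^n(\hat x)$ that appears in your display.
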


\begin{proof}
By the hypothesis on $g_0$ there exist cone families  $\mathcal C_+(x),\mathcal C_-(x)\subset
E^s_{f_0}(x)$ and constants $\mu_1>\lambda_1>1$ such that
 $\mathcal C_+(x)\cup\mathcal C_-(x)=E^s_{f_0}(x)$,  $\mathcal C_+$ satisfies the assumptions of
Lemma \ref{lem:coneexpan} for $g_0$, and  $\mathcal C_-$ satisfies the assumptions
of Lemma \ref{lem:coneexpan} for $g_0^{-1}$.

Let  $\phi_f$ be the leaf conjugacy from $(f_0,\mathcal W^c_{f_0})$ to $(f,\mathcal W^c_f)$. For any $y\in\mathcal W^s_f(x)$ we have
 $\phi_f^{-1}(y)\in\mathcal W^{cs}_{f_0}(\phi_f^{-1}(x))$. Put  $x_1=\phi_f^{-1}(x)$. Since
 $\mathcal C_+(x_1)\cup\mathcal C_-(x_1)=E^s_{f_0}(x_1)$ there exists
 $y_1\in\exp(\mathcal C_+(x_1))\cup\exp(\mathcal C_-(x_1))$ with
 $\phi_f^{-1}(y)\in\mathcal W^c_{f_0}(y_1)$. Assume  $y_1\in\exp(\mathcal C_+(x_1))$ (the
other case is analogous using $g_0^{-1}$). If
 $d(y_1,\phi_f^{-1}(x))\le \mu_1^{-n}\epsilon$, Lemma \ref{lem:coneexpan}
gives
\[
d\big(\hat g^n(x_1),\mathcal W^c_{f_0}(y_1)\big)\ge \tfrac{1}{4}\lambda_1^n d(x_1,y_1),
\]
where  $\hat g=\phi_f^{-1} g\phi_f$.

Since  $\phi_f$ is $\theta$-bi-H\"older, there is a constant $C>0$ such that
\[
d\big(g^n(x),g^n(y)\big)
\ge C\,d\big(\hat g^n(x_1),\mathcal W^c_{f_0}(y_1)\big)^{1/\theta}
\ge C'\,\lambda_1^{n/\theta} d(x_1,y_1)^{1/\theta}.
\]
Using the bi-H\"older bounds again to relate $d(x_1,y_1)$ to $d(x,y)$
gives
\[
d\big(g^n(x),g^n(y)\big)\ge C''\,\lambda_1^{n/\theta} d(x,y)^{1/\theta^2},
\]
provided $d(x,y)\le \epsilon^{2/\theta}\mu_1^{-n/\theta}$. Choosing
$\mu,\lambda,C_1,C_2$ appropriately (absorbing the powers of $\theta$ and
the constants) yields the stated inequality. The alternative inequality
for backward iterates is handled similarly when $y_1$ lies in the cone
 $\mathcal C_-(x_1)$.
\end{proof}

\begin{lem}\label{gnozeroexp}
Assume that $f_0$ has only one nonzero root value up to sign.
 Let $g_0$ be a diagonal map such that
$|\lambda_{ij}(g_0)|\neq 0$ for every root $\lambda_{ij}$ with $\lambda_{ij}(f_0)<0$. Then there exists  $\epsilon_0>0$ such that if
$g\in\CZ(f)$ is projectively  $\epsilon_0$-close to $g_0$, then $Dg|_{E^s_f}$ cannot have a zero Lyapunov exponent. The analogous statement holds  on $E^u_f$. 
\end{lem}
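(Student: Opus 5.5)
The plan is to argue by contradiction: a zero exponent would produce pairs of points in a stable leaf whose separation grows subexponentially in both time directions, while Corollary~\ref{cor:gexpandinEsf} forces exponential separation in one of them. Replacing $g_0$ by $g_0^m$ and $g$ by $g^m$ is harmless, since a power of $g$ has a zero exponent on $E^s_f$ exactly when $g$ does. Choose $m$ with $|\lambda_{ij}(g_0^m)|>10$ on every root with $\lambda_{ij}(f_0)<0$; this is possible because these roots are nonzero. Shrinking $\epsilon_0$, $g^m$ is projectively close to $g_0^m$, because the comparison in Proposition~\ref{gtransl} is applied to a fixed number of factors. So we may assume the hypotheses of Corollary~\ref{cor:gexpandinEsf}. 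This gives constants $\mu>\lambda>1$ depending only on $g_0$, and an exponent $\theta$, the H\"older exponent of the leaf conjugacy.

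The next step is to make $\theta$ as close to $1$ as needed. Since $f_0$ has a single nonzero root value up to sign, the splitting in Lemma~\ref{lem:holexp1} is just $E^c_f\oplus E^s_f\oplus E^u_f$. The joint integrals are $\mathcal W^{cs}_f$ and $\mathcal W^{cu}_f$, which exist by dynamical coherence (Proposition~\ref{fparhyp}). The leaf conjugacy $\phi_f$ carries $\mathcal W^{cs}_{f_0},\mathcal W^{cu}_{f_0}$ to them by its construction in Theorem~\ref{leafconj}. Lemma~\ref{lem:holexp1} therefore gives $\theta\ge\theta_0$ for any prescribed $\theta_0<1$, after shrinking the $C^1$-neighborhood of $f_0$. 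Fix $\theta_0$ with $\mu^{1/\theta_0^2-1}<\lambda^{1/2}$. Note that $\mu,\lambda$ depend only on $g_0$, so this choice is not circular.

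Now suppose $Dg|_{E^s_f}$ has a zero exponent with respect to volume; $g$ preserves volume by Proposition~\ref{fparhyp}(6). Fix $\varepsilon>0$ with $e^{2\varepsilon}<\lambda^{1/4}$. Apply Theorem~\ref{thm:pesin-blocks} to the continuous cocycle $Dg|_{E^s_f}$ over $g$. This gives a Pesin block $\Lambda$ of positive measure, and at $x\in\Lambda$ a zero-exponent Oseledets direction $v\in H^0(x)\subset E^s_f(x)$. Work in tempered Lyapunov charts along the $C^r$ leaf $\mathcal W^s_f(x)$, which is $g$-invariant since $g$ commutes with $f$. Take $y=\exp^{s}_x(\delta v)$, the leafwise exponential along $\mathcal W^s_f(x)$. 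A standard inductive graph-transform (Taylor) estimate in the charts should give
\[
d\bigl(g^{\pm k}x,g^{\pm k}y\bigr)\le C_\Lambda e^{2\varepsilon k}\delta ,\qquad 0\le k\le n,
\]
provided $\delta\le e^{-c\varepsilon n}$ for a constant $c$. The point is that the quadratic remainder, of size $\delta^2e^{O(\varepsilon k)}$, stays dominated.

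Choose $\delta=d(x,y)\approx C_2\mu^{-n}$, which satisfies both the constraint above and the hypothesis of Corollary~\ref{cor:gexpandinEsf}. The Corollary then gives, for either $+n$ or $-n$,
\[
C_1\lambda^n\delta^{1/\theta^2}\le d\bigl(g^{\pm n}x,g^{\pm n}y\bigr)\le C_\Lambda e^{2\varepsilon n}\delta .
\]
Hence $\lambda^n\mu^{-n(1/\theta^2-1)}\lesssim e^{2\varepsilon n}$. By the choices of $\theta_0$ and $\varepsilon$ the left side grows at least like $\lambda^{n/2}$, so it beats $e^{2\varepsilon n}\le\lambda^{n/4}$. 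This is a contradiction for $n$ large. The statement on $E^u_f$ follows by the same argument applied to $f^{-1}$.

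The main obstacle is the two-sided subexponential bound at the macroscale $\mu^{-n}$. Pesin charts control linear growth, but the Corollary needs genuine points at distance exponentially small in $n$ yet controlled for $n$ steps in both directions. The first approach I would try is to apply leafwise Pesin theory (Lemma~\ref{lem:leafwise-pesin-ac}) to $g$ on $\mathcal W^s_f$, using a center manifold in the zero direction. If that is too delicate, I would instead use the direct chart estimate above, exploiting that $\mu^{-n}$ is far smaller than the Pesin radius $e^{-c\varepsilon n}$, so the nonlinear terms are negligible.
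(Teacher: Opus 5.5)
Your outline is the paper's proof. You argue by contradiction, pass to a power so that $|\lambda_{ij}(g_0^m)|>10$, and use the one-root $cs/cu$ structure to invoke Lemma~\ref{lem:holexp1}, fixing $\theta$ only after $\mu,\lambda$. You then take tempered charts along $\W^s_f$ and compare with Corollary~\ref{cor:gexpandinEsf}, with the same final arithmetic.

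\textbf{The gap.} The step that does not hold as you justify it is the finite-time two-sided bound for $y=\exp^s_x(\delta v)$. In the chart, $z_0=C_\varepsilon(x)^{-1}\delta v$ lies exactly in the zero block. However, $F_0(z_0)=B_0z_0+r_0(z_0)$, and the remainder $r_0(z_0)=O(\delta^2)$ generically has a nonzero component in the blocks of $Dg|_{E^s_f}$ with nonzero exponents. That component is amplified at the rate of those exponents: forward in the expanding blocks, backward in the contracting ones. So after $n$ steps the deviation is of order $e^{\chi n}\delta^2$, not $e^{\varepsilon n}\delta$. Your bound then requires $\delta\lesssim e^{-(\chi_{\max}+O(\varepsilon))n}$, where $\chi_{\max}$ is the largest modulus of a nonzero exponent of $Dg|_{E^s_f}$; the condition $\delta\le e^{-c\varepsilon n}$ is not enough. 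Being far inside the Pesin radius keeps the nonlinearity small relative to the linear part at each step, but it does not stop this exponential leakage.

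You have no bound of $\chi_{\max}$ in terms of the model constant $\mu$, since $g$ is not known to be $C^1$-close to $g_0$ off the center. Nor can you simply shrink $\delta$ to $e^{-Mn}$. The final inequality would then need $M(1/\theta^2-1)<\log\lambda-2\varepsilon$. Here $M$ depends on $g$, hence on $f$, while $\theta$ must be fixed before $f$.

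\textbf{The repair.} The fix is the center-manifold choice you mention, but it cannot come from Lemma~\ref{lem:leafwise-pesin-ac}, which assumes there are no zero exponents. Build a finite-time center manifold directly, as follows.
\begin{enumerate}
\item In the tempered charts along $\{g^jx\}_{|j|\le n}$, cut off the chart maps so that the nonlinearity has Lipschitz constant $\varepsilon_0$, small compared with the gap to the nonzero exponents. This is valid on radii $\rho_j\gtrsim\varepsilon_0e^{-c\varepsilon|j|}$.
\item Transform the graph $\{u=0\}$ backward from time $n$. This gives a finite-time center-stable graph over the zero-plus-contracting coordinates.
\item Transform $\{s=0\}$ forward from time $-n$. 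This gives a center-unstable graph.
\item The intersection of these two graphs is a Lipschitz graph over the zero block.
\end{enumerate}
Take a point $y_n$ on this graph with $C_2\mu^{-n-1}<d(x,y_n)<C_2\mu^{-n}$. Its chart orbit satisfies $\|z_j\|\le e^{(\varepsilon+O(\varepsilon_0))|j|}\|z_0\|$ for $|j|\le n$. This stays below $\rho_j$ because $\log\mu$ is fixed while $\varepsilon,\varepsilon_0$ are small, so the cutoff is never active and the orbit is a genuine $g$-orbit. Including the chart distortion, you get $d(g^{\pm n}x,g^{\pm n}y_n)\le C(x)e^{(2\varepsilon+O(\varepsilon_0))n}d(x,y_n)$. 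The rest of your argument then goes through with $e^{2\varepsilon+O(\varepsilon_0)}<\lambda^{1/4}$.

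The paper's text is equally terse at this point (``choose points $y_n$ \dots\ in the coordinates of the zero blocks''), so this is exactly the step to make explicit.
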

\begin{proof}
Suppose $g$ has a zero Lyapunov exponent  on $E^s_f$. By  Oseledets'
theorem, for every $\epsilon>0$ there are a regular point $x$ in
the  zero-exponent set,  a nonzero vector $u(x)\in E^s_f(x)$, and a
constant  $K(x)>0$ such that
 \[
\|Dg^n_xu(x)\|\le K(x)e^{\epsilon|n|}\|u(x)\|,
\qquad n\in\mathbb Z.
\]

Choosing $f$ sufficiently $C^1$-close to
$f_0$, the one-root splitting gives the jointly integrable
center--stable and center--unstable foliations, and the canonical leaf
conjugacy carries the corresponding model foliations to them.  Thus the
hypotheses of Lemma~\ref{lem:holexp1} hold, and we may assume that
the leaf conjugacy has H\"older exponent
$\theta$ arbitrarily close to $1$. We shall show that this
sub-exponential bound contradicts Corollary ~\ref{cor:gexpandinEsf}.
 Take a fixed power  $g_0^k$, if necessary, so that
$|\lambda_{ij}(g_0^k)|>10$ for  every root  nonzero on $f_0$.  After
decreasing $\epsilon_0$, the  corresponding fixed power $g^k$ is
projectively  close enough to $g_0^k$.

 By Corollary~\ref{cor:gexpandinEsf}, for  every sufficiently  small
$y\in\mathcal W^s_f(x)$ and every $n$, there is a  sign
$\sigma_n\in\{-1,1\}$ such that
 \[
d\bigl(g^{\sigma_nkn}(y),g^{\sigma_nkn}(x)\bigr)
\ge C_1\lambda^n d(x,y)^{1/\theta^2},
\qquad d(x,y)<C_2\mu^{-n},
\]
where  $\mu>\lambda>1$.

Apply Theorem~\ref{thm:pesin-blocks} to the cocycle $Dg|_{E^s_f}$ at the regular point $x$, using tempering constant $\eta\ll \log \mu$. In the corresponding tempered Pesin charts along the orbit segment $\{g^jx:|j|\le kn\}$, the derivative is block diagonal with respect to the negative, zero, and positive Oseledets spaces, the zero block has growth between $e^{-\eta}$ and $e^\eta$, and the nonlinear terms have arbitrarily small derivative. Since the Pesin chart radii are tempered while $\mu^{-n}$ decreases exponentially, we may choose points $y_n\in\mathcal W_f^s(x)$ in the coordinates of the zero blocks in the Pesin block, so that for sufficiently large $n$, \[ C_2\mu^{-n}>d(x,y_n)>C_2\mu^{-n-1} \]  and \[ \frac{\exp_{x,\mathcal W_f^s}^{-1}(y_n)} {\|\exp_{x,\mathcal W_f^s}^{-1}(y_n)\|} \longrightarrow \frac{u(x)}{\|u(x)\|}. \] Taking $j=\pm kn$ in the preceding finite-time estimate gives  \[ d\bigl(g^{\pm kn}(y_n),g^{\pm kn}(x)\bigr) \le C(x)e^{\epsilon kn}d(x,y_n). \]
Taking the sign
$\sigma_n$ in the lower bound gives
 \[
C_1\lambda^n d(x,y_n)^{1/\theta^2}
\le C(x)e^{\epsilon kn}d(x,y_n),
\]
which implies that
 \[
C_1(\lambda e^{-k\epsilon})^{n}\le C(x)d(x,y_n)^{1-1/\theta^2}
\le C(x)C_2^{1-1/\theta^2}
(\mu^{(1-\theta^2)/\theta^2})^{n+1}.
\]
This holds for every  sufficiently large $n$.  Pick $\epsilon$ sufficiently small so that $e^{k\epsilon}<\lambda \mu^{-\frac{1-\theta^2}{\theta^2}}$; this yields a contradiction since $\mu$ and $\lambda$ are fixed before the neighborhood of $f_0$ is reduced once more, and $\theta$ can be made arbitrarily close to $1$  by Lemma \ref{lem:holexp1}.
\end{proof}

\section{Partially hyperbolic elements in the centralizer}\label{sec:gph}
In this section, we shall prove that many elements in $\CZ(f)$ are partially hyperbolic if $f_0$ has only one root up to sign.

We start with some definitions.
For the diagonal $\mathbb Z^2$-actions in this section, a root $\beta$ of
$\SL_n(\mathbb R)$ restricts to the linear functional
\[
 \chi_\beta(m,n)=\beta\bigl(m\log f_0+n\log g_0\bigr)
 \quad\text{on }\mathbb R^2.
\]
The nonzero restrictions, modulo positive proportionality, are the
\emph{coarse Lyapunov functionals}; the associated coarse Lyapunov
foliations are the maximal intersections of stable foliations with the
corresponding sign pattern.  An element $a\in\mathbb Z^2$ is
\emph{regular for the restricted action} if $\chi(a)\ne0$ for every
nonzero restricted Lyapunov functional $\chi$.  When we say ``regular'' in this section, applied
to $a\in\mathbb Z^2$, it has this restricted meaning, which is much weaker than an element of $\SL_n(\mathbb R)$ being generic.  

A finite set $F\subset\mathbb Z^2$ is called \emph{sufficient} if it is
symmetric, contains a generating set, each element is regular for the restricted action, and contains a common contracting element
for each pair of nonproportional coarse Lyapunov foliations.

An action $\alpha'$ on $Y$ is a \emph{factor} of an action $\alpha$ on
$X$ if there is a surjection $p:X\to Y$ with
$p\circ\alpha(a)=\alpha'(a)\circ p$ for every $a$.  Such a factor is
\emph{rank one} if $\alpha'(\mathbb Z^2)$ contains a virtually cyclic
subgroup whose coarse Lyapunov foliations are the same as those of the
full factor action.  A restriction of the diagonal action is
\emph{genuinely higher rank} if its lifted algebraic action on the ambient
group $G=\mathrm{SL}_n\R$ has no rank-one factor.  This is the terminology of
\cite[Definition~3.2 and Section~3.2]{VinWang}; see also
\cite[Section~5]{Wang26}.

Let
$
\alpha_0:\mathbb Z^2\longrightarrow\Diff^\infty(X)
$
be a genuinely higher-rank restriction of the diagonal action.
The coarse Lyapunov foliations of $\alpha_0$ are the
maximal intersections of stable foliations of
the maps $\alpha_0(a)$, $a\in \Z^2$.

In this section we prove the following.

\begin{thm}\label{thm:highrankgph}
Fix $K>1$. Let $f_0$ be a diagonal map with only one nonzero root value
up to sign on
$
X=\SL_n(\mathbb R)/\Gamma,\qquad n\ge5,
$
and let $\mathfrak g_1$ be a maximal semisimple subalgebra of
$\mathfrak g(\mathcal Z(f_0))$ whose simple factors have real rank at
least two.

Choose a diagonal element
$g_0\in\exp(\mathfrak g_1)$ such that $E^c_{g_0}\subset E^c_{f_0}$, and a finite sufficient set
$(0,1)\in F\subset\mathbb Z^2$.
Then the following holds.

For every $\epsilon>0$ there exists $\delta>0$ such that, if
\[
f\in\Diff^\infty_{\mathrm{vol}}(X),\qquad
d_{C^1}(f,f_0)<\delta,\qquad
d_{C^2}(f,f_0)\le K,\qquad
\mathcal Z(f)\doteq\mathcal Z(f_0),
\]
and the action of $\mathcal Z(f)$ is $K$-regular, then there exists
$g\in\mathcal Z(f)$ such that, for
$
\alpha(m,n)=f^m g^n,
$
the following hold.

\begin{enumerate}
\item For every $a\in F$, the map $\alpha(a)$ is
partially hyperbolic, and all these elements have the same center
foliation, denoted $\mathcal W_\alpha^c$.

\item Every leaf of $\mathcal W_\alpha^c$ is fixed by the whole action
$\alpha$, and there is a $C^0$-small bi-H\"older leaf conjugacy
\[
h_c:(X,\mathcal W_{\alpha_0}^c)\longrightarrow
     (X,\mathcal W_\alpha^c)
\]
which conjugates the induced actions on leaf spaces.

\item For every $a\in F$ and every $\eta>0$, there is
$C=C(a,\eta)>0$ such that
\[
\sup_{x\in X}
\left\|D\alpha(na)|_{E_\alpha^c(x)}\right\|
\le Ce^{\eta|n|}
\qquad\text{for every }n\in\mathbb Z.
\]

\item Put $\widehat\alpha=h_c^{-1}\alpha h_c$. For every
$a\in F$, the stable holonomies of $\widehat\alpha(a)$
between local $\mathcal W_{\alpha_0}^c$-plaques are uniformly
$C^0$--$\epsilon$-close to those of $\alpha_0(a)$, for stable legs
of length at most $\epsilon^{-1}$.

\item For every $a\in F$, $\alpha(a)$ and $\alpha_0(a)$ are
projectively $\epsilon$-close in the sense that the center
translation of $\widehat\alpha(a)$ at any point of $\W_{\alpha_0}^c$
lies in the $\epsilon$-cone about the model translation by $\alpha_0(a)$.
\end{enumerate}
\end{thm}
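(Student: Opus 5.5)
The element $g$ will be the one furnished by Proposition~\ref{prop:g_0tog}: $g=\exp_{\CZ(f)}(\Xi^{-1}(\log g_0))$, with $\Xi$ the adapted isomorphism of Lemma~\ref{liealg}. Since $g_0\in\exp(\mathfrak g_1)$ is diagonal and $f_0$ is central in $\CZ(f_0)$, the pair $\alpha_0(m,n)=f_0^mg_0^n$ is a $\mathbb Z^2$ restriction of the diagonal action. By Lemma~\ref{lem:finflow}, $f$ lies in the center of $\CZ(f)$, so $\alpha(m,n)=f^mg^n$ is a genuine $\mathbb Z^2$-action.

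The plan is to verify the items (1)--(5) for each of the finitely many $a\in F$. Items (2) and (5) are almost immediate from Proposition~\ref{prop:g_0tog}. The common center foliation will be $\mathcal W^c_\alpha:=\mathcal W^{c,c}_{f,g}$. Because $E^c_{g_0}\subset E^c_{f_0}$, this is the common neutral foliation of $\alpha_0$. The map $h_c$ of Proposition~\ref{prop:g_0tog}(4) is simultaneously a leaf conjugacy for $f$ and $g$, hence for all of $\alpha$. Projective closeness of $\alpha(a)=f^mg^n$ to $\alpha_0(a)$ follows by applying Proposition~\ref{gtransl} to the finitely many words in $F$, shrinking $\delta$ depending on $F$.

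The substance is in (1) and (3), namely partial hyperbolicity of $\alpha(a)$ transversally to $\mathcal W^c_\alpha$. Inside $E^c_f$ the picture is clean. There $D\alpha(a)$ is continuously cohomologous via $D\sigma$ to the constant map $\mathrm{Ad}(\alpha(a))$ on $\mathfrak g(\CZ(f))$. By construction of $\Xi$, this constant map has the same spectrum as $\mathrm{Ad}(\alpha_0(a))$ on $\mathfrak g(G^c)$. This gives a uniform splitting $E^{c,s}\oplus E^{c,c}\oplus E^{c,u}$ with uniform rates, and on $E^{c,c}$ the growth is polynomial, which yields (3) on the $E^c_f$ part.

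Outside the center, on $E^s_f\oplus E^u_f$, I would follow the outline in \S\ref{outl}, using four ingredients in turn:
\begin{itemize}
\item[(i)] Zimmer cocycle superrigidity for the $K$-regular $\CZ(f)$-action (which contains the higher-rank simple factors of $\mathfrak g_1$) applied to $Dg|_{E^s_f}$ and $Df|_{E^s_f}$. Since $\dim E^s_f$ is small compared with the nontrivial representations of the rank-$\ge2$ factors, this will show that the measurable cocycle is cohomologous to a constant representation times a compact-valued cocycle. In particular $Df|_{E^s_f}$ has a single exponent, and the Lyapunov exponents of $Dg|_{E^s_f}$ are read off from the representation.
\item[(ii)] Lemma~\ref{gnozeroexp}, which rules out zero exponents for $Dg$ on $E^s_f,E^u_f$, so that each restricted Lyapunov functional is nonzero on regular $a\in F$.
\item[(iii)] A Hopf-type argument. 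Lemma~\ref{lem:leafwise-pesin-ac} gives absolute continuity of leafwise Pesin manifolds of $g$ inside $\mathcal W^{s}_f,\mathcal W^u_f$. Combined with Theorem~\ref{burnswikerg} for $f$, this shows that $g$ and the $\alpha(a)$ are ergodic. The measurable Oseledets subspaces of $\alpha(a)$ are then invariant under the $su$-holonomies of the fiber-bunched cocycles $Df|_{E^{s/u}_f}$ (Proposition~\ref{prop:holcocycle}), hence continuous.
\item[(iv)] Upgrading from continuous invariant subbundles with nonzero exponents to uniform domination. I would argue that the derivative cocycle becomes continuously cohomologous, modulo a sign, to the constant model. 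Then the exponents are uniform, which gives partial hyperbolicity of $\alpha(a)$ with $E^c_{\alpha}=E^{c,c}_{f,g}$ and subexponential center growth, proving (1) and (3).
\end{itemize}

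For (4), the stable holonomies of $\widehat\alpha(a)$ between $\mathcal W^c_{\alpha_0}$-plaques will be compared with those of $\alpha_0(a)$. The stable bundle of $\alpha(a)$ is $E^{c,s}\oplus$ (the part of $E^s_f\oplus E^u_f$ contracted by $a$). Its component in $E^c_f$ is $C^0$-close to the model by Proposition~\ref{prop:g_0tog}(2). Its component outside $E^c_f$ is close because the continuous splitting of $E^{s/u}_f$ obtained in (iii) can be compared to $E^\ast_{g_0}$ through the uniform cone estimates of Lemma~\ref{lem:coneexpan}. Holonomies along legs of length $\le\epsilon^{-1}$ then depend continuously on the bundles and on the conjugacy $h_c$.

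The main obstacle is step (iii)--(iv): passing from measurable superrigidity data to continuous, uniformly dominated bundles. I would attack it by showing that every bi-essentially saturated set for the Pesin foliations of $g$ is bi-essentially saturated for $\mathcal W^{s}_f,\mathcal W^u_f$. This is where accessibility and center-bunching of $f$ substitute for the missing hyperbolicity of $g$.
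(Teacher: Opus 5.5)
\textbf{Step (i) fails when $q=2$.} Zimmer's theorem cannot be applied to $Df$ at all. The map $f$ lies in the $\mathbb R$-center of $\CZ(f)$, not in a higher-rank simple factor, so everything about $Df|_{E^s_f}$ must come from $f$ commuting with $G_1$, and your dimension count does not deliver this. When $q=2$ the hypothesis forces $\mathfrak g_1=\mathfrak{sl}_{n-2}(\mathbb R)$, because the $\mathfrak{sl}_2(\mathbb R)$ factor has rank one, while $\dim E^s_f=2(n-2)$. The superrigidity representation of $G_1$ can then be $V\oplus V$ or $V\oplus V^\vee$. Its commutant contains independent scalars on the two summands. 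Hence a measurable $\CZ(f)$-invariant splitting $E^s_f=E^1_f\oplus E^2_f$ into two $(n-2)$-dimensional pieces, on which $Df$ has different exponents, is entirely consistent with $G_1$-superrigidity.

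The paper (Step~3 of \S\ref{sec:1exp}) excludes this using the rank-one factor $S_2$, which Zimmer's theorem does not cover. First, the log-determinant homomorphism $\mathcal D$ on $E^1_f$ vanishes on the semisimple group $S_2$. Next, for $h\in S_2$, which centralizes $G_1$, Lemma~\ref{lem:tauginv} gives $\tau(a,h^{n_j}x)=T_j\pi_{\omega(x)}(a)T_j^{-1}$ at recurrent times. Absolute irreducibility of $\pi_{\omega(x)}$ then bounds the condition numbers of $Dh^{n_j}|_{E^1_f}$. So all exponents of $Dh|_{E^1_f}$ coincide, hence vanish since $\mathcal D|_{S_2}=0$, contradicting Lemma~\ref{gnozeroexp}. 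The same obstruction returns when you need $Df|_{E^s_f}$ cohomologous to a constant. For $q=2$ the paper uses Lee's superrigidity for $\SL_r\times\SL_2$ (Theorem~\ref{thm:lee}), which applies only once the $\SL_r$-factor is known to act ergodically. It combines this with property (T) of $\SL_r$ to kill the determinant cocycle (Lemma~\ref{lem:real-cocycle-T}).

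\textbf{Step (iv) has no mechanism.} You state the target, continuous cohomology modulo sign to a constant. But the attack you propose (Pesin-saturated sets are bi-saturated for $\W^s_f,\W^u_f$) only yields ergodicity of $g$. Continuous invariant subbundles on which $Dg$ has Lebesgue-a.e.\ nonzero exponents do not give uniform contraction or expansion, since other invariant measures may carry zero exponents.

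In the paper, ergodicity of $g$ is an \emph{input} to this step. First, $\tau(f,\cdot)$ is invariant along $G_1$-orbits (Lemma~\ref{lem:tauginv}, via recurrence and Lemma~\ref{lem:positive-conjugation}). By ergodicity of $g$ it is a.e.\ constant, so $Df|_{E^s_f}$ is measurably cohomologous modulo $\pm I$ to a constant $A_f$ with one exponent. Then Kalinin--Sadovskaya's measurable-to-H\"older theorem (Theorem~\ref{KS23}) makes the transfer continuous; it is applied to the tensor squares $A\otimes A$ and $Df|_{E^s_f}\otimes Df|_{E^s_f}$ to dispose of the sign. Only then does $Dg^nL_x=\pm L_{g^nx}A_g^n$, with uniformly bounded frames, give uniform rates.

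Relatedly, the paper obtains continuity of the $g$-splitting of $E^s_f$ before the Hopf argument. This comes from Theorem~\ref{contbund1exp}, using only ergodicity of $f$ and the single exponent. The Hopf argument then uses this continuity to get a local product structure at a uniform scale.
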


In this section, let $p\ge q$ be the two multiplicities of the diagonal entries of $f_0$,
so that $p+q=n$ and $\dim E_f^s=pq$.  Let $\mathfrak g_1$ be the
higher-rank semisimple subalgebra chosen in
Theorem~\ref{thm:highrankgph}, and set
$
G_1<\CZ(f)^0$ to be the connected analytic subgroup with
Lie algebra $\Xi^{-1}(\mathfrak g_1).$

Choose a diagonal element $g_0\in\exp(\mathfrak g_1)$ such that $E^c_{g_0}\subset E^c_{f_0}$.  We take the corresponding $g\in G_1$ as in Proposition \ref{prop:g_0tog}. We shall prove in this section that $g$ is
partially hyperbolic, and its invariant foliations are $C^0$-close to those of
$g_0$.

\subsection{\texorpdfstring{$Df|_{E^s_f}$ only has one Lyapunov exponent}
{The stable derivative has only one Lyapunov exponent}}\label{sec:1exp}

We first prove that $Df|_{E_f^s}$ has only one Lyapunov exponent.  If it
had two or more, grouping its Oseledets spaces would give a measurable
$\CZ(f)$-invariant splitting
\[
E_f^s=E_f^1\oplus E_f^2,
\]
with both summands nonzero.  We assume such a splitting and derive a
contradiction.

For a real vector space $W$, write
\[
Q(W)=\GL(W)/\{\pm I\}.
\]

For $L\in\GL(W)$, we write $[L]_\pm$ for its class in $Q(W)$; this
notation is used throughout the section.

We identify $Q(W)$ with the real algebraic image of
$\GL(W)\to \GL(W\otimes W) :A\mapsto A\otimes A$.  Thus Zimmer's theorem applies directly to
$Q(W)$, and norms, condition numbers, invariant subspaces, and the
moduli of eigenvalues are independent of the choice of either lift of
an element of $Q(W)$.  

We shall repeatedly use the following form of the Lusin--recurrence
argument.

\begin{lem}\label{lem:observable-recurrence}
Let $T$ be an invertible measure-preserving transformation of a standard
probability space, and let $\mathcal O$ be a measurable map to a
second countable metric space.  For almost every $x$ there are sequences
$n_j\to+\infty$ and $m_j\to-\infty$ such that
\[
\mathcal O(T^{n_j}x)\longrightarrow\mathcal O(x),\qquad
\mathcal O(T^{m_j}x)\longrightarrow\mathcal O(x).
\]
The statement also holds simultaneously for any finite family of
measurable maps.
\end{lem}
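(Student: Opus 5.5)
The plan is to combine Poincaré recurrence with Lusin's theorem, applied to the pushforward of the measure under $\mathcal O$. We may assume the probability space is $(Y,\nu)$ and $\mathcal O:Y\to Z$ with $Z$ second countable metric. Fix a countable base $\{U_k\}$ of $Z$. For each $k$, apply Poincaré recurrence to $T$ and to $T^{-1}$ (both measure preserving) on the measurable set $A_k=\mathcal O^{-1}(U_k)$. This gives a null set $N_k\subset A_k$ such that every $x\in A_k\setminus N_k$ returns to $A_k$ under $T^n$ for infinitely many $n>0$, and under $T^{m}$ for infinitely many $m<0$. Set $N=\bigcup_k N_k$, which is null.

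Now take $x\notin N$. For each $\ell$, the point $\mathcal O(x)$ lies in some basic open set $U_{k_\ell}$ contained in the ball $B(\mathcal O(x),1/\ell)$. Since $x\in A_{k_\ell}\setminus N_{k_\ell}$, there are infinitely many $n>0$ with $\mathcal O(T^nx)\in U_{k_\ell}$. Choose such an $n_\ell$ with $n_\ell>\max(n_{\ell-1},\ell)$; then $n_\ell\to+\infty$ and $d(\mathcal O(T^{n_\ell}x),\mathcal O(x))<1/\ell$. Doing the same with negative returns produces $m_\ell\to-\infty$. No Lusin step is actually needed, since we work directly with preimages of basic open sets, and these are measurable.

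For a finite family $\mathcal O_1,\dots,\mathcal O_r$, apply the argument to the single map $\mathcal O=(\mathcal O_1,\dots,\mathcal O_r)$ into the product space $Z_1\times\cdots\times Z_r$ with the max metric. That space is again second countable and metric, and $\mathcal O$ is measurable because the Borel $\sigma$-algebra of a countable product of second countable spaces is the product $\sigma$-algebra. Convergence in the product gives simultaneous convergence of every component along the same sequences.

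The only point needing care is this measurability of the product map, and it is exactly where second countability is used. Nothing here is a real obstacle; the lemma is elementary.
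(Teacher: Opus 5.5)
Your proof is correct and follows essentially the same route as the paper's: apply Poincar\'e recurrence for $T$ and $T^{-1}$ to $\mathcal O^{-1}(U)$ for each set $U$ in a countable base, discard the countable union of exceptional null sets, pick shrinking basic neighborhoods of $\mathcal O(x)$, and treat a finite family through the product map. One small correction to your closing remark: second countability is used not only for measurability of the product map but already in the main step, since the countable base is what lets you discard a single null set.
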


\begin{proof}
Take a countable basis in the target containing neighborhoods of
arbitrarily small diameter.  For each basis element $U$, Poincar\'e
recurrence applies to $\mathcal O^{-1}(U)$.  Intersecting the resulting
countably many full-measure sets, and then choosing successively smaller
$U\ni\mathcal O(x)$, gives the positive sequence.  Apply the same
argument to $T^{-1}$ for negative times and to the product map for a
finite family of observables.
\end{proof}

We also record the linear-algebra consequence used with this lemma.

\begin{lem}\label{lem:positive-conjugation}
Let $A,C\in\GL(W)$ be real diagonalizable with positive eigenvalues and
let $B\in Q(W)$.  If
\[
{[C]_\pm}^{n_j}B{[A]_\pm}^{-n_j}\longrightarrow B
\quad\text{in }Q(W)
\]
for some $n_j\to\infty$, then ${[C]_\pm}=B{[A]_\pm} B^{-1}$.  In particular, if
$C=A$, then $B$ commutes with ${[A]_\pm}$.
\end{lem}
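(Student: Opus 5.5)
The plan is to lift to $\GL(W)$, absorb the sign ambiguity by passing to a subsequence, and then compare eigenspaces directly.

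\textbf{Lifting and removing the sign.} Fix a lift $b\in\GL(W)$ of $B$. Convergence in $Q(W)=\GL(W)/\{\pm I\}$ means that for each $j$ there is a sign $s_j\in\{\pm1\}$ with
\[
s_jC^{n_j}bA^{-n_j}\to b \quad\text{in }\GL(W).
\]
After passing to a subsequence we may assume $s_j=s$ is constant. Set $C'=b^{-1}Cb$. Left-multiplying by $b^{-1}$ turns the hypothesis into
\[
C'^{\,n_j}A^{-n_j}\longrightarrow sI .
\]
Note that $C'$ is again real diagonalizable with positive eigenvalues, because it is conjugate to $C$. The goal then becomes to show $C'=A$ and $s=1$. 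This gives $C=bAb^{-1}$, i.e. $[C]_\pm=B[A]_\pm B^{-1}$. The case $C=A$ is immediate from this.

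\textbf{Comparing eigenspaces.} Write $W=\bigoplus_\mu U_\mu$ for the eigenspace decomposition of $C'$, with all $\mu>0$. Take an eigenvector $v$ of $A$ with eigenvalue $\lambda>0$, and decompose $v=\sum_\mu u_\mu$ with $u_\mu\in U_\mu$. Then
\[
C'^{\,n}A^{-n}v=\sum_\mu(\mu/\lambda)^n u_\mu .
\]
Positivity of all eigenvalues means there are no oscillating signs. So along $n_j\to\infty$ this sum converges only if $u_\mu=0$ whenever $\mu>\lambda$. In that case the limit is $u_\lambda$, since the terms with $\mu<\lambda$ tend to $0$.

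The hypothesis forces this limit to equal $sv=\sum_\mu s u_\mu$. By uniqueness of the decomposition, $su_\mu=0$ for $\mu\ne\lambda$ and $su_\lambda=u_\lambda$. Hence $v=u_\lambda\in U_\lambda$ (so $C'v=\lambda v$), and $s=1$ because $v\neq0$. Since $A$ is diagonalizable, its eigenvectors span $W$, and therefore $C'=A$.

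\textbf{Where care is needed.} There is no serious obstacle; the only point requiring attention is the first step. Convergence in the quotient must be lifted correctly by fixing a sign along a subsequence. The positivity hypothesis on the eigenvalues is exactly what makes the term-by-term limit argument valid, and it is also what rules out $s=-1$.
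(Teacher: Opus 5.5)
Your proof is correct and follows essentially the same route as the paper. You lift to $\GL(W)$ with a fixed sign along a subsequence, then use the factors $(\mu/\lambda)^{n_j}$ to kill every cross term between distinct eigenvalues of $C$ and $A$. The only cosmetic difference is that you first conjugate by $b$ to reduce to $C'^{\,n_j}A^{-n_j}\to sI$ and test on eigenvectors of $A$, whereas the paper works directly with the blocks $P_c\widetilde B Q_a$ of the lift $\widetilde B$.
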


\begin{proof}

Choose a lift $\widetilde B$ of $B$.  After passing to a subsequence, the
hypothesis lifts to
$C^{n_j}\widetilde B A^{-n_j}\to\varepsilon\widetilde B$ for one fixed
$\varepsilon\in\{\pm1\}$.  Let $P_c$ and $Q_a$ be the spectral
projections of $C$ and $A$ for the positive eigenvalues $c$ and $a$.
The $(c,a)$ block of the left-hand side is
\[
 (c/a)^{n_j}P_c\widetilde BQ_a.
\]
If $c>a$, boundedness forces this block to vanish; if $c<a$, its limit is
zero, so convergence to $\varepsilon\widetilde B$ again forces it to
vanish.  Hence $P_c\widetilde BQ_a=0$ unless $c=a$.  Since
$\widetilde B$ is invertible, the matching eigenspaces have the same
dimensions and $C\widetilde B=\widetilde BA$.  Thus
$C=\widetilde B A\widetilde B^{-1}$.  (The nonzero matching blocks also
force $\varepsilon=1$.)

\end{proof}

\paragraph{\textbf{Step 1. Apply Zimmer's cocycle superrigidity to $Dg|_{E^1_f}$}}

Put $W=\mathbb R^m$.  After choosing a measurable
orthonormal trivialization of $E_f^1$, consider the
quotient differential cocycle
\[
\beta:\CZ(f)\times X\to Q(W),\qquad
\beta(g,x)=[Dg|_{E^1_f(x)}]_{\pm},
\]
where $m=\dim E^1_f$.

We use the following nonergodic form of Zimmer's cocycle superrigidity
theorem.  The symbol $\omega(x)$ records the ergodic component of $x$;
it is unrelated to the multiplicities $p,q$ fixed above.

\begin{thm}[{\cite[Theorem 4.4]{FMW}}]\label{thm:zimmer-nonerg}

Let $S$ be a finite product of connected, simply connected semisimple
real groups whose simple factors all have real rank at least $2$, and
let $S\curvearrowright(X,\mu)$ be a probability-preserving action. Let
$\mathbf H$ be a real algebraic group and let
$\beta:S\times X\to\mathbf H(\mathbb R)$ be an integrable Borel cocycle.
Let $\omega:X\to\Omega$ be the ergodic-decomposition map.

Then there are the following objects: a measurable family of homomorphisms
$\pi:\Omega\times S\to\mathbf H(\mathbb R)$, a measurable transfer
map $H:X\to\mathbf H(\mathbb R)$, a measurable family of compact
groups
\[
K_\xi\subset Z_{\mathbf H(\mathbb R)}(\pi_\xi(S)),
\]
and a measurable cocycle $c:S\times X\to\mathbf H(\mathbb R)$
with $c(a,x)\in K_{\omega(x)}$, such that
\[
\beta(a,x)=H(ax)\pi_{\omega(x)}(a)c(a,x)H(x)^{-1}
\]
for every $a\in S$ and almost every $x$.

\end{thm}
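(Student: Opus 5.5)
The plan is to reduce Theorem~\ref{thm:zimmer-nonerg} to the classical ergodic form of Zimmer's cocycle superrigidity theorem, applied on each ergodic component, and then glue the componentwise conclusions together measurably. First disintegrate $\mu$ over the ergodic-decomposition map $\omega:X\to\Omega$, writing $\mu=\int_\Omega\mu_\xi\,d\bar\mu(\xi)$. Since $S$ is a connected locally compact second countable group acting measurably, almost every $\mu_\xi$ is $S$-invariant and ergodic. Restricting $\beta$ to the support of $\mu_\xi$ gives a Borel cocycle $\beta_\xi:S\times(X,\mu_\xi)\to\mathbf H(\mathbb R)$. I would check integrability first: the integrability hypothesis on $\beta$ says that $x\mapsto\sup_{a\in B}\log^+\|\beta(a,x)\|$ lies in $L^1(\mu)$ for compact $B\subset S$. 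By Fubini this function lies in $L^1(\mu_\xi)$ for $\bar\mu$-almost every $\xi$, so $\beta_\xi$ is integrable.

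Next, for each such $\xi$, apply the ergodic integrable version of cocycle superrigidity (Zimmer; Fisher--Margulis in the integrable form) to $\beta_\xi$. Its hypotheses are that all simple factors of $S$ have real rank at least two and that $\mathbf H$ is real algebraic. The conclusion is a continuous homomorphism $\pi_\xi:S\to\mathbf H(\mathbb R)$, a compact subgroup $K_\xi\subset Z_{\mathbf H(\mathbb R)}(\pi_\xi(S))$, a measurable $H_\xi:X\to\mathbf H(\mathbb R)$, and a $K_\xi$-valued cocycle $c_\xi$ such that
\[
\beta(a,x)=H_\xi(ax)\pi_\xi(a)c_\xi(a,x)H_\xi(x)^{-1}
\quad\text{for $\mu_\xi$-a.e.\ $x$.}
\]
Because $S$ is simply connected, $\pi_\xi$ is determined by its differential, which is an element of the finite-dimensional algebraic variety $\mathrm{Hom}(\mathfrak s,\mathfrak h)$. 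Compact subgroups of $\mathbf H(\mathbb R)$ lie in a standard Borel space, for example via the Chabauty topology.

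The main obstacle is making all these choices measurable in $\xi$. The data $(\pi_\xi,K_\xi,H_\xi,c_\xi)$ are not unique, so I would select them rather than construct them. Consider the set of tuples $(\xi,\pi,K,H,c)$ satisfying the identity above $\mu_\xi$-almost everywhere, with $H$ and $c$ ranging over the Polish spaces of measurable maps modulo null sets, metrized by convergence in measure. I would show this set is analytic: the identity is a countable conjunction of conditions over a countable dense set of $a\in S$, and each condition is Borel in the data. By the existence step it projects onto a $\bar\mu$-conull set of $\xi$. The Jankov--von Neumann uniformization theorem then supplies a $\bar\mu$-measurable selection $\xi\mapsto(\pi_\xi,K_\xi,H_\xi,c_\xi)$.

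To define $H(x)=H_{\omega(x)}(x)$ and $c(a,x)=c_{\omega(x)}(a,x)$ as jointly measurable functions, I would realize the selected classes jointly measurably. One way is to use a measurable isomorphism of $(X,\mu)$ with a fibered space over $\Omega$, together with the fact that a measurable map into $L^0$ admits a jointly measurable version. The remaining technical point is that the identity holds for each fixed $a$ only almost everywhere. I would handle this by verifying it on a countable dense subset of $S$, then extending to all $a$, using continuity of $\pi_\xi$ and the cocycle identity for $\beta$ and $c$. Fubini then gives the identity for every $a\in S$ and $\mu$-almost every $x$, with $c(a,x)\in K_{\omega(x)}\subset Z_{\mathbf H(\mathbb R)}(\pi_{\omega(x)}(S))$, as claimed.
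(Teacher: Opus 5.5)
Your proposal is correct and follows the same route as the source: the paper gives no proof of its own but quotes \cite[Theorem~4.4]{FMW}, whose purpose is exactly to reduce such nonergodic statements to the ergodic case by decomposing into ergodic components, applying ergodic superrigidity on almost every component, and gluing the componentwise data $(\pi_\xi,K_\xi,H_\xi,c_\xi)$ by a measurable-selection argument. One point to keep explicit is that the ergodic black box you invoke must be the form valid for ergodic but possibly reducible actions of a product $S$ whose simple factors all have rank at least two, since the ergodic components of a product action need not be irreducible.
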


The derivative cocycle is uniformly bounded, hence
integrable.  Proposition~\ref{fparhyp} shows that $G_1$ preserves
volume.  Pulling the action and cocycle back to the simply connected
cover of $G_1$ does not change the invariant subbundles.  Apply
Theorem~\ref{thm:zimmer-nonerg} with $\mathbf H(\mathbb R)=Q(W)$.  It
gives a measurable quotient frame
$H(x)\in\operatorname{Iso}(W,E_f^1(x))/\{\pm I\}$, componentwise
representations $\pi_{\omega(x)}:G_1\to Q(W)$, and a compact-valued
error $c$.  Thus, for every $a\in G_1$ and almost every $x$,
\begin{equation}\label{eq:zimmer-decomposition}
[Da|_{E_f^1(x)}]_{\pm}
 =H(ax)\pi_{\omega(x)}(a)c(a,x)H(x)^{-1},
\end{equation}
where
\[
c(a,x)\in K_{\omega(x)}\subset
Z_{Q(W)}(\pi_{\omega(x)}(G_1)).
\]
Since the cover of $G_1$ is simply connected, each $\pi_\xi$ lifts to
$\GL(W)$.  Whenever eigenvalues or vectors are used below, we use this
lift and retain the notation $\pi_\xi$.

We next show that $\pi_{\omega(x)}$ is nontrivial almost everywhere.
Choose a regular semisimple $g_0\in\exp_{\CZ(f_0)}(\mathfrak g_1)$ whose
weights on $E_{f_0}^s$ are all nonzero, and let $g\in G_1$ be the
projectively close element supplied by
Proposition~\ref{prop:g_0tog}.  We claim more precisely that
$\pi_{\omega(x)}(g)$ has no eigenvalue of modulus one for almost every
$x$.

Otherwise there is an invariant positive-measure set $A$ on which such a modulus-one
eigenvalue occurs.  Since $g$ is semisimple and the compact error
$c(g,\cdot)$ commutes with $\pi_{\omega(\cdot)}(g)$, one can choose a
measurable unit vector $v(x)$ in the corresponding subspace and a
measurable function $C_g(x)$ such that
\[
\sup_{n\in\mathbb Z}
 \|\pi_{\omega(x)}(g^n)c(g^n,x)v(x)\|\le C_g(x).
\]
Choose an Oseledets-regular $x\in A$ for which
Lemma~\ref{lem:observable-recurrence} applies to $H$.  Along the
resulting sequences $n_j\to+\infty$ and $m_j\to-\infty$,
Equation~\eqref{eq:zimmer-decomposition} and compactness of the error
give
\[
\sup_j\|Dg^{n_j}(x)H(x)v(x)\|<\infty,
\qquad
\sup_j\|Dg^{m_j}(x)H(x)v(x)\|<\infty.
\]
At an Oseledets-regular point, bounded subsequences in both time
directions force the Lyapunov exponent of $H(x)v(x)$ to be zero.  This
contradicts Lemma~\ref{gnozeroexp}.  Hence
$\pi_{\omega(x)}(g)$ has no eigenvalue of modulus one almost everywhere;
in particular, $\pi_{\omega(x)}$ has no trivial summand.

\paragraph{\textbf{Step 2: Unique exponent using representation input}}

We now classify the possible representations
$\pi_{\omega(x)}:G_1\to\GL_m(\mathbb R)$.  Recall that
\[
\mathfrak g(\CZ(f))\simeq
\mathfrak{sl}_p(\mathbb R)\oplus
\mathfrak{sl}_q(\mathbb R)\oplus\mathbb R,
\qquad p\ge q,\quad p+q=n,
\]
and that $\dim E_f^s=pq$.

If $q=1$, then $G_1$ has Lie algebra
$\mathfrak{sl}_{n-1}(\mathbb R)$.  Step~1 shows that
$\pi_{\omega(x)}$ is nontrivial, while $m\le n-1$.  Hence $m=n-1$
and $\pi_{\omega(x)}$ is the standard representation or its dual.
Thus $E_f^2=0$.  The centralizer of this representation in
$\GL(W)$ is $\mathbb R^\times I$, so its compact centralizer in
$Q(W)$ is trivial.  Hence $c(a,x)=e$ almost everywhere.

If $p,q>2$, take
$\mathfrak g(G_1)=\mathfrak{sl}_p(\mathbb R)\oplus
\mathfrak{sl}_q(\mathbb R)$.  Applying Step~1 to regular split elements
in each simple factor rules out every irreducible summand on which one
factor acts trivially.  Every remaining irreducible summand has dimension
at least $pq=\dim E_f^s$.  Consequently $m=pq$,
$\pi_{\omega(x)}$ is irreducible, $E_f^2=0$, and again
$c(a,x)=e$ almost everywhere in $Q(W)$.

It remains to consider $q=2$, for which
$\mathfrak g(G_1)=\mathfrak{sl}_{n-2}(\mathbb R)$.  Nontriviality gives
$m\ge n-2$.  Applying the same argument to the complementary bundle
$E_f^2$ shows that either $E_f^2=0$, or
\[
\dim E_f^1=\dim E_f^2=n-2.
\]
In the latter case the representation on either summand is standard or
dual and its compact error is trivial in $Q(W)$.  Step~3 rules out this
last possibility.

\paragraph{\textbf{Step 3: The case
$\mathfrak g(G_1)=\mathfrak{sl}_{n-2}(\mathbb R)$.}}

Set $r=n-2$. Suppose, toward a contradiction, that
$Df|_{E_f^s}$ has more than one Lyapunov exponent. By Step~2, there is
then a measurable $\CZ(f)$-invariant splitting
\[
E_f^s=E_f^1\oplus E_f^2,\qquad
\dim E_f^1=\dim E_f^2=r,
\]
and the representation
$\pi_{\omega(x)}:G_1\to\GL_r(\mathbb R)$ on $E_f^1$ is, for almost every
$x$, either the standard representation or its dual. Moreover,
$c(a,x)=e$ in $Q(W)$ for every $a\in G_1$ and almost every $x$.

Let $\mu$ be Lebesgue measure and define the logarithmic determinant
homomorphism
\[
\mathcal D:\CZ(f)\longrightarrow\mathbb R,\qquad
\mathcal D(h)=\int_X\log\left|
\det\left(Dh|_{E_f^1(x)}\right)\right|\,d\mu(x).
\]

Since $E_f^1$ is $\CZ(f)$-invariant and every element of $\CZ(f)$
preserves $\mu$, the cocycle identity for the determinant gives
$\mathcal D(h_1h_2)=\mathcal D(h_1)+\mathcal D(h_2)$. The restriction of $\mathcal D$ to any
finite-dimensional Lie subgroup of $\CZ(f)$ is a Borel homomorphism,
hence is continuous. In particular, let $S_2<\CZ(f)^0$ be
the connected Lie subgroup with Lie algebra
$\Xi^{-1}(\mathfrak{sl}_2(\mathbb R))$.
Then the restriction $\mathcal D|_{S_2}$ is trivial, since $S_2$ is connected
and semisimple. Thus
\begin{equation}\label{eq:det-S2-zero}
\mathcal D(h)=0\qquad\text{for every }h\in S_2.
\end{equation}

We define the $Q(W)$-valued cocycle
\[
\tau(h,x)=H(hx)^{-1}[Dh|_{E_f^1(x)}]_{\pm}H(x).
\]
For $h\in \CZ(f)$, the superrigidity decomposition and the conclusion of
Step~2 give, for $g\in G_1$,
\[
\tau(g,x)=\pi_{\omega(x)}(g).
\]
We shall use the following lemma, whose proof is given in Step~4.

\begin{lem}\label{lem:tauginv}
Suppose that $c(g,x)=e$ in $Q(W)$ for every $g\in G_1$ and almost
every $x$. Then, for every $h\in Z_{\CZ(f)}(G_1)$ and every
$g\in G_1$,
\[
\tau(h,gx)=\tau(h,x)
\]
for almost every $x$.
\end{lem}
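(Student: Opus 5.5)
The plan is to combine the cocycle identity for $\tau$ with the commutation $hg=gh$, and then use the equivariance of $\pi_{\omega(x)}$ together with a recurrence argument to strip off the $G_1$-dependence.

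First, since $\tau$ is conjugate by the measurable frame $H$ to the derivative cocycle, it satisfies the cocycle identity $\tau(ab,x)=\tau(a,bx)\tau(b,x)$ for all $a,b\in\CZ(f)$ and almost every $x$. Fix $h\in Z_{\CZ(f)}(G_1)$ and $g\in G_1$. Expanding $\tau(hg,x)=\tau(gh,x)$ in both orders and using $\tau(g,y)=\pi_{\omega(y)}(g)$ gives
\[
\tau(h,gx)\,\pi_{\omega(x)}(g)=\pi_{\omega(hx)}(g)\,\tau(h,x).
\]
Next I need $\omega(hx)=\omega(x)$. Because $h$ commutes with $G_1$, it maps $G_1$-ergodic components to $G_1$-ergodic components. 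To make the two representations agree, I would argue that $\pi_{\omega}$ is determined up to conjugacy by the Lyapunov data of $Dg|_{E^1_f}$ on the component, and that $h$ preserves these exponents by the commutation. Since the only possibilities are the standard representation or its dual, the two representations coincide up to this choice, and after a measurable adjustment of $H$ we may write $\pi_{\omega(hx)}=\pi_{\omega(x)}=:\pi$. This yields
\[
\tau(h,gx)=\pi(g)\,\tau(h,x)\,\pi(g)^{-1}.
\]

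Now take $g$ regular real-split in $G_1$, so that $\pi(g)$ is diagonalizable with positive, distinct eigenvalues (up to sign in $Q(W)$). Iterating the last identity gives $\tau(h,g^nx)=\pi(g)^n\tau(h,x)\pi(g)^{-n}$. By Lemma~\ref{lem:observable-recurrence} applied to the observable $x\mapsto\tau(h,x)$ (together with $\omega$), for almost every $x$ there are $n_j\to\infty$ with $\tau(h,g^{n_j}x)\to\tau(h,x)$. Lemma~\ref{lem:positive-conjugation} with $A=C=\pi(g)$ then shows that $\tau(h,x)$ commutes with $\pi(g)$. Running this over a set of regular elements generating $G_1$ shows that $\tau(h,x)$ centralizes $\pi(G_1)$. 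For the standard or dual representation this centralizer in $Q(W)$ is the scalars $\mathbb R^\times I/\{\pm I\}$. Scalars commute with everything, so $\pi(g)\tau(h,x)\pi(g)^{-1}=\tau(h,x)$, and therefore $\tau(h,gx)=\tau(h,x)$ for every $g\in G_1$.

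The main obstacle is the identification $\pi_{\omega(hx)}=\pi_{\omega(x)}$ as honest homomorphisms rather than merely up to conjugacy. If it fails, I would instead absorb the conjugating element into the transfer map $H$, using the uniqueness of the superrigidity decomposition up to the centralizer. The remaining steps are direct applications of the two preceding lemmas.
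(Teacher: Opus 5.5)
The identity you start from,
\[
\tau(h,gx)\,\pi_{\omega(x)}(g)=\pi_{\omega(hx)}(g)\,\tau(h,x),
\]
is exactly where the paper begins. The next step, which you flag yourself as the main obstacle, is a genuine gap as written.

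There is no reason for $\omega(hx)=\omega(x)$. Since $h$ commutes with $G_1$, it only permutes $G_1$-ergodic components. In Step~3, where the lemma is first used, ergodicity of $G_1$ is not yet known. Getting $\pi_{\omega(hx)}=\pi_{\omega(x)}$ by your route would require all of the following:
\begin{enumerate}
\item Show that the Lyapunov spectrum of $Dg|_{E^1_f}$ at $x$ equals the log-modulus spectrum of $\pi_{\omega(x)}(g)$. This uses $c=e$ and recurrence of $H$.
\item Show that this spectrum is the same at $hx$, because $Dh$ is bounded and maps $E^1_f(x)$ onto $E^1_f(hx)$.
\item Show that these spectra, taken over a split Cartan, determine the representation. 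In particular they must separate the standard representation from its dual. These are not conjugate for $r\ge 3$, and that is precisely the case your phrase ``coincide up to this choice'' leaves open.
\item Renormalize $H(x)\mapsto H(x)P_{\omega(x)}$ by a matrix depending only on the ergodic component. Only then does the conclusion for the new $\tau$ transfer back to the original $\tau$.
\end{enumerate}
None of this is carried out. The fallback of absorbing the conjugating element into $H$ is not an argument.

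You also use that $\pi_{\omega(x)}$ is standard or dual, which is not a hypothesis of the lemma. The lemma is applied again in Step~7 with $h=f$ when $q>2$. There $\pi$ is a $pq$-dimensional representation of $\mathfrak{sl}_p\oplus\mathfrak{sl}_q$, so your argument as written does not cover that use.

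The missing idea is that none of this is needed. Lemma~\ref{lem:positive-conjugation} is stated for two \emph{different} matrices $C$ and $A$. Iterate your identity with $a=g^2$, where $g$ is real diagonalizable, so that both $\pi_{\omega(x)}(g^2)$ and $\pi_{\omega(hx)}(g^2)$ have positive eigenvalues:
\[
\tau(h,g^{2n}x)=\pi_{\omega(hx)}(g^2)^{n}\,\tau(h,x)\,\pi_{\omega(x)}(g^2)^{-n}.
\]
Recurrence of $\tau(h,\cdot)$ along the $g^2$-orbit, together with that lemma, gives
\[
\pi_{\omega(hx)}(g^2)=\tau(h,x)\,\pi_{\omega(x)}(g^2)\,\tau(h,x)^{-1}.
\]
Substituting this back into the cocycle identity gives $\tau(h,g^2x)=\tau(h,x)$ directly. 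Squares of real-diagonalizable elements contain an open subset of $G_1$, which generates $G_1$, so the cocycle identity extends the relation to every element of $G_1$.

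This uses only $c=e$. It needs no comparison of ergodic components, no classification of $\pi$, and no Schur argument. Even within your route, the final scalar step is redundant: once $\tau(h,x)$ commutes with $\pi(g)$, your displayed identity already gives $\tau(h,gx)=\tau(h,x)$.
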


Fix $h\in S_2$. We show that all Lyapunov exponents of
$Dh|_{E_f^1}$ are equal. We use the following elementary
linear-algebra observation: if $A_1,\ldots,A_{r^2}$ form a basis of
$M_r(\mathbb R)$, then there is a finite constant
$C_{\mathrm{test}}(A_1,\ldots,A_{r^2})$ such that
\begin{equation}\label{eq:matrix-testing}
\kappa(T):=\|T\|\,\|T^{-1}\|
\le
C_{\mathrm{test}}(A_1,\ldots,A_{r^2})
\max_i\|TA_iT^{-1}\|
\end{equation}
for every $T\in\GL_r(\mathbb R)$. Indeed, this follows by writing every
matrix of norm at most one in the basis $\{A_i\}$ and taking the
supremum over the unit ball of $M_r(\mathbb R)$.

Choose a countable dense subgroup $\Lambda_1<G_1$ (the new symbol
avoids confusion with the lattice $\Gamma<G$) and enumerate all
ordered $r^2$-tuples in $\Lambda_1$ as
\[
\mathbf a^\ell=(a_1^\ell,\ldots,a_{r^2}^\ell),
\qquad \ell\in\mathbb N.
\]
Since $\pi_{\omega(x)}$ is the standard representation or its dual, it is
absolutely irreducible, and hence
\[
\operatorname{span}_{\mathbb R}\pi_{\omega(x)}(G_1)=M_r(\mathbb R).
\]
Thus, for almost every $x$, there is some $\ell$ for which
$\pi_{\omega(x)}(a_1^\ell),\ldots,
\pi_{\omega(x)}(a_{r^2}^\ell)$ form a basis of
$M_r(\mathbb R)$. Let $X_\ell$ be the measurable set on which $\ell$ is the
first such index, and for $x\in X_\ell$ set
\[
C_\ell(x)=
C_{\mathrm{test}}\bigl(\pi_{\omega(x)}(a_1^\ell),\ldots,
       \pi_{\omega(x)}(a_{r^2}^\ell)\bigr).
\]

Fix an Oseledets-regular $x\in X_\ell$ for which
Lemma~\ref{lem:observable-recurrence} applies simultaneously to
$H$ and the finitely many maps $\tau(a_i^\ell,\cdot)$.  Let
$n_j\to\infty$ be the resulting sequence and put
$T_j=\tau(h^{n_j},x)$. Since $h^{n_j}$
centralizes $G_1$, the cocycle identity gives
\[
\tau(a_i^\ell,h^{n_j}x)T_j
=
\tau(h^{n_j},a_i^\ell x)\tau(a_i^\ell,x).
\]
By Lemma~\ref{lem:tauginv},
$\tau(h^{n_j},a_i^\ell x)=T_j$, while
$\tau(a_i^\ell,x)=\pi_{\omega(x)}(a_i^\ell)$. Therefore
\begin{equation}\label{eq:Tj-conjugation}
\tau(a_i^\ell,h^{n_j}x)
=
T_j\pi_{\omega(x)}(a_i^\ell)T_j^{-1}
\quad\text{in }Q(W).
\end{equation}
Notice that the matrices in the middle of
\eqref{eq:Tj-conjugation} are all evaluated at the fixed initial point
$x$ and hence do not depend on $j$.

The left-hand sides of \eqref{eq:Tj-conjugation} converge and hence are
uniformly bounded. Applying
\eqref{eq:matrix-testing} to the basis
$\{\pi_{\omega(x)}(a_i^\ell)\}$ and to arbitrary lifts of $T_j$ gives
\[
\sup_j\kappa(T_j)<\infty.
\]
On the other hand,
\[
[Dh^{n_j}|_{E_f^1(x)}]_{\pm}
=H(h^{n_j}x)T_jH(x)^{-1}.
\]
Since $H(h^{n_j}x)\to H(x)$, it follows that
\[
\sup_j\kappa\left(Dh^{n_j}|_{E_f^1(x)}\right)<\infty.
\]
At an Oseledets-regular point,
\[
\lambda_{\max}(h,x)-\lambda_{\min}(h,x)
=
\lim_{n\to\infty}\frac1n
\log\kappa\left(Dh^n|_{E_f^1(x)}\right).
\]
Evaluating this limit along the subsequence $n_j$ yields
\[
\lambda_{\max}(h,x)=\lambda_{\min}(h,x).
\]
Thus all Lyapunov exponents of $Dh|_{E_f^1}$ coincide almost
everywhere.

Because $h$ commutes with $f$, these Lyapunov exponents are
$f$-invariant measurable functions. Since $f$ is ergodic, their common
value is almost everywhere constant; denote it by $\chi(h)$. By
Oseledets' theorem and \eqref{eq:det-S2-zero},
\[
r\chi(h)
=
\int_X\log\left|
\det\left(Dh|_{E_f^1(x)}\right)\right|\,d\mu(x)
=
\mathcal D(h)=0.
\]
Hence every Lyapunov exponent of $Dh|_{E_f^1}$ is zero for every
$h\in S_2$.

Finally, choose a regular real-split element
$h_0$ in the $\SL_2(\mathbb R)$-factor of $\CZ(f_0)^0$
whose weights on $E_{f_0}^s$ are all nonzero, and choose
$h\in S_2$ projectively sufficiently close to $h_0$. The preceding
argument shows that $Dh|_{E_f^1}$ has only zero Lyapunov exponents,
whereas Lemma~\ref{gnozeroexp} says that
$Dh|_{E_f^s}$ has no zero Lyapunov exponent. This contradiction rules
out the splitting $E_f^s=E_f^1\oplus E_f^2$. Therefore
$Df|_{E_f^s}$ has only one Lyapunov exponent.

\paragraph{\textbf{Step 4: Proof of Lemma~\ref{lem:tauginv}.}}
Let $h\in Z_{\CZ(f)}(G_1)$.  The cocycle identities give, for
$a\in G_1$ and $n\in\mathbb Z$,
\begin{align}
\tau(h,a^nx)
&=\tau(a^n,hx)\tau(h,x)\tau(a^n,x)^{-1}\nonumber\\
&=\pi_{\omega(hx)}(a)^n\tau(h,x)
        \pi_{\omega(x)}(a)^{-n}.
\label{equ:taugdirection}
\end{align}
Lemma~\ref{lem:observable-recurrence}, applied to
$x\mapsto\tau(h,x)$, gives for almost every $x$ a sequence
$n_j\to\infty$ for which $\tau(h,a^{n_j}x)\to\tau(h,x)$. Hence
\begin{equation}\label{tauhconv}
\pi_{\omega(hx)}(a)^{n_j}\tau(h,x)
 \pi_{\omega(x)}(a)^{-n_j}\longrightarrow\tau(h,x)
 \quad\text{in }Q(W).
\end{equation}

Take $g\in G_1$ real diagonalizable and apply
Lemma~\ref{lem:positive-conjugation} to $a=g^2$.  The two matrices
$\pi_{\omega(x)}(g^2)$ and $\pi_{\omega(hx)}(g^2)$ have positive
eigenvalues, so \eqref{tauhconv} gives
\[
\tau(g^2,hx)=\tau(h,x)\tau(g^2,x)\tau(h,x)^{-1}.
\]
Substitution into \eqref{equ:taugdirection} yields
\[
\tau(h,g^2x)=\tau(h,x).
\]
The squares of real-diagonalizable elements contain a nonempty open
subset of $G_1$, and every nonempty open subset generates the connected
group $G_1$.  The cocycle identity therefore extends the last relation
to every $a\in G_1$:
\begin{align}
\tau(a,hx)&=\tau(h,x)\tau(a,x)\tau(h,x)^{-1},\nonumber\\
\tau(h,ax)&=\tau(h,x).
\label{taufgequ}
\end{align}
This proves Lemma~\ref{lem:tauginv}.

\subsection{Ergodicity}\label{sec:gerg}
We next prove that for our choice of $g_0$, the corresponding $g\in \CZ(f)$ is ergodic.

\paragraph{\textbf{Step 5. The splitting of $g$ is continuous}}

Fix $g\in G_1$.  Recall that we have proved  $E_f^1=E_f^s$, and we may extend the
notation $\tau$ to the group generated by $G_1$ and $f$ by setting
\[
\tau(h,x)=H(hx)^{-1}[Dh|_{E_f^s(x)}]_{\pm}H(x).
\]
For $\xi\in\Omega$, let
\[
V_g^s(\xi)\oplus V_g^c(\xi)\oplus V_g^u(\xi)=\mathbb R^{pq}
\]
be the sums of the generalized eigenspaces of $\pi_\xi(g)$ with
eigenvalues of modulus less than, equal to, and greater than one,
respectively.  The compact error centralizes $\pi_\xi(G_1)$ and
therefore preserves these three subspaces.  Since $f$ commutes with
$g$, the cocycle identity gives
\[
\tau(f,x)V_g^\ast(\omega(x))=V_g^\ast(\omega(fx)),
\qquad \ast=s,c,u.
\]
Consequently the measurable subbundles
\begin{equation}\label{eq:measurable-g-splitting}
E^{s,\ast}_{f,g}(x)=H(x)V_g^\ast(\omega(x)),
\qquad \ast=s,c,u,
\end{equation}
are invariant under $Df|_{E_f^s}$ as well as under $Dg$.

We use the following theorem from \cite{KalSad1exp}.

\begin{thm}[Theorem 3.3, \cite{KalSad1exp}; see also Theorem B,
\cite{ASV}]\label{contbund1exp}
Let $\mathcal E\to M$ be a finite-dimensional H\"older vector bundle,
and let $\mathcal A:\mathcal E\to\mathcal E$ be a fiber-bunched
H\"older cocycle over a $\mu$-ergodic, partially hyperbolic,
center-bunched, accessible $C^2$ map.  If $\mathcal A$ has only one
Lyapunov exponent, then every measurable $\mathcal A$-invariant
subbundle agrees almost everywhere with a continuous subbundle.  If the
base is locally H\"older accessible, the resulting subbundle is H\"older
\cite[Corollary~3.7]{KalSad1exp}.
\end{thm}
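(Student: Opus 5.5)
The plan is the standard route: first reduce to an invariance statement for the cocycle holonomies, then pass from measurable to continuous using accessibility. Throughout, let $\mathcal V$ be the measurable $\mathcal A$-invariant subbundle, of dimension $k$, and let $\chi$ be the single Lyapunov exponent. Replacing $\mathcal A$ by $e^{-\chi}\mathcal A$ changes neither invariant subbundles nor holonomies, since fiber bunching is unaffected by scalar normalization. So I may assume all exponents of $\mathcal A$ vanish.

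\textbf{Step 1: holonomies.} By fiber bunching, Proposition~\ref{prop:holcocycle} supplies standard stable and unstable holonomies $\mathcal H^s_{x,y}$ and $\mathcal H^u_{x,y}$. These are H\"older in $(x,y)$ and satisfy the equivariance $\mathcal A_y\mathcal H^s_{x,y}=\mathcal H^s_{fx,fy}\mathcal A_x$. They also converge to the identity along forward orbits of stable pairs. They induce holonomies on the Grassmann bundle $\mathrm{Gr}_k(\mathcal E)$, and $\mathcal V$ is a measurable section of that bundle.

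\textbf{Step 2: essential holonomy invariance.} I want to show that, for $\mu$-a.e.\ $x$ and a.e.\ $y$ in the local stable leaf of $x$ (with respect to leafwise volume),
\[
\mathcal V(y)=\mathcal H^s_{x,y}\mathcal V(x),
\]
and the analogous statement for $\mathcal H^u$. I would not attack this by direct distortion estimates, because $\mathcal V$ is only measurable. Instead I would apply the Avila--Viana invariance principle (\cite{ASV}) to the $\mathcal A$-invariant measure $m=\int\delta_{\mathcal V(x)}\,d\mu(x)$ on $\mathrm{Gr}_k(\mathcal E)$. Since the extremal exponents of $\mathcal A$ coincide, the fiberwise entropy-type defect vanishes. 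The invariance principle then says the disintegration $x\mapsto\delta_{\mathcal V(x)}$ is essentially $\mathcal H^s$-invariant; this needs only that $\mu$ has local product structure, which holds for smooth $\mu$ over the absolutely continuous stable and unstable foliations. Applying the same argument to $f^{-1}$ gives essential $\mathcal H^u$-invariance.

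\textbf{Step 3: from bi-essential invariance to a continuous section (main obstacle).} From Step 2, $\mathcal V$ is essentially saturated by the ``$s$-holonomy'' and the ``$u$-holonomy'' separately, but on different full-measure sets. This is the same problem that Theorem~\ref{burnswikerg} solves for sets, and I expect it to be the hardest step. I would follow the scheme of \cite{ASV}. First, use center bunching and the Burns--Wilkinson julienne density points to show that the set of density points of $\{x:\mathcal V(x)\in U\}$ is genuinely $s$- and $u$-saturated, after transporting by the holonomies $\mathcal H^{s/u}$, for each $U$ in a countable basis of open sets in the Grassmannian. Fiber bunching is what makes the holonomies continuous enough for the julienne argument to go through at the level of the fiber. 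The output is a modification $\mathcal V'=\mathcal V$ a.e.\ that is invariant under every $\mathcal H^{s}$ and $\mathcal H^{u}$ on a full-measure, bi-saturated set; since that set is $s$- and $u$-saturated and nonempty, accessibility makes it all of $M$. Define $\mathcal V'(y)=\mathcal H_\gamma\mathcal V'(x_0)$ along any $su$-path $\gamma$ from a fixed base point $x_0$. This is well defined because holonomy invariance holds everywhere, so the transported value does not depend on the path.

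\textbf{Step 4: continuity and H\"older regularity.} Continuity of $\mathcal V'$ follows because any point near $y$ is reached by an $su$-path close to $\gamma$, and $\mathcal H^{s/u}$ depend continuously on their endpoints. If the base is locally H\"older accessible, the nearby path can be chosen with leg lengths bounded by a power of the distance. The holonomies are H\"older, so $\mathcal V'$ is H\"older, which gives the second assertion of the theorem, as in \cite[Corollary~3.7]{KalSad1exp}.
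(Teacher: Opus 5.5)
The paper gives no proof of this statement. It imports it from \cite[Theorem~3.3, Corollary~3.7]{KalSad1exp} and points to \cite[Theorem~B]{ASV}. Your outline follows the \cite{ASV} route:
\begin{itemize}
\item the invariance principle on the Grassmannian bundle, where all fiber exponents $\chi_j-\chi_i$ vanish;
\item rough regularity via julienne density points, which upgrades bi-essential invariance to genuine $su$-invariance on a bi-saturated full-measure set;
\item accessibility.
\end{itemize}
So your approach is consistent with what the paper cites, and Steps~1--3 are acceptable as a sketch of those results.

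There is one unjustified claim, in Step~4. You assert that every point near $y$ is reached by an $su$-path close to $\gamma$. Accessibility does not give this: it yields neither openness of the endpoint map at $\gamma$ nor local accessibility by short paths. You also do not need it. Because $\mathcal V'$ is holonomy-invariant everywhere, $\mathcal V'(y)=\mathcal H_{\gamma}\mathcal V'(x_0)$ for \emph{every} $su$-path $\gamma$ from $x_0$ to $y$. The repair is:
\begin{itemize}
\item Accessibility plus a Baire/compactness argument gives $N,L$ such that any two points are joined by an $su$-path with at most $N$ legs, each of length at most $L$. In the paper's setting this is Proposition~\ref{fparhyp}(4).
\item For $y_j\to y$, choose such paths $\gamma_j$ from $x_0$ to $y_j$. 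The space of these paths is compact and the endpoint map is continuous, so a subsequence converges to an $su$-path $\gamma$ ending at $y$.
\item Continuity of the holonomies in the endpoints of bounded legs then gives $\mathcal V'(y_j)=\mathcal H_{\gamma_j}\mathcal V'(x_0)\to\mathcal H_{\gamma}\mathcal V'(x_0)=\mathcal V'(y)$. Every subsequence has such a further subsequence, so continuity follows.
\end{itemize}
Your H\"older argument is fine as written, since local H\"older accessibility is, by definition, the short-path property that the continuity step lacked.

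A smaller point concerns your remark in Step~2 that the argument ``needs only that $\mu$ has local product structure.'' Local product structure is what supplies continuity over hyperbolic bases. With a nontrivial center, volume has no product structure with respect to the pair $\mathcal W^s,\mathcal W^u$. Over a partially hyperbolic base, that role is played by the rough-regularity (julienne) step in Step~3. What that step actually needs is that $\mu$ be a smooth volume. The statement's ``$\mu$-ergodic'' leaves this implicit, and it holds in the application, where $\mu$ is Lebesgue measure.
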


For the affine model, $Df_0|_{E_{f_0}^s}$ is conformal on the single root
space.  Thus, after fixing a H\"older exponent and shrinking the
$C^1$-neighborhood, the strict fiber-bunching inequalities persist for
$\mathcal A=Df|_{E_f^s}$.  Section~\ref{sec:1exp} shows that this
cocycle has one Lyapunov exponent.  Applying
Theorem~\ref{contbund1exp} to the three bundles in
\eqref{eq:measurable-g-splitting} makes each of them continuous.

Finally, Lemma~\ref{lem:observable-recurrence} applied to $H$ shows that the Lyapunov
exponents of $g$ on $E^{s,s}_{f,g}$ are strictly negative, those on
$E^{s,c}_{f,g}$ are zero, and those on $E^{s,u}_{f,g}$ are strictly
positive.  Indeed, along a recurrent subsequence the contribution of
$H(g^nx)$ is bounded, whereas the three rates are those of the
constant matrix $\pi_{\omega(x)}(g)$.  We have therefore obtained the
continuous splitting
\[
E_f^s=E^{s,s}_{f,g}\oplus E^{s,c}_{f,g}\oplus E^{s,u}_{f,g}
\]
with the asserted Lyapunov signs.

\paragraph{\textbf{Step 6. $g$ is ergodic.}}

We now show that the element $g\in G_1$ corresponding to the regular
model element $g_0$ is ergodic. The proof
uses the Hopf argument together with the ergodicity criterion of
Burns--Wilkinson \cite{BWerg}.

Applying the construction of Step~5 to the unstable bundle
$E^u_f$ (instead of $E^s_f$) yields, for any $g\in G_1$, a
continuous splitting
\[
E^u_f=E^{u,s}_{f,g}\oplus E^{u,c}_{f,g}\oplus E^{u,u}_{f,g}.
\]
Choose $g_0$ with no zero weight on
$E_{f_0}^s\oplus E_{f_0}^u$, as in
Theorem~\ref{thm:highrankgph}, and take the associated $g$.  Step~1 and
Lemma~\ref{gnozeroexp} give
$E^{s,c}_{f,g}=E^{u,c}_{f,g}=0$.

In the center direction $E^c_f$ of $f$, Lemma \ref{liealg} implies that
$Dg|_{E^c_f}$ is conjugate to the adjoint action $\operatorname{Ad}(g)$
on the Lie algebra $\mathfrak g(\CZ(f))$. Since $g$ is semisimple in
$\CZ(f)$, there is a smooth splitting
\[
E^c_f=E^{c,s}_{f,g}\oplus E^{c,c}_{f,g}\oplus E^{c,u}_{f,g}
\]
and constants $C_c>0$, $\nu_{c,s},\nu_{c,u}\in(0,1)$ such that
for all $n\in\N$:
\[
\begin{aligned}
|Dg^n(v)|&\le C_c\nu_{c,s}^n|v| &&(v\in E^{c,s}_{f,g}),\\
|Dg^{\pm n}(v)|&\le C_c|v| &&(v\in E^{c,c}_{f,g}),\\
|Dg^{-n}(v)|&\le C_c\nu_{c,u}^n|v| &&(v\in E^{c,u}_{f,g}).
\end{aligned}
\]

Set
\[
E^s_g=E^{s,s}_{f,g}\oplus E^{c,s}_{f,g}\oplus E^{u,s}_{f,g},\qquad
E^u_g=E^{s,u}_{f,g}\oplus E^{c,u}_{f,g}\oplus E^{u,u}_{f,g},\qquad
E^c_g=E^{c,c}_{f,g}.
\]
Then $TX=E^s_g\oplus E^c_g\oplus E^u_g$ is a
continuous splitting. The constants in the center estimates above may be
chosen uniform because the orbit-coordinate transfer is continuous on the
compact space $X$. In contrast, the contraction and expansion estimates on
the off-center Lyapunov bundles are nonuniform: there are measurable
functions $C_g(x)>0$ and $\kappa_g(x)\in(0,1)$ such that for a.e.\
$x\in X$ and all $n\in\N$:
\begin{equation}\label{gbund}
    \begin{aligned}
\|Dg^n(v)\|&\le C_g(x)\,\kappa_g(x)^n\|v\| &&(v\in E^s_g(x)),\\
\|Dg^{\pm n}(v)\|&\le C_g(x)\,\|v\| &&(v\in E^c_g(x)),\\
\|Dg^{-n}(v)\|&\le C_g(x)\,\kappa_g(x)^n\|v\| &&(v\in E^u_g(x)).
\end{aligned}
\end{equation} Here the estimate in the $E^c_g$ bundles comes from the dynamics in the center foliation given in Section \ref{sec:gcendyn}, not from the measurable transfer map $H$.

Hence this splitting coincides a.e.\ with the Oseledets splitting for $g$
(Theorem \ref{Oseled}). Therefore for a.e.\ $x$ the distributions $E^s_g$
and $E^u_g$ are tangent to the Pesin stable and unstable manifolds
$\mathcal W^-_g(x)$ and $\mathcal W^+_g(x)$ given by Theorem
\ref{pesin}. By Theorem \ref{pesabcont} these Pesin foliations are
transversely absolutely continuous.

Apply Lemma~\ref{lem:leafwise-pesin-ac}, with
$T=g$ and $\mathcal F=\mathcal W_f^s$.  Commutation of $f$ and $g$ makes
$\mathcal W_f^s$ invariant under $g$; its leaves are uniformly smooth,
and the stable foliation of the partially hyperbolic map $f$ is absolutely
continuous.  The restriction $Dg|_{E_f^s}$ has no zero exponent and has
the Oseledets splitting
\[
 E_f^s=E^{s,s}_{f,g}\oplus E^{s,u}_{f,g}.
\]
Denote the relative stable and unstable manifolds supplied by that lemma
by $\mathcal W^{s,s}_{f,g}$ and $\mathcal W^{s,u}_{f,g}$.  They are
embedded plaques inside $\mathcal W_f^s$, tangent respectively to
$E^{s,s}_{f,g}$ and $E^{s,u}_{f,g}$, and they are transversely absolutely
continuous with respect to the Riemannian volumes on almost every
$\mathcal W_f^s$-leaf. 

We next prove that the leaves of $\mathcal W^{s,s}_{f,g}$ and $\mathcal W^{s,u}_{f,g}$ have uniform radius.  For a Pesin regular point $x$, let $r_{ss}(x)$ be the
supremum of the intrinsic radii of embedded disks about $x$ in the
relative global stable manifold $\mathcal W^{s,s}_{f,g}(x)$.  The
leafwise Pesin lemma gives $r_{ss}(x)>0$ almost everywhere.  Choose
$N\ge1$ so that $Df^{-N}$ expands $E_f^s$ by a factor at least $2$.
Because $f$ commutes with $g$, it preserves the relative stable relation,
and hence
\[
 r_{ss}(f^{-N}x)\ge2r_{ss}(x).
\]
As $f^{-N}$ preserves volume, for every $t>0$ this gives
\[
 \mu\{r_{ss}>t\}\le\mu\{r_{ss}>2t\}\le\mu\{r_{ss}>t\}.
\]
Consequently, for every $m\ge1$,
$\mu\{r_{ss}>t\}=\mu\{r_{ss}>2^mt\}$.  Letting $m\to\infty$ gives
$\mu\{r_{ss}>t\}=\mu\{r_{ss}=\infty\}$; then letting $t\downarrow0$ and
using $r_{ss}>0$ almost everywhere shows that
$r_{ss}(x)=\infty$ almost everywhere.  The same argument gives an
infinite radius for $\mathcal W^{s,u}_{f,g}$. Since their
radii are infinite and the angle between the continuous complementary
bundles $E^{s,s}_{f,g}$ and $E^{s,u}_{f,g}$ is uniformly bounded below,
the relative plaques form a local product structure on one fixed scale
inside almost every $\mathcal W_f^s$-leaf.

Applying the same argument with $\mathcal F=\mathcal W_f^u$ gives
relative manifolds $\mathcal W^{u,s}_{f,g}$ and
$\mathcal W^{u,u}_{f,g}$, their leafwise absolute continuity, and their
global-radius local product structure inside almost every
$\mathcal W_f^u$-leaf.

We now apply the Hopf argument to prove ergodicity of $g$. Fix a
continuous function $\psi:X\to\R$ and define the forward and backward
Birkhoff averages
\[
\psi^+(x):=\limsup_{n\to\infty}\frac{1}{n}\sum_{i=1}^n\psi(g^i(x)),
\qquad
\psi^-(x):=\limsup_{n\to\infty}\frac{1}{n}\sum_{i=1}^n\psi(g^{-i}(x)).
\]
By the exponential contraction and expansion in $\mathcal W^-_g$ and $\mathcal W^+_g$, we have
$\psi^+(x)=\psi^+(y)$ for all $y\in\mathcal W^-_g(x)$ and a.e.\ $x$, and
$\psi^-(x)=\psi^-(y)$ for all $y\in\mathcal W^+_g(x)$ and a.e.\ $x$. By
Birkhoff's ergodic theorem the two averages agree almost everywhere, so
for any $t\in\R$ the set
\[
X_t:=\{x:\psi^+(x)\le t\}\cap\{x:\psi^-(x)\le t\}
\]
is both essentially $\mathcal W^-_g$-saturated and essentially
$\mathcal W^+_g$-saturated. Consequently $X_t$ is essentially both
$\mathcal W^{s,s}_{f,g}$- and $\mathcal W^{s,u}_{f,g}$-saturated.

Since $\W^{s,s}_{f,g}$ and $\W^{s,u}_{f,g}$ are absolutely continuous and form a local product structure in $\W^s_f$, the standard Hopf argument shows that $X_t$ is
essentially $\mathcal W^s_f$-saturated. 

The same argument in the
unstable direction shows $X_t$ is essentially $\mathcal W^u_f$-saturated.

Now we invoke Theorem \ref{burnswikerg} (the Burns--Wilkinson criterion) on $f$ (instead of $g$):
the Lebesgue density points of a measurable set which is both
$\mathcal W^s_f$- and $\mathcal W^u_f$-saturated must form a set which
is saturated by both foliations. Since $f$ is accessible (see
Proposition \ref{fparhyp}), and center-bunched, Theorem \ref{burnswikerg} implies that any set
saturated by both $\mathcal W^s_f$ and $\mathcal W^u_f$ has either full
measure or zero measure. Therefore $X_t$ has either full measure or zero
measure for each $t\in\R$, which forces the functions $\psi^+$ and
$\psi^-$ to be almost everywhere constant. Hence $g$ is ergodic.

\subsection{Partial hyperbolicity}\label{subsec:ph}
We now establish continuity of the coboundary and partial hyperbolicity of $g$.
\paragraph{\textbf{Step 7.  $Df|_{E_f^s}$ is cohomologous to a
constant modulo $\{\pm I\}$.}}

Apply the notation of Step~1 with $E_f^1=E_f^s$: thus $W$ now denotes
a fixed real vector space of dimension $d=\dim E_f^s$.
We regard a measurable transfer modulo sign as a measurable section
\[
\bar H(x)\in
\operatorname{Iso}(W,E_f^s(x))/\{\pm I\}.
\]
We shall prove that there are such a section $\bar H$, a homomorphism
$\bar\pi:G_1\to Q(W)$, and an element $A_f\in Q(W)$ such that, for
every $a\in G_1$ and almost every $x$,
\begin{equation}\label{eq:Q-cohomology}
[Da|_{E_f^s(x)}]_\pm
 =\bar H(ax)\bar\pi(a)\bar H(x)^{-1},
\qquad
[Df|_{E_f^s(x)}]_\pm
 =\bar H(fx)A_f\bar H(x)^{-1}.
\end{equation}

Suppose first that $q=1$ or $q>2$. Apply the construction of
Step~1 with $E_f^1=E_f^s$. By Step~2, the corresponding
representation $\pi:G_1\to Q(W)$ is absolutely irreducible after
lifting to $\GL(W)$, and the compact error is trivial in $Q(W)$. Hence
\[
\tau(a,x)=\pi(a).
\]
Set $\bar H=H$ and $\bar\pi=\pi$. Lemma~\ref{lem:tauginv}, applied with
$h=f$, gives $\tau(f,ax)=\tau(f,x)$. Taking the ergodic
element $g\in G_1$ constructed in Step~6, we conclude that
$\tau(f,x)$ is almost everywhere constant. This gives
\eqref{eq:Q-cohomology} in these two cases.

It remains to consider $q=2$. Put $r=n-2$, so that $r\ge3$ and
$d=2r$, and let $G_{12}<\CZ(f)^0$ be the connected Lie
subgroup with Lie algebra
$\Xi^{-1}(\mathfrak{sl}_r(\mathbb R)\oplus
\mathfrak{sl}_2(\mathbb R))$.
We pull the action back to the algebraically simply connected cover
\[
\SL_r(\mathbb R)\times\SL_2(\mathbb R)
\]
and use the same notation for the resulting action.

We use Lee's dynamical cocycle superrigidity theorem in the following
specialized form.

\begin{thm}[{\cite[Theorem~4.3]{Lee}}]\label{thm:lee}
Let
\[
S=\SL_r(\mathbb R)\times\SL_2(\mathbb R),\qquad r\ge3,
\]
act measurably on a standard probability space $(X,\mu)$, preserving
$\mu$, and assume that the $\SL_r(\mathbb R)$-factor acts ergodically.
Let $V$ be a finite-dimensional real vector space and let
$\beta:S\times X\to\GL(V)$ be a measurable cocycle such that
\[
\log\max\{\|\beta(a,\cdot)\|,\|\beta(a,\cdot)^{-1}\|\}\in L^2(X,\mu)
\]
for every $a\in S$. Then, after passing to a finite measurable
extension $\mathfrak q:\widehat X\to X$, there exist a measurable
transfer $\Phi:\widehat X\to\GL(V)$, a rational homomorphism
$\pi:S\to\GL(V)$, and a measurable cocycle
$\zeta:S\times\widehat X\to A_0(\mathbb R)$ with values in an amenable
real algebraic subgroup of $\GL(V)$ such that
\[
\beta(a,\mathfrak q(\hat x))
 =\Phi(a\hat x)\pi(a)\zeta(a,\hat x)\Phi(\hat x)^{-1}.
\]
Moreover, $\pi(S)$ commutes with $\zeta(a,\hat x)$, and for every
$a\in S$ the Lyapunov spectrum of the cocycle generated by
$\beta(a,\cdot)$ agrees with that of the constant matrix $\pi(a)$.
\end{thm}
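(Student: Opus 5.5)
Since this is quoted from \cite[Theorem~4.3]{Lee}, the plan is to reconstruct how I expect Lee's argument to go, reusing the machinery already used in Section~\ref{sec:1exp}. The idea is to handle the two factors of $S=\SL_r(\mathbb R)\times\SL_2(\mathbb R)$ separately.
\begin{enumerate}
\item Superrigidity on the higher-rank factor.
\item A commutation argument that confines the $\SL_2$-part of the cocycle to a centralizer.
\item Amenable reduction on the rank-one factor.
\item A Lyapunov-spectrum computation.
\end{enumerate}

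\textbf{Step 1 (higher-rank factor).} Restrict $\beta$ to $\SL_r(\mathbb R)$, which acts ergodically. The $L^2$ (hence $L^1$) integrability lets us apply Zimmer's superrigidity in the form of Theorem~\ref{thm:zimmer-nonerg}; ergodicity means only one component appears. After passing to the finite extension $\widehat X\to X$, which kills the finite component group of the algebraic hull, this gives a measurable $\Phi_1$, a rational representation $\pi_1:\SL_r(\mathbb R)\to\GL(V)$, and a compact-valued error $c_1$ commuting with $\pi_1(\SL_r(\mathbb R))$.

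\textbf{Step 2 (commutation).} Set $\tau(a,\hat x)=\Phi_1(a\hat x)^{-1}\beta(a,\hat x)\Phi_1(\hat x)$ for $a\in\SL_2(\mathbb R)$. Since $\SL_2(\mathbb R)$ commutes with $\SL_r(\mathbb R)$, I would follow the proof of Lemma~\ref{lem:tauginv}: combine Poincar\'e recurrence (Lemma~\ref{lem:observable-recurrence}) along real-split elements of $\SL_r(\mathbb R)$ with Lemma~\ref{lem:positive-conjugation}. This should show two things:
\begin{itemize}
\item $\tau(a,\cdot)$ intertwines $\pi_1 c_1$ with itself, so it takes values in the reductive group $Z:=Z_{\GL(V)}(\pi_1(\SL_r(\mathbb R)))$ up to the compact error;
\item $\tau(a,\cdot)$ is $\SL_r(\mathbb R)$-invariant in the appropriate twisted sense.
\end{itemize}
This reduces the problem to a $Z$-valued cocycle of the rank-one group $\SL_2(\mathbb R)$.

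\textbf{Step 3 (rank-one factor, the main obstacle).} Cocycle superrigidity fails for $\SL_2(\mathbb R)$ alone, so the leverage must come from the ergodic commuting $\SL_r(\mathbb R)$-action. My first attempt would be Zimmer's amenable reduction. Use the Furstenberg boundary $P\backslash\SL_2(\mathbb R)$ to produce a measurable $\SL_2$-equivariant map into $Z/A$ for an amenable algebraic $A$, i.e.\ a further transfer putting $\tau$ into the normalizer of an amenable subgroup. Then let the algebraic hull of the resulting cocycle split as the image of a rational homomorphism $\pi_2:\SL_2(\mathbb R)\to Z$ times an amenable group $A_0$ commuting with it; this is where the $\SL_r(\mathbb R)$-invariance from Step~2 should force the non-amenable part of the hull to be algebraic. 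Taking $\pi=\pi_1\otimes\pi_2$ (more precisely the commuting product inside $\GL(V)$), and folding $c_1$ into $\zeta$, gives the displayed decomposition. The delicate point is to make the reductive part of the $\SL_2$-hull genuinely a homomorphism rather than merely cohomologous to one up to a non-amenable error.

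\textbf{Step 4 (Lyapunov spectrum).} The amenable algebraic group $A_0$ is compact-by-solvable, and $\zeta$ commutes with $\pi(S)$, so the exponents of $\beta(a,\cdot)$ are those of $\pi(a)$ plus the exponents of $\zeta(a,\cdot)$ on the joint eigenspaces. Two sub-steps remain:
\begin{itemize}
\item Show that $\log\|\Phi\|$ is tempered, so that it contributes nothing. Here the $L^2$ hypothesis enters, via the moment condition used in Lee's/Brown--Fisher--Hurtado-type arguments.
\item Show that $\zeta$ has zero exponents. Use that $\zeta$ commutes with the ergodic $\SL_r(\mathbb R)$-action, so its exponents are $\SL_r$-invariant and hence constant. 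Then a determinant/trace-zero argument on the solvable part, analogous to the computation with $\mathcal D$ in Step~3 of Section~\ref{sec:1exp}, forces them to vanish.
\end{itemize}
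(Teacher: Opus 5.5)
\textbf{The paper's route.} The paper does not reprove this statement. It imports \cite[Theorem~4.3]{Lee}, and in the remark that follows it only checks two things: that Lee's weak irreducibility reduces here to ergodicity of the $\SL_r(\mathbb R)$-factor, and that the homomorphism is real rational because it lands in a real Levi factor of the algebraic hull.

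\textbf{The gap in your reconstruction.} Your Step~3 is where the proof breaks, and it carries the whole content of the theorem. Amenable reduction over $P\backslash\SL_2(\mathbb R)$ only gives an equivariant map on the amenable extension $X\times P\backslash\SL_2(\mathbb R)$. It does not give a transfer on $X$ itself. The claim that the resulting hull splits as $\pi_2(\SL_2(\mathbb R))$ times a commuting amenable group is exactly what has to be proved. You give no mechanism by which the $\SL_r$-invariance produces this, and you flag the decisive point yourself as unresolved.

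Two further steps are also missing. First, folding $c_1$ into $\zeta$ requires $c_1$ to commute with $\pi_2(\SL_2(\mathbb R))$, and nothing in your argument provides that. Second, in Step~4 the $L^2$ hypothesis on $\beta$ gives no control on $\Phi$, so temperedness of $\log\|\Phi\|$ is unjustified. It is also unnecessary: recurrence via Lemma~\ref{lem:observable-recurrence} suffices. Moreover, a determinant argument only controls sums of exponents, so vanishing of the exponents of a solvable-valued error is not established.

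\textbf{How to close it.} The missing mechanism is already in your Step~2, if you use it directly instead of going through boundary theory.
\begin{enumerate}
\item Your recurrence argument, with the compact errors passed to convergent subsequences, gives $\tau(h,x)\in Z$ exactly. It then gives $\tau(h,ax)=c_1(a,hx)\tau(h,x)c_1(a,x)^{-1}$ for $a\in\SL_r(\mathbb R)$.
\item If $c_1$ is trivial, $\tau(h,\cdot)$ is $\SL_r$-invariant, hence constant by ergodicity. Then $h\mapsto\tau(h,\cdot)$ is already a homomorphism. This is exactly how the paper uses Lemma~\ref{lem:tauginv} in Step~7.
\item In general, reduce $c_1$ by a $K$-valued transfer to a compact $K_0$ whose skew product $X\times K_0$ is $\SL_r$-ergodic. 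This is harmless, since $K$ centralizes $\pi_1$.
\item Work in $\bar Z$, the quotient of $\pi_1(\SL_r(\mathbb R))Z$ by $\pi_1(\SL_r(\mathbb R))$. The $\SL_r$-invariant ergodic lifts of $\mu$ to $X\times\bar Z$ are exactly the right translates $\mu\times m_{\bar K_0}z$.
\item Each $h\in\SL_2(\mathbb R)$ commutes with the $\SL_r$-action on this skew product, so it permutes these lifts. This forces the image of $\tau(h,x)$ in $\bar Z$ to lie in $z_h\bar K_0$, with $z_h\in N(\bar K_0)$ independent of $x$.
\item The map $h\mapsto z_h\bar K_0$ is a continuous homomorphism. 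It lifts to a homomorphism $\pi_2$ centralizing $K_0$, because a connected group acts trivially on a torus and $\mathfrak{sl}_2$ has no nonzero map to a compact Lie algebra.
\item This yields the decomposition with a compact, commuting error, which is stronger than the stated amenable error. Equality of Lyapunov spectra then follows by recurrence, exactly as at the end of Step~5.
\end{enumerate}
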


\begin{rmk}
Lee states Theorem~4.3 for a weakly irreducible, ergodic action and
writes the superrigidity homomorphism in a complex algebraic ambient
group, with rational homomorphisms understood to be defined over
$\mathbb R$.  Here the only rank-one factor is $\SL_2(\mathbb R)$, so
weak irreducibility is exactly ergodicity of the complementary
$\SL_r(\mathbb R)$-factor; this also implies ergodicity of the full
$S$-action.  In the statement of Lee's Theorem~4.3, $\pi$ is written as
a rational homomorphism into $\GL(V_{\mathbb C})$.  In the proof,
however, the algebraic hull is a real algebraic group
$\mathbf L=\mathbf L(\mathbb R)$, a Levi subgroup $\mathbf F$ is chosen
over $\mathbb R$ with
$\mathbf L(\mathbb R)=\mathbf F(\mathbb R)\ltimes\mathbf U(\mathbb R)$.
Writing $\mathbf H$ for the noncompact semisimple factor of
$\mathbf F$, \cite[Theorem~4.1]{Lee} produces
\[
\pi:S\longrightarrow\mathbf H(\mathbb R)
 \subset\mathbf F(\mathbb R)\subset\GL(V).
\]
The remaining steps only add the commuting amenable error.  This gives
exactly the specialized statement above.
\end{rmk}

The ergodic element constructed in Step~6 belongs to the
$\SL_r(\mathbb R)$-factor, so that factor acts ergodically.  Take $V=W$
in Theorem~\ref{thm:lee}, choose a measurable orthonormal trivialization
of $E_f^s$, and apply the theorem to
\[
\beta^s(a,x)=Da|_{E_f^s(x)}.
\]
The $L^2$ condition follows from compactness of $X$ and smooth
dependence on $a$.

We claim that the representation $\pi$ is a product of the standard/dual representations on the simple components
\[
\pi\simeq V_r^\varepsilon\otimes V_2,
\qquad \varepsilon\in\{1,\vee\}.
\]
Indeed, rational representations of $S$ are completely reducible.
If an irreducible summand were trivial on one of the two simple
factors, then a regular positive real-split element in that factor
would have a zero exponent on this summand. Lee's equality of spectra
would then give a zero exponent for the corresponding derivative
cocycle, contradicting Lemma~\ref{gnozeroexp}. Thus every irreducible
summand is nontrivial on both factors and has dimension at least
$2r=d$. Since the total dimension is $d$, the representation is
irreducible and is the tensor product above. In particular, it is
absolutely irreducible, so
\[
Z_{\GL(W_\mathbb C)}(\pi(S))=\mathbb C^\times I.
\]
Since the error takes values in a real algebraic subgroup, it follows
that $\zeta(a,\hat x)\in\mathbb R^\times I$.

Projectivizing over $\R$ gives
\begin{equation}\label{eq:Lee-projective}
\mathbb P\beta^s(a,\mathfrak q(\hat x))
 =\widehat H(a\hat x)\mathbb P\pi(a)\widehat H(\hat x)^{-1},
\qquad \widehat H=\mathbb P\Phi.
\end{equation}

We now show that $\widehat H$ descends to $X$. Disintegrate the
invariant measure on $\widehat X$ as
$\hat\mu=\int\hat\mu_x\,d\mu(x)$ and equip
\[
Y=\widehat X\times_X\widehat X
\]
with the relatively independent measure
$\mu_Y=\int\hat\mu_x\otimes\hat\mu_x\,d\mu(x)$. Define
\[
R(\hat x_1,\hat x_2)
 =\widehat H(\hat x_2)^{-1}\widehat H(\hat x_1)
 \in\PGL(W).
\]
Equation~\eqref{eq:Lee-projective} gives
\begin{equation}\label{eq:R-covariance}
R(a\hat x_1,a\hat x_2)
 =\mathbb P\pi(a)R(\hat x_1,\hat x_2)\mathbb P\pi(a)^{-1}.
\end{equation}
We apply Lemmas~\ref{lem:observable-recurrence}
and~\ref{lem:positive-conjugation}, the latter after projecting from
$Q(W)$ to $\PGL(W)$.

Choose positive real-split elements $a_1,\ldots,a_N\in S$ such that
\[
\bigcap_{i=1}^N
Z_{\PGL(W)}(\mathbb P\pi(a_i))=\{e\}.
\]
Such a finite family exists because positive real-split elements are
Zariski dense, the projective centralizer of $\pi(S)$ is trivial,
and centralizers are algebraic. Applying
Lemma~\ref{lem:observable-recurrence} to $R$ for each $a_i$, and then
using \eqref{eq:R-covariance} and
Lemma~\ref{lem:positive-conjugation}, gives
\[
R(y)\in Z_{\PGL(W)}(\mathbb P\pi(a_i))
\]
for almost every $y\in Y$. Intersecting the finitely many
full-measure sets gives $R=e$ almost everywhere. Hence
$\widehat H$ is constant on almost every fiber of $\mathfrak q$ and descends to
a measurable projective frame
\[
H_{\mathrm{proj}}(x)\in
\operatorname{PIsom}(W,E_f^s(x))
\]
satisfying
\begin{equation}\label{eq:projective-base}
\mathbb P(Da|_{E_f^s(x)})
 =H_{\mathrm{proj}}(ax)\mathbb P\pi(a)
  H_{\mathrm{proj}}(x)^{-1}.
\end{equation}

We now treat the determinant part.

\begin{lem}[{\cite[Proposition~2.2.10 and Theorem~2.12.4]{BHV}}]
\label{lem:real-cocycle-T}
Let $L$ be a locally compact, second countable group.  Suppose that
$L$ has property {\rm (T)} and acts by measure-preserving transformations on
$(X,\mu)$. Let $\delta:L\times X\to\mathbb R$ be a measurable
additive cocycle. Suppose that $\delta(a,\cdot)\in L^2(X,\mu)$ for
every $a\in L$ and that $a\mapsto\delta(a,\cdot)$ is continuous as a
map to $L^2(X,\mu)$. Then there exists $u\in L^2(X,\mu)$ such that
\[
\delta(a,x)=u(ax)-u(x)
\]
for every $a\in L$ and almost every $x$.
\end{lem}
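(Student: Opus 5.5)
The plan is to use the Hilbert-space characterization of property (T) through vanishing of first cohomology. The cocycle $\delta$ gives an affine isometric action, and property (T) forces that action to have a fixed point, which is the desired coboundary $u$. Let $\rho$ be the Koopman representation of $L$ on $\mathcal H=L^2(X,\mu)$, defined by $(\rho(a)\psi)(x)=\psi(a^{-1}x)$. Since $L$ preserves $\mu$, $\rho$ is unitary. It is strongly continuous because $L$ is locally compact second countable and the action is measurable and measure preserving; this is the standard measurable-action continuity.

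\textbf{Step 1: a $1$-cocycle.} Define
\[
b(a)(x):=\delta(a,a^{-1}x).
\]
This lies in $\mathcal H$, since $\delta(a,\cdot)\in L^2$ and $a^{-1}$ preserves $\mu$. From the additive cocycle identity $\delta(ab,x)=\delta(a,bx)+\delta(b,x)$ we get
\[
b(ab)(x)=\delta(a,a^{-1}x)+\delta(b,b^{-1}a^{-1}x)=b(a)(x)+(\rho(a)b(b))(x).
\]
So $b\in Z^1(L,\rho)$. For continuity, write $b(a)=\rho(a)\delta(a,\cdot)$. The map $a\mapsto\delta(a,\cdot)$ is continuous into $L^2$ by hypothesis, and $\rho$ is strongly continuous and unitary, so $b$ is continuous.

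\textbf{Step 2: property (T).} By the Delorme--Guichardet theorem, a locally compact second countable group with property (T) satisfies $H^1(L,\pi)=0$ for every continuous unitary representation $\pi$. This is the direction of \cite[Theorem~2.12.4]{BHV}: property (T) implies property (FH) via \cite[Proposition~2.2.10]{BHV}. Hence there is $w\in\mathcal H$ with
\[
b(a)=w-\rho(a)w\quad\text{for every } a\in L.
\]
Equivalently, the continuous affine isometric action $a\cdot\xi=\rho(a)\xi+b(a)$ has bounded orbits. It therefore has a fixed point, namely the circumcenter of an orbit, and $w$ is that fixed point.

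\textbf{Step 3: unwinding.} Evaluate the relation at $ax$:
\[
\delta(a,x)=b(a)(ax)=w(ax)-w(x)
\]
for almost every $x$, for each fixed $a$. So $u:=w$ satisfies the lemma. Note the order of quantifiers: the conclusion is stated for each $a$ almost everywhere in $x$, which is exactly what this argument gives. There is no need for a simultaneous null set, and hence no Fubini argument.

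The main point requiring care is Step 1's continuity. The strong continuity of the Koopman representation on a second countable group is standard: approximate by continuous compactly supported functions after realizing the action continuously via Mackey's point realization, or use that a measurable unitary representation on a separable Hilbert space is automatically continuous. Step 2 is a direct citation.
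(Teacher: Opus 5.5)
Your proposal is correct and follows the paper's approach. The paper's proof is just a citation of Delorme's implication property~(T)$\Rightarrow$(FH), applied implicitly to the Koopman representation on $L^2(X,\mu)$; your Steps~1--3 write out that application, namely the $1$-cocycle $b(a)=\rho(a)\delta(a,\cdot)$, its continuity, the fixed vector $w$, and the unwinding $\delta(a,x)=w(ax)-w(x)$.
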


This follows from the Delorme--Guichardet characterization of property
${\rm (T)}$.  The implication used here, property ${\rm (T)}\Rightarrow
{\rm (FH)}$, is due to Delorme \cite[Theorem~V.1]{Delorme}; Guichardet
proved the converse for $\sigma$-compact locally compact groups
\cite{Guichardet}.  See also
\cite[Proposition~2.2.10 and Theorem~2.12.4]{BHV} for a more modern summary.

For $a\in G_1=\SL_r(\mathbb R)$, define an additive cocycle
\[
\delta(a,x)=\frac1d
 \log\left|\det(Da|_{E_f^s(x)})\right|.
\]
 Since $E_f^s$ is continuous and the
action depends smoothly on $a$, the map
$a\mapsto\delta(a,\cdot)$ is continuous in $L^2$. For $r\ge3$,
$\SL_r(\mathbb R)$ has property {\rm (T)} \cite{Kazhdan}; therefore
Lemma~\ref{lem:real-cocycle-T} gives
$\delta(a,x)=u(ax)-u(x)$.

Fix volume densities on $W$ and $E_f^s$. For each $x$, the map
\[
[L]_\pm\longmapsto
\bigl(\mathbb P L,|\det L|\bigr)
\]
identifies
$\operatorname{Iso}(W,E_f^s(x))/\{\pm I\}$ with
$\operatorname{PIsom}(W,E_f^s(x))\times\mathbb R_{>0}$; similarly,
\[
Q(W)\simeq\PGL(W)\times\mathbb R_{>0}.
\]
Since $\det\pi(a)=1$ on the connected semisimple group $G_1$,
Equation~\eqref{eq:projective-base} and the determinant coboundary
combine to give a measurable quotient frame $\bar H$ and a
homomorphism $\bar\pi:G_1\to Q(W)$ satisfying the first identity in
\eqref{eq:Q-cohomology}. In these coordinates,
\[
\bar H(x)=\bigl(H_{\mathrm{proj}}(x),e^{du(x)}\bigr),
\qquad
\bar\pi(a)=\bigl(\mathbb P\pi(a),1\bigr).
\]

Define
\[
\mathfrak A_f(x)=\bar H(fx)^{-1}[Df|_{E_f^s(x)}]_\pm\bar H(x)\in Q(W).
\]
The commutativity of $f$ with $G_1$ gives
\begin{equation}\label{eq:script-Af-covariance}
\mathfrak A_f(ax)=\bar\pi(a)\mathfrak A_f(x)\bar\pi(a)^{-1}.
\end{equation}
Choose the ergodic element $g\in G_1$ in a positive real-split
chamber. Lemma~\ref{lem:observable-recurrence} gives, for almost every
$x$, a sequence $n_j\to\infty$ such that
$\mathfrak A_f(g^{n_j}x)\to \mathfrak A_f(x)$. Under
$Q(W)\simeq\PGL(W)\times\mathbb R_{>0}$, the second factor is central,
while the first factor satisfies the projective conjugation lemma.
Thus $\mathfrak A_f(x)$ commutes with $\bar\pi(g)$. Equation
\eqref{eq:script-Af-covariance} then gives
$\mathfrak A_f(gx)=\mathfrak A_f(x)$, and ergodicity
of $g$ implies that $\mathfrak A_f(x)$ is almost everywhere equal to a constant
$A_f$. This proves \eqref{eq:Q-cohomology} in the case $q=2$.

The same argument, applied to $f^{-1}$ and $E_f^u$, gives the
corresponding quotient cohomology on the unstable bundle.
\paragraph{\textbf{Step 8. Continuity of the transfer and partial
hyperbolicity of $g$.}}

We use the following result of Kalinin--Sadovskaya.

\begin{thm}[{\cite[Theorem~1.4]{KalSad23}}]\label{KS23}
Let $f:X\to X$ be a $C^2$, accessible, center-bunched, partially
hyperbolic diffeomorphism preserving a smooth volume.  Let $\mathcal A$
be a constant $\GL(D,\mathbb R)$-valued cocycle with one Lyapunov
exponent, and let $\mathcal B$ be an su-$\vartheta$--H\"older,
fiber-bunched $\GL(D,\mathbb R)$-valued cocycle over $f$.  Then every
measurable conjugacy between $\mathcal A$ and $\mathcal B$ agrees almost
everywhere with an su-$\vartheta$--H\"older conjugacy.
\end{thm}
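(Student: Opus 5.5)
We treat this as a black box from \cite{KalSad23}, but the argument we would follow is this. Write the measurable conjugacy as $C:X\to\GL(D,\mathbb R)$ with
\[
\mathcal B_x=C(fx)\,\mathcal A\,C(x)^{-1}
\]
for almost every $x$. Since $\mathcal B$ is fiber bunched, Proposition~\ref{prop:holcocycle} supplies stable and unstable holonomies $\mathcal H^{s}_{x,y}$ and $\mathcal H^{u}_{x,y}$ for $\mathcal B$; the constant cocycle $\mathcal A$ has trivial holonomies. The plan has three steps: show that $C$ intertwines these holonomies almost everywhere, upgrade this to everywhere-defined su-invariance using accessibility, and read off the regularity.

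\textbf{Step 1: holonomy invariance almost everywhere.} Fix a Lusin set $\Lambda$ of measure close to $1$ on which $C$ and $C^{-1}$ are uniformly continuous and bounded. Take typical $x$ and $y\in\mathcal W^s_f(x)$ whose forward orbits visit $\Lambda$ simultaneously along a subsequence $n_j\to\infty$; this follows from Poincar\'e recurrence plus absolute continuity of $\mathcal W^s_f$. Using the conjugacy relation we write
\[
C(y)^{-1}\,\mathcal H^s_{x,y}\,C(x)=\lim_j \mathcal A^{-n_j}\,\Bigl[C(f^{n_j}y)^{-1}\,\mathcal B^{-n_j}_{f^{n_j}y}\,\mathcal B^{n_j}_{x}\,C(f^{n_j}x)\Bigr]\,\mathcal A^{n_j}.
\]
More precisely, we insert $\mathcal H^s_{f^{n}x,f^{n}y}$, which is H\"older close to the identity at distance $d(f^nx,f^ny)\lesssim\mu_s^{n}$. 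Uniform continuity of $C$ on $\Lambda$ then makes the bracketed middle term equal to $I+O(\mu_s^{\vartheta n_j})$.

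Since $\mathcal A$ has a single Lyapunov exponent, all its eigenvalues have the same modulus. Hence $\|\mathcal A^{n}\|\,\|\mathcal A^{-n}\|$ grows at most polynomially in $n$, because only Jordan blocks contribute. Conjugating an exponentially small perturbation of $I$ by $\mathcal A^{n_j}$ therefore still tends to $I$. We conclude $C(y)=\mathcal H^s_{x,y}C(x)$ for almost every pair, and symmetrically for $\mathcal H^u$.

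\textbf{Step 2: from almost everywhere to everywhere.} Here we pass from essential $s$- and $u$-invariance to a genuine su-invariant version. Following the Burns--Wilkinson scheme behind Theorem~\ref{burnswikerg}, we consider the section of the bundle $\GL(D)$ twisted by the $\mathcal B$-holonomies. Using center bunching of $f$ and fiber bunching of $\mathcal B$ (which give uniformly controlled fake holonomies), we show that the Lebesgue density points of the level sets of $C$ are saturated by both $\mathcal W^s_f$ and $\mathcal W^u_f$, read in holonomy coordinates. Accessibility then lets us define
\[
\widetilde C(z)=\mathcal H_\gamma\,C(x_0)
\]
along any su-path $\gamma$ from a good point $x_0$ to $z$. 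Well-definedness, i.e.\ independence of the path, follows because a path-dependent discrepancy would produce a non-trivial bi-saturated set of positive but not full measure, contradicting ergodicity and the density-point statement.

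\textbf{Step 3: regularity.} Once $\widetilde C=C$ almost everywhere and $\widetilde C$ is transported by holonomies, the estimate $\|\mathcal H^{s/u}_{x,y}-I\|\lesssim d(x,y)^{\vartheta}$ gives su-$\vartheta$--H\"older continuity along each leg. We expect Step~2 to be the main obstacle, since it needs the full density-point machinery with matrix-valued rather than scalar functions. Step~1 is the only place where the one-exponent hypothesis on $\mathcal A$ enters, and it is essential there: subexponential growth of $\mathcal A^{\pm n}$ must beat the exponential contraction of the middle term.
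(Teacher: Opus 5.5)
The paper gives no proof of this statement; it quotes \cite[Theorem~1.4]{KalSad23}, so treating it as a black box is exactly what the paper does. Your sketch, however, has a genuine error in Step~1. Lusin's theorem gives uniform continuity of $C$ on $\Lambda$ but no modulus of continuity. So the bracketed term is only $I+o(1)$ along $n_j$: the H\"older rate belongs to $\mathcal H^s$, not to $C(f^{n}y)^{-1}C(f^{n}x)$.

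An $o(1)$ error does not survive conjugation by $\mathcal A^{\pm n_j}$ once $\mathcal A$ has a Jordan block. For $\mathcal A=\begin{pmatrix}1&1\\0&1\end{pmatrix}$ and $M_j=I+t\,n_j^{-2}e_{21}\to I$, one gets $\mathcal A^{-n_j}M_j\mathcal A^{n_j}\to I-t\,e_{12}$. Your argument is fine when $\mathcal A$ is semisimple, since then conjugation by $\mathcal A^{\pm n}$ is uniformly bounded.

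The conclusion of Step~1 is nevertheless true, by a different mechanism. Put $\Phi(x,y)=C(y)^{-1}\mathcal H^s_{x,y}C(x)$. The conjugacy relation and $\mathcal H^s_{f^nx,f^ny}=\mathcal B^n_y\mathcal H^s_{x,y}(\mathcal B^n_x)^{-1}$ give the exact identity
\[
\Phi(f^nx,f^ny)=\mathcal A^{n}\,\Phi(x,y)\,\mathcal A^{-n}.
\]
Hence $Q=\Phi(x,y)-I$ is a fixed matrix with $\mathcal A^{n_j}Q\mathcal A^{-n_j}\to0$. Since all eigenvalues of $\mathcal A$ have the same modulus, $\operatorname{Ad}_{\mathcal A}$ has spectrum on the unit circle. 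Its semisimple part has uniformly bounded powers and inverse powers. Its commuting unipotent part sends $Q\neq0$ to a nonzero vector-valued polynomial in $n$. Therefore $\liminf_{n}\|\mathcal A^{n}Q\mathcal A^{-n}\|>0$, which forces $Q=0$. This, not a race between polynomial growth and exponential contraction, is where the one-exponent hypothesis enters. Also, simultaneous returns of $f^nx$ and $f^ny$ to $\Lambda$ need $\mu(\Lambda)>1/2$ and Birkhoff frequencies, not Poincar\'e recurrence alone.

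Step~2 is not an argument as written. A nontrivial value of $\mathcal H_\gamma$ on one su-cycle does not by itself produce a bi-saturated set of intermediate measure, and ergodicity of $f$ is not what excludes it. What you need is the theorem of Avila--Santamaria--Viana \cite{ASV} on bi-essentially invariant sections. Over a $C^2$, volume-preserving, center-bunched, accessible partially hyperbolic map, consider a measurable section of a continuous bundle with continuous stable and unstable holonomies. If it is essentially invariant under both, it agrees almost everywhere with a continuous section invariant under all holonomies.

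For continuous $\mathcal B$ (as in Step~8, where $\mathcal B$ is H\"older), apply this to the trivial $\GL(D,\mathbb R)$-bundle with holonomies $C\mapsto\mathcal H^s_{x,y}C$ and $C\mapsto\mathcal H^u_{x,y}C$, using the corrected Step~1 as input. Path independence is then automatic, because the resulting $\widetilde C$ is a function invariant along every leg. Your Step~3 then gives the su-$\vartheta$-H\"older regularity.
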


Let
\[
\mathsf D_x=Df|_{E_f^s(x)}.
\]
Step~7 gives a measurable quotient frame $\bar H$ and an element
$A_f\in Q(W)$ such that
\[
[\mathsf D_x]_\pm=\bar H(fx)A_f\bar H(x)^{-1}.
\]
Choose a lift $A\in\GL(W)$ of $A_f$ and consider the linear cocycles
\[
\mathcal A=A\otimes A
\quad\text{on }X\times(W\otimes W),\qquad
\mathcal B_x=\mathsf D_x\otimes\mathsf D_x
\quad\text{on }E_f^s\otimes E_f^s.
\]
The map
\[
\mathcal T(x)=\bar H(x)\otimes\bar H(x):
W\otimes W\longrightarrow E_f^s(x)\otimes E_f^s(x)
\]
is well defined, independent of the representative of $\bar H(x)$,
and satisfies
\begin{equation}\label{eq:tensor-bundle-cohomology}
\mathcal B_x=\mathcal T(fx)\mathcal A\mathcal T(x)^{-1}
\end{equation}
almost everywhere.

We first show that $\mathcal A$ has one Lyapunov exponent. Pass to an
$f$-invariant full-measure set on which
\eqref{eq:tensor-bundle-cohomology} holds for every iterate. Choose an
Oseledets-regular point $x$ for which
Lemma~\ref{lem:observable-recurrence} applies to $\mathcal T$, and let
$n_j\to\infty$ be the resulting sequence.  For some $M=M(x)>0$ we then
have
\[
\kappa(\mathcal A^{n_j})
 \le M^4\kappa(\mathcal B_x^{n_j})
 =M^4\kappa(\mathsf D_x^{n_j})^2,
\qquad \text{where }
\kappa(T)=\|T\|\|T^{-1}\|.
\]
Since $Df|_{E_f^s}$ has one Lyapunov exponent,
$n^{-1}\log\kappa(\mathsf D_x^n)\to0$. On the other hand, for a fixed matrix
the limit
\[
\lim_{n\to\infty}\frac1n\log\kappa(\mathcal A^n)
\]
is the difference between the logarithms of the largest and smallest
moduli of its eigenvalues. It follows that all eigenvalues of
$\mathcal A$ have the same modulus.

The cocycle $\mathcal B$ is su-$\vartheta$--H\"older. Shrinking the
$C^1$-neighborhood of $f_0$ if necessary, it is fiber bunched. Indeed,
$Df_0|_{E_{f_0}^s}$ is conformal on the single root space, so
$\mathcal B$ is fiber bunched at $f_0$; the required strict
fiber-bunching inequalities for an iterate persist under small
$C^1$ perturbations.  The standing hypotheses on the base follow from
Proposition~\ref{fparhyp}: $f$ is smooth,
volume-preserving, accessible, center-bunched, and partially
hyperbolic.  The algebraic bundle $E_{f_0}^s$ is smoothly trivial, and
orthogonal projection from $E_f^s$ to $E_{f_0}^s$ is a H\"older bundle
isomorphism for $f$ sufficiently close to $f_0$.  We use the induced
trivialization of $E_f^s\otimes E_f^s$ to regard $\mathcal B$ as a
$\GL(W\otimes W)$-valued cocycle.  Therefore
Theorem~\ref{KS23} applies and gives an su-$\vartheta$--H\"older
conjugacy
\[
\mathcal T_0(x):W\otimes W\longrightarrow
E_f^s(x)\otimes E_f^s(x)
\]
which agrees almost everywhere with $\mathcal T$.

We now recover a continuous quotient frame from $\mathcal T_0$. For each $x$,
consider
\[
\iota_x:
\operatorname{Iso}(W,E_f^s(x))/\{\pm I\}
 \longrightarrow
\operatorname{Iso}(W\otimes W,E_f^s(x)\otimes E_f^s(x)),
\qquad
[L]_\pm\longmapsto L\otimes L.
\]
This map is injective. Its image is closed: in local trivializations,
if $L_j\otimes L_j$ converges to an invertible map, then
$\|L_j\|^2$ and $\|L_j^{-1}\|^2$ remain bounded, so a subsequence of
$L_j$ converges to an invertible $L$, and the limit is $L\otimes L$.
The same argument shows that $\iota_x$ is a homeomorphism onto its
image.

The equality set $\{\mathcal T_0=\mathcal T\}$ has full measure and is therefore dense.
In local bundle trivializations, continuity of $\mathcal T_0$ and closedness
of the tensor-square image imply that
$\mathcal T_0(x)\in\operatorname{Im}\iota_x$ for every $x$. Consequently,
\[
\bar H_0(x)=\iota_x^{-1}(\mathcal T_0(x))
\]
is a continuous section of
$\operatorname{Iso}(W,E_f^s)/\{\pm I\}$ agreeing almost everywhere
with $\bar H$. We replace $\bar H$ by $\bar H_0$.

For every fixed $a\in G_1$, both sides of the identities in
\eqref{eq:Q-cohomology} are now continuous in $x$. Since they agree
almost everywhere and the volume has full support, they agree for
every $x$:
\begin{equation}\label{eq:continuous-Q-cohomology}
[Da|_{E_f^s(x)}]_\pm
 =\bar H(ax)\bar\pi(a)\bar H(x)^{-1},
\qquad
[Df|_{E_f^s(x)}]_\pm
 =\bar H(fx)A_f\bar H(x)^{-1}.
\end{equation}

Apply the first equation to the regular positive real-split element
$g\in G_1$ chosen in Step~6. Choose a real-diagonalizable lift
$A_g\in\GL(W)$ of $\bar\pi(g)$ with positive eigenvalues, and let
\[
W=W_g^s\oplus W_g^c\oplus W_g^u
\]
be the sums of eigenspaces with eigenvalues respectively less than,
equal to, and greater than $1$. Define
\[
E^{s,*}_{f,g}(x)=\bar H(x)W_g^*,
\qquad *=s,c,u.
\]
These subspaces are well defined because multiplication of a frame by
$-I$ does not change its image on the Grassmannian. They form a
continuous $Dg$-invariant splitting of $E_f^s$.

The quantities $\|L\|$ and $\|L^{-1}\|$ are invariant under replacing
a representative $L$ of $\bar H(x)$ by $-L$. Hence compactness of
$X$ gives a uniform bound for both. Iterating
\eqref{eq:continuous-Q-cohomology}, for arbitrary representatives
$L_x$ and $L_{g^nx}$, gives
\[
Dg_x^nL_x=\varepsilon_n(x)L_{g^nx}A_g^n,
\qquad \varepsilon_n(x)\in\{\pm1\}.
\]
It follows that there are $C_s>0$ and
$\nu_s,\nu_u\in(0,1)$ such that, for $n\ge0$,
\[
\begin{aligned}
\|Dg^nv\|&\le C_s\nu_s^n\|v\|,
 &&v\in E^{s,s}_{f,g},\\
\|Dg^{\pm n}v\|&\le C_s\|v\|,
 &&v\in E^{s,c}_{f,g},\\
\|Dg^{-n}v\|&\le C_s\nu_u^n\|v\|,
 &&v\in E^{s,u}_{f,g}.
\end{aligned}
\]
For the chosen $g$, there are no zero weights on $E_f^s$, and hence
$E^{s,c}_{f,g}=0$.

Repeating Steps~7 and~8 for $f^{-1}$ and $E_f^u$ gives a continuous
$Dg$-invariant splitting
\[
E_f^u=E^{u,s}_{f,g}\oplus E^{u,c}_{f,g}\oplus E^{u,u}_{f,g}
\]
with the analogous uniform estimates; again
$E^{u,c}_{f,g}=0$.

On $E_f^c$, Section~\ref{sec:gcendyn} gives a smooth invariant
splitting
\[
E_f^c=E^{c,s}_{f,g}\oplus E^{c,c}_{f,g}\oplus E^{c,u}_{f,g},
\]
with uniform exponential contraction on $E^{c,s}_{f,g}$, uniform
exponential contraction for $Dg^{-1}$ on $E^{c,u}_{f,g}$, and
uniformly bounded iterates in both directions on
$E^{c,c}_{f,g}$.

Set
\[
\begin{aligned}
E_g^s&=E^{s,s}_{f,g}\oplus E^{c,s}_{f,g}
       \oplus E^{u,s}_{f,g},\\
E_g^c&=E^{c,c}_{f,g},\\
E_g^u&=E^{s,u}_{f,g}\oplus E^{c,u}_{f,g}
       \oplus E^{u,u}_{f,g}.
\end{aligned}
\]
Then $TX=E_g^s\oplus E_g^c\oplus E_g^u$ is a continuous
$Dg$-invariant splitting. Combining the estimates above, there are
$C_{\mathrm{ph}}>0$ and $\nu_{\mathrm{ph}}\in(0,1)$ such that, for every $n\ge0$,
\[
\|Dg^n|_{E_g^s}\|\le C_{\mathrm{ph}}\nu_{\mathrm{ph}}^n,\qquad
\|Dg^{\pm n}|_{E_g^c}\|\le C_{\mathrm{ph}},\qquad
\|Dg^{-n}|_{E_g^u}\|\le C_{\mathrm{ph}}\nu_{\mathrm{ph}}^n.
\]
After passing to an iterate, these inequalities give the domination
between the three bundles. Thus $g$ is partially hyperbolic.

\subsection{Cone control of the fine foliations}\label{sec:c0closebund}

We prove the plaque estimate used below. Let
$\hat g=\phi_f^{-1}\,g\,\phi_f$ be the conjugate of $g$ by the leaf
conjugacy of $f$, and write $\hat{\mathcal W}^{*}_{\hat f,\hat g}=\phi_f^{-1}(\mathcal W^*_{f,g})$
for $*=s,s;s,u;u,s;u,u$; these are the topological stable/unstable
subfoliations of $\hat g$ inside those of $\hat f=\phi_f^{-1} f\phi_f$.
Consider $\hat{\mathcal W}^{s,s}_{\hat f,\hat g}$. Since $\phi_f$ is
$\theta$--H\"older, for any $x\in X$ and
$y\in\hat{\mathcal W}^{s,s}_{\hat f,\hat g}(x)$ there are
$C_1>0$ and $\rho>1$ such that
\[
d\big(\hat g^n(x),\hat g^n(y)\big)
 \le C_1\rho^{-\theta n}d(x,y)^{\theta^2}\qquad(n>0).
\]

Fix $\delta>0$. Replacing $g_0$ by a suitable power $g_0^k$ and taking
$\epsilon_1>0$ sufficiently small, there exists a family of cones
$C(x)\subset E^s_{f_0}(x)$ satisfying the hypothesis of Lemma
\ref{lem:coneexpan}; namely, for every left translation $g_1$ projectively
$\epsilon$-close to $g_0^k$ and every $u\in C(x)$,
\[
Dg_1(u)\in C(g_1(x)),\qquad \mu|u|\ge |Dg_1(u)|\ge\lambda|u|
\]
for fixed $\mu>\lambda>1$. Moreover, we may choose the cones so that any
vector $v\in E^s_{f_0}(x)\setminus C(x)$ lies in the $\delta$--cone of
the subspace $E^{s,s}_{f_0,g_0}=E^s_{f_0}\cap E^s_{g_0}$.

For any $\epsilon>0$, applying Proposition
\ref{gtransl} to $R>d_{\W^c_{f_0}}(\tilde g_0^k(x),x)$ and taking
$d_{C^1}(f_0,f)$ sufficiently small, the iterate $g^k$ is projectively
$\epsilon$-close to $g_0^k$. Therefore, Lemma~\ref{lem:coneexpan}
implies that for any $y_1\in\exp(C(x))$ with
$d(x,y_1)<\mu^{-n}\epsilon_1$,
\[
d_G\big(\tilde g^{kn}(x),\tilde \W^c_{f_0}(y_1)\big)
 >\tfrac14\lambda^n d(x,y_1).
\]

Fix $N>0$ sufficiently large. For any $x\in X$ and any
$y\in\hat{\mathcal W}^{s,s}_{\hat f,\hat g}(x)$ with
\[
\epsilon_1\mu^{-n-1}<d(x,y)<\epsilon_1\mu^{-n},\qquad n\ge N,
\]
take
$y_1=\W^c_{f_0}(y,\mathrm{loc})\cap\W^s_{f_0}(x,\mathrm{loc})$.

If $y_1\in\exp C(x)$, then combining the inequalities above gives
\[
C_1\rho^{-\theta kn}d(x,y)^{\theta^2}
 \ge d\big(\hat g^{kn}(x),\hat g^{kn}(y)\big)
 \ge d_G\big(\tilde g^{kn}(x),\tilde \W^c_{f_0}(y_1)\big)
 >\tfrac14\lambda^n d(x,y_1).
\]

On the other hand, we also have
$\phi_f(y_1)\in\W^c_f(\phi_f(y))$ and
$\phi_f(y)\in\W^s_f(\phi_f(x))$. Since
$d(\phi_f(y),\phi_f(x))\ll1$ and the angle between $E^c_f$ and $E^s_f$
is uniformly bounded away from zero, the local product estimate gives
\[
d(\phi_f(x),\phi_f(y_1))
 \ge d_G\big(\phi_f(x),\W^c_f(\phi_f(y_1),\mathrm{loc})\big)
 \ge C_2d(\phi_f(x),\phi_f(y)).
\]
The bi-H\"older bounds therefore imply, for uniform constants
$C_3,C_4,C_5>0$, that
\[
d(x,y_1)\ge C_3d(\phi_f(x),\phi_f(y_1))^{1/\theta}
 \ge C_4d(\phi_f(x),\phi_f(y))^{1/\theta}
 \ge C_5d(x,y)^{1/\theta^2}.
\]
Plugging this relation back, we get
\[
C_1\rho^{-\theta kn}d(x,y)^{\theta^2}
 \ge C_6\lambda^n d(x,y)^{1/\theta^2}.
\]
Using $d(x,y)\ge\epsilon_1\mu^{-n-1}$, we obtain an upper bound of
order $\mu^{n(1/\theta^2-\theta^2)}$ for the left-hand ratio.
By Lemma~\ref{lem:holexp1}, we may choose $\theta$ sufficiently close to
$1$ so that
\[
\mu^{1/\theta^2-\theta^2}<\rho^{\theta k}\lambda.
\]
Choosing $N$ sufficiently large yields a contradiction.
This shows that for any
$y\in\hat{\mathcal W}^{s,s}_{\hat f,\hat g}(x)$ with
$d(x,y)<\epsilon_1\mu^{-N}$, we must have $y_1\notin\exp(C(x))$.
Therefore, the foliation $\hat \W^{s,s}_{\hat f,\hat g}$ is inside the
$\delta$--cone of $E^{s,s}_{f_0,g_0}$ after projecting in the
$\W^c_{f_0}$ foliation to $\W^s_{f_0}$.

\subsection{Completion of the proof of Theorem~\ref{thm:highrankgph}}\label{subsec:proofgph}

\begin{proof}[Proof of item~(5)]
The model element $g_0$ and the finite set $F$ are chosen first.  Fix
$a=(m,n)\in F$ and choose compatible lifts.  Since $m$ and $n$ are
fixed, Proposition~\ref{prop:g_0tog}(1), the convergence
$\phi_f^{-1}f\phi_f\to f_0$, and continuity of a fixed finite
composition imply
\[
 \widetilde{\widehat{\alpha(a)}}(x_\ast)
 \longrightarrow \alpha_0(a)(x_\ast).
\]
In particular, the center displacement at $x_\ast$ is uniformly
bounded.  Apply Proposition~\ref{gtransl} to $\alpha(a)\in\CZ(f)^0$ and
let $L_a$ be the left translation determined by this base-point
displacement.  It gives uniform projective closeness of $\alpha(a)$ to
$L_a$, while the displayed convergence gives $L_a\to\alpha_0(a)$.
Because $F$ is finite and its elements are regular,
\[
 \min_{a\in F}
 d_{\widetilde{\mathcal W}^c_{f_0}}
   (\alpha_0(a)(x_\ast),x_\ast)>0.
\]
The triangle inequality therefore gives the required relative
projective estimate simultaneously for every $a\in F$ after shrinking
the $C^1$-neighborhood.  This neighborhood may depend on $g_0$ and
$F$; no estimate for $a\notin F$ is asserted or needed.
\end{proof}

\begin{proof}[Proof of items~(1), (2), and~(4)]
For $a=(m,n)\in F$, the continuous quotient cohomologies from
Steps~7--8 give
\[
[D\alpha(a)|_{E_f^s(x)}]_\pm
 =\bar H(\alpha(a)x)A_f^m\bar\pi(g)^n\bar H(x)^{-1},
\]
and the analogous identity on $E_f^u$.  By item~(5) and
Lemma~\ref{gnozeroexp}, these constant matrices have no eigenvalue of
modulus one.  Together with the orbit-coordinate splitting on $E_f^c$
from Section~\ref{sec:gcendyn}, this gives a continuous uniformly
partially hyperbolic splitting for every $\alpha(a)$, $a\in F$.  Since
the model elements in $F$ are regular, their neutral spaces are the
common center space of the action; the same orbit-coordinate description
therefore gives the common center foliation $\mathcal W^c_\alpha$.
This proves item~(1).

Proposition~\ref{prop:g_0tog}(4) supplies the same bi-H\"older map $h_c$
as a leaf conjugacy for both generators $f_0,f$ and $g_0,g$.  It
therefore conjugates the induced $\mathbb Z^2$-actions on their common
center leaf spaces, proving item~(2).

Finally, repeat the cone argument of Section~\ref{sec:c0closebund} for
the four stable/unstable fine foliations and for every $a\in F$.  Item~(5)
supplies the required projective closeness, and finiteness of $F$ makes
the plaque radii and cone widths uniform.  The projected plaques then lie
in arbitrarily small cones about the corresponding model plaques. On
legs of bounded length, uniform local transversality makes the associated
local product holonomies uniformly $C^0$-close to the model holonomies.
Combining with the fact that the leaf conjugacy $h_c$ is $C^0$-small, this proves
item~(4).
\end{proof}

\begin{proof}[Proof of item~(3)]
By Proposition~\ref{prop:g_0tog}(3), the common center leaves are the
orbits of $Z_{\CZ(f)^0}(g)^0$.  Both $f$ and $g$ commute with this
subgroup.  If $\mathcal B_c(x)$ denotes the differential of its orbit
coordinate, equivariance gives
\[
D\alpha(na)|_{E^c_\alpha(x)}
 =\mathcal B_c(\alpha(na)x)\mathcal B_c(x)^{-1},
 \qquad a\in F,\quad n\in\mathbb Z.
\]
The orbit frames and their inverses are uniformly bounded on the compact
manifold $X$.  Hence the left-hand side is uniformly bounded in $n$,
which is stronger than item~(3).
\end{proof}

This completes the proof of Theorem \ref{thm:highrankgph}.

\section{Proof of Theorem \ref{cenrignongen}}\label{sec:main-proof}

With the partial hyperbolicity structure established by Theorem \ref{thm:highrankgph}, we prove the main theorem using the geometric method of proving rigidity of higher rank restrictions of the Weyl chamber flow, following the ideas of \cite{DK,Vinhage,VinWang,Wang}.

We use the following rigidity statement, which combines
Theorem~6.1, Proposition~6.2, and the argument of Section~6.5 in
\cite{Wang26}.

\begin{thm}[Rigidity of a topological perturbation]
\label{thm:wang-rigidity}

Let $\alpha_0$ and $\alpha$ be the model and perturbed
$\mathbb Z^2$-actions supplied by Theorem~\ref{thm:highrankgph}.
Then $\alpha$ is
$C^\infty$ conjugate to a restriction of the diagonal action.

\end{thm}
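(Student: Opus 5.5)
The plan is to verify that the pair $(\alpha_0,\alpha)$ produced by Theorem~\ref{thm:highrankgph} satisfies the hypotheses of the topological-perturbation rigidity scheme of \cite[Section~6]{Wang26}, and then to run the geometric method of \cite{DK,Vinhage,VinWang}. The output is a $C^\infty$ conjugacy to a restriction of the diagonal action.

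\textbf{Step 1: recover the coarse Lyapunov foliations of $\alpha$.} Using the finite sufficient set $F$ and items~(1), (4), (5), I would take intersections of the stable foliations of the partially hyperbolic elements $\alpha(a)$, $a\in F$. This defines coarse Lyapunov foliations $\mathcal W^{\chi}_\alpha$ for each restricted coarse functional $\chi$. Item~(4) makes their holonomies $C^0$-close to the model on bounded legs, and item~(5) makes them subordinate to the model cone structure. Genuine higher rank guarantees that each coarse foliation is the stable foliation of some intersection of regular elements of $F$, so the fine foliations built in Section~\ref{sec:c0closebund} suffice.

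\textbf{Step 2: produce a topological conjugacy.} Item~(2) gives the leaf conjugacy $h_c$ on $\mathcal W^c_\alpha$, and item~(3) gives subexponential growth in the center. Following \cite[Proposition~6.2]{Wang26}, I would compare $su$-cycles of the model with those of $\alpha$, using item~(4) to transport words of bounded length. The model's cycle structure (Steinberg-type commutator relations of $\mathrm{SL}_n$ among coarse root groups) then forces the cycle holonomies of $\alpha$ to be trivial in the center. This yields a homeomorphism conjugating $\alpha$ to a possibly different homogeneous action, and hence, after a small change of the generator $g_0$, to a restriction of the diagonal action.

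\textbf{Step 3: upgrade to smoothness.} Along each coarse Lyapunov foliation, the conjugacy is $C^\infty$: the relevant coarse exponents of $\alpha$ are single-valued because of the one-Lyapunov-exponent statement of Section~\ref{sec:1exp} together with the constant cocycle of Step~7. The normal forms/narrow-band theory then linearizes the leaves, giving smoothness along them. Smoothness along the center follows from the Lie group action of $\CZ(f)$ of Theorem~\ref{czlieg}. Journé's lemma then gives global $C^\infty$ regularity.

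\textbf{Main obstacle.} The hard step is Step~2. The topological closeness in item~(4) is only $C^0$ and only on legs of bounded length, so one must check that the cycle-killing argument of \cite{Wang26} needs no more than this. I would first try to show that the periodic-cycle/center-translation functional is continuous under $C^0$-close holonomies and takes values in a discrete set; its vanishing for the model would then force it to vanish for $\alpha$.
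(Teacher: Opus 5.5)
Your Steps~1 and~3 broadly match the paper. The paper verifies the topological-perturbation clauses on the sufficient set $F$ by intersecting stable foliations. It then uses normal forms along coarse leaves, the Lie-group action along center leaves, and Journ\'e's lemma. Step~2, however, has two genuine gaps.

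\textbf{First gap: the cycle argument.} Your mechanism for killing the cycle obstruction does not work as stated. The relevant object is the center-displacement cocycle $\beta_c(a,x)\in N_0=Z_{G^c}(g_0)^0$ of $\widehat\alpha=h_c^{-1}\alpha h_c$. Its Lyapunov cycle functional takes values in the connected group $N_0$, so there is no discreteness of values to combine with continuity.

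Commutator-type structure only kills the functional on contractible cycles. In this restricted, non-generic setting that structure means two results: model contractible cycles modulo stable cycles form a minimally almost periodic group \cite[Theorem~7.2]{VinWang}, and the model--perturbed correspondence of contractible cycles preserves stable cycles \cite[Theorem~12.2]{VinWang}.

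What remains is a homomorphism $\widetilde\Gamma\to N_0$ that is close to trivial. You then need the separate fact that sufficiently small homomorphisms $\widetilde\Gamma\to N_0$ are trivial \cite[Lemma~11.1]{VinWang}. This isolation of the trivial homomorphism is the only ``discreteness'' actually available.

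Even then, cohomology of $\beta_c$ to a constant $\rho$ yields only the semiconjugacy $h_1(x)=U_c(x)^{-1}x$. Injectivity needs its own argument: the Lyapunov holonomy group acts freely and transitively on lifted center leaves, and nontrivial elements of the universal cover of $N_0$ have unbounded powers.

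\textbf{Second gap (more serious): identifying $\rho$ as diagonal.} ``After a small change of the generator $g_0$'' does not produce a diagonal action. The action $\alpha$ is fixed, and $\rho(\Z^2)$ is only known to lie in $N_0$ near the model. When $q=2$, $N_0$ contains the $\SL_2(\R)$ factor, so $\rho(1,0)$ and $\rho(0,1)$ may carry small elliptic or unipotent parts.

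In fact, items (1)--(5) of Theorem~\ref{thm:highrankgph} hold, with $h_c=\mathrm{id}$, for $\langle f_0,L_{g_0k}\rangle$, where $k$ is a small nontrivial rotation in that $\SL_2(\R)$ factor. There $\rho(0,1)=g_0k$ is not real-split. So no argument using only those items can reach a diagonal restriction.

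The paper closes this gap using the hypothesis $\mathcal Z(f)\doteq\mathcal Z(f_0)$ and the specific choice $g=\exp_{\CZ(f)}(\Xi^{-1}(\log g_0))$:
\begin{enumerate}
\item conjugation by the H\"older conjugacy $h$ gives an injective continuous homomorphism $\Theta:\CZ(f)^0\to G^c$, which is onto by a dimension count;
\item hence $f_1=\Theta(f)$ is central in $G^c$, so it lies in the diagonal one-parameter group through $f_0$;
\item and $g_1=\Theta(g)$ is real-split semisimple;
\item therefore $f_1$ and $g_1$ lie in a common split Cartan subgroup.
\end{enumerate}
Your proposal never uses the centralizer hypothesis at this stage. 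Without it, nothing in your argument forces even $f_1$ to be diagonal.
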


The proof follows the arguments of \cite{Wang26,VinWang}.  We
give the application here and recall the details in
Appendix~\ref{app:proof-wang-rigidity}.  First, the conclusions of
Theorem~\ref{thm:highrankgph} imply that $\alpha$ is a sufficiently
small topological perturbation of $\alpha_0$ on the finite sufficient
set $F$ in the sense of Definition~\ref{def:topological-perturbation}.

\begin{prop}\label{prop:alpha-topological}

For every sufficiently small $\epsilon>0$, after reducing the
$C^1$-neighborhood in Theorem~\ref{thm:highrankgph}, the action
$\alpha=\langle f,g\rangle$ constructed there is an
$(\epsilon,r)$-topological perturbation on $F$ of
$\alpha_0=\langle f_0,g_0\rangle$, where $r>0$ depends only on the
model action.  The proof is given in
Appendix~\ref{app:verify-topological-perturbation}.

\end{prop}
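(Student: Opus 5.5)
The plan is to verify, one at a time, the defining conditions of an $(\epsilon,r)$-topological perturbation on $F$ from Definition~\ref{def:topological-perturbation}, using the matching items of Theorem~\ref{thm:highrankgph}. Those conditions are: partial hyperbolicity of each $\alpha(a)$, $a\in F$, with a common center foliation; a $C^0$-small leaf conjugacy of the center foliations that intertwines the leaf-space actions; subexponential growth along the center; $C^0$-closeness of stable holonomies on legs of bounded length; and projective closeness of the center translations. The order of quantifiers is important. The model $\alpha_0=\langle f_0,g_0\rangle$ and the finite sufficient set $F$ are fixed first. The scale $r$ is then taken from the model, namely a radius below which the local product structures of the coarse Lyapunov foliations of $\alpha_0(a)$, $a\in F$, are uniform; since $F$ is finite, this $r$ depends only on $\alpha_0$. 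Finally, given $\epsilon$, we apply Theorem~\ref{thm:highrankgph} with some $\epsilon'\le\epsilon$ and shrink $\delta$.

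The verification proceeds item by item. Partial hyperbolicity and the common center foliation $\mathcal W^c_\alpha$ come directly from item~(1). The leaf conjugacy $h_c$ comes from item~(2); its $C^0$-smallness is $C^0$-smallness of $\phi_f$ combined with the construction of Proposition~\ref{prop:g_0tog}(4), where the amalgam corrections $s,s_u$ move points by at most $O(r)$ and in fact by $o(1)$ as $f\to f_0$, because $g$ is projectively close to $g_0$. The center growth bound is item~(3). The projective condition is item~(5) with $\epsilon'$.

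The holonomy condition needs more care. Item~(4) is stated only for stable legs of $\widehat\alpha(a)$, whereas the definition may require holonomies along each coarse Lyapunov foliation, as intersections of stable foliations of elements of $F$, between $\mathcal W^c_{\alpha_0}$-plaques at scale $r$. I would obtain these from the fine foliations $\mathcal W^{s,s}_{f,g}$, $\mathcal W^{s,u}_{f,g}$, $\mathcal W^{u,s}_{f,g}$, $\mathcal W^{u,u}_{f,g}$. Section~\ref{sec:c0closebund} shows that their $\phi_f$-images lie in $\delta$-cones around the model subspaces $E^{\pm,\pm}_{f_0,g_0}$ after projection along $\mathcal W^c_{f_0}$. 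In the one-root situation these model subspaces are precisely the coarse Lyapunov spaces of $\langle f_0,g_0\rangle$, since coarse exponents are determined by the sign pattern under $(f_0,g_0)$. A cone condition at every scale below $\epsilon_1\mu^{-N}$, together with uniform transversality, gives $C^0$-closeness of the plaques at scale $r$ after iterating with $f$ to pass from small scales to scale $r$. Hence the local product holonomies along each coarse foliation are $\epsilon$-close to the model holonomies.

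I expect the main obstacle to be this last step: converting the infinitesimal cone control, which holds only at distance scales below $\epsilon_1\mu^{-N}$, into uniform $C^0$ control of plaques at the fixed scale $r$, uniformly over $a\in F$. My first attempt would be to integrate the cone condition along the leaf, so that a curve whose chords all lie in the $\delta$-cone stays $\delta r$-close to the model plaque, and to use invariance under $f^{-k}$, which expands $\mathcal W^s_f$ roughly conformally, to transport small-scale control to scale $r$ without losing the cone width.
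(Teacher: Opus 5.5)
\textbf{There is a genuine gap: your proposal checks the wrong list of conditions and misidentifies the coarse Lyapunov foliations.}

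The five properties you enumerate are the items of Theorem~\ref{thm:highrankgph}. They are not the seven clauses of Definition~\ref{def:topological-perturbation}, and four of those clauses are left unproved.
\begin{itemize}
\item Clause~(1) requires the foliations $\mathcal W^i_\alpha(x,\mathrm{loc})=\bigcap_{a\in F_i}\mathcal W^s_{\alpha(a)}(x,\mathrm{loc})$ to have the model dimensions. The paper gets $\dim\mathcal W^i_\alpha\le\dim\mathcal W^i_{\alpha_0}$ from the cone containment of the stable leaves of $\widehat\alpha(a)$. It then gets equality from the two direct-sum decompositions of $T_xX$.
\item Clause~(5) is where $r$ actually enters. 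It is not a product-structure radius. It is the rate in $d(\alpha(na)x,\alpha(na)y)\le e^{r\chi_i(a)n}$, i.e.\ contraction or expansion of each coarse foliation with the model signature. The paper derives this from projective closeness plus the cone argument applied to all $a\in F$. The constant $r$ is uniform because only finitely many pairs $(a,i)$ with $a\in F$ occur.
\item Clause~(6) is a two-sided estimate with no multiplicative constant. Already at $n=\pm1$ it forces $\alpha(a)$ to be $e^{\epsilon^2\|\alpha_0(a)\|}$-bi-Lipschitz along center leaves. Item~(3) cannot give this, because its constant $C(a,\eta)$ is uncontrolled. The paper instead uses the coboundary form $D\alpha(a)|_{E^c_\alpha(x)}=\mathcal B_c(\alpha(a)x)\mathcal B_c(x)^{-1}$. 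The orbit frame $\mathcal B_c$ tends uniformly to the isometric model coordinate, so $\sup\|\mathcal B_c\|$ and $\sup\|\mathcal B_c^{-1}\|$ are below $e^{\epsilon^2 m_0/2}$.
\item Clause~(7) follows from sufficiency of $F$ once $\epsilon$ is smaller than the angular distance of $F$ from the walls.
\end{itemize}

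The holonomy step, which you treat as the crux, rests on a false identification. Coarse Lyapunov functionals are rays of $\chi_\beta=(\beta(\log f_0),\beta(\log g_0))$ in $(\mathbb R^2)^*$, not sign patterns on the two generators. On $E^s_{f_0}$ they are $(-\lambda,\beta(\log g_0))$, and these are pairwise non-proportional whenever the values $\beta(\log g_0)$ differ. Write $\log g_0=\mathrm{diag}(t_1,\dots,t_p,s_1,\dots,s_q)$, with all $s_j=0$ when $q\le2$. Then these values are $\pm(t_i-s_j)$. Since $p\ge3$ and $g_0$ is regular, at least two distinct values of the same sign occur.

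So at least one of the fine bundles $E^{s,s}_{f_0,g_0},\dots,E^{u,u}_{f_0,g_0}$ is a sum of several coarse Lyapunov spaces. Cone control of $\mathcal W^{s,s}_{f,g}$ and its siblings therefore does not control the individual $\mathcal W^i_\alpha$. What is needed is to cut out each $\mathcal W^i_\alpha$ as an intersection over $F_i\subset F$ and to use stable-holonomy control for every $a\in F$. That control is exactly item~(4) of Theorem~\ref{thm:highrankgph}, and the paper simply restricts those holonomies to the intersections defining $\mathcal W^i_\alpha$. In particular, the small-scale-to-scale-$r$ conversion you flag as the main obstacle belongs to the proof of item~(4), which the proposition may cite.
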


Then, Theorem~6.1 and Proposition~6.2 of \cite{Wang26} give a H\"older
conjugacy to a homogeneous action, and the argument of
\cite[Section~6.5]{Wang26} upgrades this conjugacy to $C^\infty$.
A more detailed proof and explanation are recalled in
Appendix~\ref{app:proof-wang-rigidity}.

\begin{proof}[Proof of Theorem \ref{cenrignongen}]
The assertion that $\mathcal Z(f)$ is a Lie group and that the action
\[
\mathcal Z(f)\times X\longrightarrow X
\]
is jointly smooth follows from
\cite[Theorem~3.1]{Wang}; see Theorem~\ref{czlieg}.

Suppose first that $f_0$ has more than one nonzero root up to sign.
By the algebraic classification above, the center of
$\mathcal Z(f_0)$ has dimension at least two. The conclusion then follows directly from
\cite[Theorem~1.2]{Wang26}.

We therefore assume that $f_0$ has only one nonzero root value up to
sign. Up to finite
index,
\[
\mathcal Z(f_0)
 \doteq
 \SL_p(\mathbb R)\times\SL_q(\mathbb R)\times\mathbb R,
\qquad p\ge q,\qquad p+q=n,
\]
and we take
\[
\mathfrak g_1=
\begin{cases}
\mathfrak{sl}_p(\mathbb R)\oplus
\mathfrak{sl}_q(\mathbb R),&q>2,\\
\mathfrak{sl}_p(\mathbb R),&q\le2.
\end{cases}
\]

Choose $\log g_0$ in a split Cartan subalgebra of
$\mathfrak g_1$, outside the finitely many root
hyperplanes on which a root nonzero on $\mathfrak g_1$ vanishes.  Then
$E^c_{g_0}\subset E^c_{f_0}$ and all weights of $g_0$ on
$E^s_{f_0}\oplus E^u_{f_0}$ are nonzero.  Since $\log f_0$ is in the
central direction complementary to $\mathfrak g_1$, the two logarithms
are linearly independent; the resulting diagonal $\mathbb Z^2$-action
$\alpha_0=\langle f_0,g_0\rangle$ is genuinely higher rank.

The nonzero restricted Lyapunov functionals have only finitely many
walls in $\mathbb R^2$.  Choose a primitive lattice vector outside all
walls; among the lattice vectors completing it to a unimodular basis,
all but finitely many also avoid the walls.  For each coarse Lyapunov
functional choose a regular lattice point in its negative half-space,
and for each pair of nonproportional functionals choose a regular lattice
point in the intersection of their negative half-spaces.  Add their
negatives.  The resulting finite
symmetric set $F$ contains a generating set, is sufficient, and every
$a\in F$ is regular.  In particular
$E^c_{g_0}=E^c_{\alpha_0(a)}\subset E^c_{f_0}$ is the common center space for every $a\in F$.

Then up to reducing the
$C^1$-neighborhood of $f_0$, Theorem \ref{thm:highrankgph} gives
$g\in\mathcal Z(f)$ and an action
\[
\alpha(m,n)=f^m g^n
\]
with the properties stated there.

Theorem~\ref{thm:wang-rigidity}  then gives a smooth conjugacy
of $\alpha$ to a diagonal restriction.  In particular, $f$ is smoothly
conjugate to a diagonal map.  This proves
Theorem~\ref{cenrignongen} in the one-root case.

\end{proof}

\appendix
\section{Proof of the leafwise Pesin lemma}
\label{app:leafwise-pesin-proof}

We prove Lemma~\ref{lem:leafwise-pesin-ac} by applying the usual Pesin
construction inside the leaves of $\mathcal F$.  We separate the argument
into three lemmas: existence of the relative Pesin manifolds, absolute
continuity of their holonomies inside $\mathcal F$, and comparison with the
ambient Pesin manifolds.

\begin{lem}
\label{lem:app-relative-pesin}
Let $T\in\Diff^2(M)$ preserve a uniformly $C^2$ foliation $\mathcal F$,
and let $\nu$ be a $T$-invariant probability measure.  Assume that
$DT|_{T\mathcal F}$ has no zero Lyapunov exponent at $\nu$-almost every
point.  There is a countable family of compact Pesin blocks
$\{\Lambda_j\}$ covering the regular set modulo a $\nu$-null set such that, on
each $\Lambda_j$, the following data are uniform:
\begin{enumerate}
\item the dimensions of $E^-_{\mathcal F,T}$ and $E^+_{\mathcal F,T}$;
\item local $C^1$ disks
$\mathcal W^-_{\mathcal F,T,\mathrm{loc}}(x)$ and
$\mathcal W^+_{\mathcal F,T,\mathrm{loc}}(x)$ tangent at $x$ to
$E^-_{\mathcal F,T}(x)$ and $E^+_{\mathcal F,T}(x)$;
\item a radius $r_j>0$ and constants $C_j>0$, $\gamma_j>0$ such that the
relative stable and unstable disks have intrinsic radius at least $r_j$ and
satisfy the usual exponential contraction estimates.
\end{enumerate}
In particular, these disks are contained in the relative stable and unstable
sets $\mathcal W^-_{\mathcal F,T}(x)$ and
$\mathcal W^+_{\mathcal F,T}(x)$ defined above.
\end{lem}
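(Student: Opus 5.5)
The plan is to run the standard Pesin construction for the restricted cocycle $DT|_{T\mathcal F}$ inside the leaves of $\mathcal F$, using uniform foliation charts to make everything leafwise-uniform.

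\textbf{Step 1: Oseledets data and Pesin blocks.} Because $\mathcal F$ is uniformly $C^2$, the bundle $T\mathcal F$ is a continuous subbundle of $TM$. Hence $A=DT|_{T\mathcal F}$ is a continuous bundle automorphism over the homeomorphism $T$. Apply Theorem~\ref{Oseled} to $A$ and $\nu$, and group the exponents into negative and positive parts. This gives $E^\pm_{\mathcal F,T}$, and by hypothesis $E^-_{\mathcal F,T}\oplus E^+_{\mathcal F,T}=T_x\mathcal F$ almost everywhere. Next apply Theorem~\ref{thm:pesin-blocks} with a tempering constant $\varepsilon$ much smaller than the minimal modulus of the exponents. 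Intersect its blocks with the sets where the dimensions of $E^\pm_{\mathcal F,T}$ are constant and the exponent gap is bounded below by a fixed constant. This yields compact blocks $\Lambda_j$ on which the Lyapunov charts $C_\varepsilon(x)$ are continuous and satisfy $\|C_\varepsilon^{\pm1}\|\le K_j$; this gives item (1).

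\textbf{Step 2: Leafwise Lyapunov charts.} Fix a finite foliation atlas with uniformly $C^2$ plaques. For each $x$, define the leafwise chart $\Psi_x=\exp^{\mathcal F}_x\circ C_\varepsilon(x)$ on a ball of radius $\rho(x)$, where $\exp^{\mathcal F}$ is the exponential map of the induced leaf metric. This map has uniform $C^2$ bounds, since the leaves are uniformly $C^2$ and $T$ is $C^2$. In these charts, $\widetilde T_x=\Psi_{Tx}^{-1}\circ T\circ\Psi_x$ is a hyperbolic block-diagonal linear map plus a nonlinear term. The Lipschitz constant of the nonlinear term is at most $\varepsilon$ on a ball of radius $\rho(x)\ge\rho_jK_j^{-2}e^{-\varepsilon|n|}$ along the orbit, by the tempering of $K_\varepsilon$.

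\textbf{Step 3: Local disks.} Apply the Hadamard--Perron graph transform to the sequence $(\widetilde T_{T^nx})_{n\ge0}$ to obtain the stable disk $\mathcal W^-_{\mathcal F,T,\mathrm{loc}}(x)$, and to the backward sequence to obtain the unstable disk. These are the usual $C^1$ graphs over $E^\mp$ with Lipschitz constant at most $1$ and tangent to $E^\mp_{\mathcal F,T}(x)$. Forward iterates of points on the stable disk contract at rate $e^{-(\chi-2\varepsilon)n}$ in the charts. Converting back through $\Psi$ costs a factor $K_je^{\varepsilon n}$, which gives the exponential estimates of item (3) with $\gamma_j=\chi-3\varepsilon$ and a uniform radius $r_j$ on $\Lambda_j$. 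The contraction is measured in the leafwise distance, because every chart lives inside a plaque. Therefore the disks lie in $\mathcal W^\pm_{\mathcal F,T}(x)$ as defined with $d_{\mathcal F}$. Continuity of the disks in $x\in\Lambda_j$ follows from continuity of the charts on $\Lambda_j$ and the contraction-mapping dependence on parameters.

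\textbf{Main obstacle.} The main obstacle is the uniformity of the nonlinear estimates in Step 2 across different leaves. The leafwise exponential maps, and the comparison between $d_{\mathcal F}$ and plaque coordinates, must have $C^2$ bounds independent of the leaf. This is exactly where the uniform $C^2$ hypothesis on $\mathcal F$ is used: it gives a uniform injectivity radius for the leaf metrics and uniform second derivatives of $T$ read in plaque charts. Once that is verified, the remainder of the construction is the ambient Pesin argument of \cite{introdyn} carried out fiberwise.
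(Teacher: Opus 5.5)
Your proposal is correct and follows essentially the same route as the paper. Both arguments use Pesin blocks for the restricted cocycle $DT|_{T\mathcal F}$, refined by dimension and by a lower bound on $|\chi_i|$, together with leafwise Lyapunov charts $\exp^{\mathcal F}_{T^kx}\circ C_\varepsilon(T^kx)$ on tempered radii and the Hadamard--Perron graph transform, with contraction measured in $d_{\mathcal F}$. The one point to tighten is that $\nu$ need not be ergodic, so you must choose the tempering constant separately on each piece $\{\min_i|\chi_i|\ge q^{-1}\}$, as the paper does with $\varepsilon_q$, rather than fixing a single $\varepsilon$ before refining the blocks.
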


\begin{proof}
Let $\mathcal R$ be the regular set for the restricted cocycle
$DT|_{T\mathcal F}$.  Since there are no zero exponents at $\nu$-almost
every point, modulo a $\nu$-null set we may decompose $\mathcal R$ into a countable union, where each set has Lyapunov exponents of absolute value at least $q^{-1}$
and constant $\dim E^-_{\mathcal F,T}$.  For each such set,
choose a Lyapunov-coordinate tolerance $\varepsilon_q>0$ sufficiently small
compared with $q^{-1}$.  Applying Theorem~\ref{thm:pesin-blocks} with this
tolerance and refining its compact blocks by these two conditions gives a
countable compact exhaustion of $\mathcal R$ modulo a null set.

Fix one such block $\Lambda$.  Write $\lambda>0$ for its uniform lower bound
on the absolute values of the restricted Lyapunov exponents and write
$\varepsilon>0$ for its chosen Lyapunov-coordinate tolerance.  We choose the
refinement so that $\varepsilon$ is as small compared with $\lambda$ as
needed below.  For $x\in\Lambda$
and $x_k=T^kx$, let
$
 C_k=C_\varepsilon(x_k)
$
be the coordinate change from Theorem~\ref{thm:pesin-blocks}.  Its domain
has the fixed splitting
$
 \mathbb R^{d^-}\oplus\mathbb R^{d^+},
$
and
\[
 C_{k+1}^{-1}\,DT_{x_k}|_{T\mathcal F}\,C_k
 =
 \begin{pmatrix}
 A_k&0\\
 0&D_k
 \end{pmatrix},
\]
where
\begin{equation}\label{eq:leafwise-linear-hyperbolicity}
 \|A_k\|\le e^{-\lambda+\varepsilon},
 \qquad
 \|D_k^{-1}\|\le e^{-\lambda+\varepsilon}.
\end{equation}
The tempered estimate in Theorem~\ref{thm:pesin-blocks} gives, for a
constant $K_\Lambda$ independent of $x\in\Lambda$,
\begin{equation}\label{eq:leafwise-tempered-coordinates}
 \|C_k\|,\ \|C_k^{-1}\|
 \le K_\Lambda e^{\varepsilon|k|}.
\end{equation}

Use leafwise exponential coordinates
\[
 \Phi_k(z)=\exp^{\mathcal F}_{x_k}(C_kz),
 \qquad
 F_k=\Phi_{k+1}^{-1}\circ T\circ\Phi_k.
\]
Uniform $C^2$ bounds for the leaves of $\mathcal F$ and for $T$, together
with \eqref{eq:leafwise-tempered-coordinates}, give radii $\rho_k>0$ on
which
\[
 \|DF_k(z)-DF_k(0)\|<\varepsilon_0,
 \qquad \|z\|<\rho_k,
\]
where $\varepsilon_0>0$ may be fixed arbitrarily small.  Moreover there is
a constant $c_0$, depending only on the fixed foliation atlas and the
uniform leafwise $C^2$ bounds, such that $\rho_k^{-1}$ grows at most like
$e^{c_0\varepsilon|k|}$.  Choose
$c_0\varepsilon<\eta<\lambda/10
$
and replace $\rho_k$ by the tempered minorant
\[
 r_k=\inf_{\ell\in\mathbb Z}e^{\eta|\ell-k|}\rho_\ell.
\]
Then
\begin{equation}\label{eq:leafwise-tempered-radii}
 e^{-\eta}\le \frac{r_{k+1}}{r_k}\le e^\eta,
\end{equation}
and $r_0$ is bounded below uniformly for $x\in\Lambda$.  On the balls
$\|z\|<r_k$ we still have
\begin{equation}\label{eq:leafwise-nonlinear-smallness}
 \|DF_k(z)-DF_k(0)\|<\varepsilon_0.
\end{equation}

Write $z=(u,v)\in\mathbb R^{d^-}\oplus\mathbb R^{d^+}$.  Then
\[
 F_k(u,v)
 =\bigl(A_ku+a_k(u,v),\,D_kv+b_k(u,v)\bigr),
\]
with
\[
 a_k(0)=b_k(0)=0,
 \qquad Da_k(0)=Db_k(0)=0.
\]
Choose $\varepsilon_0$ so that
\[
 e^{-\lambda+\varepsilon}+2\varepsilon_0<e^{-\eta}.
\]
After multiplying all $r_k$ by one fixed small constant, the backward graph
transform preserves graphs
\[
 v=\varphi_k(u),
 \qquad \operatorname{Lip}\varphi_k\le\frac13,
\]
defined on a fixed fraction of the ball of radius $r_k$, and is a contraction
in the graph metric.  This follows directly from
\eqref{eq:leafwise-linear-hyperbolicity},
\eqref{eq:leafwise-tempered-radii}, and
\eqref{eq:leafwise-nonlinear-smallness}.

Starting with the zero graph at time $N$ and pulling backward, then letting
$N\to\infty$, gives a unique invariant sequence of graphs.  Their images
under $\Phi_k$ are the local relative stable manifolds.  On $\Lambda$ there
are uniform constants $C_\Lambda>0$ and $\gamma_\Lambda>0$ such that
\begin{equation}\label{eq:leafwise-stable-contraction}
 d_{\mathcal F}(T^ny,T^nx)
 \le C_\Lambda e^{-\gamma_\Lambda n}d_{\mathcal F}(y,x),
 \qquad n\ge0,
\end{equation}
for $y$ in the local relative stable disk through $x$.  Its radius is
bounded below uniformly on $\Lambda$.  The global relative stable manifold
is obtained by saturation under backward iterates:
\[
 \mathcal W^-_{\mathcal F,T}(x)
 =\bigcup_{n\ge0}
 T^{-n}\mathcal W^-_{\mathcal F,T,\mathrm{loc}}(T^nx).
\]
Applying the same construction to $T^{-1}$ gives the relative unstable
manifolds and the corresponding uniform estimates.
\end{proof}

\begin{lem}
\label{lem:app-relative-ac}
On the blocks from Lemma~\ref{lem:app-relative-pesin}, the local relative
stable and unstable laminations have absolutely continuous holonomy inside
$\mathcal F$.  More precisely, fix two blocks $\Lambda,\Lambda'$ and a
foliation box.  On the part of a relative stable holonomy whose initial
point lies in $\Lambda$ and terminal point lies in $\Lambda'$, the
leafwise measures of a set and of its holonomy image are comparable, with
constants depending only on the two blocks and the foliation box.
\end{lem}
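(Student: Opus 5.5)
The plan is to run the classical Pesin absolute continuity argument for stable laminations entirely inside the leaves of $\mathcal F$, using the uniform leafwise Lyapunov charts constructed in the proof of Lemma~\ref{lem:app-relative-pesin}. Fix a foliation box, blocks $\Lambda,\Lambda'$, and two small smooth transversals $S_1,S_2$ to the relative stable disks inside a single plaque $P$. Let $h:S_1\cap\{\text{points of }\Lambda\text{-disks}\}\to S_2$ be the relative stable holonomy, restricted to those points whose image lies on a disk through $\Lambda'$. The goal is to show that $h$ has a Jacobian, computed as an infinite product, that is bounded above and below by constants depending only on $\Lambda,\Lambda'$ and the box.

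\textbf{Step 1: finite-time comparison.} I will push $S_1,S_2$ forward by $T^n$. By \eqref{eq:leafwise-stable-contraction}, corresponding points $T^nx$ and $T^n(hx)$ approach each other exponentially in the leafwise metric. The leafwise graph transform in the tempered charts $\Phi_k$, fed by \eqref{eq:leafwise-linear-hyperbolicity}--\eqref{eq:leafwise-nonlinear-smallness}, shows that $T^nS_1$ and $T^nS_2$ become $C^1$-close to the unstable direction $E^+_{\mathcal F,T}$ in those charts. Concretely, they are graphs over $\mathbb R^{d^+}$ with Lipschitz constant decaying like $e^{-(\lambda-3\eta)n}$. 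At time $n$, I approximate $T^nh$ by the local stable-plaque projection between these nearly parallel graphs. That projection is nearly isometric, with error controlled by their distance and angle.

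\textbf{Step 2: Jacobian formula.} I will write $\operatorname{Jac}h(x)=\prod_{k\ge0}\frac{\det DT|_{T T^kS_1}(T^kx)}{\det DT|_{TT^kS_2}(T^khx)}$. Each factor is bounded using the uniform $C^2$ bounds on $T$ and on the leaves of $\mathcal F$ (the latter is the hypothesis that $\mathcal F$ is uniformly $C^2$): $|\log\text{factor}_k|\lesssim d_{\mathcal F}(T^kx,T^khx)+\angle(T T^kS_1,TT^kS_2)$. Both terms decay exponentially, and the tempered chart constants $K_\Lambda e^{\varepsilon k}$ are beaten by the rate $e^{-\gamma_\Lambda k}$ because $\varepsilon\ll\lambda$. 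The series therefore converges uniformly, with bounds depending on $\Lambda,\Lambda'$ and the box. A standard limiting argument identifies this product with the Radon--Nikodym derivative: compare measures of small balls at time $n$ and let $n\to\infty$. Recovering the statement for $T^{-1}$ gives the unstable case. Conclusion (2) of Lemma~\ref{lem:leafwise-pesin-ac} then follows, because the blocks exhaust almost every plaque, which holds since $\nu$ is smooth and $\mathcal F$ is absolutely continuous.

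The main obstacle is Step 1. Specifically, I need to show that the images $T^nS_i$ remain transverse graphs of controlled size inside the tempered charts despite nonuniformity. My first attempt would be to invoke the inclination lemma in the charts $\Phi_k$ along the orbit segment, with chart radii $r_k$ from \eqref{eq:leafwise-tempered-radii}. I will cut the transversals into pieces of size $r_k$ and use that the Lipschitz constant of the graphs contracts at rate $e^{-\lambda+2\varepsilon+2\varepsilon_0}$, while the radii shrink at most by $e^{-\eta}$ per step.
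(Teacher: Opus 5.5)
Your proposal is correct and is essentially the paper's argument. Both push the transversals forward by $T^n$, compare their images at time $n$ through the projection that matches $E^+$-coordinates in the tempered leafwise Lyapunov charts, and sum exponentially decaying distortion terms to get Jacobian bounds that depend only on $\Lambda$, $\Lambda'$ and the box. The one cosmetic difference is that the paper never identifies your infinite product with the Radon--Nikodym derivative. Instead it bounds $\operatorname{Jac}h_n$ uniformly in $n$ for the approximate holonomies $h_n=(T^n|_{\Sigma_1})^{-1}\circ\pi_n\circ T^n|_{\Sigma_0}$ and passes to the limit $h_n\to h$, which is all the comparability statement requires.
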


\begin{proof}
We give the stable case.  Work inside one $\mathcal F$-plaque and let
$\Sigma_0,\Sigma_1$ be sufficiently small smooth disks transverse to the
relative stable plaques.  Let
\[
 h:D_0\subset\Sigma_0\longrightarrow D_1\subset\Sigma_1
\]
be the relative stable holonomy.  Fix two blocks $\Lambda,\Lambda'$ and
restrict to
\[
 D_{\Lambda,\Lambda'}
 =\{z\in D_0:z\in\Lambda,\ h(z)\in\Lambda'\}.
\]
Since there are only countably many blocks, it is enough to obtain a uniform
absolute-continuity estimate on each such measurable piece.  After a
countable further decomposition, all charts, cone bounds, and local
extensions used below may be taken uniformly on the piece.

After shrinking the foliation box, the tangent spaces of $\Sigma_0$ and
$\Sigma_1$ lie in a common cone transverse to the stable direction.  Forward
graph transform narrows this cone.  At the same time, if $z$ and $h(z)$ are
paired by stable holonomy, then their forward iterates approach one another
at the exponential rate in \eqref{eq:leafwise-stable-contraction}.  Hence,
in the time-$n$ Lyapunov chart, the relevant pieces of
$T^n\Sigma_0$ and $T^n\Sigma_1$ are $C^1$-close graphs over the same
$E^+$-coordinate plane.  Let
\[
 \pi_n:T^n\Sigma_0\longrightarrow T^n\Sigma_1
\]
be the map obtained by matching the $E^+$ coordinates.  Then
\[
 \|D\pi_n-I\|\longrightarrow0
\]
uniformly on each fixed measurable chart piece.

Define the approximate holonomy
\[
 h_n=(T^n|_{\Sigma_1})^{-1}\circ\pi_n\circ(T^n|_{\Sigma_0}).
\]
Then $h_n\to h$ uniformly.  For $z\in D_{\Lambda,\Lambda'}$, put
\[
 w_n=h_n(z),\qquad z_k=T^kz,\qquad w_{k,n}=T^kw_n,
\]
and
\[
 P_{0,k}=DT^k(T_z\Sigma_0),
 \qquad
 P_{1,k,n}=DT^k(T_{w_n}\Sigma_1).
\]
The stable contraction, the forward and backward cone estimates, and the
$C^1$ regularity of $DT|_{T\mathcal F}$ give constants $C,c>0$, depending
only on the two blocks and the chosen chart piece, such that
\begin{equation}\label{eq:leafwise-distortion}
 \left|
 \log\bigl|\det(DT(z_k)|_{P_{0,k}})\bigr|
 -
 \log\bigl|\det(DT(w_{k,n})|_{P_{1,k,n}})\bigr|
 \right|
 \le C\bigl(e^{-ck}+e^{-c(n-k)}\bigr)
\end{equation}
for $0\le k<n$.  The first term compares the two true stable orbits, while
the second accounts for pulling backward the short projection made at time
$n$.

Summing \eqref{eq:leafwise-distortion} and using the uniform Jacobian bounds
for $\pi_n$ and $\pi_n^{-1}$ gives
\begin{equation}\label{eq:leafwise-holonomy-jacobian}
 C_1^{-1}\le\operatorname{Jac}h_n\le C_1,
\end{equation}
with the same estimate for $h_n^{-1}$ and with $C_1$ independent of $n$.
By the area formula, for every Borel set
$B\subset D_{\Lambda,\Lambda'}$,
\[
 \operatorname{vol}_{\Sigma_1}(h_n(B))
 \le C_1\operatorname{vol}_{\Sigma_0}(B).
\]
Interchanging $\Sigma_0$ and $\Sigma_1$ gives the corresponding
estimate for $h_n^{-1}$ and also $h_n^{-1}\to h^{-1}$ uniformly.  Thus
\[
 (h_n)_*\operatorname{vol}_{\Sigma_0}
 \le C_1\operatorname{vol}_{\Sigma_1}
\]
on the measurable piece, and similarly for the inverse push-forward.
Passing to weak limits and using the uniform convergence $h_n\to h$ gives
\[
 \operatorname{vol}_{\Sigma_1}(h(B))
 \le C_1\operatorname{vol}_{\Sigma_0}(B).
\]
Thus $h$ sends null sets to null sets on
$D_{\Lambda,\Lambda'}$.  Taking the countable union over pairs of blocks,
chart pieces, transversals, and foliation boxes proves absolute continuity
of the relative stable lamination inside $\mathcal F$.  The unstable case
follows by applying the same argument to $T^{-1}$.
\end{proof}

\begin{lem}
\label{lem:app-relative-intersection}
Let $x$ be regular for both the ambient derivative cocycle $DT$ and the
restricted cocycle $DT|_{T\mathcal F}$.  Suppose that
$\mathcal F_{\mathrm{loc}}(x)$ and
$\mathcal W^-_{T,\mathrm{loc}}(x)$ meet cleanly near $x$.  Then, after
shrinking the local plaques,
\[
 \mathcal W^-_{\mathcal F,T,\mathrm{loc}}(x)
 =\mathcal F_{\mathrm{loc}}(x)
  \cap\mathcal W^-_{T,\mathrm{loc}}(x).
\]
The analogous statement holds for unstable manifolds.
\end{lem}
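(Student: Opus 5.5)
We will argue by a squeeze between the two characterizations of stable points: a point of the clean intersection contracts exponentially in the ambient metric, and a point of the relative stable disk contracts exponentially in the leaf metric. The proof then reduces to comparing leafwise and ambient distances near $x$, and to checking that the two local disks have the same dimension and tangent space at $x$.

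\textbf{Inclusion $\mathcal W^-_{\mathcal F,T,\mathrm{loc}}(x)\subset \mathcal F_{\mathrm{loc}}(x)\cap\mathcal W^-_{T,\mathrm{loc}}(x)$.} Let $y$ lie in the relative local stable disk of Lemma~\ref{lem:app-relative-pesin}. Then $y\in\mathcal F_{\mathrm{loc}}(x)$ by construction. Estimate \eqref{eq:leafwise-stable-contraction} gives exponential decay of $d_{\mathcal F}(T^ny,T^nx)$. Since $d\le d_{\mathcal F}$, the point $y$ lies in the global Pesin stable set $\mathcal W^-_T(x)$. Because $y$ stays within the (tempered) Pesin radius along the whole forward orbit, the standard characterization of the local Pesin manifold puts $y\in\mathcal W^-_{T,\mathrm{loc}}(x)$ after shrinking plaques. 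That characterization says that points whose forward orbits remain in the tempered Lyapunov charts and converge lie on the local graph.

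\textbf{Reverse inclusion.} Clean intersection makes $N:=\mathcal F_{\mathrm{loc}}(x)\cap\mathcal W^-_{T,\mathrm{loc}}(x)$ a $C^1$ submanifold with tangent space $T_x\mathcal F\cap E^-_T(x)$. By the Oseledets theorem applied to both cocycles at the doubly regular point $x$, this tangent space equals $E^-_{\mathcal F,T}(x)$. Indeed, a vector in $T_x\mathcal F$ has a negative ambient exponent if and only if it has a negative restricted exponent, since the restricted cocycle is literally the restriction. So $N$ and the relative disk are $C^1$ disks through $x$ of the same dimension, and by the first step the relative disk is contained in $N$.

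Invariance of domain then shows the relative disk is an open subset of $N$ near $x$. Shrinking $N$ to a small neighborhood of $x$ gives equality. The unstable case follows by applying the argument to $T^{-1}$.

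\textbf{Expected obstacle.} The delicate point is not the dimension count but the radii. Shrinking must be done so that both disks are graphs over the same small ball in $E^-_{\mathcal F,T}(x)$. Also, membership of $y$ in $\mathcal W^-_T(x)$ must be upgraded to membership in the \emph{local} ambient disk. Both are handled by the tempered Pesin charts. We take the ambient local manifold at a radius smaller than the relative one, and use the uniform Lipschitz-graph structure (Lipschitz constant $\le 1/3$) to conclude that the portion of $N$ over that ball coincides with the relative graph. Since both are graphs of $C^1$ functions over the same domain, and the relative graph's image lies in $N$, the two graph functions agree.
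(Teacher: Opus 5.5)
Your proposal is correct and follows the paper's own argument. Exponential leafwise contraction plus local uniqueness of the ambient Pesin manifold puts the relative disk inside $\mathcal W^-_{T,\mathrm{loc}}(x)$; the identity $E^-_{\mathcal F,T}(x)=T_x\mathcal F\cap E^-_T(x)$ together with clean intersection gives matching dimension and tangent space; and invariance of domain turns the inclusion into a local equality after shrinking, with your remarks on radii and graphs only making explicit the shrinking the paper leaves implicit.
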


\begin{proof}
By \eqref{eq:leafwise-stable-contraction}, points in the relative stable
plaque converge exponentially in leafwise distance, hence also in ambient
distance.  After shrinking the plaque, all forward iterates remain in the
ambient Pesin charts.  The local uniqueness part of the ambient stable
manifold theorem therefore gives
\[
 \mathcal W^-_{\mathcal F,T,\mathrm{loc}}(x)
 \subset
 \mathcal W^-_{T,\mathrm{loc}}(x).
\]
Since $T\mathcal F$ is $DT$-invariant, the Oseledets splitting for the
restricted cocycle is obtained by intersecting the ambient Oseledets
splitting with $T\mathcal F$; in particular,
\[
 E^-_{\mathcal F,T}(x)=T_x\mathcal F\cap E^-_T(x).
\]
Clean intersection therefore implies that
$\mathcal F_{\mathrm{loc}}(x)\cap\mathcal W^-_{T,\mathrm{loc}}(x)$ is a
submanifold with the same dimension and tangent space at $x$ as the relative
stable plaque.  The preceding inclusion is consequently a local equality
after shrinking.  Applying the same argument to $T^{-1}$ proves the
unstable statement.
\end{proof}

\begin{proof}[Proof of Lemma~\ref{lem:leafwise-pesin-ac}]
Let $\mathcal R$ be the regular set for $DT|_{T\mathcal F}$.  It has full
$\nu$-measure.  Since $\mathcal F$ is absolutely continuous and $\nu$ is
smooth, disintegration in a countable family of foliation boxes shows that
$\mathcal R$ has full leafwise Riemannian measure in almost every
$\mathcal F$-plaque.

Lemma~\ref{lem:app-relative-pesin} gives the relative stable and unstable
manifolds on a countable family of blocks covering $\mathcal R$ modulo a
null set.  Lemma~\ref{lem:app-relative-ac}, applied to every pair of these
blocks, gives the plaque-wise absolute continuity asserted in part~(2) of
Lemma~\ref{lem:leafwise-pesin-ac}.  Finally,
Lemma~\ref{lem:app-relative-intersection} gives part~(3) whenever the clean
intersection hypothesis holds.  This proves the lemma.
\end{proof}

\section{Topological perturbations and the rigidity input}
\label{app:topological-perturbations}

Let $A$ be an abelian group and let
$\alpha_0:A\to\Diff(X)$ be a restriction of the diagonal action.  Write
$\mathcal W^i_{\alpha_0}$ for its coarse Lyapunov foliations,
$\chi_i$ for the corresponding Lyapunov functionals, and
$\mathcal W^c_{\alpha_0}$ for its center foliation.

Recall that a finite set $F\subset A$ is called \emph{sufficient} if it is
symmetric, contains a generating set, each element is regular for the restricted action, and contains a common contracting element
for each pair of nonproportional coarse Lyapunov foliations.

We shall use the following definition of topological perturbation of $\alpha_0$, which is a variant of {\cite[Definition~5.1]{Wang26}}.

\begin{defi}[Topological perturbation on a sufficient set; adapted from
{\cite[Definition~5.1]{Wang26}}]
\label{def:topological-perturbation}
Fix a finite sufficient set $F\subset A$.  A
continuous action $\alpha:A\to\operatorname{Homeo}(X)$ is an
$(\epsilon,r)$-\emph{topological perturbation on $F$} of
$\alpha_0$ if it
preserves continuous, locally transverse foliations
$\mathcal W^i_\alpha$ and $\mathcal W^c_\alpha$ satisfying the following
conditions.
\begin{enumerate}
\item The dimensions of $\mathcal W^i_\alpha$ and
$\mathcal W^c_\alpha$ agree with those of their model counterparts.

\item The center foliation $\mathcal W^c_\alpha$ is uniformly
$C^0$--$\epsilon$-close to $\mathcal W^c_{\alpha_0}$.

\item Holonomies along $\mathcal W^i_\alpha$-paths of length at most
$\epsilon^{-1}$ are uniformly $C^0$--$\epsilon$-close to the
corresponding model holonomies.

\item For every $a\in F$, the map $\alpha(a)$ is projectively
$\epsilon/2$-close to $\alpha_0(a)$.

\item If $a\in F$ and $\alpha_0(a)$ is outside the $\epsilon$-cone about
$\ker\chi_i$, then $\alpha(a)$ contracts or expands
$\mathcal W^i_\alpha$ with the model signature.  Precisely, for
$x\in X$, $y\in\mathcal W^i_\alpha(x)$, and all sufficiently large
$|n|$ with
$\operatorname{sign}(n)=-\operatorname{sign}(\chi_i(a))$,
\[
d(\alpha(na)x,\alpha(na)y)\le e^{r\chi_i(a)n}.
\]

\item For $a\in F$, $x\in X$, $y\in\mathcal W^c_\alpha(x)$, and
$n\in\mathbb Z$,
\[
e^{-\epsilon^2\|\alpha_0(a)\||n|}d(x,y)
\le d(\alpha(na)x,\alpha(na)y)
\le e^{\epsilon^2\|\alpha_0(a)\||n|}d(x,y).
\]
Here $\|\alpha_0(a)\|$ is the norm of the corresponding translation
vector in the split Cartan algebra, and the distances in this clause are
the intrinsic distances on the corresponding center leaves.

\item If $\chi_i$ and $\chi_j$ are not proportional, there is
$a\in F$ for which $\chi_i(a)<0$, $\chi_j(a)<0$, and
$\alpha_0(a)$ lies outside the $\epsilon$-cones about both walls.
\end{enumerate}
\end{defi}
We note we here define topological perturbation restricted to a sufficient set, instead of for the whole action as given in \cite{Wang26}. In {\cite[Definition~5.1]{Wang26}}, $\alpha$ is said to be a topological perturbation of $\alpha_0$ if clauses (1)-(7) are satisfied for any element $a$ outside a small cone of the Weyl chamber walls of $\alpha_0$. However, as we shall see in Section \ref{app:proof-wang-rigidity}, having clauses (1)-(7) on a sufficient set is enough for the proof of the rigidity of the action.

\setcounter{thm}{1}
The rest of this section is devoted to the proof of Theorem \ref{thm:wang-rigidity}.

In Section \ref{app:verify-topological-perturbation}, we first prove Proposition~\ref{prop:alpha-topological}, which shows that for the actions $\alpha_0,\alpha$ and the finite symmetric sufficient set $F$ from
Theorem~\ref{thm:highrankgph}, $\alpha$ is a topological perturbation of $\alpha_0$ on the sufficient set $F$. 

Next, in Section \ref{app:proof-wang-rigidity}, we will recall the proof from Section 6 of \cite{Wang26}, which shows that a topological perturbation of a restriction of the diagonal action is smoothly conjugate to an affine model, this concludes the proof of Theorem \ref{thm:wang-rigidity}.

\subsection{Verification for the action constructed in
Theorem~\ref{thm:highrankgph}}
\label{app:verify-topological-perturbation}

\begin{proof}[Proof of Proposition~\ref{prop:alpha-topological}]
Let $F$ be the finite symmetric sufficient set from
Theorem~\ref{thm:highrankgph}. Since $F$ is sufficient, for each coarse Lyapunov functional
$\chi_i$, there exist a subset $F_i\subset F$ such that
\[
\mathcal W^i_{\alpha_0}(x,loc)
=
  \bigcap_{a\in F_i}\mathcal W^s_{\alpha_0(a)}(x,loc).
\]
and we define
\[
\mathcal W^i_\alpha(x,loc)
=
  \bigcap_{a\in F_i}\mathcal W^s_{\alpha(a)}(x,loc).
\]

 Let
$\mathcal W^c_\alpha$ be the common center foliation in
Theorem~\ref{thm:highrankgph}(1).

We verify the seven clauses of
Definition~\ref{def:topological-perturbation}.

\begin{enumerate}
\item By Proposition \ref{prop:g_0tog}, we have $\dim \W^c_\alpha(x)=\dim \W^c_g(x)=\dim \W^c_{g_0}(x)=\dim \W^c_{\alpha_0}(x)$. By
the proof of Theorem~\ref{thm:highrankgph}(4), the stable leaf $\W^s_{\hat \alpha(a)}(x)$ is inside a small cone about $\W^s_{\alpha_0(a)}(x)$ for each $a\in F$ and $x\in X$. This implies that the dimension of $h_c^{-1}(\W^i_{\alpha})$ is no larger than  the dimension of $\W^i_{\alpha_0}$ for any Lyapunov functional $\chi_i$, i.e. $\dim \W^i_\alpha(x)\le \dim \W^i_{\alpha_0}(x)$ for all $i$. Moreover, by construction, $$T_xX=T\W^c_\alpha(x)\bigoplus\oplus_iT\W^i_\alpha(x)=T\W^c_{\alpha_0}(x)\bigoplus\oplus_iT\W^i_{\alpha_0}(x).$$ This shows that $\dim \W^i_\alpha(x)=\dim \W^i_{\alpha_0}(x)$  for any $\chi_i$. 

\item The center leaf conjugacy in
Theorem~\ref{thm:highrankgph}(2) makes
$\mathcal W^c_\alpha$ uniformly $C^0$-close to
$\mathcal W^c_{\alpha_0}$.

\item By Theorem~\ref{thm:highrankgph}(4), stable holonomies for every
$a\in F$ are uniformly close to their model holonomies on legs of the
required bounded length.  Restricting these holonomies to the clean
intersections defining $\mathcal W^i_\alpha$ gives the required estimate for every coarse
Lyapunov foliation. 

\item Apply Theorem~\ref{thm:highrankgph}(5) with $\epsilon/2$ in
place of $\epsilon$.  It gives projective $\epsilon/2$-closeness for
every $a\in F$.

\item The set $F$ was chosen inside the interiors of the
relevant Weyl chambers.  Thus, whenever $\alpha_0(a)$ stays outside
the $\epsilon$-cone about $\ker\chi_i$, all root spaces entering
$\mathcal W^i_\alpha$ have the same model sign.  The projective
estimate in the preceding item and the all-$F$ cone argument
in the proof of Theorem~\ref{thm:highrankgph}(1), applied to
$\alpha(a)$ or its inverse, give the asserted contraction or expansion
on $\mathcal W^i_\alpha$.
Because only the finitely many pairs $(a,i)$ with
$a\in F$ occur, the constant $r$ is uniform and depends only
on $\alpha_0$ and the chosen sufficient set.

\item The common center leaves are the orbits of
$Z_{\CZ(f)^0}(g)^0$.  In orbit coordinates the derivative of every
$\alpha(a)$ along this foliation has the coboundary form
\[
D\alpha(a)|_{E^c_\alpha(x)}
=\mathcal B_c(\alpha(a)x)\mathcal B_c(x)^{-1}.
\]
The orbit-coordinate map $\mathcal B_c$ tends uniformly to the isometric model
coordinate as $f\to f_0$.  Hence the two separate bounds
\[
 \sup_x\|\mathcal B_c(x)\|<e^{\epsilon^2m_0/2},
 \qquad
 \sup_x\|\mathcal B_c(x)^{-1}\|<e^{\epsilon^2m_0/2}
\]
hold after reducing the neighborhood, where
\[
m_0=\min\{\|\alpha_0(a)\|:a\in F\}>0.
\]
The same formula for $na$ therefore gives a bi-Lipschitz constant at most
$e^{\epsilon^2m_0}$, which is bounded by
$e^{\epsilon^2\|\alpha_0(a)\||n|}$ when $n\ne0$; the case $n=0$ is
immediate.  This proves the two-sided center-distance estimate in
clause~(6), for every $a\in F$ and $n\in\mathbb Z$.

\item This follows directly from $F$ being sufficient. Simply choose $\epsilon$ below the minimum of
their positive angular distances from the relevant walls, which only depend on $\alpha_0$ and $F$. 
\end{enumerate}

The topological Lyapunov foliations are also accessible: the joint
integrations with $\chi_i(f_0)<0$ and $\chi_i(f_0)>0$ are respectively
$\mathcal W^s_f$ and $\mathcal W^u_f$, and $f$ is accessible by
Proposition~\ref{fparhyp}.  This is the accessibility used in the proof
of Theorem~\ref{thm:wang-rigidity}.  All clauses of the definition are
now verified, proving the proposition.
\end{proof}

\subsection{Proof of Theorem~\ref{thm:wang-rigidity}}
\label{app:proof-wang-rigidity}

We now prove that if $\alpha$ is a topological perturbation of $\alpha_0$
on a sufficient set $F$, then $\alpha$ is smoothly conjugate to a restriction
of the diagonal action.  The proof follows the same geometric rigidity
argument as Section~6 of \cite{Wang26}; the underlying Lyapunov-cycle
results are from \cite{VinWang}.

\begin{proof}[Proof of Theorem~\ref{thm:wang-rigidity}]
We use Theorem~6.1, Proposition~6.2, and Section~6.5 of
\cite{Wang26}, in the untwisted setting needed here.

Let $h_c$ be the center leaf conjugacy supplied by
Theorem~\ref{thm:highrankgph} and put
\[
\widehat\alpha(a)=h_c^{-1}\alpha(a)h_c.
\]
Put
\[
N_0=Z_{G^c}(g_0)^0
=\bigl(Z_G(f_0)^0\cap Z_G(g_0)^0\bigr)^0.
\]
The lifted common center leaves are the $N_0$-orbits.  Both $f_0$ and
$g_0$ are central in $N_0$.

Choose the lifts of the maps $\widehat\alpha(a)$ which preserve each
lifted model center leaf.  Their displacement in that leaf defines
\begin{equation}\label{eq:appendix-center-cocycle}
\beta_c(a,x)
=
\widetilde{\widehat\alpha(a)}(\tilde x)\tilde x^{-1}
\in N_0.
\end{equation}
The choice is compatible with products, so $\beta_c$ is a cocycle over
$\widehat\alpha$.  The leaf conjugacy and the perturbed foliations are
H\"older, and projective closeness makes $\beta_c$ $C^0$-close on the
generators to the constant center-displacement cocycle of $\alpha_0$.

We first prove cocycle rigidity.

\begin{thm}[{\cite[Theorem~6.1]{Wang26}}]
\label{thm:cocrig}
Any H\"older cocycle of $\widehat\alpha$ with values in $N_0$ that is
$C^0$-close to a constant cocycle on a generating set is H\"older
cohomologous to a constant.  Thus there exist a H\"older map
$U_c:X\to N_0$ and a homomorphism
$\rho:\mathbb Z^2\to N_0$ such that
\begin{equation}\label{eq:appendix-cohomology}
\beta_c(a,x)
=
U_c(\widehat\alpha(a)x)\rho(a)U_c(x)^{-1}.
\end{equation}
\end{thm}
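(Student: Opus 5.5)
We would prove cocycle rigidity for $\beta_c$ by the periodic-cycle (Lyapunov-cycle) method of Damjanovi\'c--Katok and Vinhage--Wang, adapted to the centralizer-valued setting. The strategy is to show that the natural obstruction, the holonomy of $\beta_c$ around closed paths made of coarse Lyapunov legs, is trivial. Once that holds, $\beta_c$ can be straightened leaf by leaf to a constant.

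\textbf{Step 1: stable holonomies of the cocycle.} For each $a\in F$, we first construct stable and unstable holonomies for $\beta_c$ along $\mathcal W^i_\alpha$. Since $N_0$ is the connected centralizer of $f_0,g_0$ inside $G^c$ and $\beta_c$ is $C^0$-close to a constant valued in the centre of $N_0$, we use clause (6) of Definition~\ref{def:topological-perturbation}. This clause gives subexponential growth of the adjoint action along the orbit, at rate $\epsilon^2\|\alpha_0(a)\|$. Clause (5) gives exponential contraction along $\mathcal W^i_\alpha$ at rate $r\chi_i(a)$. Together these make $\beta_c$ fiber bunched. The limit
\[
\mathcal H^i_{x,y}=\lim_{n}\beta_c(na,y)^{-1}\beta_c(na,x)
\]
then exists and is H\"older, in the spirit of Proposition~\ref{prop:holcocycle}.

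Clause (7) supplies, for nonproportional $\chi_i,\chi_j$, a common contracting element. Using it, we show that $\mathcal H^i$ does not depend on the choice of $a\in F$ contracting $\mathcal W^i_\alpha$. This independence is what makes $\mathcal H^i$ an $\alpha$-equivariant, action-intrinsic object.

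\textbf{Step 2: trivial periodic cycle functional.} We next show that the product of holonomies around any closed $\mathcal W^i_\alpha$-cycle is the identity. For cycles built from commutator relations between coarse Lyapunov subgroups in $G$, we compare with the model. Clause (3) says the perturbed holonomies are $C^0$-close to the model ones, so each such cycle is close to a model cycle, which closes up. The cycle functional is equivariant under $\alpha$. Iterating by an element that contracts the cycle, and using the subexponential centre estimates, forces the functional to equal the identity.

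This reduction to commutator cycles is the \textbf{main obstacle}. It needs the Vinhage--Wang description of generating relations for the cycle group of a genuinely higher rank restriction with no rank-one factor, which is why genuine higher rank of $\alpha_0$ matters. We would import it verbatim from \cite{VinWang}, as in \cite[Section~6]{Wang26}.

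\textbf{Step 3: building $U_c$.} With trivial cycles, we fix a base point and define $U_c(y)$ as the holonomy of $\beta_c$ along any $\mathcal W^i_\alpha$-path to $y$. Accessibility of the topological Lyapunov foliations, noted at the end of the proof of Proposition~\ref{prop:alpha-topological} via accessibility of $f$, guarantees such paths exist. Step 2 makes $U_c$ path-independent, and the H\"older bounds on the holonomies make $U_c$ H\"older.

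By construction $U_c(\widehat\alpha(a)x)^{-1}\beta_c(a,x)U_c(x)$ is invariant under every holonomy, so it is constant on accessibility classes and hence constant. Call this constant $\rho(a)$. The cocycle identity for $\beta_c$ then makes $\rho$ a homomorphism, which gives \eqref{eq:appendix-cohomology}.
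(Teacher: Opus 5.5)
Your Steps~1 and~3 follow the paper's outline. The cocycle holonomy $p_c$ exists because $\beta_c$ is close to a central constant, so $\operatorname{Ad}(\beta_c)$ is bunched against the contraction on $\mathcal W^i_\alpha$. The implication ``trivial cycle functional $\Rightarrow$ cohomologous to a constant'' is \cite[Proposition~6.3]{Wang26}. A minor misattribution: independence of $p_c$ from the contracting element does not need clause~(7). It follows from the equivariance $p_c(bx,by)=\beta_c(b,x)\,p_c(x,y)\,\beta_c(b,y)^{-1}$ together with contraction. Clause~(7) is what places a commutator cycle inside a single stable leaf, so that its functional telescopes to $e$; that is \cite[Lemma~6.4]{Wang26}.

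\textbf{The gap is in Step~2.} Closed Lyapunov cycles do \emph{not} reduce to commutator (stable) cycles, and Vinhage--Wang does not say they do.

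\emph{Non-contractible cycles.} Stable cycles lie in contractible stable leaves, so they, and everything they generate, are null-homotopic in $X$. But $\pi_1(X)\cong\widetilde\Gamma$, the preimage of $\Gamma$ in the universal cover of $G$, is infinite. Since the lifted Lyapunov foliations are accessible, each of its elements is represented by a closed Lyapunov path. No contraction or relation argument reaches these cycles. Without them, the $U_c$ you define on $X$ in Step~3 is not path-independent.

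\emph{Contractible cycles.} Even here, \cite[Theorem~7.2]{VinWang} does not say the model contractible cycles are generated by stable cycles. It says only that the quotient is minimally almost periodic, which is a $K_2$-type obstruction as in Damjanovi\'c--Katok. To transfer this to $\widehat\alpha$ you also need \cite[Theorem~12.2]{VinWang}: the canonical correspondence between model and perturbed contractible cycles carries stable cycles to stable cycles. $C^0$-closeness of holonomies (clause~(3)) is an input to that correspondence, not a substitute for it.

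\textbf{The missing ingredient is a lattice-rigidity step.} Once $P_c$ vanishes on contractible cycles, it descends to a homomorphism $\overline P_c:\widetilde\Gamma\to N_0$. Using uniformly bounded accessible paths, $\overline P_c$ is uniformly close to the trivial homomorphism when the perturbation is small \cite[Lemma~6.10]{Wang26}. Rigidity of higher-rank lattices then shows that every sufficiently small homomorphism $\widetilde\Gamma\to N_0$ is trivial (\cite[Lemma~6.9]{Wang26}, i.e.\ \cite[Lemma~11.1]{VinWang}). Only after this is $P_c\equiv e$ on all cycles, at which point your Step~3 goes through. This input concerns $\Gamma$ itself rather than the foliations or contraction rates, and nothing in your plan supplies it.
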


Theorem~6.1 of \cite{Wang26} is stated with target $G^c$.
Its proof applies without change to the closed subgroup $N_0$: all stable
holonomies, Lyapunov cycle functionals, and the resulting transfer map
remain in $N_0$.
\begin{proof}
    For completeness, we recall the notation entering that argument.  Let
$\{\mathcal W^i_{\widehat\alpha}\}$ be the topological coarse Lyapunov
foliations of $\widehat\alpha$.  A \emph{Lyapunov path} is a finite path
\[
\gamma=[x_0,x_1,\ldots,x_k]
\]
such that each leg $[x_i,x_{i+1}]$ lies in one
$\mathcal W^j_{\widehat\alpha}$.  For a fixed base point $x_*$, let
$\mathcal C_{x_*}(\widehat\alpha)$ be the group of closed Lyapunov paths
based at $x_*$.  Let
$\mathcal C^0_{x_*}(\widehat\alpha)$ be the subgroup consisting of cycles
whose lifts to the universal cover are closed, and let
$\mathcal S_{x_*}(\widehat\alpha)$ be the closed normal subgroup generated
by \emph{stable cycles}, namely cycles contained in
$W^s_{\widehat\alpha(a)}(x_*)$ for some $a\in\mathbb Z^2$.

If $y\in\mathcal W^i_{\widehat\alpha}(x)$, choose $a\in F$ with
$\chi_i(a)<0$, and let $0<\lambda_i(a)<1$ be a uniform contraction
rate on this foliation.  Since the model center displacement is central
in $N_0$, by making the perturbation smaller we have
\[
\sup_{z\in X,\ \delta=\pm1}
\|\operatorname{Ad}(\beta_c(a,z)^\delta)\|
\lambda_i(a)^\kappa<1,
\]
where $\kappa$ is a H\"older exponent of $\beta_c$.  Hence the stable
cocycle holonomy
\begin{equation}\label{eq:appendix-cocycle-holonomy}
p_c(x,y)
=
\lim_{n\to\infty}
\beta_c(na,x)^{-1}\beta_c(na,y)
\end{equation}
exists; this is the stable holonomy of
\cite[Definition~6.1]{Wang26}.  The estimate is needed because $N_0$
need not be abelian, in particular in the case $q=2$.  The value is
independent of the choice of contracting $a$.

For a Lyapunov path
$\gamma=[x_0,\ldots,x_k]$, define
\[
P_c(\gamma)
=
\prod_{i=0}^{k-1}p_c(x_i,x_{i+1}).
\]
On closed paths this is the Lyapunov cycle functional.  By
\cite[Lemma~6.4]{Wang26}, it is trivial on
$\mathcal S_{x_*}(\widehat\alpha)$, and by
\cite[Proposition~6.3]{Wang26} it is trivial on all cycles if and only if
$\beta_c$ is cohomologous to a constant.

It therefore remains only to show that $P_c$ is trivial.  Lift the action
and the Lyapunov paths to the universal cover.  By
\cite[Proposition~6.5]{Wang26}, which is
\cite[Theorem~7.2]{VinWang}, the quotient of the model contractible-cycle
group by stable cycles is minimally almost periodic.  By
\cite[Proposition~6.6]{Wang26}, which is
\cite[Theorem~12.2]{VinWang}, the canonical correspondence between model
and perturbed contractible cycles preserves stable cycles.  The hypotheses
needed for this correspondence are precisely the $C^0$ control of the
foliations and the bounded holonomies in
Definition~\ref{def:topological-perturbation}.  It follows that $P_c$
vanishes on contractible cycles and hence factors through a homomorphism
\begin{equation}\label{eq:appendix-lattice-hom}
\overline P_c:\widetilde\Gamma\longrightarrow N_0.
\end{equation}
By \cite[Lemma~6.10]{Wang26}, this homomorphism is arbitrarily close to
the trivial homomorphism when the perturbation is sufficiently small, and
\cite[Lemma~6.9]{Wang26} (equivalently
\cite[Lemma~11.1]{VinWang}) implies that every sufficiently small
homomorphism $\widetilde\Gamma\to N_0$ is trivial.  Thus $P_c$ is trivial,
and Proposition~6.3 of \cite{Wang26} gives Theorem
\eqref{eq:appendix-cohomology}.
\end{proof}

We now apply Theorem~\ref{thm:cocrig} to show that $\alpha$ is H\"older conjugate to an affine action.

Let
\[
\alpha_\rho(a)(x):=\rho(a)\cdot x.\]

\begin{prop}[{\cite[Proposition 6.2]{Wang26}}]
    Let $\alpha,\alpha_0$ be the $\Z^2$-actions given by Theorem \ref{thm:highrankgph}. Then $\alpha$ is H\"older conjugate to the affine action $\alpha_\rho$.
\end{prop}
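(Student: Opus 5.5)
We construct the conjugacy from the cohomology \eqref{eq:appendix-cohomology} of Theorem~\ref{thm:cocrig}, following the scheme of \cite[Proposition~6.2]{Wang26}. The transfer map $U_c:X\to N_0$ is H\"older, and $\rho:\mathbb Z^2\to N_0$ is a homomorphism. We first straighten the center coordinate. Define
\[
h_1(x)=U_c(x)^{-1}\cdot x
\]
on lifts, translating $x$ inside its model center leaf. Equation~\eqref{eq:appendix-cohomology} then says that $h_1\widehat\alpha(a)h_1^{-1}$ moves each point of a center leaf by exactly $\rho(a)$. The map $U_c$ is $C^0$-small, because $\beta_c$ is $C^0$-close to the constant model displacement and the transfer from Theorem~\ref{thm:cocrig} can be normalized to be close to $e$. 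Hence $h_1$ is a H\"older homeomorphism close to the identity that preserves each $\mathcal W^c_{\alpha_0}$-leaf. Compatibility with the deck group follows because $\beta_c$ was defined by lifts preserving lifted center leaves.

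Next we handle the transverse directions. Put $\alpha_1=h_1\widehat\alpha h_1^{-1}$ and compare it with $\alpha_\rho$. Both preserve $\mathcal W^c_{\alpha_0}$ and induce the same action on the center leaf space. The leaf-space action of $\alpha_\rho$ agrees with that of $\alpha_0$, since $\rho(a)$ lies in $N_0$ and $N_0$ fixes its own orbits. Along center leaves the two actions now agree exactly. To define the conjugacy $h_2$, we send $x$ to the unique point $y$ that is reached from a fixed reference by the same Lyapunov path, with legs in $\mathcal W^i_{\alpha_1}$ and in $\mathcal W^i_{\alpha_\rho}$ respectively. Here accessibility by the topological Lyapunov foliations, noted at the end of Appendix~\ref{app:verify-topological-perturbation}, is used. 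The map is well defined exactly because the cycle functional $P_c$ vanishes: closed Lyapunov cycles for $\alpha_1$ correspond to closed cycles for $\alpha_\rho$, with the same center displacement. Equivariance $h_2\alpha_1(a)=\alpha_\rho(a)h_2$ holds because both actions map Lyapunov leaves to Lyapunov leaves and agree on center displacement. H\"older regularity of $h_2$ comes from the H\"older holonomies $p_c$ and the $C^0$-closeness of the foliations in clauses (2)--(3) of Definition~\ref{def:topological-perturbation}.

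The final conjugacy is $h=h_c\circ h_1^{-1}\circ h_2^{-1}$, suitably ordered.

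The main obstacle is showing that $h_2$ is well defined and injective. Well-definedness rests on the cycle correspondence of \cite[Proposition~6.6]{Wang26} together with the vanishing of $P_c$ and of $\overline P_c$ on $\widetilde\Gamma$. As an alternative route, I would first try the uniqueness argument instead. One defines $h_2$ as the limit of $\alpha_\rho(-na)\alpha_1(na)$ restricted to stable leaves of a contracting $a\in F$. Clauses (5)--(6) give exponential contraction against the subexponential center distortion, so this limit converges on stable leaves; one argues symmetrically with $-a$ for unstable leaves. The two partial maps are glued using the local product structure. Injectivity then follows from expansivity of $\alpha_\rho$ transverse to the center and the $C^0$-smallness of $h_2$.
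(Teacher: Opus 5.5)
Your first step (defining $h_1(x)=U_c(x)^{-1}\cdot x$) is the paper's, but you then misplace where the work lies. By Theorem~\ref{thm:highrankgph}(2), each $\widehat\alpha(a)$ fixes every leaf of $\mathcal W^c_{\alpha_0}$. So on lifts $\widetilde{\widehat\alpha(a)}(\tilde x)=\beta_c(a,x)\tilde x$ describes the \emph{whole} action, not only its center part. Substituting \eqref{eq:appendix-cohomology} gives $h_1\circ\widehat\alpha(a)=\alpha_\rho(a)\circ h_1$ on all of $X$. Hence, once $h_1$ is invertible, $\alpha_1=h_1\widehat\alpha h_1^{-1}$ coincides with $\alpha_\rho$ identically. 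You half-notice this: both fix every center leaf and agree along it. Your $h_2$ is therefore the identity. Its Lyapunov-path construction, the appeal to $P_c=0$ (already used up in producing $U_c$ via Theorem~\ref{thm:cocrig}), and the alternative limit $\alpha_\rho(-na)\alpha_1(na)$ all do no work. The entire content of the proposition is the injectivity of $h_1$.

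That is exactly the step you skip with ``$U_c$ is $C^0$-small, hence $h_1$ is a homeomorphism.'' This does not follow. $U_c$ is only H\"older, and a map $x\mapsto U_c(x)^{-1}x$ with small but non-Lipschitz displacement can fold inside a center leaf. Dynamics does not rescue it cheaply either. If $h_1(x)=h_1(y)$, then $x,y$ lie on one center leaf, where the action has only subexponential distortion (clause~(6) of Definition~\ref{def:topological-perturbation}), so there is no expansivity to use. The expansivity of $\alpha_\rho$ transverse to the center, which you invoke, concerns the wrong directions.

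The paper supplies the missing idea:
\begin{enumerate}
\item The Lyapunov holonomy group of $\widehat\alpha$ acts freely and transitively on each lifted center leaf (\cite[Propositions~6.6 and~6.8]{Wang26}), so $y=H(x)$ for a nontrivial holonomy $H$.
\item The semiconjugacy sends stable sets into stable sets, so it intertwines the Lyapunov legs of $H$ with model legs. This forces the whole orbit $\{H^n(x)\}$ into a single fiber of $h_1$.
\item That fiber is bounded because $U_c$ is bounded.
\item This contradicts the fact that nontrivial elements of the simply connected cover of $N_0$ have unbounded powers. Its factors are vector groups, split tori, and $\widetilde{\SL_2(\R)}$ when $q=2$.
\end{enumerate}
Injectivity plus invariance of domain then makes $h_1$ a homeomorphism, and $h=h_1h_c^{-1}$ is the H\"older conjugacy.
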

\begin{proof}
The proof is the same as the proof of \cite[Proposition 6.2]{Wang26}.

Let
\[
h_1(x):=U_c(x)^{-1}\cdot x.
\]
Since $U_c(x)\in N_0$, the point $h_1(x)$ lies on the same model center
leaf as $x$.  Equation~\eqref{eq:appendix-cohomology} gives
\begin{equation}\label{eq:h1-semiconjugacy}
h_1\circ\widehat\alpha(a)
=
\alpha_\rho(a)\circ h_1,
\qquad a\in\mathbb Z^2.
\end{equation}
Thus $h_1$ is a H\"older semiconjugacy from $\widehat\alpha$ to the
homogeneous action $\alpha_\rho$.  Since $X$ is compact and $U_c$ is
continuous, the displacement of $h_1$ inside each lifted center leaf is
uniformly bounded.

We recall the holonomy fact needed to see that this semiconjugacy is a
conjugacy.  If $\gamma$ is a Lyapunov path whose initial and terminal
points lie on center leaves, successive holonomies along its Lyapunov legs
give a map between the corresponding center leaves.  On the universal
cover, the maps obtained from Lyapunov paths beginning and ending on a
fixed lifted center leaf form its \emph{Lyapunov holonomy group}.
By \cite[Propositions~6.6 and~6.8]{Wang26}, the holonomy group of
$\widehat\alpha$ is canonically identified with the model holonomy group
and acts freely and transitively on every lifted center leaf.

In the present algebraic model this holonomy group is the simply connected
group covering $N_0$, acting by left translation.  We shall use the
elementary fact that every nonidentity element of this group has unbounded
powers.  Indeed, up to finite central quotients, the relevant factors of
$N_0$ are vector groups and split tori; when $q=2$ there may also be an
$\SL_2(\mathbb R)$ factor.  After passage to the universal cover, the
vector and split factors clearly have no nontrivial bounded cyclic
subgroups.  The same is true for
$\widetilde{\SL_2(\mathbb R)}$: hyperbolic and parabolic elements have
unbounded powers, while the lift of the elliptic circle is $\mathbb R$,
so a nontrivial elliptic lift also has unbounded powers.

Suppose that $h_1(x)=h_1(y)$ with $x\ne y$.  Since $h_1$ preserves model
center leaves, $x$ and $y$ lie on the same lifted center leaf.  By simple
transitivity there is a nontrivial holonomy element $H$ with $H(x)=y$.
Equation~\eqref{eq:h1-semiconjugacy}, applied along the Lyapunov legs
defining $H$, implies that the entire orbit
$\{H^n(x):n\in\mathbb Z\}$ lies in a single fiber of $h_1$.  That fiber is
bounded in the lifted center leaf because $U_c$ is uniformly bounded.
On the other hand, the preceding paragraph shows that the orbit of a
nontrivial holonomy element is unbounded.  This contradiction proves that
$h_1$ is injective.  By invariance of domain and compactness, $h_1$ is a
homeomorphism.

Consequently
\[
h:=h_1h_c^{-1}
\]
is a H\"older conjugacy from the original action $\alpha$ to
$\alpha_\rho$.  This is precisely the conjugacy conclusion of
\cite[Proposition~6.2]{Wang26}. Normalize $U_c$ to be the
identity at the chosen base point.  The construction of $U_c$ by
Lyapunov paths, together with the uniformly bounded accessible paths from
Proposition~\ref{fparhyp} and the estimate of
\cite[Lemma~6.10]{Wang26}, shows that
\[
\sup_{x\in X}d_{N_0}(U_c(x),e)\longrightarrow0
\]
with the size of the perturbation.  It follows from
\eqref{eq:appendix-cohomology} that $\rho$ is as close to $\alpha_0$ on
the generators as desired.
\end{proof}

It remains to show that $\alpha_\rho$ is conjugate to a restriction of the diagonal action and to upgrade smoothness of the conjugacy.  Write
\[
f_1=\rho(1,0),
\qquad
g_1=\rho(0,1).
\]
Conjugation by $h$ defines
\[
\Theta:\CZ(f)^0\longrightarrow G^c,
\qquad
\Theta(g)=hgh^{-1}.
\]
By \cite[Proposition~6.2]{Wang26}, its image is a Lie subgroup of
\[
G^c=\CZ(f_0)^0.
\]
Here the Lie-group topology agrees with the topology induced by the action
on a lifted center leaf; equivalently, this follows from the free and
proper center-leaf actions.  Thus $\Theta$ is a continuous homomorphism of
finite-dimensional Lie groups and hence is smooth.

By the hypothesis
\[
\mathcal Z(f)\doteq\mathcal Z(f_0),
\]
the image $\Theta(\CZ(f)^0)$ has the same dimension as the connected group
$G^c$.  Therefore
\[
\Theta(\CZ(f)^0)=G^c,
\]
so $\Theta$ is a Lie-group isomorphism.  Since every element of
$\CZ(f)^0$ commutes with $f$, the element $f$ is central in
$\CZ(f)^0$.  Hence
\[
f_1=\Theta(f)\in Z(G^c).
\]
In the present case
\[
G^c\doteq
\SL_p(\mathbb R)\times\SL_q(\mathbb R)\times\mathbb R,
\]
and $Z(G^c)^0$ is precisely the one-parameter diagonal subgroup containing
$f_0$.  Since $f_1$ is close to $f_0$, it lies in this identity component;
in particular $f_1$ is diagonal. 
No regularity assumption on $f_1$ as an element of
$\SL_n(\mathbb R)$ is needed.

We next identify $g_1$.  Recall that $g\in\CZ(f)^0$ was chosen by
\[
g
=
\exp_{\CZ(f)}
\bigl(\Xi^{-1}(\log g_0)\bigr),
\]
where $\Xi$ is the Lie-algebra isomorphism constructed above.  Hence $g$
is real-split semisimple in the Lie group $\CZ(f)^0$.  Since $\Theta$ is
a Lie-group isomorphism and $\Theta(g)=g_1$,
\[
d\Theta\circ\operatorname{Ad}(g)
=
\operatorname{Ad}(g_1)\circ d\Theta,
\]
so $g_1$ is real-split semisimple in $G^c$.  It is therefore contained in
a maximal $\mathbb R$-split torus of $G^c$ after conjugation inside
$G^c$.  Such a conjugation fixes the central element $f_1$.  Thus
$f_1$ and $g_1$ lie in a common split Cartan subgroup of $G^c$.
Closeness of $g_1$ to the positive model element $g_0$ places it in the
corresponding identity component.  Therefore
$\rho(\mathbb Z^2)$ is a restriction of the diagonal action.  This proves
the required algebraic part of Theorem~\ref{thm:wang-rigidity}.

It remains only to upgrade the conjugacy from H\"older to smooth.  We use
the standard smoothness argument from the final part of
\cite{Wang26}.  More precisely, \cite[Proposition~6.11]{Wang26} shows
that the preimage under $h$ of every generic real-split element of the
homogeneous action is partially hyperbolic and that the corresponding
topological coarse Lyapunov foliations are its smooth stable and unstable
foliations.  The normal-form argument of
\cite[Section~2.2.3, Step~4]{KS} then implies that $h$ is
$C^\infty$ along every coarse Lyapunov foliation.  Along center leaves,
$h$ conjugates the smooth transitive actions of the Lie groups
$\CZ(f)^0$ and $G^c$, and is therefore smooth there as well.  Successive
applications of Journ\'e's lemma \cite{Journe} to complementary joint
foliations give $
h\in C^\infty(X).
$

Thus $\alpha$, and in particular $f$, is smoothly conjugate to the
claimed restriction of the diagonal action.
\end{proof}

\raggedbottom
\bibliography{cenrig}
\bibliographystyle{acm}

\end{document}